\documentclass[12pt, a4paper]{article}

\usepackage{amssymb,amsmath,amsthm,bbm}
\usepackage{bigfoot}
\usepackage[all]{xy}

\usepackage{amssymb,amsmath,amsthm}
\usepackage[mathscr]{eucal}%
\usepackage[all]{xy}
\usepackage{lmodern}
\usepackage[T1]{fontenc}
\usepackage[utf8]{inputenc}
\usepackage{tikz-cd}
\usepackage[shortlabels]{enumitem}
\usepackage{relsize}
\usepackage{geometry}
\usepackage{mathtools}
\usepackage{adjustbox}
\usepackage{lscape}
\usepackage{diagbox}
\usepackage{slashbox}
\usepackage{bm}
\usepackage{caption}
\usepackage{float}

\usepackage{stmaryrd}
\usepackage{mathrsfs}  
\usepackage{multirow}
\usepackage[hyperfootnotes=false]{hyperref}
\usetikzlibrary{positioning}
\usepackage{tikz-3dplot}
\usepackage{xifthen}
\usetikzlibrary{quotes,arrows.meta}
\tdplotsetmaincoords{60}{125}
\tdplotsetrotatedcoords{0}{0}{0}
\usepackage{xcolor}
\definecolor{mygreen}{rgb}{0.16,.55,0.0}
\usepackage[nottoc]{tocbibind}

\renewcommand{\theenumi}{{\upshape(\roman{enumi})}}

\renewcommand{\theenumii}{{\upshape(\alph{enumii})}}

\theoremstyle{plain}
\newtheorem{prop}{Proposition}[subsection]

\newtheorem{lem}[prop]{Lemma}
\newtheorem{lm}[prop]{Lemma}
\newtheorem{thm}[prop]{Theorem}
\newtheorem{cor}[prop]{Corollary}
\theoremstyle{definition}
\newtheorem{definit}[prop]{Definition}
\newtheorem{ex}[prop]{Example}
\newtheorem{rem}[prop]{Remark}
\newtheorem{ques}[prop]{Question}

\theoremstyle{plain} 
\newtheorem{prop1}{Proposition}[section]

\newtheorem{lem1}[prop1]{Lemma}

\newtheorem{thm1}[prop1]{Theorem}
\newtheorem{cor1}[prop1]{Corollary}
\theoremstyle{definition}

\newtheorem{ques1}[prop1]{Question}

\theoremstyle{plain}

\theoremstyle{definition}

\theoremstyle{plain}

\theoremstyle{definition}

\def\varddots{\mathinner{\raise7pt\vbox{\kern3pt\hbox{.}}\mkern1mu\smash{\raise4pt\hbox{.}}\mkern1mu\smash{\raise1pt\hbox{.}}}}

\newcommand{\vplim}{\varprojlim}

\DeclareMathAlphabet{\mathpzc}{OT1}{pzc}{m}{it}

\DeclarePairedDelimiter{\vabs}{\lvert}{\rvert}

\DeclareMathOperator{\Hom}{Hom}
\DeclareMathOperator{\cont}{cont}
\DeclareMathOperator{\Ind}{Ind}
\DeclareMathOperator{\cInd}{c-Ind}
\DeclareMathOperator{\Res}{Res}

\DeclareMathOperator{\GL}{GL}

\DeclareMathOperator{\Gal}{Gal}

\DeclareMathOperator{\im}{im}

\DeclareMathOperator{\Proj}{Proj}
\DeclareMathOperator{\Inj}{Inj}

\DeclareMathOperator{\St}{St}

\DeclareMathOperator{\rad}{rad}

\DeclareMathOperator{\gr}{gr}

\newcommand{\m}{\mathfrak{m}}

\newcommand{\Z}{{\mathbb Z}}

\newcommand{\cC}{\mathcal{C}}

\newcommand{\QK}[1]{I_{1}}

\newcommand{\Qpf}{\mathbb{Q}_{p^f}}
\newcommand{\Qp}{\mathbb{Q}_{p}}
\newcommand{\Zpf}{\mathbb{Z}_{p^f}}
\newcommand{\Zp}{\mathbb{Z}_{p}}
\newcommand{\F}{\mathbb{F}}
\newcommand{\Fp}{\mathbb{F}_{p}}
\newcommand{\Fpf}{\mathbb{F}_{p^f}}
\newcommand{\Fq}{\mathbb{F}_{q}}

\newcommand{\Qpbar}{\overline{\mathbb{Q}}_p}

\newcommand{\Fpbar}{\overline{\mathbb{F}}_p}

\newcommand{\val}{\mathrm{val}}

\newcommand{\Q}{\mathbb{Q}}

\newcommand{\R}{\mathbb{R}}

\newcommand{\wt}[1]{\widetilde{#1}}

\newcommand{\onto}{\twoheadrightarrow}
\newcommand{\into}{\hookrightarrow}
\newcommand{\congto}{\xrightarrow{\,\sim\,}}

\newcommand{\cF}{\mathcal{F}}

\newcommand{\cO}{\mathcal{O}}
\newcommand{\et}{\acute{\mathrm{e}}\mathrm{t}}

\DeclareMathOperator{\ind}{ind}

\newcommand{\Av}{{\mathbb A}_{F^+}^{\infty,v}}
\newcommand{\Avo}{{\mathbb A}_{F^+}^{\infty,v_0}}
\newcommand{\Ap}{{\mathbb A}_{F^+}^{\infty,p}}
\newcommand{\A}{{\mathbb A}_{F^+}^{\infty}}

\newcommand{\cI}{\mathcal{I}}

\newcommand{\rbar}{\overline{r}}
\newcommand{\rhobar}{\overline{\rho}}

\newcommand{\gK}{{\Gal}(\Qpbar/K)}

\newcommand{\matr}[4]{\begin{pmatrix}{#1}&{#2}\\ {#3}&{#4}\end{pmatrix}}
\newcommand{\smatr}[4]{\bigl(\begin{smallmatrix} {#1}& {#2}\\ {#3}&{#4}\end{smallmatrix}\bigr)}%
\newcommand{\smat}[1]{\left( \begin{smallmatrix} #1 \end{smallmatrix} \right)}

\newcommand{\un}[1]{\underline{#1}}

\newcommand{\ra}{\rightarrow}

\newcommand{\ppar}[1]{(\mkern-3mu(#1)\mkern-3mu)}
\newcommand{\bbra}[1]{\llbracket #1\rrbracket}

\newcommand{\Florian}[1]{\textcolor{magenta}{\makebox[0pt][l]{\textcolor{white}{@FH}}#1}}

\newcommand{\mar}[1]{\marginpar{\raggedright\tiny @#1}}

\newcommand{\fm}{\mathfrak{m}}

\DeclareMathOperator{\id}{id}

\DeclareMathOperator{\HOM}{HOM}

\newcommand{\cts}{_{\mathrm{cts}}}

\newcommand{\xto}[1][]{\xrightarrow{#1}}
\newcommand{\simto}{\xto[\sim]} %

\renewcommand{\subset}{\subseteq}
\renewcommand{\supset}{\supseteq}
\renewcommand{\simeq}{\cong}

\newcommand{\vp}{\varphi}
\newcommand{\ok}{\cO_K}
\newcommand{\oks}{\ok\s}

\newcommand{\zp}{\Z_p}

\newcommand{\rhob}{\overline{\rho}} 
\renewcommand{\o}[1]{\overline{#1}}
\newcommand{\s}{^\times}
\newcommand{\wh}[1]{\widehat{#1}}

\newcommand{\fq}{\F_q}
\newcommand{\zlt}{Z_\mathrm{LT}}
\newcommand{\zok}{Z_{\ok}}
\newcommand{\zokg}{\zok^{\mathrm{gen}}}
\DeclareMathOperator{\res}{res}
\newcommand{\cG}{\mathcal{G}}
\newcommand{\plim}{\varprojlim}
\newcommand{\ilim}{\varinjlim}   
\newcommand{\zltg}{\zlt^{\mathrm{gen}}}

\newcommand{\LT}{\mathrm{LT}}
\newcommand{\Perf}{\mathrm{Perf}}
\newcommand{\perf}{^\mathrm{perf}}
\newcommand{\pr}{\mathrm{pr}}

\DeclareMathOperator{\Spa}{\mathrm{Spa}}

\renewcommand{\ss}{\mathrm{ss}}

\newcommand{\proet}{\mathrm{proet}}
\newcommand{\qproet}{\mathrm{qproet}}
\newcommand{\qproetp}{\mathrm{qproet}'}
\newcommand{\an}{\mathrm{an}}
 
\theoremstyle{plain}

\definecolor{olive}{rgb}{0.5, 0.5, 0.0}%

\title{To be or not to be local\footnote{The title is borrowed from Alain Aspect, Nature 446, 2007, 866--867 (in a different context!).}}

\DeclareNewFootnote{AAffil}[alph]

\usepackage{etoolbox}
\makeatletter
\patchcmd\maketitle{\def\@makefnmark{\rlap{\@textsuperscript{\normalfont\@thefnmark}}}}{}{}{}
\makeatother

\makeatletter
\def\thanksAAffil#1{%
  \footnotemarkAAffil\protected@xdef\@thanks{\@thanks%
        \protect\footnotetextAAffil[\the \c@footnoteAAffil]{#1}}%
}
\def\thanksANote#1{%
  \footnotemarkANote%
  \protected@xdef\@thanks{\@thanks%
        \protect\footnotetextANote[\the \c@footnoteANote]{#1}}%
}
\makeatother

\author{Christophe Breuil\thanksAAffil{CNRS, B\^atiment 307, Facult\'e d'Orsay, Universit\'e Paris-Saclay, 91405 Orsay Cedex, France}
\and
Florian Herzig\thanksAAffil{Dept.\ of Math., Univ.\ of Toronto, 40 St.\ George St., Toronto, ON M5S 2E4, Canada}
\and
Yongquan Hu\thanksAAffil{Morningside Center of Math., No.\ 55, Zhongguancun East Road, Beijing, 100190, China}
\and
Karol Kozio\l\thanksAAffil{CUNY Baruch College, 55 Lexington Ave., New York, NY 10010, USA}
\and
Stefano Morra\thanksAAffil{LAGA, 99 Avenue Jean Baptiste Cl\'ement, 93430 Villetaneuse, France}
\and
Benjamin Schraen\thanksAAffil{Universit\'e Claude-Bernard-Lyon-I, Institut Camille Jordan, 69622 Villeurbanne, France}
\and
Sug Woo Shin\thanksAAffil{UC Berkeley, Berkeley, CA 94720, USA / KIAS, Seoul 02455, Republic of Korea}}

\date{ }

\begin{document} 

\maketitle

\setcounter{tocdepth}{3}

\vspace{-8ex}

\begin{abstract}
Let $p$ be a prime number and $K$ a finite unramified extension of $\Qp$. For a smooth representation $\pi$ of $\GL_2(K)$ occurring in some Hecke eigenspace of the mod $p$ cohomology of a Shimura curve, we explore different strategies (inspired by the case $K=\Qp$) to attack the locality question: does $\pi$ depend only on the underlying $2$-dimensional representation $\rhobar$ of ${\rm Gal}(\overline K/K)$? In particular when $[K:\Qp]=2$, crucially using perfectoid geometry, we associate to $\rhobar$ an infinite-dimensional mod $p$ smooth representation of $\smatr{K\s}{K}{0}{1}$ which we hope is the restriction to $\smatr{K\s}{K}{0}{1}$ of the (irreducible) supersingular subquotient of $\pi$.
\end{abstract}

\tableofcontents

\section{Introduction}\label{sec:intro}

Let $p$ be a prime number and $K$ a finite extension of $\Q_p$. In the classical formulation of the (hoped for) mod $p$ Langlands correspondence for $\GL_2(K)$, one would like to relate two-dimensional continuous representations $\rhob$ of $\Gal(\overline{K}/K)$ in characteristic $p$ to certain finite length infinite-dimensional smooth representations $\pi$ of $\GL_2(K)$ in characteristic $p$. The case $K=\Q_p$ has been known for years (\cite{Colmez}, \cite{emerton-local-global}, \cite{CDP}), but the case $K\neq\Q_p$ has been challenging specialists for a quarter of a century, at least in part due to a lack of classification of irreducible smooth representations of $\GL_2(K)$ in characteristic $p$ (when $K\ne \Qp$). 

Inspired by $K=\Qp$, it is natural to first look for the representations of $\GL_2(K)$ in the mod $p$ Betti cohomology $H^\bullet$ of towers of Shimura varieties, or more precisely in the Hecke-isotypic subspaces of $H^d$, where $d$ is the dimension of the relevant Shimura varieties and ``Hecke-isotypic'' means with respect to modular globalizations $\rbar$ of the Galois representations $\rhob$. When $d\in \{0,1\}$ it is known in many cases that the isomorphism class of such a Hecke-isotypic subspace $\pi$ determines the one of $\rhob$ (\cite{BD}, \cite{ScholzeLT}). When moreover $K$ is unramified and under several technical hypothesis (on $p$, $\rhob$, $\rbar$), it was recently proven that $\pi$ has finite length (\cite{BHHMS4}, \cite{lucrezia}). In general, $\pi$ could depend on more than $\rhob$ (for instance because of multiplicities that involve places outside of $p$). But under a multiplicity one assumption on $\pi$ (that can be satisfied in practice), $\rhob\mapsto \pi$ is a good candidate for a mod $p$ Langlands correspondence for $\GL_2(K)$, or at least would be if we knew the answer to the following crucial question.
\begin{ques1}\label{hard}
Does $\pi$ depend only on $\rhob$?
\end{ques1}
Question \ref{hard}, which is called the locality issue, is natural because everything that can be calculated about $\pi$ up till now (Serre weights, invariants under $1+p{\text M}_2(\cO_K)$, etc.)\ depends only on $\rhob$. But over the years it has proved to be extremely difficult: its answer is yes when $K=\Qp$ but so far there is no example of a single $\pi$ when $K\ne \Qp$ for which the answer is known. In this work, we do not answer Question \ref{hard} but we present a tentative approach to this locality issue (\S~\ref{perfII}), as well as two failed attempts so far (\S~\ref{perf0}, \S~\ref{perfI}) but whose intermediate results may be of use in the future.

Let us temporarily return to $K=\Qp$ and assume that $\rhobar:\Gal(\overline{\Qp}/\Qp)\rightarrow \GL_2(\F)$ is irreducible, or equivalently $\pi$ is supersingular, which is the most interesting case (in this introduction $\F$ is a sufficiently large finite extension of $\Fp$ so that all representations are over $\F$-vector spaces). Let $D(\rhobar)$ be Fontaine's cyclotomic $(\varphi,\Zp^\times)$-module over $\F\ppar \Zp\cong \F\ppar T$ associated to $\rhobar$ and recall there is a canonical surjection $\psi:D(\rhobar)\twoheadrightarrow D(\rhobar)$ with $\psi\circ \varphi = \id$. Colmez found in \cite[\S~IV.4]{Colmez} several ways to relate $D(\rhobar)$ to the restriction of $\pi$ to $P:=\smat{\Qp^\times & \Qp\\ 0 & 1}$ (up to twist, say). One way is an equivariant injection $\pi^\vee\hookrightarrow \vplim_{\psi}D(\rhob)$ (where $\pi^\vee$ is the $\F$-linear dual of $\pi$) which induces an isomorphism
\begin{equation}\label{eq:intro1}
\pi^\vee\vert_P\buildrel\sim\over\longrightarrow \vplim_{\psi}D(\rhob)^\natural,
\end{equation}
where $D(\rhob)^\natural$ is the unique minimal nonzero closed $\F\bbra{T}$-submodule of $D(\rhob)$ which is $\Zp^\times$-stable and on which $\psi$ is surjective. In particular $\pi^\vee\vert_P$ depends only on $\rhobar$ and with \cite{paskunas-restriction} it follows that $\pi$ depends only on $\rhobar$. Another way, which can also be deduced from the injection $\pi^\vee\hookrightarrow \vplim_{\psi}D(\rhob)$, is an equivariant surjection (again stated here up to twist):
\begin{equation}\label{eq:intro2}
D_\infty(\rhobar):=\F\ppar{T^{p^{-\infty}}}\otimes_{\F\ppar T}D(\rhob)\twoheadrightarrow \pi\vert_P.
\end{equation}
A few years ago we began investigating what happens to (\ref{eq:intro1}) and (\ref{eq:intro2}) when $\Qp$ is replaced by an unramified extension. Unsurprisingly we discovered a situation \emph{far more complicated} than for $\Qp$. In this work we present our findings~so~far.

Assume from now on that $K$ is unramified and let $f:=[K:\Qp]$, $q:=p^f$. The Iwasawa algebra $\F\bbra{\cO_K}$ {can be written} as $\F\bbra{Y_{\sigma},\ \sigma\!:\!\Fq\hookrightarrow \F}$ for $Y_{\sigma}:= \sum_{\lambda\in \Fq^\times}\sigma(\lambda)^{-1}[\lambda]\in \F\bbra{\cO_K}$, where $[\lambda]\in \cO_K\s$ is the multiplicative representative of $\lambda$ (seen in $\F\bbra{\cO_K}$). We define the ring $A$ as the completion of $\F\bbra{\cO_K}[1/Y_{\sigma},\ \sigma\!:\!\Fq\hookrightarrow \F]$ for the $(Y_{\sigma})_{\sigma}$-adic topology. Alternatively $A$ is isomorphic to the Tate algebra $\F\ppar{Y_{\sigma}}\langle (Y_{\sigma'}/Y_{\sigma})^{\pm 1},\sigma'\ne \sigma\rangle$ for any choice of $\sigma$. It is endowed with an $\F$-linear Frobenius $\varphi$ coming from the multiplication by $p$ on $\cO_K$ and with a commuting continuous action of $\cO_K^\times$ also coming from multiplication on $\cO_K$. An \'etale $(\varphi,\cO_K^\times)$-module over $A$ is by definition a finite free $A$-module $D$ endowed with a semi-linear Frobenius $\varphi$ whose image generates $D$ and a commuting continuous semi-linear action of $\cO_K^\times$. Any \'etale $(\varphi,\cO_K^\times)$-module $D$ is also endowed with a surjection $\psi:D\twoheadrightarrow D$ such that $\psi\circ \varphi=\id$.

Let $\rhob$ and $\pi$ be as in Question \ref{hard} (we always assume $\rhob$ sufficiently generic). In \cite{BHHMS3} we associated to $\rhobar$ an \'etale $(\varphi,\cO_K^\times)$-module $D_A^{\otimes}(\rhobar)$ of rank $2^f$ over $A$. In \cite{BHHMS2} we also defined
\begin{equation}\label{eq:intro3}
D_A(\pi):= A\wh\otimes_{\F\bbra{\cO_K}}(\pi^\vee),
\end{equation}
where the completion is for the tensor product filtration (with $\m_{I_1}$-adic filtration on $\pi^\vee$, where $I_1$ is the pro-$p$ Iwahori defined in \S~\ref{strengthening}). In \cite{BHHMS3} and \cite{YW2} it is shown by explicit computations that $D_A(\pi)$ and $D_A^{\otimes}(\rhobar)$ are isomorphic as \'etale $(\varphi,\cO_K^\times)$-modules (up to twist). Together with (\ref{eq:intro3}) we obtain a morphism $\pi^\vee\rightarrow D_A^{\otimes}(\rhobar)$ and denote by $D_A(\pi)^\natural$ its image, which is a nonzero $\cO_K^\times$-stable closed $\F\bbra{\cO_K}$-submodule of $D_A^{\otimes}(\rhobar)$ on which $\psi$ is surjective. In the rest of this introduction, in order to avoid principal series (which are not at all difficult to handle but which behave slightly differently from supersingular representations), we replace $\pi$ by its maximal subquotient which has only (finitely many) supersingular constituents (this subquotient exists by the main results of \cite{BHHMS4}). There is an explicit subquotient $D_A^{\otimes}(\rhobar)^\ss$ of $D_A^{\otimes}(\rhobar)$ such that $D_A(\pi)\cong D_A^{\otimes}(\rhobar)^\ss$ (see (\ref{form}), (\ref{piglobal})).

\begin{thm1}
Let $\rhob$, $\pi$ be as above and recall $P = \smatr{K\s}{K}{0}{1} \subset \GL_2(K)$.
\begin{enumerate}
\item
We have an isomorphism $\pi^\vee\vert_{P}\buildrel\sim\over\longrightarrow \vplim_{\psi}D_A(\pi)^\natural$.
\item
Assume $\pi$ is irreducible (which happens when $f=2$ or when $\rhobar$ is irreducible). Then $D_A(\pi)^\natural$ is a minimal closed $\F\bbra{\cO_K}$-submodule of $D_A^{\otimes}(\rhobar)^\ss$ which is $[\Fq^\times]$-stable and on which $\psi$ is surjective. Moreover $\pi$ depends only on $\rhobar$ if and only if $D_A(\pi)^\natural$, as a $([\Fq^\times],\psi)$-module over $\F\bbra{\cO_K}$, depends only on $\rhobar$.
\end{enumerate}
\end{thm1}

Part (i) is easy (Corollary \ref{limpsiiso}) and part (ii) follows from part (i) and a strengthening of results of Pa{\v{s}}k{\=u}nas (Theorem \ref{localforQ} and Remark \ref{thm:Q-irred+}(ii)); see Corollary \ref{localfail1} and the paragraph below Remark \ref{rem:nonunique}. In fact, in most cases, using a further strengthening of \cite{paskunas-restriction} (Theorem \ref{thm:Q-irred}) one can even forget the $[\Fq^\times]$-action and prove that $D_A(\pi)^\natural$ is a minimal closed $\F\bbra{\cO_K}$-submodule of $D_A^{\otimes}(\rhobar)^\ss$ on which $\psi$ is surjective (Corollary \ref{naturalirr}). Even better, at least when $f=2$ and $\rhobar$ is reducible, we do not need the surjectivity of $\psi$, that is, $D_A(\pi)^\natural$ is a minimal closed $\F\bbra{\cO_K}$-submodule of $D_A^{\otimes}(\rhobar)^\ss$ preserved by $\psi$ (Lemma \ref{stronger}).

Since $D_A^{\otimes}(\rhobar)^\ss$ is very simple when $f=2$ and $\rhobar$ is reducible (see (\ref{ssummand}) with (\ref{piglobal})), and $\psi$ is also not very complicated (see (\ref{psi}) with (\ref{basechange})), we were hoping to find ways to identify $D_A(\pi)^\natural$ inside $D_A^{\otimes}(\rhobar)^\ss$ in that case, even though we knew that $D_A(\pi)^\natural$ is not of finite type over $\F\bbra{\cO_K}$ (Proposition \ref{notfinite}). However, we could not find \emph{one single nonzero} element in $D_A(\pi)^\natural$ (see Lemma \ref{flippant}). Also, contrary to the case $f=1$, there can exist several minimal nonzero closed $\F\bbra{\cO_K}$-submodules of $D_A^{\otimes}(\rhobar)^\ss$ which are $\cO_K^\times$-stable on which $\psi$ is surjective (we give examples in Remark \ref{rem:nonunique} but we suspect this could be a general phenomenon).

As it seemed impossible to directly recognize $D_A(\pi)^\natural$, we tried a perfectoid approach (\S~\ref{perfI}). When $f=1$, $D_\infty(\rhobar)$ in (\ref{eq:intro2}) is a dense subspace of $\vplim_{\psi}D(\rhob)$ and the intersection $X:=D_\infty(\rhobar)\cap \vplim_{\psi}D(\rhob)^\natural$ is again dense in $\vplim_{\psi}D(\rhob)^\natural\cong \pi^\vee\vert_P$ (using (\ref{eq:intro1})). That is, $\pi^\vee\vert_P$ can also be described as the closure of $X$ in $\vplim_{\psi}D(\rhob)$. When $f>1$, though we do not know what $D_A(\pi)^\natural$ is, there is a possible geometric interpretation of (the analog of) $X$ in terms of perfectoid geometry which depends only on $\rhob$~as~we~explain~now.

Replacing $A$ by its perfectoid version $A_\infty:= \F\ppar{Y_{\sigma}^{1/p^{\infty}}}\left\langle (Y_{\sigma'}/Y_{\sigma})^{\pm {1/p^{\infty}}},\sigma'\ne \sigma\right\rangle$ (a perfectoid Tate algebra over the perfectoid field $\F\ppar{Y_{\sigma}^{1/p^{\infty}}}$ for any $\sigma$) we set $D_{A_\infty}^{\otimes}(\rhobar):=A_\infty \otimes_A D_A^{\otimes}(\rhobar)$ and likewise for $D_{A_\infty}^{\otimes}(\rhobar)^\ss$. One easily checks there is a canonical $P$-equivariant injection $D_{A_\infty}^{\otimes}(\rhobar)^\ss\hookrightarrow \vplim_{\psi}D_A^{\otimes}(\rhobar)^\ss$ (see Proposition \ref{embedding}). Just as for $f=1$ we define $X:=D_{A_\infty}^{\otimes}(\rhobar)^\ss\cap \vplim_{\psi}D_A(\pi)^\natural$ (see (\ref{square})) which is an $\F\bbra{\smat{1 & K\\ 0 & 1}}\cong \F\bbra{Y_\sigma^{p^{-\infty}},\sigma}$-submodule of $D_{A_\infty}^{\otimes}(\rhobar)^\ss$ preserved by $P$ (Lemma \ref{lem:action-extends}).

Fix an embedding $\sigma_0:\Fq\hookrightarrow \F$ (the choice of which does not matter) and set $\sigma_i:= \sigma_0\circ\varphi^i$ for $i\in \Z$. As in \cite{BHHMS3} we consider the two perfectoid spaces 
\begin{eqnarray*}
Z_{\LT}&:= &\underbrace{\Spa\big(\F\ppar{T_{\sigma_0}^{1/p^\infty}},\F\bbra{T_{\sigma_0}^{1/p^\infty}}\big)\times_{\Spa(\F)}\cdots \times_{\Spa(\F)}\Spa\big(\F\ppar{T_{\sigma_0}^{1/p^\infty}},\F\bbra{T_{\sigma_0}^{1/p^\infty}}\big)}_{f\mathrm{\ times}}\\
Z_{\cO_K}&:= &\Spa\Big(\F\bbra{Y_{\sigma_0}^{1/p^\infty},\dots,Y_{\sigma_{f-1}}^{1/p^\infty}},\F\bbra{Y_{\sigma_0}^{1/p^\infty},\dots,Y_{\sigma_{f-1}}^{1/p^\infty}}\Big)\setminus V(Y_{\sigma_0},\dots, Y_{\sigma_{f-1}}),
\end{eqnarray*}
where $T_{\sigma_0}$ is the variable of the Lubin--Tate formal group over $\ok$ associated to the uniformizer $p$ (we use the embedding $\sigma_0$ to work with $\F$-coefficients). The perfectoid space $Z_{\LT}$ is quasi-Stein (cf.~\S~\ref{sec:space-zlt}), while the perfectoid space $Z_{\cO_K}$ is quasi-compact (cf.~\S~\ref{geometricinter}). Moreover $Z_{\LT}$ is endowed with an action of $(K^\times)^f\rtimes{S}_f$, $Z_{\cO_K}$ is endowed with an action of $K^\times$ and there is a canonical quasipro\'etale surjection of perfectoid spaces $m:Z_{\LT}\longrightarrow Z_{\cO_K}$ such that $m \circ ((a_0,\dots,a_{f-1}),w) = (\prod_i a_i) \circ m$ for $a_i\in K^\times$ and $w\in {S}_f$. (Here $S_f$ is the permutation group in $f$ variables.) Recall also that $\Spa(A_\infty, A_\infty^\circ)$ is an affinoid open subspace of $Z_{\cO_K}$ and that $Z_{\LT}=Z_{\cO_K}=\Spa(A_\infty, A_\infty^\circ)$ if (and only if) $f=1$.

The perfectoid Lubin--Tate $(\varphi_q,\oks)$-module associated to $\rhobar$ (and $\sigma_0$) can be seen as \ the \ space \ of \ global \ sections \ of \ a \ $K^\times$-equivariant \ vector \ bundle \ $\mathcal{V}_{\rhobar}$ \ on \ $\Spa(\F\ppar{T_{\sigma_0}^{1/p^\infty}},\F\bbra{T_{\sigma_0}^{1/p^\infty}})$. We define the following $(K^\times)^f\rtimes{S}_f$-equivariant vector bundle on $Z_{\LT}$:
\[\cF_{\rhobar} := \mathcal{V}_{\rhob}^{(0)}\otimes_{\cO_{Z_{\LT}}}\mathcal{V}_{\rhob}^{(1)}\otimes_{\cO_{Z_{\LT}}}\cdots\otimes_{\cO_{Z_{\LT}}}\mathcal{V}_{\rhob}^{(f-1)},\]
where $\mathcal{V}_{\rhobar}^{(i)}:= \pr_i^*\mathcal{V}_{\rhobar}$ for $i\in \{0,\dots, f-1\}$ with $\pr_i:Z_{\LT}\rightarrow \Spa(\F\ppar{T_{\sigma_0}^{1/p^\infty}},\F\bbra{T_{\sigma_0}^{1/p^\infty}})$ the $i$-th projection. Let $G:= \{(a_0,\dots,a_{f-1})\in (K^\times)^f,\ \prod a_i =1\}{\rtimes S_f}$, then $(m_*\cF_{\rhobar})^G$ is a $K^\times$-equivariant sheaf of $\cO_{Z_{\cO_K}}$\!-modules~on~$Z_{\cO_K}$ and we have by \cite[\S~2.7]{BHHMS3} $H^0(\Spa(A_\infty, A_\infty^\circ),(m_*\cF_{\rhobar})^G)\cong D_{A_\infty}^{\otimes}(\rhobar)$. We give in Definition~\ref{cF} a $(K^\times)^f\rtimes{S}_f$-equivariant subquotient $\cF_{\rhobar}^\ss$ of $\cF_{\rhobar}$ such that 
\[H^0(\Spa(A_\infty, A_\infty^\circ),(m_*\cF_{\rhobar}^\ss)^G)\cong D_{A_\infty}^{\otimes}(\rhobar)^\ss.\]

The sheaf $(m_*\cF_{\rhobar}^\ss)^G$ is very difficult to study, as the map $m$ is not a $G$-torsor when $f>1$ (we strongly suspect $(m_*\cF_{\rhobar}^\ss)^G$ is not locally free in general). However, if we replace the $2$-dimensional $\rhobar$ by a $1$-dimensional character $\chi$ of $\Gal(\overline{K}/K)$, then $\cF _\chi$ (defined similarly as $\cF_{\rhobar}$) is isomorphic to $\cO_{Z_{\LT}}$ as a $G$-equivariant vector bundle, and one can check that $(m_*\cO_{Z_{\LT}})^G\cong \cO_{Z_{\cO_K}}$ (Lemma \ref{FN0}). Using $H^0(Z_{\cO_K},\cO_{Z_{\cO_K}})\cong \F\bbra{Y_\sigma^{p^{-\infty}},\sigma}$ when $f>1$ (Proposition \ref{globalsections}, note that this is wrong when $f=1$!), one easily checks in that case $H^0(Z_{\cO_K},(m_*\mathcal{F}_{\chi})^G)\cong X,$
where $X$ is defined as previously replacing $\pi$ by a certain principal series (see \S~\ref{sec:princ-seri-case}). In general $H^0(Z_{\cO_K},(m_*\cF_{\rhobar}^\ss)^G)$ is an $\F\bbra{Y_\sigma^{p^{-\infty}},\sigma}$-submodule of $D_{A_\infty}^{\otimes}(\rhobar)^\ss$ preserved by $P$. The above case of dimension $1$ makes it a natural candidate for the ``possible geometric interpretation of $X$ depending only on $\rhob$'' alluded to above:

\begin{ques1}[Question \ref{qu:intersec}, Question \ref{density?}]\label{hard2}
Do we have $H^0(Z_{\cO_K},(m_*\cF_{\rhobar}^\ss)^G)\cong X$ inside $D_{A_\infty}^{\otimes}(\rhobar)^\ss$? If so, is this submodule dense in $\varprojlim_\psi \!\!D_A(\pi)^\natural\cong \pi^\vee|_P$?
\end{ques1}

A positive answer to Question \ref{hard2} would be enough to imply that $\pi$ depends only on $\rhobar$. Unfortunately, despite much effort and energy (including the use of computers), we are still unable to even decide whether $X$ or $H^0(Z_{\cO_K},(m_*\cF_{\rhobar}^\ss)^G)$ is zero or nonzero (let alone if they are equal). Note that Fargues conjectures (in a broader context) that $H^0(Z_{\cO_K},(m_*\cF_{\rhobar}^\ss)^G)\ne 0$, see Remark \ref{fargues}.

Not being successful (so far) with the analog of (\ref{eq:intro1}) for $K$, we tried the analog of (\ref{eq:intro2}) (\S~\ref{perfII}). Indeed, it is not difficult to show that there is an equivariant surjection (up to twist) $D_{A_\infty}^{\otimes}(\rhobar)^\ss\twoheadrightarrow \pi\vert_{P}$, see \S~\ref{perfII} (in particular (\ref{surj})). But this still does not prove that $\pi\vert_P$ depends only on $\rhobar$. For that we would again need a geometric interpretation of the surjection $D_{A_\infty}^{\otimes}(\rhobar)^\ss\twoheadrightarrow \pi\vert_P$ depending only on $\rhob$. We were able to discern a candidate when $f=2$ at least, which we explain now.

One can rewrite $H^0(Z_{\cO_K},(m_*\cF_{\rhobar}^\ss)^G)\cong H^0(G, H^0(Z_{\LT},\cF_{\rhobar}^\ss))$, hence it is natural to consider the continuous group cohomology $H^1\cts(G, H^0(Z_{\LT},\cF_{\rhobar}^\ss))$, where $H^0(Z_{\LT},\cF_{\rhobar}^\ss)$ is endowed with its natural Fr\'echet topology. We assume from now on $f=2$. In that case $Z_{\cO_K}=U_0\cup U_1$, where $U_0$, $U_1$ are affinoid perfectoid open subsets defined by $U_0 = \{ |Y_{\sigma_1}| \le |Y_{\sigma_0}| \ne 0 \}$, $U_1 = \{ |Y_{\sigma_0}| \le |Y_{\sigma_1}| \ne 0 \}$ (see (\ref{eq:U_i}) for a more explicit description). We have $U_0 \cap U_1 = \Spa(A_\infty, A_\infty^\circ)$ (see the proof of Lemma \ref{hartogs}). Combining the Mayer--Vietoris sequence for the covering $Z_{\cO_K}=U_0\cup U_1$ with the long exact sequence of continuous group cohomology for $H^0$ and $H^1$, we obtain the following two exact sequences of $P$-representations (see (\ref{connecting}) and (\ref{IR})):
\begin{multline}\label{eq:intro5}
0 \longrightarrow H^0(\zok, (m_* \cF_{\rhobar}^\ss)^G) \longrightarrow H^0(U_0, (m_* \cF_{\rhobar}^\ss)^G) \oplus H^0(U_1, (m_* \cF_{\rhobar}^\ss)^G)\\
\longrightarrow D_{A_\infty}^{\otimes}(\rhobar)^\ss \buildrel \delta \over \longrightarrow H^1_{\mathrm{cts}}(G,H^0(\zlt,\cF_{\rhobar}^\ss))
\end{multline}
\begin{multline}\label{eq:intro6}
0 \longrightarrow H^1_{\mathrm{cts}}\big(G/(p,p^{-1})^{\Z}, H^0(\zlt,\cF_{\rhobar}^\ss)^{(p,p^{-1})^{\Z}}\big)\longrightarrow H^1_{\mathrm{cts}}(G, H^0(\zlt,\cF_{\rhobar}^\ss))\\
\buildrel \res \over \longrightarrow H^1\big((p,p^{-1})^{\Z},H^0(\zlt,\cF_{\rhobar}^\ss)\big)^G.
\end{multline}
By an explicit calculation we first prove (see \S~\ref{sec:smoothness-result}):

\begin{prop1}\label{intro:smooth}
The action of $P$ on $H^1\big((p,p^{-1})^{\Z},H^0(\zlt,\cF_{\rhobar}^\ss)\big)^G$ is smooth.
\end{prop1}

As we look for a $P$-smooth quotient of $D_{A_\infty}^{\otimes}(\rhobar)^\ss$, it is therefore natural to consider the composition
\begin{equation}\label{eq:intro4}
D_{A _\infty}^\otimes(\rhob)^{\ss} \xrightarrow{\ \delta\ } H^1_{\mathrm{cts}}(G,H^0(\zlt,\cF_{\rhobar}^\ss)) \xrightarrow{\ \res\ } H^1\big((p,p^{-1})^{\Z},H^0(\zlt,\cF_{\rhobar}^\ss)\big)^G.
\end{equation}

The following is our main result, see Theorem \ref{main} and Corollary \ref{cor:main}:

\begin{thm1}\label{mainintro}
The image of $D_{A _\infty}^\otimes(\rhob)^{\ss}$ in the composition (\ref{eq:intro4}) is an infinite-di\-men\-sion\-al smooth representation of $P$ over $\F$ which is $Y_{\sigma_i}$-divisible for $i\in \{0,1\}$, in particular which has unbounded $(Y_{\sigma_0},Y_{\sigma_1})$-torsion.
\end{thm1}

We hope that the image of $D_{A _\infty}^\otimes(\rhob)^{\ss}$ via (\ref{eq:intro4}) is isomorphic to $\pi\vert_P$ (up to twist), though we presently do not know how to attack this question. Proposition~\ref{intro:smooth} and Theorem~\ref{mainintro} in fact hold for \emph{any} continuous representation $\rhob:\Gal(\overline{K}/K)\rightarrow \GL_2(\F)$ {with} $f=2$ without need of any global context, representation $\pi$ or genericity condition (other than $p>2$). Let us sketch the two main steps in the proof of Theorem~\ref{mainintro}.

The first main step is that the map $H^0(U_0, (m_* \cF_{\rhobar}^\ss)^G) \oplus H^0(U_1, (m_* \cF_{\rhobar}^\ss)^G)
\rightarrow D_{A_\infty}^{\otimes}(\rhobar)^\ss$ in (\ref{eq:intro5}) cannot be an isomorphism. This is in fact true with $\cF_{\rhobar}^\ss$ replaced by any {nonzero} $G$-equivariant vector bundle on $\zlt$, see Theorem \ref{AIM23}. The proof goes as follows. Assuming the above map is an isomorphism, one first proves that each $H^0(U_i, (m_* \cF_{\rhobar}^\ss)^G)$ is a nonzero closed $A_\infty$-submodule of $D_{A_\infty}^{\otimes}(\rhobar)^\ss$. For $\alpha\in \Q$, $\alpha<1$ sufficiently close to $1$ we define $V_\alpha := \{|Y_{\sigma_0}| = |Y_{\sigma_1}|^\alpha \ne 0 \}\subset U_0$ and prove that $m^{-1}(V_\alpha)\rightarrow V_\alpha$ is again a $G$-torsor, so that $H^0(V_\alpha, (m_* \cF_{\rhobar}^\ss)^G)$ is a nonzero locally free $H^0(V_\alpha, \cO_{V_\alpha})$-module. Then, taking moreover $\alpha^{-1} \in \Z[1/p]$, we prove that the restriction map $H^0(U_0,(m_* \cF_{\rhobar}^\ss)^G)\rightarrow H^0(V_\alpha,(m_* \cF_{\rhobar}^\ss)^G)$ is both injective and zero, yielding a contradiction since $H^0(U_0,(m_* \cF_{\rhobar}^\ss)^G)\ne 0$ under our assumption. The injectivity essentially comes from the inclusion $V_\alpha\subset U_0$ (though we also use the injection $H^0(U_0, (m_* \cF_{\rhobar}^\ss)^G)\hookrightarrow H^0(U_0\cap U_1, (m_* \cF_{\rhobar}^\ss)^G)\cong D_{A_\infty}^{\otimes}(\rhobar)^\ss$). The fact the map is $0$ is more subtle and comes from the $A_\infty$-structure on $H^0(U_0, (m_* \cF_{\rhobar}^\ss)^G)$ and the definition of $V_\alpha$ which force any element of $H^0(U_0,(m_* \cF_{\rhobar}^\ss)^G)$ to have image $0$ in $H^0(V_\alpha,(m_* \cF_{\rhobar}^\ss)^G)$.

The second main step crucially uses the fact that the $H^0(Z_{\cO_K},\cO_{Z_{\cO_K}})$-modules $H^0(\zok, (m_* \cF_{\rhobar}^\ss)^G)$ and $H^1_{\mathrm{cts}}(G/(p,p^{-1})^{\Z}, H^0(\zlt,\cF_{\rhobar}^\ss)^{(p,p^{-1})^{\Z}})$ are derived complete with respect to the ideal $I = (Y_{\sigma_0},Y_{\sigma_1})$ of $\F\bbra{Y_{\sigma_0}^{p^{-\infty}},Y_{\sigma_1}^{p^{-\infty}}}\cong H^0(Z_{\cO_K},\cO_{Z_{\cO_K}})$, a property which is well-behaved with respect to exact sequences. Now, assume that the composition (\ref{eq:intro4}) is $0$, then from (\ref{eq:intro6}) we get an $H^0(Z_{\cO_K},\cO_{Z_{\cO_K}})$-linear map $D_{A _\infty}^\otimes(\rhob)^{\ss}\rightarrow H^1_{\mathrm{cts}}(G/(p,p^{-1})^{\Z}, H^0(\zlt,\cF_{\rhobar}^\ss)^{(p,p^{-1})^{\Z}})$. As $Y_{\sigma_0}$ (or $Y_{\sigma_1}$) is invertible on the source and the target is derived complete with respect to $I$, this map is also $0$. Hence by (\ref{eq:intro5}) we have a short exact sequence of $H^0(Z_{\cO_K},\cO_{Z_{\cO_K}})$-modules
\[0\rightarrow H^0(\zok, (m_* \cF_{\rhobar}^\ss)^G) \rightarrow H^0(U_0, (m_* \cF_{\rhobar}^\ss)^G) \oplus H^0(U_1, (m_* \cF_{\rhobar}^\ss)^G)\rightarrow D_{A_\infty}^{\otimes}(\rhobar)^\ss\rightarrow 0,\]
which implies $H^0(\zok, (m_* \cF_{\rhobar}^\ss)^G)\ne 0$ by the first main step. Since $Y_{\sigma_0}$ is invertible on $D_{A_\infty}^{\otimes}(\rhobar)^\ss$ and since $H^0(\zok, (m_* \cF_{\rhobar}^\ss)^G)$ is derived complete with respect to $I$, this exact sequence splits, i.e.,~there are $H^0(Z_{\cO_K},\cO_{Z_{\cO_K}})$-linear surjections $H^0(U_i, (m_* \cF_{\rhobar}^\ss)^G)\twoheadrightarrow H^0(\zok, (m_* \cF_{\rhobar}^\ss)^G)$ for $i=0,1$. Since $H^0(\zok, (m_* \cF_{\rhobar}^\ss)^G)$ is derived complete these surjections must again be $0$, contradiction. Hence the composition (\ref{eq:intro4}) is nonzero. The fact $D_{A_\infty}^{\otimes}(\rhobar)^\ss$ is $Y_{\sigma_i}$-divisible for $i\in \{0,1\}$ then implies Theorem \ref{mainintro}.

As a byproduct of the proof of Theorem \ref{mainintro} we could at least prove (see Corollary \ref{non0bis} and recall we do not know if $H^0(\zok, (m_* \cF_{\rhobar}^\ss)^G)$ is $0$ or not):

\begin{cor1}
Assuming $f=2$, at least one of the spaces $H^0(\zok, (m_* \cF_{\rhobar}^\ss)^G)$, $H^1_{\mathrm{cts}}(G/(p,p^{-1})^{\Z}, H^0(\zlt,\cF_{\rhobar}^\ss)^{(p,p^{-1})^{\Z}})$ is nonzero.
\end{cor1}

Finally, in \S\S~\ref{sec:aside-trivial-galois}, \ref{sec:princ-seri-case2} we check what happens to (\ref{eq:intro5}), (\ref{eq:intro6}) and the image of (\ref{eq:intro4}) when $\cF_{\rhobar}^\ss$ is replaced by $\cF _\chi$ for a $1$-dimensional character $\chi$ of $\Gal(\overline{K}/K)$ (recall that $(m_*\cF _\chi)^G\cong \cO_{Z_{\cO_K}}$). In that case everything is (so!)~much easier: we have an isomorphism $H^1_{\mathrm{cts}}(G,\!H^0(\zlt,\cF _\chi))\cong H^1(\zok,\cO_{\zok})$ (Proposition \ref{avoir} in the appendix), from which we deduce that the $P$-representation $H^1_{\mathrm{cts}}(G,\!H^0(\zlt,\cF _\chi))$ is irreducible smooth, the map $\delta$ in (\ref{eq:intro5}) is surjective and $H^1_{\mathrm{cts}}(G/(p,p^{-1})^{\Z}, H^0(\zlt,\cF_{\chi})^{(p,p^{-1})^{\Z}})=0$ (Corollary \ref{irrtriv}, Proposition \ref{nonzero}). In \S~\ref{sec:princ-seri-case2} we then check that the image of (\ref{eq:intro4}) is a codimension one $P$-equivariant subspace of a principal series, just as expected.

\textbf{Acknowledgements}: We started to seriously think about the locality issue around the early months of 2022 while writing \cite{BHHMS3}. The geometric interpretation of $D_{A_\infty}^\otimes(\rhobar)$ in \emph{loc.~cit.}~made us optimistic at the time, but month after month we realized that, once again, this issue was far more complicated than expected. Since 2022 we have discussed our problems with many people, starting with Laurent Fargues. We would like to thank him, and also all the people who patiently listened to (some of) us: Johannes Ansch\"utz, Juan Esteban Rodr\'\i guez Camargo, Andrea Dotto, Youshua Kesting, Arthur-C\'esar Le Bras, Jo\~ao Louren\c co, Nataniel Marquis, Vytautas Pa{\v{s}}k{\=u}nas, Matteo Tamiozzo, \dots\ In particular we would like to thank J.~E. Rodr\'\i guez Camargo for suggesting that derived completeness {might} be useful. We finally thank Mohammed Boubakeur and Xavier Caruso for their help in our computer-assisted attempts.

F.\;H.\ is partially supported by an NSERC grant, a grant from Simons Foundation International [SFI-MPS-SFM-00006210, FH], and a Visiting Professorship at Korea Institute for Advanced Study. 
Y. \;H.\ is \ partially \ supported by CAS Project for Young Scientists in Basic Research (Grant No.\ YSBR-033), National Natural Science Foundation of China Grants 12288201 and 12425103, National Center for Mathematics and Interdisciplinary Sciences and Hua Loo-Keng Key Laboratory of Mathematics, Chinese Academy of Sciences.
K.\;K.\ is partially supported by NSF grant DMS-2310225 and a PSC-CUNY Trad B award. 
C.\;B.\,, S.\;M.\ and B.\;S.\ are partially supported by the ANR-25-CE40-4664,
S.\;M.\ and B.\;S.\ are partially supported by the Institut Universitaire de France, and S.\;M.\ is partially supported by the ANR--MSREI OPE-2025-0369. 
S.\;W.\;S.\ is partially supported by NSF grant DMS-2401353, NSF RTG grant DMS-2342225, and a Simons Travel Grant.
F.\;H.\ thanks the Korea Institute for Advanced Study and Universit\'e Paris--Saclay for the excellent working conditions provided.
Finally we thank the American Institute of Mathematics for hosting and supporting our SQuaRE meeting in 2023.

\section{Trying to directly prove the locality of \texorpdfstring{$\pi$}{pi}}\label{perf0}

Let $\pi$ be a finite length admissible supersingular representation of $\GL_2(K)$ over $\F$ of global origin with only supersingular constituents. We explain how difficult it is to determine the image of $\pi^\vee$ in the $(\psi,\oks)$-module $D_A(\pi)$ defined in (\ref{eq:intro3}) as soon as $f\geq 2$, even when $f=2$, though $D_A(\pi)$ itself is explicit.

\subsection{A strengthening of results of \texorpdfstring{Pa{\v{s}}k{\=u}nas}{Paskunas}}\label{strengthening}

We assume $f\geq 1$ and strengthen some of the results of \cite{paskunas-restriction}.

We consider the following subgroup of $\GL_2(K)$:
\[Q:=\matr{p^{\Z}}{K}0{p^{\Z}}.\]
We shall study the restriction to $Q$ of an irreducible admissible supersingular representation of $\GL_2(K)$ over $\F$, generalizing the results of \cite{paskunas-restriction} which treat the restriction to the Borel subgroup $B$ (containing $Q$). 

We let $I:=\smatr{\cO_K^\times}{\cO_K}{p\cO_K}{\cO_K^\times}$ the Iwahori subgroup, $I_1:=\smatr{1+p\cO_K}{\cO_K}{p\cO_K}{1+p\cO_K}\!\subset I$ the pro-$p$ Iwahori subgroup and $H:=\smatr{[\F_q^{\times}]}00{[\F_q^{\times}]}$ which is isomorphic to $I/I_1$. We have the following decomposition 
\begin{equation}\label{eq:Q-decomp}
\GL_2(K)=QI\sqcup Q\smatr{0}11{0}I.
\end{equation}
Using $B=QT_0$, where $T_0:=\smatr{\cO_K^\times}00{\cO_K^\times}$, this follows from the more familiar formula $\GL_2(K)=BI\sqcup B\smatr0110 I$.
 
Let $Z_1$ be the center of $I_1$ and let $\mathfrak{m}_{I_1}$ (resp.~$\mathfrak{m}_{I_1/Z_1}$, $\mathfrak{m}_{N_0}$) be the maximal ideal of the local Iwasawa algebra $\F\bbra{I_1}$ (resp.~$\F\bbra{I_1/Z_1}$, $\F\bbra{N_0}\cong \F\bbra{\cO_K}$). If $\pi$ is a smooth representation of $\GL_2(K)$ over $\F$ with a central character, define an increasing filtration
$F_n\pi:=\pi[\mathfrak{m}_{I_1/Z_1}^{n+1}]$ for all $n\geq -1$, with the convention that $F_{-1}\pi=\{0\}$. If $v\in\pi$, define
\[\deg(v):=\mathrm{min}\{n\geq -1:~v\in F_n\pi\}\in\Z_{\geq -1}.\]
Hence $\deg(v)=-1$ if and only if $v=0$ and $\deg(v)=0$ if and only if $v\in \pi^{I_1}\backslash\{0\}$. We can also restrict $\pi$ to $N_0:=\smatr{1}{\cO_K}01$ and define similarly
\[\deg'(v):=\min\{n\geq -1: v\in \pi[\mathfrak{m}_{N_0}^{n+1}]\}.\]
It is clear that $\deg(v)\geq \deg'(v)$ and $\deg(v)=0\Rightarrow \deg'(v)=0$. 
 
We fix an embedding $\sigma_0:\F_q\hookrightarrow \F$ and use $\F\bbra{N_0}\cong \F\bbra{Y_0,Y_1,\dots,Y_{f-1}}$, where $Y_j:=Y_{\sigma_0\circ \varphi^j}$ with $Y_\sigma:=\sum_{\lambda\in \F_q^\times}\sigma(\lambda)^{-1}\smatr1{[\lambda]}01$ for $\sigma:\F_q\hookrightarrow \F$. For $\underline{k}\in\Z_{\geq 0}^f$ we write
\begin{equation}\label{Yk}
\|\underline{k}\|:=\sum_{i=0}^{f-1}k_i,\ \ \ \underline{Y}^{\underline{k}}:=\prod_{i=0}^{f-1}Y_i^{k_i}.
\end{equation}
For $0\leq j\leq f-1$ we let $Z_j :=\sum_{\lambda\in \F_q^\times}\sigma_0(\lambda)^{-p^j}\smatr10{p[\lambda]}1 \in \mathfrak{m}_{I_1/Z_1}$ (do not confuse $Z_1$ here with the center of $I_1$!), and let $y_j$, $z_j \in \gr(\F\bbra{I_1/Z_1})$ denote the reductions of $Y_j$, $Z_j$ in degree $-1$ in the associated graded ring $\gr(\F\bbra{I_1/Z_1})$ with respect to the filtration $(\mathfrak{m}_{I_1/Z_1}^{n})_{n\geq 0}$. Incidentally, we recall that $\gr(\F\bbra{I_1/Z_1})$ is the universal enveloping algebra of an $\F$-Lie algebra with basis $y_j, z_j, h_j := [y_j,z_j]$ ($0 \le j \le f-1$), where $h_j$ is central, cf.\ \cite[\S~5.3]{BHHMS1}. Let $J_0$ denote the (2-sided) ideal of $\gr(\F\bbra{I_1/Z_1})$ generated by all $h_j$, so that $\gr(\F\bbra{I_1/Z_1})/J_0$ is a commutative polynomial algebra in the $y_j$, $z_j$ ($0 \le j \le f-1$). For later reference we also recall the ideal $J := (h_j,y_jz_j : 0 \le j \le f-1)$ of $\gr(\F\bbra{I_1/Z_1})$ that contains $J_0$ and such that $\gr(\F\bbra{I_1/Z_1})/J$ is isomorphic to the commutative ring $\F[y_j,z_j]/(y_jz_j; 0\leq j\leq f-1)$. Note also that $\gr(\F\bbra{N_0})\cong \F[y_j; 0\leq j\leq f-1]$ with $Y_j\in \mathfrak{m}_{N_0}$ lifting $y_j$.

We let $\gr(\pi)$ denote the graded {$\gr(\F\bbra{I_1/Z_1})$-}module with respect to the filtration $F_n\pi$.
By one of the main results of \cite{BHHMS1}, in a global setting and under some hypotheses, $\gr(\pi)$ factors as a module over $\gr(\F\bbra{I_1/Z_1})/J\cong \F[y_j,z_j]/(y_jz_j; 0\leq j\leq f-1)$. The following result is the key lemma for Theorem \ref{thm:Q-irred}. 

\begin{lem}\label{lem:key}
Let $\pi$ be a smooth representation of $\GL_2(K)$ over $\F$ with a central character such that $\gr(\pi)$ is annihilated by the 2-sided ideal $J_0$. Let $v\in \pi\backslash\{0\}$ be an element fixed by $\smatr10{p\cO_K}1$. Then there exists $\underline{k}\in\Z_{\geq 0}^f$ with $\|\underline{k}\|=\deg(v)$ such that $0\neq \underline{Y}^{\underline{k}}v\in \pi^{I_1}$.
Moreover, we have 
\[\deg(v)=\deg'(v).\]
\end{lem}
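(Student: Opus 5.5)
Let $n:=\deg(v)\geq 0$. The plan is an induction on $n$, combined with a graded-module argument. If $n=0$ we take $\underline{k}=0$. For $n\ge 1$, the image $\bar v$ of $v$ in $\gr_n(\pi)=F_n\pi/F_{n-1}\pi$ is nonzero. The key observation is that $v$ is fixed by $\smatr10{p\cO_K}1$, so every $Z_j$ kills $v$ exactly, and hence $z_j\bar v=0$ in $\gr(\pi)$. The $y_j$ decrease degree by one. Since $\gr(\pi)$ is annihilated by $J_0$, it is a module over the commutative polynomial ring $\F[y_j,z_j]$.

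\textbf{Step 1 (some $Y_j v$ has degree exactly $n-1$).} Suppose instead that $y_j\bar v=0$ for all $j$, i.e.\ $Y_jv\in F_{n-2}\pi$ for all $j$. Together with $Z_jv=0$, this gives $\m_{I_1/Z_1}v\subset F_{n-2}\pi+\m^2_{I_1/Z_1}v$. Indeed, $\m_{I_1/Z_1}$ is generated modulo $\m^2$ by the $Y_j,Z_j$: the $\gr_1$ of $\F\bbra{I_1/Z_1}$ is spanned by the $y_j,z_j$, since the $h_j$ lie in degree $2$.
\begin{itemize}
\item By Nakayama-type iteration, we get $\m_{I_1/Z_1}v\subset F_{n-2}\pi$. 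To see this, write any $X\in \m$ as $\sum a_jY_j+\sum b_jZ_j$ with $a_j,b_j\in \F\bbra{I_1/Z_1}$ (generation of the ideal, not just modulo $\m^2$). Then $Xv=\sum a_j(Y_jv)\in F_{n-2}\pi$, because $F_{n-2}\pi$ is $I_1$-stable.
\item Hence $v\in \pi[\m^{n}]=F_{n-1}\pi$, contradicting $\deg v=n$.
\end{itemize}
So there exists $j$ with $\deg(Y_jv)=n-1$.

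\textbf{Step 2 (induction).} To iterate, we need $Y_jv$ to again be fixed by $\smatr10{p\cO_K}1$. This is false in general, so we should not insist on exact invariance. Instead, we run the induction purely in the graded module. Define $M$ as the $\F[y]$-submodule of $\gr(\pi)$ generated by $\bar v$. Since $\gr(\pi)$ is a module over the commutative ring $\gr/J_0$ and $z_j\bar v=0$, every $\underline y^{\underline k}\bar v$ is also killed by all $z_j$.

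The argument of Step 1 has a graded analogue: a nonzero homogeneous $w\in\gr_m(\pi)$ with $m\ge1$ killed by all $z_j$ cannot be killed by all $y_j$. This holds because $\gr_m(\pi)$ injects into $\Hom(\gr_1\F\bbra{I_1/Z_1},\gr_{m-1}(\pi))$, an injectivity that follows from the definition $F_m=\pi[\m^{m+1}]$. We need to make this injectivity precise: $w$ is represented by $u\in F_m\setminus F_{m-1}$, and $\m u\not\subset F_{m-2}$. If we had $y_jw=z_jw=0$ for all $j$, then $Y_ju,Z_ju\in F_{m-2}$, and the generation argument above gives a contradiction. This is the main delicate point, and the way to settle it is to use that the $Y_j,Z_j$ generate $\m_{I_1/Z_1}$ as a left ideal. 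That fact holds because $I_1/Z_1$ is topologically generated by the corresponding unipotent elements.

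Iterating, we obtain $\underline k$ with $\|\underline k\|=n$ and $\underline y^{\underline k}\bar v\neq0$ in $\gr_0(\pi)=\pi^{I_1}$. Lifting, $\underline Y^{\underline k}v\equiv$ this nonzero element modulo $F_{-1}=0$, so $0\ne \underline Y^{\underline k}v\in\pi^{I_1}$.

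\textbf{Step 3 ($\deg=\deg'$).} The inequality $\deg'(v)\le \deg(v)$ is clear. Conversely, $\underline Y^{\underline k}v\neq0$ with $\underline Y^{\underline k}\in\m_{N_0}^{n}$ shows $v\notin\pi[\m_{N_0}^{n}]$, so $\deg'(v)\ge n$.
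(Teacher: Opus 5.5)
Your proof is correct and is essentially the paper's argument. The paper likewise uses $z_j\gr(v)=0$ together with the fact (quoted there from \cite[Prop.~3.5.1]{BHHMS3}, which you reprove from the generation of $\m_{I_1/Z_1}$ by the $Y_j,Z_j$ as a left ideal) that a non-$I_1$-fixed element whose graded image is killed by all $z_j$ satisfies $y_i\gr(w)\neq 0$ for some $i$; it then iterates via commutativity modulo $J_0$ and deduces $\deg(v)=\deg'(v)$ exactly as in your Step 3.
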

\begin{proof}
For $w\in \pi$ let $\gr(w)\in \gr(\pi)$ be the image of $w$ in $\pi[\m_{I_1/Z_1}^{\deg(w)+1}]/\pi[\m_{I_1/Z_1}^{\deg(w)}]$ if $w\ne 0$ and $\gr(w):=0$ if $w=0$. The proof is as in \cite[Prop.~3.5.1]{BHHMS3}. In \emph{loc.~cit.}~the following fact is proved: if $w\in\pi$ is not fixed by $I_1$ and if $\gr(w)\in \gr(\pi)$ is annihilated by all variables $z_j$, then there exists $i\in \{0,\dots, f-1\}$ such that $y_i\gr(w)\neq0$ in $\gr(\pi)$. (This does not use that $\gr(\pi)$ is annihilated by the ideal $J$, only that $\m_{I_1/Z_1}$ is generated by all $Y_j$ and $Z_j$.) As a consequence, $Y_iw\neq 0$ in $\pi$, $\gr(Y_iw)=y_i\gr(w)$ in $\gr(\pi)$ and
\begin{equation*}
\deg(Y_iw)=\deg(w)-1.
\end{equation*}
Now for $v$ as in the statement we first show that there exists $\underline{k}\in\Z_{\geq 0}^f$ with $\|\underline{k}\|=\deg(v)$ such that $\underline{Y}^{\underline{k}}v\in \pi^{I_1}\setminus \{0\}$. If $v\in \pi^{I_1}$ we are done. Otherwise the assumption on $v$ implies $z_j\gr(v)=0$ for all $j$, and as above we get $Y_iv\neq 0$ and $\deg(Y_iv) = \deg(v)-1$ for some $i\in \{0,\dots, f-1\}$. If $Y_i v\in \pi^{I_1}$ we are done. Otherwise, as $\gr(v)$ is killed by all $z_j$ and $\gr(\F\bbra{I_1/Z_1})/J_0$ is commutative, $y_i\gr(v)$ is also killed by all $z_j$, hence we may repeat the same argument replacing $v$ by $Y_i v$. Inductively we obtain $\underline{k}\in\Z_{\geq0}^{f}$ such that $\underline{Y}^{\underline{k}} v\in \pi^{I_1}\backslash\{0\}$ and $\deg(\underline{Y}^{\underline{k}} v) = \deg(v)-\|\underline{k}\|$. Hence $\deg(v) = \|\underline{k}\|$. Moreover we have $\deg(v) \ge \deg'(v) \ge \|\underline{k}\| = \deg(v)$, where the first inequality is general and the second holds since $\underline{Y}^{\underline{k}} v\ne 0$.
\end{proof}

\begin{rem}
We originally proved Lemma \ref{lem:key} with $\gr(\pi)$ killed by $J$ instead of just $J_0$. The fact it holds with $J_0$ was suggested to us by Yitong Wang.
\end{rem}

Let $\Pi:=\smatr{0}1p0$ and for $0\leq i\leq q-1$ define the following operator
\[S_i:=\sum_{\lambda\in\F_q}\lambda^i\matr{p}{[\lambda]}01 = \sum_{\lambda\in\F_q}\lambda^i\matr{[\lambda]}110\Pi\in \F[Q].\]
Recall that $S_i$ is nicely compatible with the $H$-action:
\begin{equation}\label{eq:S_i-H-action}
\smatr {[a]}00{[b]} \circ S_i = (ba^{-1})^{i} \left(S_i \circ \smatr {[a]}00{[b]}\right) \quad\forall\ a,b \in \F_q^\times.
\end{equation}
For $n\geq 1$ we write $(S_i)^n$ for $S_i\circ \cdots \circ S_i $ ($n$ times). 

If $\pi$ is a smooth representation of $\GL_2(K)$ over $\F$ and $v\in \pi^{I_1}$, the set $\{\Pi v, S_iv:0\leq i\leq q-1\}$ spans the finite-dimensional $\GL_2(\cO_K)$-representation $\langle \GL_2(\cO_K)\cdot \Pi v\rangle$.
 Assuming that $v$ is an $I$-eigenvector with eigencharacter $\chi$, note that the latter is just the image of the induced morphism $\Ind_I^{\GL_2(\cO_K)}\chi^s\ra \pi|_{\GL_2(\cO_K)}$, where $\F\Pi v\cong \chi^s\hookrightarrow \pi|_I$ (the statement follows from \cite[Lemma 2.5(ii)]{BP}).
 More precisely, the elements $f_j$, $\phi$ of \cite[\S~2]{BP} are sent to $S_j v$, $\Pi v$, respectively. We observe that if $\chi\neq \chi^s$ then any \emph{proper} quotient $Q$ of $\Ind_I^{\GL_2(\cO_K)}\chi^s$ is multiplicity free as an $H$-representation, so $Q^{I_1}$ is spanned by $\{S_jv,~j\in T\}$ for some subset $T\subset\{0,\dots,q-1\}$.

\begin{lem}\label{lem:BP-irred}
Keep the above notation. Then the $\GL_2(\cO_K)$-representation $\tau:=\langle \GL_2(\cO_K)\cdot \Pi v\rangle$ is irreducible if and only if $\tau^{I_1}$ has dimension $1$.
\end{lem}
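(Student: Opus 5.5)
Forward direction: if $\tau$ is irreducible, then $\tau^{I_1}$ is one-dimensional, because an irreducible representation of $\GL_2(\cO_K)$ over $\F$ has a one-dimensional space of $I_1$-invariants (\cite[Lemma 2.5]{BP} or the standard description of weights).

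Converse: assume $\dim\tau^{I_1}=1$. We view $\tau$ as a quotient of $\Ind_I^{\GL_2(\cO_K)}\chi^s$ via the surjection recalled before the statement, which sends $f_j\mapsto S_jv$ and $\phi\mapsto \Pi v$. I would argue by contradiction: suppose $\tau$ is reducible and let $\sigma\subsetneq\tau$ be its socle.

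Case $\chi\neq\chi^s$. The principal series $\Ind_I^{\GL_2(\cO_K)}\chi^s$ has a well-known structure (\cite[\S~2]{BP}): it is multiplicity free, with socle the irreducible weight generated by $\phi$'s image, and its $I_1$-invariants are spanned by $\phi$ and $f_0$ (or $f_{q-1}$). Every nonzero quotient $\tau$ has a socle, and each irreducible constituent of the socle contributes a nonzero $I_1$-invariant line. These lines carry pairwise distinct $H$-eigencharacters because $\tau$ is multiplicity free, as observed above. If $\tau$ were reducible, then the cosocle of $\tau$ (the image of the cosocle of the induced representation, which is irreducible and generated by $\Pi v$) would differ from some socle constituent. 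That constituent's $I_1$-invariant line would give a second $I_1$-invariant in $\tau$ independent of $\Pi v$, since the two lines have different $H$-eigencharacters or sit in different constituents. This contradicts $\dim\tau^{I_1}=1$.

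More cleanly, I would use that $\tau$ is generated by $\Pi v\in\tau^{I_1}$, so $\Pi v$ maps nonzero to the cosocle. If $\tau^{I_1}=\F\Pi v$, then $\mathrm{soc}\,\tau$ has its $I_1$-invariants inside $\F\Pi v$. Hence $\Pi v\in\mathrm{soc}\,\tau$, so $\tau=\langle\GL_2(\cO_K)\Pi v\rangle\subset\mathrm{soc}\,\tau$ is semisimple. It is also generated by a single $I_1$-invariant vector with a one-dimensional $I_1$-invariant space, so it is irreducible: each simple summand has a nonzero $I_1$-invariant line, so there is only one summand. This argument does not even require the case split, and works equally when $\chi=\chi^s$.

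The key step, and the main point to verify, is the socle argument: every nonzero $\GL_2(\cO_K)$-subrepresentation $W\subset\tau$ satisfies $W^{I_1}\neq0$, since $I_1$ is pro-$p$ and we are in characteristic $p$. Therefore the socle's invariants, lying in $\tau^{I_1}=\F\Pi v$, force $\Pi v$ to lie in the socle. Everything else is formal.
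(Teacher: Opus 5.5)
Your final argument (the ``more cleanly'' paragraph) is correct, and it takes a genuinely different and more elementary route than the paper.

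The paper proves the converse with the structure theory of \cite{BP}. Since $(\Ind_I^{\GL_2(\cO_K)}\chi^s)^{I_1}$ is $2$-dimensional, $\tau$ is a proper quotient of the induced representation. The case $\chi=\chi^s$ is then \cite[Lemma 2.6]{BP}. For $\chi\neq\chi^s$ the paper uses that the cosocle $\sigma$ of $\Ind_I^{\GL_2(\cO_K)}\chi^s$ is irreducible \cite[Thm.~2.4]{BP} and that $\tau^{I_1}\to\sigma^{I_1}$ is surjective \cite[Lemma 2.7]{BP}. Hence $\rad(\tau)^{I_1}=0$, so $\rad(\tau)=0$ and $\tau\cong\sigma$.

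You use only three facts: $\Pi v$ is nonzero, it is $I_1$-fixed because $\Pi$ normalizes $I_1$, and it generates $\tau$. Add that every nonzero smooth $\F$-representation of the pro-$p$ group $I_1$ has nonzero invariants. Then any nonzero subrepresentation $W\subseteq\tau$ satisfies $0\neq W^{I_1}\subseteq\tau^{I_1}=\F\Pi v$, so $\Pi v\in W$ and $W=\tau$. This one-line version also makes your detour through $\mathrm{soc}\,\tau$ and semisimplicity unnecessary.

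What each route buys:
\begin{itemize}
\item Your route needs no case distinction and no knowledge of principal series. It works verbatim for any smooth representation generated by a single $I_1$-invariant vector.
\item The paper's route stays inside the \cite{BP} framework used in the surrounding lemmas. When $\chi\neq\chi^s$ it also identifies $\tau$ with the cosocle of $\Ind_I^{\GL_2(\cO_K)}\chi^s$ directly.
\end{itemize}

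You should drop your first, case-based sketch. It is not needed, and it misstates the structure: the image of $\phi$ generates all of $\Ind_I^{\GL_2(\cO_K)}\chi^s$ and maps onto the cosocle. It does not generate the socle.
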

\begin{proof}
The condition is clearly necessary. Conversely, assume $\dim_{\F}\tau^{I_1}=1$. As $(\Ind_I^{\GL_2(\cO_K)}\chi^s)^{I_1}$ has dimension $2$, $\tau$ is a proper quotient of $\Ind_I^{\GL_2(\cO_K)}\chi^s$. If $\chi=\chi^s$, the statement follows from \cite[Lemma 2.6]{BP}. If $\chi\neq \chi^s$, then by \cite[Thm.~2.4]{BP} the cosocle of $\Ind_I^{\GL_2(\cO_K)}\chi^s$, hence of $\tau$, is irreducible; we denote it by $\sigma$. Using \cite[Lemma 2.7]{BP}, we see that the natural morphism $\tau^{I_1}\ra \sigma^{I_1}$ is always surjective, so that the following sequence 
\[0\ra \rad(\tau)^{I_1}\ra \tau^{I_1}\ra \sigma^{I_1}\ra0\]
is exact. Since $\dim_{\F}\tau^{I_1}=1$ by assumption, we must have $\rad(\tau)^{I_1}=0$, hence $\rad(\tau)=0$ and $\tau\cong \sigma$.
\end{proof}

We recall \cite[Lemme 2.9]{yongquan-jussieu}:

\begin{lem}\label{lem:Sn-vanish} 
Let $\pi$ be an irreducible supersingular representation of $\GL_2(K)$ over $\F$ with a central character.
Then for any $v\in \pi^{I_1}$ there exists an integer $n\gg0$ such that $(S_0)^nv=0$.
\end{lem}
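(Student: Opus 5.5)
The statement is \cite[Lemme 2.9]{yongquan-jussieu}, so in the paper it is simply recalled. If I had to reprove it, I would argue by contradiction using admissibility-type finiteness together with the supersingularity of $\pi$, as follows.

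\textbf{Step 1: reduction to $H$-eigenvectors.} Decompose $v$ into $H$-eigenvectors, which is possible since $H$ has order prime to $p$. By (\ref{eq:S_i-H-action}), $S_0$ commutes with $H$, so it suffices to treat $v$ an $I$-eigenvector with character $\chi$. The operator $S_0$ preserves $\pi^{I_1}$: indeed $S_0 v$ is the image of $f_0$ in $\langle \GL_2(\cO_K)\cdot\Pi v\rangle$ under the Barthel--Livn\'e/Breuil--Pa\v{s}k\=unas description recalled above. Moreover $S_0v$ is again an $H$-eigenvector, with character $\chi^s$. Hence the iterates $(S_0)^n v$ alternate between the $\chi$- and $\chi^s$-eigenspaces of $\pi^{I_1}$.

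\textbf{Step 2: relate $S_0$ to Hecke operators.} On the $I$-invariants, the operators $\Pi$ and $S_0$ (up to $\Pi$) generate the action of the pro-$p$ Iwahori Hecke algebra on $\pi^{I_1}$. The composition $S_0\circ S_0$, or more precisely $(\Pi S_0)^2$-type words, should be expressed via the Iwahori--Hecke relations in terms of the standard generators $T$ of $\mathcal{H}(I_1)$.

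\textbf{Step 3: use supersingularity of the Hecke module.} The key input is that $\pi$ supersingular implies that the $\mathcal{H}(I_1)$-module $\pi^{I_1}$ is supersingular, meaning the central element acts nilpotently (Ollivier, Vign\'eras, and in characteristic $p$ this holds for supersingular $\pi$). The plan is to show that $(S_0)^2$ is, up to invertible $H$-scalars, a product lying in the augmentation/nilpotent ideal. I would then conclude $(S_0)^n v=0$ for $n\gg0$, using that the cyclic $\mathcal H(I_1)$-submodule generated by $v$ is finite-dimensional. For this finiteness I would rely either on admissibility of the eigenvectors or on the smoothness argument: $(S_0)^n v$ is fixed by a subgroup depending on $n$.

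\textbf{Main obstacle.} The hardest step is the direct computation relating $S_0^n$ to nilpotent Hecke elements without assuming admissibility. Here I would instead follow the approach of the cited lemma. Namely, consider $w_n := (S_0)^n v$, which lies in the subrepresentation generated under the monoid $\smatr{p^{\Z_{\ge0}}}{\cO_K}01$. If all $w_n\neq 0$, I would build a nonzero $\GL_2(K)$-equivariant map from a compact induction $\cInd_{KZ}^{G}\sigma/(T-\lambda)$ with $\lambda\neq0$, obtained by summing over the tree. This would give a non-supersingular quotient, contradicting irreducibility of $\pi$.
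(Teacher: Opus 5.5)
Quoting \cite[Lemme 2.9]{yongquan-jussieu} is exactly what the paper does; it gives no proof. Your reproof sketch, however, is not yet an argument, and one claim in it is false.

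In Step~1, \eqref{eq:S_i-H-action} with $i=0$ says that $S_0$ \emph{commutes} with $H$. Hence every $(S_0)^nv$ has the same character $\chi$ as $v$. In the Breuil--Pa{\v{s}}k{\=u}nas picture it is $\Pi v$ that has character $\chi^s$, while $f_0\in\Ind_I^{\GL_2(\cO_K)}\chi^s$ has character $\chi$. So there is no alternation, and your two sentences contradict each other.

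More importantly, Steps~2--3 never produce the mechanism that forces nilpotence. On $\pi^{I_1}$, $(S_0)^n$ is the Hecke operator of $I_1\smatr{p^n}{0}{0}{1}I_1$, which is a nonzero element of the pro-$p$ Iwahori Hecke algebra for every $n$. Nilpotence must therefore come from the module, and the natural route is through a central element. Two things are needed:
\begin{enumerate}
\item an identity of the type $(S_0)^{n+1}=S_0\,\zeta^n$ with $\zeta$ central; this is what the characteristic-$p$ Bernstein relations give, since the product of the Bernstein elements attached to $\smatr{p}{0}{0}{1}$ and $\smatr{1}{0}{0}{p}$ vanishes and their sum is central;
\item the local nilpotence of $\zeta$ on all of $\pi^{I_1}$.
\end{enumerate}
Your Step~3 guesses the shape of the first (indeed $(S_0)^2=S_0\zeta$) but proves neither. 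The second is only asserted by appeal to ``supersingular Hecke modules'', which is a statement about finite-dimensional modules.

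The finiteness you then invoke is not available. The lemma does not assume $\pi$ admissible. Your alternative ``smoothness argument'' is vacuous, since every $(S_0)^nv$ already lies in $\pi^{I_1}$. Even a finite-dimensional cyclic Hecke module would still need to have only supersingular constituents.

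The fallback in your last paragraph does not work as stated either. A nonzero map $\cInd\,\sigma/(T-\lambda)\to\pi$ is the same as an eigenvector of $T$ with eigenvalue $\lambda$ in $\Hom_{\GL_2(\cO_K)}(\sigma,\pi)$. The non-vanishing of all $(S_0)^nv$ gives neither a weight $\sigma$ (a general $v\in\pi^{I_1}$ does not generate an irreducible $\GL_2(\cO_K)$-representation) nor a finite-dimensional $T$-stable space in which to find an eigenvector. ``Summing over the tree'' does not supply either.

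Every application of the lemma in the paper (Lemma~\ref{lem:diff-character}, Theorems~\ref{thm:Q-irred} and~\ref{localforQ}, Proposition~\ref{notfinite}) is to admissible $\pi$. So the Hecke-algebra route would suffice for the paper, once the identity above and the nilpotence of $\zeta$ on the finite-dimensional $\pi^{I_1}$ are actually proved. For the latter one can use, e.g., nilpotence of $T$ on each finite-dimensional $\Hom_{\GL_2(\cO_K)}(\sigma,\pi)$, which follows from Barthel--Livn\'e, together with the compatibility between $\zeta$ and $T$. This would still not prove the lemma as stated.
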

 
For $0\leq r\leq q-1$, we write $r=\sum_{j=0}^{f-1}p^ir_i$ for its $p$-adic expansion. If $0\leq r,r'\leq q-1$, we write $r\preceq r'$ if $r_j\leq r_j$ for all $j$. We need to introduce the following (subtle) condition:
\[(\mathbf{C})\ \pi^{I_1}\text{ does not contain any pair of characters }\{\chi_1,\chi_2\}\text{ such that }\chi_2=\chi_1 {\det}^{(q-1)/2}.\]
Condition (C) will be satisfied in most applications.

\begin{lem}\label{lem:diff-character}
Let $\pi$ be an irreducible admissible supersingular representation of $\GL_2(K)$ over $\F$ satisfying (C). Let $v,v'\in \pi^{I_1}\backslash\{0\}$ on which $I$ acts via characters $\chi,\chi'$ respectively. Assume $\chi\neq \chi'$. Then there exist $k\geq 1$ and a sequence of integers $i_1,\dots,i_k\in\{0, 1,\dots,q-1\}$ such that precisely one of the vectors $(S_{i_k}\circ\cdots \circ S_{i_1})(v)$, $(S_{i_k}\circ\cdots \circ S_{i_1})(v')$ is nonzero.
\end{lem}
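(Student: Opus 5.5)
The plan is to argue by contradiction, using the $H$-equivariance (\ref{eq:S_i-H-action}) of the operators $S_i$, the description of $\langle \GL_2(\cO_K)\cdot \Pi v\rangle^{I_1}$ recalled before Lemma~\ref{lem:BP-irred}, and the nilpotence of Lemma~\ref{lem:Sn-vanish}.

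Suppose the conclusion fails. Then for every finite word $S_{\underline i}:=S_{i_k}\circ\cdots\circ S_{i_1}$ (with $k\geq 1$), we have $S_{\underline i}(v)=0$ if and only if $S_{\underline i}(v')=0$.

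\textbf{Step 1: how $S_i$ changes characters.} By (\ref{eq:S_i-H-action}), if $w$ is an $H$-eigenvector with character $\chi_w$, then $S_iw$ is an $H$-eigenvector with character $\chi_w\cdot (ba^{-1})^{i}$ (evaluated at $\smatr{[a]}00{[b]}$). I would first determine which $S_i$ send an $I_1$-invariant $I$-eigenvector to a nonzero $I_1$-invariant vector. The elements $S_jw$ and $\Pi w$ span $\langle \GL_2(\cO_K)\cdot\Pi w\rangle$, which is a quotient of $\Ind_I^{\GL_2(\cO_K)}\chi_w^s$. So \cite[Lemma 2.5, 2.7]{BP} give the following: there is an index $r(\chi_w)$, determined by $\chi_w$ (the top $H$-weight of the relevant Serre weight, or $0$, or $q-1$ in the degenerate case $\chi_w=\chi_w^s$), such that $S_{r(\chi_w)}w$ is $I_1$-invariant. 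For $j\ne r(\chi_w)$ one has $S_jw\in\langle \GL_2(\cO_K)\cdot\Pi w\rangle$, and $S_jw$ is either zero or not $I_1$-invariant.

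\textbf{Step 2: separating $v$ and $v'$ with one operator.} Since $\chi\neq\chi'$, I would consider the index $r:=r(\chi)$ and show that $S_r v$ and $S_r v'$ cannot both be handled symmetrically. If $S_rv'=0\ne S_rv$ (or vice versa), we are done with $k=1$. The main obstacle is the case where $S_r v$ and $S_r v'$ are both nonzero, or both zero. The hypothesis used there is condition~(C), together with Lemma~\ref{lem:BP-irred}.

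In that case the operators compatible with both characters are controlled by $i\mapsto (ba^{-1})^i$, which only sees $i$ modulo $q-1$. The two characters can only behave identically under all $S_i$ when $\chi'\chi^{-1}$ is a character of the form $(ba^{-1})^{m}$ that is invisible to the relevant $i$'s. Checking this against the list of $r(\chi)$ in \cite{BP}, the only possibility is $\chi'=\chi{\det}^{(q-1)/2}$, which is excluded by (C). Outside that excluded case, one of $S_{r(\chi)}, S_{r(\chi')}$ kills exactly one of $v,v'$ modulo lower terms. I would try to make this precise by passing to the cosocle as in the proof of Lemma~\ref{lem:BP-irred}.

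\textbf{Step 3: iteration when one step is not enough.} If the first step does not separate $v$ and $v'$, replace the pair $(v,v')$ by the pair of nonzero $I_1$-invariant vectors $(S_{r}v,S_{r}v')$. These are again $I$-eigenvectors, with characters $\chi_1\ne\chi_1'$ (their ratio is still $\chi'\chi^{-1}$ by Step~1), and (C) still applies. Then repeat. Lemma~\ref{lem:Sn-vanish}, applied with $S_0$ and more generally to the admissible finite-dimensional span generated by $\pi^{I_1}$, guarantees that every such chain eventually reaches zero on at least one side. Because the characters on the two sides always differ, the first index at which one side vanishes cannot be the same index for both sides: otherwise the last nonzero pair would have identical behaviour under all $S_i$, contradicting Step~2. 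This produces the required sequence $i_1,\dots,i_k$. The delicate point, which I expect to be the main difficulty, is Step~2: making the \cite{BP} case analysis airtight, and seeing exactly why only the twist by ${\det}^{(q-1)/2}$ can fail.
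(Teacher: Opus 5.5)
Your overall frame points the right way: you argue by contradiction from the assumption that no word separates $v$ and $v'$, and you expect (C) to exclude a twist by ${\det}^{(q-1)/2}$. But Steps 1--3 do not supply a working mechanism.

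\textbf{Step 1 is false.} Which $S_jw$ vanish, or are $I_1$-invariant, is governed by which quotient of $\Ind_I^{\GL_2(\cO_K)}\chi_w^s$ the space $\langle \GL_2(\cO_K)\cdot\Pi w\rangle$ is. This depends on $w$, not only on $\chi_w$. For instance, $S_0w$ is always $I_1$-invariant. On the other hand, if $S_0w=0$ and $\chi_w\neq\chi_w^s$, then $S_rw$ is a nonzero multiple of $\Pi w$ by \cite[Lemma 2.7(i)]{BP}. Since $S_0$ does not change the $H$-character, $w$ and the last nonzero $(S_0)^nw$ have the same character but different vanishing patterns. So no index $r(\chi_w)$ as you describe exists.

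\textbf{Step 2 is therefore unproved.} It reasons only with characters and with the exponent $i$ modulo $q-1$. The real obstruction lives in the $\GL_2(\cO_K)$-modules generated, and you never arrange for those to be determined by the vanishing pattern. Note also that the twist ${\det}^{(q-1)/2}$ is singled out by the central character (a twist $\eta\circ\det$ with $\eta^2=1$), not by the $i$'s being ``invisible''.

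\textbf{Step 3 does not repair this.} Lemma \ref{lem:Sn-vanish} concerns only powers of $S_0$. The chain $w\mapsto S_rw$ never dies in the situation above, since there it is essentially $\Pi$, which is invertible. Moreover, simultaneous vanishing at one step concerns only one operator, so ``identical behaviour under all $S_i$'' does not follow, and the appeal back to Step 2 is circular.

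\textbf{The missing idea.} You need to normalize the pair until the vanishing pattern determines a Serre weight. Each normalization is transported from $v$ to $v'$ using the standing assumption that no single $S_i$ separates them, together with the criterion that $S_iv\in\pi^{I_1}$ if and only if $S_{i'}v=0$ for all $i'\preceq i$ with $i'\neq i$. Concretely:
\begin{enumerate}
\item Use Lemma \ref{lem:Sn-vanish} to reach $S_0v=S_0v'=0$.
\item Apply $S_{i_1}$, with $i_1$ as in \cite[Lemma 4.1]{paskunas-restriction}, so that $\langle\GL_2(\cO_K)\cdot v\rangle$ is irreducible; then renormalize with powers of $S_0$.
\item Rule out $\chi=\chi^s$ using \cite[Lemma 2.6]{BP}, (C) and the central character.
\item Put $v_1:=S_rv$. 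Then $\Pi v_1\in\F^\times v$, so $\tau_1:=\langle\GL_2(\cO_K)\cdot \Pi v_1\rangle$ is irreducible and $\{j:S_jv_1\neq0\}$ has a unique $\preceq$-minimal element.
\item Suppose no $S_j$ separates $v_1$ and $v_1':=S_rv'$. Then \cite[Lemma 2.7]{BP} gives $\dim_\F(\tau_1')^{I_1}=1$, so $\tau_1'$ is irreducible by Lemma \ref{lem:BP-irred}. The common pattern forces $\tau_1'\cong\tau_1\otimes(\eta\circ\det)$, i.e.\ $\chi_1'=\chi_1\cdot(\eta\circ\det)$.
\item The central character gives $\eta^2=1$, and (C) gives $\eta=1$. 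This contradicts $\chi_1\neq\chi_1'$, which holds because \eqref{eq:S_i-H-action} multiplies both characters by the same factor.
\end{enumerate}
The separating word thus has a bounded, explicit shape, and no termination argument for arbitrary chains is needed.
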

\begin{proof}
First note that if there exists $i\in \{0,\dots,q-1\}$ such that $S_iv=0$ but $S_iv'\neq0$ (resp.~$S_iv\neq0$ but $S_iv'=0$), then we are done. Thus we can assume 
\begin{equation}\label{eq:cond-equiv}
\forall\ 0\leq i\leq q-1,\ \ S_iv=0\Longleftrightarrow S_iv'=0.
\end{equation}
Note also that $S_iv\in\pi^{I_1}$ if and only if $S_{i'}v=0$ for any $i'\preceq i$, $i'\ne i$; {this follows from the explicit formula for $S_i$ (where $\mu\in\F_q^{\times}$):}
\[\smatr 1{-[\mu]}01 S_iv = \sum_{i' \preceq i} \binom i{i'} \mu^{i-i'} S_{i'} v.\]

\textbf{Step 1.} By Lemma \ref{lem:Sn-vanish}, there exists $n\geq0$ such that $(S_0)^{n+1}v=0$ and $(S_0)^{n+1}v'=0$. \ Choosing \ $n$ \ to \ be \ the \ smallest \ non-negative \ integer \ such that $(S_0)^{n + 1}v = 0$ or $(S_0)^{n + 1}v' = 0$, and replacing $v$ by $(S_0)^nv$ and $v'$ by $(S_0)^nv'$, we may assume $S_0v=0$ or $S_0v' = 0$. By \eqref{eq:cond-equiv} we have $S_0v=S_0v'=0$.

\textbf{Step 2.}
We choose ${0}\leq i_1\leq q-1$ such that $S_{i_1}v\in \pi^{I_1}$ and $\langle \GL_2(\cO_K)\cdot S_{i_1}v\rangle$ is an irreducible representation of $\GL_2(\cO_K)$: this is possible by \cite[Lemma 4.1]{paskunas-restriction}. By \eqref{eq:cond-equiv} and the comment that follows we also have $S_{i_1}v'\in \pi^{I_1}\backslash\{0\}$. Replacing $v$ (resp.~$v'$) by $S_{i_1} v$ (resp.~$S_{i_1}v'$), we may assume $\langle \GL_2(\cO_K)\cdot v\rangle$ irreducible. If $S_0v\ne 0$ then it again generates an irreducible representation of $\GL_2(\cO_K)$ by \cite[Lemma 2.6]{BP}, \cite[Lemma 2.7]{BP}. Replacing $v$, $v'$ by $(S_0)^nv$, $(S_0)^nv'$ respectively (as in Step $1$), we can furthermore assume $S_0v=S_0v'=0$. 

\textbf{Step 3.} We prove that $\chi\neq \chi^s$ and $\chi'\neq \chi'^s$. Assume by contradiction $\chi=\chi^s$. Since $S_0v=0$, \cite[Lemma 2.6]{BP} implies $S_iv=0$ except for $i=q-1$. By \eqref{eq:cond-equiv}, we also have $S_{i}v'=0$ except for $i=q-1$, which implies $\chi'=\chi'^{s}$ (using \cite[Lemma 2.7]{BP}). Using condition (C) and the fact $\pi$ has a central character, we get $\chi=\chi'$, contradiction.
 
\textbf{Step 4.} Let $r\in\{1,\dots,q-2\}$ be the unique integer such that 
$\chi(\smatr{[a]}{0}0{[a]^{-1}})=a^{r}$ for $a\in\F_q^{\times}$. Let $\tau:=\langle \GL_2(\cO_K)\cdot \Pi v\rangle\neq 0$.
Since $S_0v=0$, \cite[Lemma 2.7(i)]{BP} implies $0\ne S_{r}v\in \F \Pi v \subset \tau^{I_1}\subset \pi^{I_1}$. Hence $\Pi(S_rv)\in \F v\backslash\{0\}$ as $\pi$ has a central character. Since we have $\langle \GL_2(\cO_K)\cdot v\rangle $ irreducible by Step 2, $\langle \GL_2(\cO_K)\cdot \Pi(S_rv)\rangle$ is irreducible. By \eqref{eq:cond-equiv} and the comment that follows we also have $S_rv'\in\pi^{I_1}\backslash \{0\}$.
 
\textbf{Step 5.} Set $v_1:=S_rv$, $v_1':=S_rv'$ in $\pi^{I_1}\setminus\{0\}$, and let $\chi_1,\chi_1'$ be the corresponding characters of $H$.
Then $\chi_1 \ne \chi_1'$ by \eqref{eq:S_i-H-action}, as $\chi \ne \chi'$. 
By Step 4, $\tau_1:=\langle \GL_2(\cO_K)\cdot \Pi v_1\rangle$ is an irreducible representation of $\GL_2(\cO_K)$ (and hence is the cosocle of $\Ind_I^{\GL_2(\cO_K)}\chi_1^s$), in particular $\tau_1^{I_1}$ is $1$-dimensional. {Thus the set $\{j: S_jv_1\neq0\}$ has a unique minimal element with respect to $\preceq $. }
Assuming \eqref{eq:cond-equiv} for $v_1$ and $v_1'$ (otherwise we are done), we see from \cite[Lemma 2.7]{BP} together with \eqref{eq:cond-equiv} and the comment that follows that 
$\dim_{\F} \tau_1'^{I_1}=1$, where $\tau'_1:=\langle \GL_2(\cO_K)\cdot \Pi v'_1\rangle$.
Hence $\tau_1'$ is irreducible by Lemma \ref{lem:BP-irred}. {Using \cite[Lemma 2.7]{BP} and \eqref{eq:cond-equiv} again we see that $\tau_1$ and $\tau_1'$ are isomorphic up to twist; indeed the set $\{j: S_jv_1\neq 0\}$ determines $\tau_1$ up to twist.}
Using condition (C) (and the fact $\pi$ has a central character) it is easy to check that all this implies $\chi_1=\chi_1'$, contradiction. This finishes the proof.
\end{proof}

Since $|H|$ is prime to $p$, for any $v\in \pi$ we can write
\begin{equation}\label{eq:w=sum}
v=\sum_{\chi:H\ra \F^{\times}}v_{\chi},
\end{equation} 
where the sum runs over all characters $\chi:H\ra\F^{\times}$ and $H$ acts on $v_{\chi}$ via $\chi$. Such an expression is unique, and $v_{\chi}\in \pi^{I_1}$ if $v\in\pi^{I_1}$. Define the length of $v\in \pi$ as
\[\ell(v):=|\{\chi: v_{\chi}\neq0\}|\in \Z_{\geq 0}.\]
Note that $\ell(v)=0$ if and only if $v=0$ and $\ell(v)=1$ if and only if $v$ is a nonzero eigenvector of $H$. 

The following result is a strengthening of \cite[Thm.~4.3]{paskunas-restriction} for certain supersingular representations of $\GL_2(K)$.

\begin{thm}\label{thm:Q-irred}
Let $\pi$ be an irreducible admissible supersingular representation of $\GL_2(K)$ over $\F$ satisfying (C) and such that $\gr(\pi)$ is annihilated by the ideal $J_0$, then $\pi|_{Q}$ is irreducible.
\end{thm}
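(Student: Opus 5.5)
Let $\pi'\subset\pi$ be a nonzero $Q$-stable subspace; we will show $\pi'=\pi$. Since $\pi$ is irreducible as a $\GL_2(K)$-representation, it is enough to produce a nonzero $v\in(\pi')^{I_1}$ that is an $H$-eigenvector, and then to show that $\pi'$ contains every $H$-eigenvector of $\pi^{I_1}$. Indeed, once $\pi^{I_1}\subset\pi'$, the decomposition \eqref{eq:Q-decomp} gives $\GL_2(K)=QI\sqcup Q\smatr0110 I$. Hence $\pi$ is spanned by $Q\cdot\langle I\cdot\pi^{I_1}\rangle$ and $Q\cdot\langle I\smatr0110\cdot\pi^{I_1}\rangle$. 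The first is $Q\cdot\pi^{I_1}\subset\pi'$. For the second, $\smatr0110 w=\smatr{p^{-1}}001\Pi w$ with $\Pi w\in\pi^{I_1}$, since $\Pi$ normalizes $I_1$. Therefore $\smatr0110\pi^{I_1}\subset Q\pi^{I_1}$, and conjugating by $I$ only moves it within $I\pi^{I_1}=\pi^{I_1}$ up to elements of $Q$ (this last bookkeeping needs care, but it is the argument of \cite[Thm.~4.3]{paskunas-restriction} with $B$ replaced by $Q$, $T_0$ being absorbed because $H$-eigenvectors are $T_0$-eigenvectors up to $1+p\cO_K$).

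\textbf{Step 1: an $I_1$-invariant vector in $\pi'$.} Take $0\ne w\in\pi'$. Since $\smatr10{p\cO_K}1=\smatr p001\smatr1{\cO_K}01\smatr p001^{-1}$ is conjugate into $N_0\subset Q$ via an element of $Q$, and $\pi$ is smooth, some $Q$-translate $v\in\pi'$ of $w$ is fixed by $\smatr10{p\cO_K}1$. More precisely, $w$ is fixed by $\smatr1{p^n\cO_K}01$ for some $n$, and we conjugate by $\smatr{p^{-n-1}}001$ or $\smatr{1}00{p^{n+1}}$ in $Q$. Lemma~\ref{lem:key} then gives $\underline{k}$ with $0\ne\underline{Y}^{\underline{k}}v\in\pi^{I_1}$. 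Because $\underline{Y}^{\underline{k}}\in\F\bbra{N_0}$ and $N_0\subset Q$, this vector lies in $\pi'$. So $(\pi')^{I_1}\ne0$.

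\textbf{Step 2: separating $H$-isotypic components.} Choose $0\ne u\in(\pi')^{I_1}$ of minimal length $\ell(u)$; the goal is $\ell(u)=1$. Note that $H\not\subset Q$, so $H$-eigenvectors cannot simply be projected out. Suppose instead $\ell(u)\ge2$, and pick two components $u_\chi,u_{\chi'}$ with $\chi\ne\chi'$. Lemma~\ref{lem:diff-character} provides a word $S=S_{i_k}\circ\cdots\circ S_{i_1}\in\F[Q]$ killing exactly one of them. By \eqref{eq:S_i-H-action}, $S$ maps $\chi$-eigenvectors to eigenvectors of a shifted character, so $Su=\sum_\chi Su_\chi$ is again an $H$-eigen decomposition with at most as many terms, and strictly fewer since one term dies. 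But $Su$ need not be $I_1$-invariant, and it could be zero. To handle this, choose the word so that $Su\ne0$: the component that survives is nonzero. Then rerun Step 1 on $Su$ using Lemma~\ref{lem:key}, applying a monomial $\underline{Y}^{\underline{k}}$, possibly after translating by $\smatr p001$-powers. I expect this rerun to be the main obstacle. These operations must not increase the length, and the resulting vector must be nonzero. Applying $\underline{Y}^{\underline{k}}$ to each component separately, with $\underline{k}$ chosen via Lemma~\ref{lem:key} for the sum, keeps the vector nonzero. Since $Y_j$ shifts the $H$-character by a fixed amount, distinct characters stay distinct, so the length does not increase; the one subtle point is that Lemma~\ref{lem:key} guarantees nonvanishing only of the total, which is enough. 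The central-character and $p^{\Z}$-scalar bookkeeping in $Q$ is harmless. Thus we obtain an element of $(\pi')^{I_1}$ of strictly smaller length, a contradiction, so $\pi'$ contains a nonzero $H$-eigenvector of $\pi^{I_1}$.

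\textbf{Step 3: reaching all of $\pi^{I_1}$.} Let $v\in(\pi')^{I_1}$ be an eigenvector. The argument of \cite[Thm.~4.3]{paskunas-restriction} shows that the $\F[Q]$-module generated by $v$, intersected with $\pi^{I_1}$, contains all of $\pi^{I_1}$. That argument uses only the operators $S_i$ and $\Pi$ acting within $\langle\GL_2(\cO_K)\Pi v\rangle$ together with irreducibility of $\pi$; here $\Pi$ is replaced by $S_r$ up to scalars, as in Step~4 of the proof of Lemma~\ref{lem:diff-character}. We then conclude as in the first paragraph.
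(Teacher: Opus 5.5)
\textbf{Steps 1 and 2.} These essentially reproduce the first two paragraphs of the paper's proof; phrasing the length reduction via a vector of minimal length is fine. However, the justification you give in Step 1 is wrong. The group $\smatr10{p\cO_K}1$ is lower unipotent and is not conjugate into $N_0$ by any diagonal element. The correct mechanism is \eqref{tp}: by smoothness $w$ is fixed by $\smatr10{p^{n+1}\cO_K}1$ for some $n$, and then $t^nw$, with $t=\smatr p001$, is fixed by $\smatr10{p\cO_K}1$.

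\textbf{The gap in Step 3.} Step 3 is where the proof actually closes, and it is only gestured at.
\begin{itemize}
\item For an arbitrary $H$-eigenvector $v\in(\pi')^{I_1}$ there is no reason for $\Pi v$ or $\smatr0110 v$ to lie in $\langle Q\cdot v\rangle$. In the universal case $\Ind_I^{\GL_2(\cO_K)}\chi^s$, the vector $\phi$ is supported on $I$ while every $f_j$ is supported off $I$. So $\phi$ is not a combination of the $f_j$, and ``$\Pi$ is replaced by $S_r$ up to scalars'' is false in general.
\item The missing input is the supersingularity statement, Lemma~\ref{lem:Sn-vanish}. Replace $v$ by $(S_0)^nv$ with $n$ maximal such that this is nonzero. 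It is still a nonzero $H$-eigenvector in $(\pi')^{I_1}$, by \eqref{eq:S_i-H-action} and because $S_0$ preserves $\pi^{I_1}$, and now $S_0v=0$.
\item Only then does \cite[Lemma 3.4]{paskunas-restriction} give \eqref{eq:sv'}. It expresses $\smatr0110 v$ through the elements $\smatr p{[\lambda^{-1}]}0{p^{-1}}\in Q$ applied to $\smatr{-[\lambda^{-1}]}00{[\lambda]}v\in\F v$. That diagonal matrix is not in $Q$, which is exactly why $v$ must be an $H$-eigenvector. Hence $\smatr0110 v\in\langle Q\cdot v\rangle$.
\item Since $I$ acts on $v$ by a character, \eqref{eq:Q-decomp} then gives directly $\pi=\langle \GL_2(K)\cdot v\rangle=\langle Q\cdot v\rangle\subset\pi'$.
\end{itemize}
Your intermediate target $\langle Q\cdot v\rangle\cap\pi^{I_1}\supset\pi^{I_1}$ is not something Pa{\v{s}}k{\=u}nas's argument produces, except as a consequence of this equality. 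As written, Step 3 just restates the goal.

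\textbf{Your alternative via \cite{BP}.} Using \cite[Lemma 2.7(i)]{BP} in place of \eqref{eq:sv'} does work once $S_0v=0$ has been arranged.
\begin{itemize}
\item If $\chi\ne\chi^s$, then $0\ne S_rv\in\F\Pi v$, as in Step~4 of the proof of Lemma~\ref{lem:diff-character}.
\item If $\chi=\chi^s$, one uses the analogous statement for $S_{q-1}$ from \cite[Lemma 2.6]{BP}.
\item One then concludes with $\smatr0110=\smatr100{p^{-1}}\Pi$ and $\smatr100{p^{-1}}\in Q$.
\end{itemize}
This identity, not $\smatr0110=\smatr{p^{-1}}001\Pi$ as you wrote, is also what makes your opening reduction work. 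Since $\Pi$ normalizes $I_1$, it gives $\smatr0110\pi^{I_1}\subset Q\pi^{I_1}$ immediately, with no extra bookkeeping.
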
 
\begin{proof}
We modify the proof of \cite[Thm.~4.3]{paskunas-restriction}. Let $w\in \pi\backslash\{0\}$. Since $\pi$ is smooth, there exists an integer $n\geq 0$ such that $w$ is fixed by $\smatr10{p^{n+1}\cO_K}1$. Write $t:=\smatr{p}001$. 
Using the equality
\begin{equation}\label{tp}
\matr{1}0{p^{n}\cO_K}1t=t\matr{1}{0}{p^{n+1}\cO_K}1,\
\end{equation}
we see that $t^{n} w$ is fixed by $\smatr{1}{0}{p\cO_K}1$. Thus by Lemma \ref{lem:key} and the assumption on $\gr(\pi)$, we obtain a nonzero element $v\in \langle Q\cdot w\rangle \cap \pi^{I_1}$. Since the maps $w\mapsto tw$ and $w\mapsto Y_iw$ send $H$-eigenvectors to $H$-eigenvectors (maybe zero), we have $\ell(v)\leq \ell(w)$. 

Next we show that there exists a nonzero element in $ \langle Q\cdot w\rangle \cap \pi^{I_1}$ which is an $H$-eigenvector. Writing $v=\sum_{\chi}v_{\chi}$ as in \eqref{eq:w=sum}, it follows that $v_{\chi}\in \pi^{I_1}$. If $v_{\chi}=0$ except for one $\chi$, we are done. Otherwise, there exist $\chi_1\neq \chi_2$ such that $v_{\chi_1}\neq0$ and $v_{\chi_2}\neq0$. By Lemma \ref{lem:diff-character} there exists a sequence of integers $i_1,\dots,i_k\in\{0,1,\dots,q-1\}$ such that either $(S_{i_k}\circ\cdots \circ S_{i_1})v_{\chi_1}=0$ or $(S_{i_k}\circ\cdots \circ S_{i_1})v_{\chi_2}=0$ (but not both). In particular $(S_{i_k}\circ\cdots \circ S_{i_1})v$ is nonzero and has length strictly less than $\ell(v)$ (recall~\eqref{eq:S_i-H-action}). Applying the argument in the first paragraph of this proof to $(S_{i_k}\circ\cdots \circ S_{i_1})v\in \langle Q\cdot w\rangle$ instead of $w$ (note that $(S_{i_k}\circ\cdots \circ S_{i_1})v$ need not be fixed by $I_1$) and noting that the length can only decrease, we finally obtain after finitely many iterations
\[0\ne v'\in \langle Q\cdot v\rangle\cap \pi^{I_1}\subset \langle Q\cdot w\rangle \cap \pi^{I_1}\]
with $\ell(v')=1$, equivalently $v'$ is an $H$-eigenvector. By Lemma \ref{lem:Sn-vanish} (and again~\eqref{eq:S_i-H-action}), replacing $v'$ by $(S_0)^nv'$ for a suitable $n\geq 0$ we can moreover assume that $v'\in \langle Q\cdot v\rangle\cap \pi^{I_1}\subset \langle Q\cdot w\rangle\cap \pi^{I_1}$ is a nonzero eigenvector of $H$ such that $S_0v'=0$.

By \cite[Lemma 3.4]{paskunas-restriction} we have
\begin{eqnarray}
\matr0110 v'&=&-\sum\limits_{\lambda\in\F_q^{\times}} \matr{-p[\lambda^{-1}]}10{p^{-1}[\lambda]}v' \notag \\
&=&-\sum\limits_{\lambda\in\F_q^{\times}}\matr{p}{[\lambda^{-1}]}0{p^{-1}}\matr{-[\lambda^{-1}]}00{[\lambda]}v'. \label{eq:sv'}
\end{eqnarray}
Since $v'$ is an $H$-eigenvector, we deduce $\smatr{0}110v'\in \langle Q\cdot v'\rangle$. 
Since $\pi$ is an irreducible $\GL_2(K)$-representation and $I$ acts on $v'$ via a character, we obtain using \eqref{eq:Q-decomp}:
\[\pi=\langle \GL_2(K)\cdot v'\rangle=\langle Q\cdot v'\rangle\subset \langle Q\cdot w\rangle.\] 
Hence $\pi=\langle Q\cdot w\rangle$ for any nonzero $w\in \pi$ and so $\pi|_Q$ is irreducible. 
\end{proof}

\begin{rem}\label{thm:Q-irred+}\ 
\begin{enumerate}
\item Since any irreducible admissible representation of $\GL_2(K)$ over $\F$ has a central character, we can even replace $Q$ by $\smatr{p^{\Z}}{K}0{1}$ in Theorem \ref{thm:Q-irred}.
\item It is likely that the statement of Theorem \ref{thm:Q-irred} does not hold without assuming (C). But if one replaces $Q$ by the larger subgroup $QH$ (or by $\smatr{p^{\Z}[\F_q^\times]}{K}0{1}$ in view of (i)) then one can prove the statement holds \emph{without} assuming that $\pi$ satisfies (C). The proof is similar but much simpler as we do not need Lemma \ref{lem:diff-character} anymore and can directly use the $H$-action to get an $H$-eigenvector in $\langle QH\cdot w\rangle \cap \pi^{I_1}$.
\end{enumerate}
\end{rem}

We also have the following partial strengthening of \cite[Thm.~4.4]{paskunas-restriction}.

\begin{thm}\label{localforQ}
Let $\pi$ and $\pi'$ be smooth representations of $\GL_2(K)$ over $\F$ admitting a central character. Assume
\begin{enumerate}
\item\label{as1} $\pi$ is irreducible admissible supersingular;
\item\label{as2} both $\gr(\pi)$ and $\gr(\pi')$ are annihilated by the ideal $J_0$. 
\end{enumerate}
Then the restriction map induces an isomorphism
\[\Hom_{\GL_2(K)}(\pi,\pi')\congto \Hom_{QH}(\pi,\pi').\]
\end{thm}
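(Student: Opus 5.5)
Injectivity of the restriction map is clear, so the content is surjectivity: given $\phi\in\Hom_{QH}(\pi,\pi')$ we must show that $\phi$ is $\GL_2(K)$-equivariant. I would follow the proof of \cite[Thm.~4.4]{paskunas-restriction}, replacing its use of the Borel subgroup $B$ by $QH$ and Lemma \ref{lem:key}. Since $\GL_2(K)$ is generated by $QH$, $I$ and $\smatr0110$, it suffices to show that $\phi$ commutes with $I$ and with $\smatr0110$.

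\textbf{Step 1 (find a good generating vector).} By Lemma \ref{lem:key} and the argument in the first paragraph of the proof of Theorem \ref{thm:Q-irred}, together with the $H$-action as in Remark \ref{thm:Q-irred+}(ii), I would pick $0\neq v\in\pi^{I_1}$ that is an $H$-eigenvector. By Lemma \ref{lem:Sn-vanish} I can also arrange $S_0v=0$, using that $S_0\in\F[Q]$ preserves $H$-eigenvectors by \eqref{eq:S_i-H-action}. Then $I$ acts on $v$ by a character $\chi$, and by irreducibility of $\pi$ and \eqref{eq:Q-decomp} together with \eqref{eq:sv'}, we get $\pi=\langle Q\cdot v\rangle$.

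\textbf{Step 2 (the image vector is $I_1$-fixed).} Set $v':=\phi(v)$. It is an $H$-eigenvector with character $\chi|_H$. Since $\phi$ commutes with $\smatr1{\cO_K}01\subset Q$, $v'$ is $N_0$-fixed. I must show that $v'$ is also fixed by $\smatr10{p\cO_K}1$, which is the main obstacle. Here I would exploit \eqref{tp}. As $v$ is fixed by $\smatr10{p\cO_K}1$, the vector $t^{-1}v$ is fixed by $\smatr10{p^2\cO_K}1$; conversely, $v=t\,(t^{-1}v)$. The plan is to show that $v'$ is fixed by some $\smatr10{p^{m}\cO_K}1$ by smoothness of $\pi'$, and then to lower $m$. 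Write $v'=t^{m-1}u$ with $u:=t^{-(m-1)}v'=\phi(t^{-(m-1)}v)$. Then $u$ is fixed by $\smatr10{p\cO_K}1$, so Lemma \ref{lem:key} (applied in $\pi'$, which is allowed by assumption \ref{as2}) gives $\deg(u)=\deg'(u)$. One then compares $\deg'$ on both sides: $\phi$ is $N_0$-equivariant and commutes with $t$, so $\deg'(\phi(w))\le\deg'(w)$ for all $w$. Combined with $\deg=\deg'$ on vectors fixed by $\smatr10{p\cO_K}1$ (applied to $v'$ after reducing to that case, and inductively on $m$), this should yield $\deg(v')\le\deg'(v)=\deg(v)=0$, i.e.\ $v'\in\pi'^{I_1}$. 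I expect to spend the most effort making this induction on $m$ work, possibly by considering $\Pi$-translates or the elements $S_i$ as in \cite{paskunas-restriction}.

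\textbf{Step 3 (conclude).} Once $v'\in\pi'^{I_1}$ with $I$ acting via $\chi$, the map $\phi$ commutes with $I$ on $v$. By \eqref{eq:sv'}, $\smatr0110 v$ is expressed through elements of $Q$ and $H$ applied to $v$, and the same formula holds for $v'$ because it only uses that $v'$ is $I_1$-fixed (\cite[Lemma 3.4]{paskunas-restriction}). For that I must also check $S_0v'=\phi(S_0v)=0$, which holds since $S_0\in\F[Q]$. Hence $\phi(gv)=g\phi(v)$ for $g\in I\cup\{\smatr0110\}$. Now for any $h\in QH$ and $g$ as above, I would write $g h v$ via \eqref{eq:Q-decomp} as a combination of elements of $QH\cdot\{v,\smatr0110 v\}$. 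This decomposition is the same in $\pi$ and $\pi'$, since it only uses the $I$-character and the relation \eqref{eq:sv'}. It follows that $\phi$ is $\GL_2(K)$-equivariant on $\langle Q\cdot v\rangle=\pi$.
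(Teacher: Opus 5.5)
Your Steps 1 and 3 are fine; Step 3 is the end of Pa{\v{s}}k{\=u}nas' argument, which the paper also invokes. The gap is in Step 2, which is the actual content of the theorem.

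As sketched, Step 2 is circular. Lemma \ref{lem:key} can only be applied in $\pi'$ to a vector already fixed by $\smatr10{p\cO_K}1$, and that is exactly what you want to prove for $\phi(v)$.

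The proposed reduction also runs the conjugation backwards. By \eqref{tp}, $t$ lowers the level. So if $\phi(v)$ is fixed by $\smatr10{p^m\cO_K}1$, it is $t^{m-1}\phi(v)=\phi(t^{m-1}v)$ that is fixed by $\smatr10{p\cO_K}1$. By contrast, $t^{-(m-1)}\phi(v)$ is only fixed by $\smatr10{p^{2m-1}\cO_K}1$. Once you pass to $t^{m-1}\phi(v)$, no induction on $m$ brings you back to $\phi(v)$, because $t^{-1}$ does not preserve $I_1$-invariance. Nothing in your sketch shows that $\phi(v)\in(\pi')^{I_1}$ for the vector chosen in Step 1.

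The missing idea is to give up on the original $v$ and manufacture a new vector whose image is visibly $I_1$-fixed.

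First, a nonzero $\phi$ is injective, because $\pi|_{QH}$ is irreducible by Remark \ref{thm:Q-irred+}(ii). You never use this. It gives $N_0$-isomorphisms $\langle N_0 w\rangle\xrightarrow{\sim}\langle N_0\phi(w)\rangle$, hence $\deg'(\phi(w))=\deg'(w)$ for all $w$.

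Now take $0\ne v\in\pi^{I_1}$ and $n$ such that $\phi(v)$ is fixed by $\smatr10{p^{n+1}\cO_K}1$. Then both $t^nv$ and $\phi(t^nv)=t^n\phi(v)$ are fixed by $\smatr10{p\cO_K}1$. Lemma \ref{lem:key}, applied in $\pi$ and in $\pi'$, gives
$\deg(t^nv)=\deg'(t^nv)=\deg'(t^n\phi(v))=\deg(t^n\phi(v))$.
It also gives $\underline k$ with $\|\underline k\|=\deg(t^nv)$ and $0\ne\underline Y^{\underline k}t^nv\in\pi^{I_1}$.

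Since $\deg(\underline Y^{\underline k}w)\le\deg(w)-\|\underline k\|$ always holds, $\underline Y^{\underline k}t^n\phi(v)$ has degree $\le 0$. It is nonzero by injectivity. So $v'':=\underline Y^{\underline k}t^nv$ satisfies $v''\in\pi^{I_1}$ and $\phi(v'')\in(\pi')^{I_1}\setminus\{0\}$.

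Only at this point should you normalize with $S_0$. Replace $v''$ by $(S_0)^mv''$ using Lemma \ref{lem:Sn-vanish}. This works because $S_0\in\F[Q]$ commutes with $\phi$ and preserves $I_1$-invariants, and injectivity keeps the image nonzero. Your normalization $S_0v=0$ in Step 1 would be destroyed by $t^n$ and $\underline Y^{\underline k}$, so the order must be reversed.

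With this vector in place of $v$, your Step 3 goes through. Use $\GL_2(K)=QHI_1\cup QH\smatr0110 I_1$ and apply \eqref{eq:sv'} to both the vector and its image under $\phi$.
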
 
\begin{proof}
We follow the proof of \cite[Thm.~4.4]{paskunas-restriction}. Clearly the restriction map is injective. We need to show that any $f\in\Hom_{QH}(\pi,\pi')$ is automatically $\GL_2(K)$-equivariant. We may assume $f$ is nonzero, so $f$ is injective since $\pi|_{QH}$ is irreducible by Remark \ref{thm:Q-irred+}(ii) with assumption \ref{as1}. As a consequence, we deduce the following fact: if $v\in \pi\backslash\{0\}$, then $f(v)\in \pi'\backslash\{0\}$ and $f$ induces an $N_0$-equivariant isomorphism $\langle N_0\cdot v\rangle\simto \langle N_0\cdot f(v)\rangle$. This implies in particular
\begin{equation}\label{eq:deg'}
\deg'(v)=\deg'(f(v)).
\end{equation}

Choose $v\in \pi^{I_1}\backslash\{0\}$. A priori, $f(v)$ need not be fixed by $\smatr{1}0{p\cO_K}1$. But since $\pi'$ is smooth, there exists an integer $n\geq 1$ such that $f(v)$ is fixed by $\smatr10{p^{n+1}\cO_K}1$. As in the proof of Theorem \ref{thm:Q-irred}, we know that $t^n f(v)$ is fixed by $\smatr{1}{0}{p\cO_K}1$ (and $t^n v$ is also fixed by $\smatr{1}{0}{p\cO_K}1$ by (\ref{tp}) since $v\in \pi^{I_1}$). Thus by Lemma \ref{lem:key} (which uses assumption \ref{as2}), we have 
\begin{equation}\label{eq:equality-degs}
\deg (t^nv)=\deg'(t^nv) =\deg'(t^nf(v))=\deg(t^nf(v)),
\end{equation}
where the middle equality holds by \eqref{eq:deg'}. Moreover, still by Lemma \ref{lem:key} there exists $\underline{k}\in \Z_{\geq0}^f$ such that $\|\underline{k}\|=\deg(t^nv)$ and $\underline{Y}^{\underline{k}} t^nv\in \pi^{I_1}\backslash\{0\}$. Then we have 
\[\deg(\underline{Y}^{\underline{k}} t^nf(v))\leq \deg(t^nf(v))-\|\underline{k}\|\overset{\eqref{eq:equality-degs}}{=}0,\]
where the first inequality is a (trivial) general fact and the second follows from \eqref{eq:equality-degs}. Since $\underline{Y}^{\underline{k}} t^nf(v)\ne 0$ as $\underline{Y}^{\underline{k}} t^nv\neq0$ and $f$ is injective, we deduce $\deg(\underline{Y}^{\underline{k}} t^nf(v))=0$ hence $\underline{Y}^{\underline{k}}t^nf(v)\in (\pi')^{I_1}$. Now we put
\[v':=\underline{Y}^{\underline{k}} t^nv,\]
so that $v'\in \pi^{I_1}\backslash\{0\}$ and $f(v')\in(\pi')^{I_1}\backslash\{0\}$. Using Lemma \ref{lem:Sn-vanish} (applied to $v'$) together with the injectivity and $Q$-equivariance of $f$, we can replace $v'$ by $(S_0)^nv'$ for a suitable integer $n\geq 0$ and assume moreover $S_0v'= 0$ and $S_0f(v')=0$. Using $\GL_2(K)=QHI_1\cup QH\smatr{0}11{0}I_1$ with formula \eqref{eq:sv'} and the $QH$-equivariance of $f$, we can then conclude as in the end of the proof of \cite[Thm.~4.4]{paskunas-restriction}.
\end{proof}

\begin{rem}
Theorem \ref{localforQ} is \emph{wrong} with $Q$ instead of $QH$, even assuming that $\pi$ and $\pi'$ satisfy (C). Indeed consider $\theta:K^\times \rightarrow \F^\times$ such that $\theta(p^{\Z}(1+p\cO_K))=1$ and $\theta(x)=\o x^r$ for $x\in [\F_q^\times]$ and $r\in \{1,\dots,q-2\}$. Then for any $\pi$ we have $\pi\vert_Q\cong (\pi\otimes (\theta\circ{\det}))\vert_Q$ but $\pi$ and $\pi\otimes (\theta\circ{\det})$ are not isomorphic as $QH$-representations (assuming $\pi$ satisfies (C) if $r = (q-1)/2$).
\end{rem}

\subsection{The mysterious compact module \texorpdfstring{$D_A(\pi)^\natural$}{D\_A(pi)\^{ }natural}}\label{natural}

For a smooth representation $\pi$ of $\GL_2(K)$ that is contained in category $\mathcal C$ of \cite[\S~3.1.2]{BHHMS2} we study the relations between $\pi$ and the image of $\pi^\vee$ in $D_A(\pi)$.

Let $\pi$ be an admissible smooth representation of $\GL_2(K)$ over $\F$ on which $Z_1$ acts trivially. %
We define as in (\ref{eq:intro3})
\[D_A(\pi):= A \wh\otimes_{\F\bbra{N_0}} (\pi^\vee),\]
{where $\pi^\vee$ carries its $\m_{I_1/Z_1}$-adic filtration.
  We will assume from now on that $\pi$ lies in category $\mathcal C$ defined in \cite[\S~3.1.2]{BHHMS2}\footnote{We remark that the definition in \cite[\S~3.1.2]{BHHMS2} is slightly incorrect, since there it was assumed that objects admit a central character, and this condition is not stable under direct sums. We thank Changjiang Du for pointing this out.}, which we recall means that $\gr(D_A(\pi))$ is a finitely generated $\gr(A)$-module.
  Note that if some power of $J$ kills $\gr(D_A(\pi))$, then $\pi \in \cC$, and if $\pi \in \cC$, then $D_A(\pi)$ is a finite free $A$-module.}

{As $\pi \in \cC$, we have that} $D_A(\pi)$ is a (possibly $0$) finite free $({\vp},\oks)$-module over $A$ endowed with a continuous surjection
\[\psi:D_A(\pi)\twoheadrightarrow D_A(\pi)\]
commuting with $\oks$ and such that $\psi(\varphi(a)d)=a\psi(d)$ for all $(a,d)\in A\times D_A(\pi)$ (see \cite[Rk.~3.26]{BHHMS2}).

Recall that the $(\psi,\oks)$-module $D_A(\pi)$ has a maximal \'etale quotient $D_A(\pi)^{\et}$, which is also a finite free $A$-module. It has the property that there is a canonical continuous $\vp$-semilinear $\vp : D_A(\pi)^{\et} \to D_A(\pi)^{\et}$ that commutes with the action of $\oks$ and such that $\psi(a\vp(d)) = \psi(a)d$ for all $a \in A$, $d \in D_A(\pi)$ \cite[\S~3.1.3]{BHHMS2}.

\begin{rem}\label{rk:action-FN0}
We choose to define the action of $\F\bbra{N_0}$ on $\pi^\vee$ and hence $D_A(\pi)$ by $\lambda f = f \circ \lambda$ for $\lambda \in \F\bbra{N_0}$, cf.\ \cite[Rk.~3.22]{BHHMS2}.
\end{rem}

We define the subgroup (called mirabolic)
\[P := \smatr{K\s}{K}{0}{1} \subset \GL_2(K).\]
For a $(\psi,\oks)$-module $D$ over $\F\bbra{N_0}$, as in \cite[Prop. III.1.1]{Colmez2} we endow
\[\plim_\psi D:= \left\{(d_n)_{n\geq 0},\ d_n\in D,\ \psi(d_n)=d_{n-1}\ {\rm for\ }n\geq 1\right\}\]
with a left $P$-action as follows, where $a \in \oks$ and $b \in K$:
\begin{align*}
 \smatr{p}{0}{0}{1} &: (d_0,d_1,\dots) \mapsto (d_1,d_2,\dots), \\
 \smatr{p^{-1}}{0}{0}{1} &: (d_0,d_1,\dots) \mapsto (\psi(d_0),d_0,d_1,\dots), \\
 \smatr{a}{0}{0}{1} &: (d_0,d_1,\dots) \mapsto (a (d_0),a(d_1),\dots), \\
 \smatr{1}{b}{0}{1} &: (d_0,d_1,\dots) \mapsto (\smatr{1}{b}{0}{1} d_0,\smatr{1}{pb}{0}{1} d_1,\smatr{1}{p^2b}{0}{1} d_2,\dots).
\end{align*}
We are abusing notation in the last row: we have $p^n b \in \ok$ and so $\smatr{1}{p^nb}{0}{1} \in \F\bbra{N_0} \subset A$ for large $n$ only, but by cofinality we can ignore the first $n$ coordinates. It is easy to check that this is well-defined.

From the definition of $D_A(\pi)$ we have a canonical continuous morphism of $\F\bbra{N_0}$-modules $\pi^\vee\rightarrow D_A(\pi)$ which commutes with $\psi$ and $\oks$ {and whose image generates $D_A(\pi)$ as $A$-module} (the $N_0$-action on $\pi^\vee$ differs from the usual one by the automorphism $\smatr{1}{y}{0}{1} \mapsto \smatr{1}{-y}{0}{1}$, cf.\ Remark~\ref{rk:action-FN0} and the reference there). Here $a \in \oks$ acts on $\pi^\vee$ as $f \mapsto f \circ \smatr{a^{-1}}{0}{0}{1}$ and $\psi$ on $\pi^\vee$ is $f \mapsto f \circ \smatr{p}{0}{0}{1}$. We define
\begin{equation}\label{pinatural}
D_A(\pi)^\natural:= \im(\pi^\vee\rightarrow D_A(\pi)),
\end{equation}
which is a compact (even profinite) linear topological $\F\bbra{N_0}$-submodule of $D_A(\pi)$ preserved by $\psi$ and $\oks$. Moreover $\psi:D_A(\pi)^\natural\twoheadrightarrow D_A(\pi)^\natural$ is again surjective as follows from the bijectivity of $\psi$ on $\pi^\vee$.

\begin{prop}\label{limpsi}
Let $\pi$ be an admissible smooth representation of $\GL_2(K)$ over $\F$ in the category $\mathcal C$. Then the surjection $\pi^\vee\twoheadrightarrow D_A(\pi)^\natural$ induces a canonical continuous $P$-equivariant surjection 
\[\pi^\vee \twoheadrightarrow \vplim_{\psi}D_A(\pi)^{\natural}\]
for the projective limit topology on the right hand side (where the $P$-action on $\pi^\vee$ differs from the usual one by the automorphism $\smatr{x}{y}{0}{1} \mapsto \smatr{x}{-y}{0}{1}$, cf.\ Remark~\ref{rk:action-FN0}).
\end{prop}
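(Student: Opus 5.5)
The plan is to build the map coordinatewise. Write $\mathrm{pr}:\pi^\vee\twoheadrightarrow D_A(\pi)^\natural$ for the canonical map. Since $\psi$ on $\pi^\vee$ is $f\mapsto f\circ\smatr p001$, it is bijective with inverse $\psi^{-1}(f)=f\circ\smatr{p^{-1}}001$. Set
\[\Phi(f):=\bigl(\mathrm{pr}(\psi^{-n}(f))\bigr)_{n\ge0}=\bigl(\mathrm{pr}(f\circ\smatr{p^{-n}}001)\bigr)_{n\geq 0}.\]
Because $\mathrm{pr}$ commutes with $\psi$, we get $\psi(\mathrm{pr}(\psi^{-n}f))=\mathrm{pr}(\psi^{-(n-1)}f)$, so $\Phi(f)\in\vplim_\psi D_A(\pi)^\natural$. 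Each coordinate of $\Phi$ is continuous, hence so is $\Phi$ for the projective limit topology.

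Next I check $P$-equivariance on the generators listed before the statement, using the modified action of Remark~\ref{rk:action-FN0}.
\begin{itemize}
\item For $\smatr p001$ and its inverse, the check is a shift of indices: we need $\Phi(\psi^{-1}f)=(d_1,d_2,\dots)$ and $\Phi(\psi f)=(\psi(d_0),d_0,d_1,\dots)$, which is immediate.
\item For $\smatr a001$ with $a\in\oks$, the action commutes with $\smatr{p^{-n}}001$ and $\mathrm{pr}$ is $\oks$-equivariant.
\item For $\smatr1b01$, the relation $\smatr{p^{-n}}001\smatr1{p^nb}01=\smatr1b01\smatr{p^{-n}}001$ shows the $n$-th coordinate is transformed by $\smatr1{p^nb}01$. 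For $n$ large this lies in $N_0$, so the claim follows from $\F\bbra{N_0}$-linearity of $\mathrm{pr}$ together with the cofinality convention.
\end{itemize}

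The main obstacle is surjectivity. The image $\Phi(\pi^\vee)$ is compact, since $\pi^\vee$ is compact (profinite), and the target is Hausdorff. So it suffices to show the image is dense, i.e.\ surjects onto each finite stage of the inverse system. Concretely, given $(d_n)$ and $N$, I want $f$ with $\mathrm{pr}(\psi^{-n}f)=d_n$ for all $n\le N$.

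Since $\psi^{N-n}(d_N)=d_n$ and $\psi$ commutes with $\mathrm{pr}$, it suffices to choose $g\in\pi^\vee$ with $\mathrm{pr}(g)=d_N$ and put $f:=\psi^N(g)$. Then $\mathrm{pr}(\psi^{-n}f)=\mathrm{pr}(\psi^{N-n}g)=\psi^{N-n}(d_N)=d_n$. Thus the images of the compact sets $\Phi^{-1}(\text{stage }N)$ form a decreasing family of nonempty closed conditions, and by compactness their intersection is nonempty. More directly, the sets $\{f:\mathrm{pr}(\psi^{-n}f)=d_n\ \forall n\le N\}$ are nonempty, closed, and nested in compact $\pi^\vee$, so they have a common point.

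The only points needing care are the compactness/Hausdorff bookkeeping and the well-definedness of the $N_0$-action on early coordinates. Otherwise the argument is formal.
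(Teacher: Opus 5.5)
Your proof is correct and is essentially the paper's argument: your map $\Phi$ is exactly the composite of $\pi^\vee\congto\vplim_\psi\pi^\vee$ (which uses bijectivity of $\psi$ on $\pi^\vee$) with the map induced on inverse limits, and your nested nonempty closed sets argument in the compact space $\pi^\vee$ is an elementary, hands-on instance of the Mittag-Leffler statement for (linearly) compact modules that the paper cites from Jensen. The only differences are that you rely on genuine topological compactness of $\pi^\vee$ (valid here since $\F$ is finite) and that you write out the $P$-equivariance check on generators, which the paper calls formal.
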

\begin{proof}
The case $f=1$ is due to Colmez \cite{Colmez}. The proof for $f>1$ is essentially the same, but we give the details. As $\pi^\vee$ and $D_A(\pi)^{\natural}$ are both linearly compact $\F\bbra{N_0}$-modules, the surjection $\pi^\vee\twoheadrightarrow D_A(\pi)^{\natural}$ {induces a surjection $\vplim_{\psi}\pi^\vee\twoheadrightarrow \vplim_{\psi} D_A(\pi)^{\natural}$} by Mittag-Leffler for linearly compact modules (\cite[Thm.\ 7.1]{jensen}, 
applied over the ring $\F$ here). As $\pi^\vee \congto \vplim_{\psi}\pi^\vee$ (since $\smatr{p}{0}{0}{1}$ is bijective on $\pi$) we deduce a surjection $\pi^\vee\twoheadrightarrow \vplim_{\psi}D_A(\pi)^{\natural}$. Its $P$-equivariance is formal (alternatively, it follows from the definitions).
\end{proof}

\begin{rem}\label{remglobal}
We could alternatively consider $D_A(\pi)^{\et,\natural} := \im(\pi^\vee \to D_A(\pi)^{\et})$. However, in the cases of global interest we have $D_A(\pi) = D_A(\pi)^{\et}$, and moreover the natural map $\vplim_{\psi}D_A(\pi)^{\natural} \to \vplim_{\psi}D_A(\pi)^{\et,\natural}$ is always an isomorphism, as $\psi$ is nilpotent on $\ker(D_A(\pi) \to D_A(\pi)^{\et})$ (see \cite[\S~3.1.2]{BHHMS2} and \cite[Rk.~3.26]{BHHMS2}).
\end{rem}

The map in Proposition \ref{limpsi} is not injective for principal series, see \S~\ref{sec:remarks-map-pivee} below. However it is so in the supersingular case.

\begin{cor}\label{limpsiiso}
Let $\pi$ be an admissible smooth representation of $\GL_2(K)$ over $\F$ in the category $\mathcal C$. Assume that $\pi$ has finite length and that all its irreducible constituents $\pi'$ are supersingular with $D_A(\pi')\ne 0$. Then the surjection $\pi^\vee\twoheadrightarrow D_A(\pi)^\natural$ induces a canonical continuous $P$-equivariant isomorphism
\[\pi^\vee \congto \vplim_{\psi}D_A(\pi)^{\natural}.\]
\end{cor}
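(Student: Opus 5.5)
By Proposition \ref{limpsi} we already have a continuous $P$-equivariant surjection $\pi^\vee\twoheadrightarrow \vplim_{\psi}D_A(\pi)^{\natural}$. It therefore suffices to show that this map is injective. Dually, let $\pi_0\subset\pi$ be the subspace orthogonal to its kernel. Since the map is $P$-equivariant and continuous, its kernel is a closed $P$-stable $\F\bbra{N_0}$-submodule of $\pi^\vee$, so $\pi_0$ is a $P$-stable subrepresentation of $\pi|_P$. The goal is to show that $\pi_0=0$.

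The first step is to identify the kernel concretely. A compatible sequence in $\pi^\vee\cong\vplim_\psi\pi^\vee$ is $(f\circ t^{-n})_n$, where $t=\smatr p001$. Hence $f$ lies in the kernel if and only if the image of $f\circ t^{-n}$ in $D_A(\pi)$ vanishes for all $n\ge0$. The kernel of $\pi^\vee\to D_A(\pi)$ consists of the elements killed after inverting the $Y_\sigma$ and completing. Dually, these are the functionals vanishing on the part of $\pi$ that is "seen" by $D_A$, i.e.\ roughly on the vectors $v$ that are not $Y$-torsion in the limit sense. The upshot is that $\pi_0$ is a $P$-stable subspace whose dual has $D_A(\pi_0)=0$, or at least $\vplim_\psi$ of its image is zero. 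I would make this precise using exactness properties of $D_A$ on category $\mathcal C$ from \cite{BHHMS2}. Specifically, $\pi\mapsto D_A(\pi)$ is exact on $\mathcal C$, and the image of $\pi^\vee$ in $D_A(\pi)$ detects the quotient $\pi^\vee\to\pi_0'^\vee$ for $\GL_2(K)$-subrepresentations $\pi_0'$.

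The main obstacle is that $\pi_0$ is only $P$-stable, not $\GL_2(K)$-stable. The plan is to upgrade $P$-stability to $\GL_2(K)$-stability via the Paškūnas-type results of \S\ref{strengthening}, arguing by induction on the length of $\pi$. Take an irreducible supersingular subrepresentation $\pi_1\subset\pi$. Then $\pi_0\cap\pi_1$ is $P$-stable, and in particular $Q$-stable after twisting by the central character. By Remark \ref{thm:Q-irred+}(i) and Theorem \ref{thm:Q-irred}, or by Paškūnas' original irreducibility of $\pi_1|_P$ \cite{paskunas-restriction}, which needs no extra hypothesis for the mirabolic, $\pi_1|_P$ is irreducible. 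So either $\pi_0\cap\pi_1=0$ or $\pi_1\subset\pi_0$.

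The second alternative is impossible. It would force the composite $\pi^\vee\to\pi_1^\vee\to\vplim_\psi D_A(\pi_1)^\natural$ to be zero, hence $D_A(\pi_1)^\natural=0$. This contradicts $D_A(\pi_1)\neq0$, since the image of $\pi_1^\vee$ generates $D_A(\pi_1)$ as an $A$-module, together with functoriality of $D_A$ and of $(-)^\natural$ in $\pi$.

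In the first alternative, $\pi_0$ injects into $\pi/\pi_1$. I then apply the induction hypothesis to $\pi/\pi_1$, which lies in $\mathcal C$ and has smaller length with the same type of constituents. The kernel of $\pi^\vee\to\vplim_\psi D_A(\pi)^\natural$ maps compatibly to that for $\pi/\pi_1$, and exactness of $D_A$ on $\mathcal C$ controls the comparison. This shows that the image of $\pi_0$ in $\pi/\pi_1$ is zero, so $\pi_0\subset\pi_1$, and hence $\pi_0=\pi_0\cap\pi_1=0$.

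The delicate point I expect is this compatibility. One must check that the kernel-subspace for $\pi$ maps into the kernel-subspace for $\pi/\pi_1$. I would verify it by showing that $D_A(\pi/\pi_1)^\natural\to D_A(\pi)^\natural$ is injective, since it is a subobject of $D_A(\pi/\pi_1)\hookrightarrow D_A(\pi)$ by exactness.
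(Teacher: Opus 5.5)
Your ingredients are the right ones: induction on length, Pa{\v{s}}k{\=u}nas' irreducibility of $\pi_1|_P$, nonvanishing of $\pi_1^\vee\to D_A(\pi_1)$, and injectivity of $\vplim_\psi D_A(\pi/\pi_1)\to\vplim_\psi D_A(\pi)$. However, the duality bookkeeping is reversed, and as written the argument proves the opposite of the statement.

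Write $\mathcal K$ for the kernel and $\pi_0=\mathcal K^\perp$. Then $\mathcal K=(\pi/\pi_0)^\vee$ is naturally the dual of a \emph{quotient} of $\pi$, not of a subspace. So the goal is $\pi_0=\pi$, not $\pi_0=0$. (This is also why the paragraph about ``$D_A(\pi_0)=0$'' is not meaningful; besides, $D_A$ is not defined for a merely $P$-stable subspace.) Consequently your case analysis is inverted:
\begin{enumerate}
\item The alternative $\pi_1\subset\pi_0$ is not impossible. It is exactly what happens when the statement holds; for irreducible $\pi$ it says $\mathcal K=0$. It does not force $\pi_1^\vee\to\vplim_\psi D_A(\pi_1)^\natural$ to vanish. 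Applied to irreducible $\pi$, your argument would conclude $\pi_0=0$, i.e.\ that the map is zero.
\item In the alternative $\pi_0\cap\pi_1=0$, the kernel is not contained in $(\pi/\pi_1)^\vee\subset\pi^\vee$. So there is nothing to compare with the kernel for $\pi/\pi_1$; note the natural map goes $(\pi/\pi_1)^\vee\hookrightarrow\pi^\vee$, not the other way.
\end{enumerate}

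The repair is to swap the two cases.
\begin{enumerate}
\item If $\pi_0\cap\pi_1=0$, then $\pi_1\hookrightarrow\pi/\pi_0$, so $\mathcal K\twoheadrightarrow\pi_1^\vee$. Since $\mathcal K$ dies in $\vplim_\psi D_A(\pi)$, functoriality shows that $\pi_1^\vee\to\vplim_\psi D_A(\pi_1)$ is zero. Hence $\pi_1^\vee\to D_A(\pi_1)$ is zero, contradicting $D_A(\pi_1)\neq0$.
\item Hence $\pi_1\subset\pi_0$, so $\mathcal K\subset(\pi/\pi_1)^\vee$. We have $\vplim_\psi D_A(\pi/\pi_1)\hookrightarrow\vplim_\psi D_A(\pi)$ by exactness of $D_A$ and left exactness of $\vplim$. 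Therefore $\mathcal K$ lies in the kernel for $\pi/\pi_1$, which is $0$ by induction.
\end{enumerate}

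With this correction your proof is essentially the paper's. The paper first reduces to irreducible $\pi$ by the same d\'evissage, phrased via exactness of $\pi'\mapsto\vplim_\psi D_A(\pi')$ and working with the composite into $\vplim_\psi D_A(\pi)$. It then concludes using $P$-irreducibility of $\pi|_P$, nonvanishing of $\pi^\vee\to\vplim_\psi D_A(\pi)$, and Hausdorffness of the target, which together force the proper closed $P$-stable kernel to be $0$.
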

\begin{proof}
By \ Proposition \ \ref{limpsi} \ it \ is \ enough \ to \ prove \ that \ the \ composition $\pi^\vee\twoheadrightarrow \vplim_{\psi}D_A(\pi)^{\natural}\hookrightarrow \vplim_{\psi}D_A(\pi)$ is injective. By d\'evissage using the exactness of $\pi'\mapsto \protect\varprojlim_\psi D_A(\pi')$ for $\pi'$ in $\mathcal C$ (which follows from the exactness of $\pi'\mapsto D_A(\pi')$ in \cite[Prop.~3.12]{BHHMS2} and the surjectivity of $\psi$ on $D_A(\pi')$), we can assume that $\pi$ is (irreducible) supersingular. The case $f=1$ can be deduced from \cite[Thm.~4.2]{BVi}. By \cite[Thm.~1.1(i)]{paskunas-restriction} (and the fact $\pi$ has a central character), $\pi\vert_{P}$ is an irreducible smooth representation of $P$ over $\F$. As $D_A(\pi)\ne 0$, the map $\pi^\vee\rightarrow D_A(\pi)$ is nonzero (since it is nonzero on graded modules or since the $A$-submodule generated by the image equals $D_A(\pi)$; note that $A$ is a Zariski ring as in \cite[\S~II.2]{LiOy} and it suffices to check this on graded modules). Hence also the (continuous) map $\pi^\vee\rightarrow \vplim_{\psi}D_A(\pi)$ is nonzero. Since $\vplim_{\psi}D_A(\pi)$ is Hausdorff (for the projective limit topology), its kernel is a proper closed $P$-invariant subspace of $\pi^\vee$, hence must be $0$.
\end{proof}

\begin{cor}\label{naturalirr}
Let $\pi$ be an irreducible admissible supersingular representation of $\GL_2(K)$ over $\F$. Assume that $\gr(\pi)$ is killed by the ideal $J$ {(\S~\ref{strengthening})} ({hence} $\pi$ is in $\mathcal C$)
and that $\pi$ satisfies condition (C) of \S~\ref{strengthening}. Then $D_A(\pi)^{\natural}$ does not contain any proper nonzero closed $\F\bbra{N_0}$-submodule preserved by $\psi$ on which $\psi$ is surjective.
\end{cor}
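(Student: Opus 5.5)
Let $D'\subset D_A(\pi)^\natural$ be a nonzero closed $\F\bbra{N_0}$-submodule with $\psi(D')=D'$. The plan is to transport $D'$ back to $\pi^\vee$ and use the irreducibility of $\pi\vert_Q$ from Theorem \ref{thm:Q-irred}. The latter applies because $\gr(\pi)$ is killed by $J\supset J_0$ and $\pi$ satisfies (C). By Remark \ref{thm:Q-irred+}(i) and the central character, $\pi$ is then irreducible as a representation of $Q':=\smatr{p^{\Z}}{K}{0}{1}\subset P$.

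First, since $D'$ is linearly compact and $\psi$ is surjective on it, Mittag-Leffler (\cite[Thm.\ 7.1]{jensen}, as in Proposition \ref{limpsi}) shows that $\vplim_\psi D'$ is a nonzero closed subspace of $\vplim_\psi D_A(\pi)^\natural$. Indeed, every element of $D'$ lifts to a compatible $\psi$-sequence. Next, $\vplim_\psi D'$ is stable under the subgroup $Q'$ acting through the formulas defining the $P$-action. It is stable under $\smatr{p^{\pm1}}{0}{0}{1}$ because it is built from $\psi$-sequences in $D'$ with $\psi(D')\subset D'$. It is stable under $\smatr{1}{b}{0}{1}$ because $D'$ is an $\F\bbra{N_0}$-submodule and we may ignore finitely many coordinates. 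We do not need $\oks$-stability, which is exactly why the statement omits it.

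Now Corollary \ref{limpsiiso} applies: $\pi$ is irreducible supersingular and lies in $\mathcal C$, and $D_A(\pi)\neq 0$ (which I would cite from \cite{BHHMS2} or \cite{BHHMS1} under the assumption that $J$ kills $\gr(\pi)$). It gives a $P$-equivariant topological isomorphism $\pi^\vee\cong\vplim_\psi D_A(\pi)^\natural$. Hence $\vplim_\psi D'$ corresponds to a nonzero closed $Q'$-stable subspace $M\subset\pi^\vee$. Its orthogonal $M^\perp\subset\pi$ is a proper $Q'$-subrepresentation, so by the irreducibility of $\pi\vert_{Q'}$ we get $M^\perp=0$. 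Since $M$ is closed, $M=\pi^\vee$. Note that the twisted $N_0$-action of Remark \ref{rk:action-FN0} is harmless, since it only changes the action by an automorphism of $Q'$.

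Finally, project to the $0$-th coordinate. The map $\vplim_\psi D_A(\pi)^\natural\to D_A(\pi)^\natural$ is surjective, and it sends $\vplim_\psi D'$ into $D'$. Since $\vplim_\psi D'$ is everything, $D'=D_A(\pi)^\natural$. The main obstacle is the first step, namely that $\vplim_\psi D'$ is nonzero and closed. It requires $\psi$ to be continuous and surjective on the compact module $D'$, so that Mittag-Leffler applies. Continuity holds because $D'$ is closed in $D_A(\pi)^\natural$ with its compact topology. One also has to check, carefully but routinely, that the $Q'$-action on $\vplim_\psi$ really only uses $\psi$ and $\F\bbra{N_0}$.
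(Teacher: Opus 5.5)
Your argument is correct and is exactly the paper's proof, written out in detail: the paper combines Theorem~\ref{thm:Q-irred} and Remark~\ref{thm:Q-irred+}(i) with Corollary~\ref{limpsiiso}, just as you do. One small simplification is that you need not cite anything for $D_A(\pi)\neq 0$. A nonzero $D'\subset D_A(\pi)^\natural\subset D_A(\pi)$ already forces it, and if $D_A(\pi)=0$ the statement is vacuous.
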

\begin{proof}
This follows from Theorem \ref{thm:Q-irred} with Remark \ref{thm:Q-irred+}(i) since such a submodule would give a proper nonzero subrepresentation of $\pi\vert_{\smatr{p^{\Z}}{K}0{1}}$ by Corollary \ref{limpsiiso}.
\end{proof}

\begin{rem}\label{rem:nonunique}\ 
  \begin{enumerate}
  \item We remark that in general, unlike $f = 1$ \cite[Cor.~II.5.12]{Colmez2}, $D_A(\pi)^\natural$ is \emph{not} unique as minimal nonzero closed $\F\bbra{N_0}$-submodule of $D_A(\pi)$ which is $\oks$-stable and on which $\psi$ is surjective.
  To see this we can take $f = 2$ and take $\o\rho_1 \not\cong\o\rho_2$ both irreducible and sufficiently generic such that $D_A^\otimes(\o\rho_1(1)) \cong D_A^\otimes(\o\rho_2(1))$ as $(\vp,\oks)$-modules over $A$, see \cite[Rk.~2.8.5(ii)]{BHHMS3}.
  In a suitable global context for $i=1,2$ let $\o r_i$ be an automorphic globalization of $\o \rho_i$ satisfying the assumptions of \cite[Thm.~1.2]{BHHMS3}, so that $D_A(\pi(\o r_i)) \cong D_A^\otimes(\o\rho_i(1))$ and $\pi(\o r_i)$ is irreducible and supersingular \cite[Thm.~3.105(i)]{BHHMS2}.
  If we had $D_A(\pi(\o r_1))^\natural \cong D_A(\pi(\o r_2))^\natural$ as $(\psi,\oks)$-modules over $\F\bbra{N_0}$, then we would deduce that $\pi(\o r_1) \cong \pi(\o r_2)$ as $\GL_2(K)$-representations by Corollary~\ref{limpsiiso} and \cite[Thm.~4.4]{paskunas-restriction}.
  This implies that $\o\rho_1 \cong\o\rho_2$ (the determinant of $\o\rho_i$ is determined by the central character of $\pi(\o r_i)$ and the restriction to inertia by the $\GL_2(\cO_K)$-socle), contradiction.
\item {Even if $\rhob$ is irreducible the $(\vp,\oks)$-module $D_A(\pi)$ may be reducible, where $\pi = \pi(\o r)$ and $\o r$ a suitable globalization of $\rhob$.
  For example, when $f = 3$ using \cite[Thm.\ 1.2]{BHHMS3} one can find examples such that $D_A(\pi) = D_A^{(1)} \oplus D_A^{(2)}$ is a direct sum of two nonzero $(\vp,\oks)$-modules over $A$.
  We remark that $D_A(\pi)^\natural$ cannot be of the form $D_A^{(1),\natural} \oplus D_A^{(2),\natural}$ (where $\psi$ is surjective on each $D_A^{(i),\natural} \subset D_A^{(i)}$), as this would contradict Corollary~\ref{limpsiiso}. (Note that $D_A^{(i),\natural} \ne 0$ for all $i$, as $D_A(\pi)^\natural$ generates $D_A(\pi)$ as $A$-module.)}
\end{enumerate}
\end{rem}

Corollary \ref{naturalirr} is quite a strong ``irreducibility'' statement for $D_A(\pi)^{\natural}$ as we do not even need the action of $\cO_K^\times$. Note that we have an analogous statement without assuming (C) if we require that the $\F\bbra{N_0}$-submodule preserved by $\psi$ is also preserved by $[\F_q^\times]$ (see Remark \ref{thm:Q-irred+}(ii)).

Now let $\pi$ be a subquotient of $\pi(\o r)$ with $\o r$ a global Galois representation such that its restriction $\o r_v$ to a decomposition group at a fixed place $v \vert p$ is sufficiently generic (see \S~\ref{sec:intro} for $\o r$ and $\pi(\o r)$; recall $\pi(\o r)$ has finite length by \cite[Thm.~1.1.1]{BHHMS4}). From \cite{BHHMS1}, \cite{YW0}, \cite[Thm.~1.2]{YW}, and \cite[Cor.~1.1.3]{BHHMS5} we know that $\gr(\pi)$ is killed by the ideal $J$ {(\S~\ref{strengthening})} and that all its constituents $\pi'$ satisfy $D_A(\pi')\ne 0$. From the proof of \cite[Lemma 3.2.6]{BHHMS4} (when $\o r_v$ is split reducible, the case $\o r_v$ irreducible was known earlier) and \cite[Thm.~1.1.2(i)]{BHHMS5} (when $\o r_v$ is nonsplit reducible) we also know that {in most cases} $\pi$ satisfies condition (C) in \S~\ref{strengthening} {(for instance when $f=2$ and $\o r_v$ is reducible we need to avoid a few cases like when the restriction to inertia of the semisimplification of $\o r_v$ is $\omega_2^{(q-1)/2} \oplus 1$ up to twist)}. If $\pi$ only has supersingular constituents (which only means that one has to ``avoid'' the two principal series in $\pi(\o r)$ when $\o r_v$ is reducible, see \cite{BHHMS4}), we can apply Corollary \ref{limpsiiso} to $\pi$, and if $\pi$ is moreover irreducible, $D_A(\pi)^{\natural}$ satisfies the conclusion of Corollary~\ref{naturalirr}.

By \cite{BHHMS3} and \cite{YW2} we have $D_A(\pi(\o r))\cong D_A^\otimes(\o r_v(1))$. Let $\pi$ be the maximal subquotient of $\pi(\o r)$ with only supersingular constituents. If $\o r_v$ is irreducible we have $\pi\cong \pi(\o r)$ and hence $D_A(\pi)\cong D_A^\otimes(\o r_v(1))$. If $\o r_v$ is reducible, writing $\o r_v(1)\cong \smatr{\chi_1}{*}0{\chi_2}$ we have
\begin{equation}\label{form}
 D_A^\otimes(\o r_v(1))\ \cong \ \Big(D_A^\otimes(\chi_1)\!\begin{xy} (0,0)*+{}="a"; (9,0)*+{}="b"; {\ar@{-}"a";"b"}\end{xy}\! D_A^\otimes(\o r_v(1))^{\ss} \!\begin{xy} (15,0)*+{}="a"; (24,0)*+{}="b"; {\ar@{-}"a";"b"}\end{xy} \!D_A^\otimes(\chi_2)\Big),
\end{equation}
where the superscript ``ss'' is for ``supersingular''. {If $\o r_v$ is irreducible we also write $D_A^\otimes(\o r_v(1))^{\ss}:=D_A^\otimes(\o r_v(1))$.} From the exactness of $\pi' \mapsto D_A(\pi')$ (see \cite[Prop.~3.12]{BHHMS2}) and the calculations in \cite{BHHMS3} and \cite{YW2}, we deduce
\begin{equation}\label{piglobal}
D_A(\pi)\cong D_A^\otimes(\o r_v(1))^{\ss}
\end{equation}
which {has rank $2^f$ over $A$ in the irreducible case and} rank $2^f-2$ in the reducible case (see \S~\ref{hand} below for an explicit description of $D_A^\otimes(\o r_v(1))^{\ss}$ {in the reducible case when $f=2$} and note we have in particular $D_A(\pi) = D_A(\pi)^{\et}$). Thus $D_A(\pi)$ is not a mystery but we need to understand $D_A(\pi)^{\natural}$. This question is crucial:

\begin{cor}\label{localfail1}
Let $\pi$ be the maximal subquotient of $\pi(\o r)$ with only supersingular constituents. If the $\GL_2(K)$-representation $\pi$ is local, then $D_A(\pi)^{\natural}$ is local as a $(\psi,[\F_q^\times])$-module over $\F\bbra{N_0}$. The converse holds if $\pi$ is irreducible.
\end{cor}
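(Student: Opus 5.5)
For the forward direction I would argue directly from the definition (\ref{pinatural}). The key observation is that $D_A(\pi)^\natural$ is built functorially from $\pi$ together with its $(\psi,[\F_q^\times])$-structure. Concretely, $D_A(\pi)^\natural$ is the image of $\pi^\vee\to A\wh\otimes_{\F\bbra{N_0}}\pi^\vee$. Here $\psi$ is induced by $\smatr{p}{0}{0}{1}$, and $[\F_q^\times]$ acts through $\smatr{[\lambda]^{-1}}{0}{0}{1}$. So a $\GL_2(K)$-isomorphism $\pi\simeq\pi'$ induces an isomorphism $D_A(\pi)^\natural\simeq D_A(\pi')^\natural$ of $(\psi,[\F_q^\times])$-modules over $\F\bbra{N_0}$.

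Here ``local'' means: whenever $\o r$, $\o r'$ are two globalizations (satisfying the running hypotheses) with $\o r_v\simeq\o r'_v$, the corresponding objects are isomorphic. Suppose the maximal supersingular subquotients $\pi$ and $\pi'$ of $\pi(\o r)$ and $\pi(\o r')$ are isomorphic as $\GL_2(K)$-representations. Then their modules $D_A(\pi)^\natural$ and $D_A(\pi')^\natural$ are isomorphic. One routine point to check is that any isomorphism $\pi\simeq\pi'$ is automatically continuous on duals and compatible with the $\m_{I_1/Z_1}$-adic filtrations, so that it extends to the completed tensor product. This holds because the filtrations are defined intrinsically from the $I_1$-action.

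For the converse, assume $\pi$ is irreducible (and then $\pi'$ is too, for a second globalization). Suppose we are given an isomorphism $D_A(\pi)^\natural\simeq D_A(\pi')^\natural$ of $(\psi,[\F_q^\times])$-modules over $\F\bbra{N_0}$, which I take to be continuous (both modules are compact). I would then proceed in three steps.

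\emph{Step 1.} Pass to $\vplim_\psi$. The isomorphism commutes with $\psi$, with $\F\bbra{N_0}$, and with $[\F_q^\times]$. From the explicit formulas for the $P$-action on $\vplim_\psi$, the action of the subgroup $\smatr{p^{\Z}[\F_q^\times]}{K}{0}{1}$ involves only $\psi$, the shift, $[\F_q^\times]$, and $\F\bbra{N_0}$ (using cofinality for $\smatr{1}{b}{0}{1}$). Hence we obtain an isomorphism of $\vplim_\psi D_A(\pi)^\natural$ with $\vplim_\psi D_A(\pi')^\natural$ that is equivariant for this subgroup.

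\emph{Step 2.} By Corollary \ref{limpsiiso}, this gives $\pi^\vee\simeq\pi'^\vee$ equivariantly for $\smatr{p^{\Z}[\F_q^\times]}{K}{0}{1}$. The hypotheses of Corollary \ref{limpsiiso} hold: in the global setting recalled above, all constituents are supersingular with $D_A\neq0$, and $\pi,\pi'$ lie in $\cC$. The isomorphism is a continuous map of compact duals, so dualizing gives an isomorphism $\pi\simeq\pi'$ of smooth representations of that subgroup. Since $\pi,\pi'$ have the same central character (determined by $\det\o r_v$), this isomorphism extends to $QH$.

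\emph{Step 3.} Apply Theorem \ref{localforQ} with $\pi$ irreducible admissible supersingular. Assumption \ref{as2} holds because $\gr(\pi)$ and $\gr(\pi')$ are killed by $J\supset J_0$, as recalled above. The theorem then shows that the $QH$-isomorphism is $\GL_2(K)$-equivariant, so $\pi\simeq\pi'$.

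The main obstacle I expect is a bookkeeping one in Steps 1 and 2. One must check that the $[\F_q^\times]$- and $p^{\Z}$-actions transported through $\vplim_\psi$ and through the twisted conventions (Remark \ref{rk:action-FN0} and the sign automorphism in Proposition \ref{limpsi}) really give $QH$-equivariance. One must also match the central characters so that $H$ is genuinely recovered from $[\F_q^\times]$ acting on the upper-left entry. I would handle this using Remark \ref{thm:Q-irred+}(i)--(ii).
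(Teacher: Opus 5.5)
Your proposal is correct and follows the paper's own argument. The forward direction is the functoriality of $\pi\mapsto D_A(\pi)^\natural$. The converse passes through $\vplim_\psi$ and Corollary~\ref{limpsiiso} to recover $\pi$ as a representation of $\smatr{p^{\Z}[\F_q^\times]}{K}{0}{1}$, and then applies Theorem~\ref{localforQ}. Your explicit treatment of continuity and of the matching central characters, which upgrades the equivariance to $QH$, fills in details the paper leaves implicit.
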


We recall that ``local'' means ``depending only on $\o r_v$ and not on the global context''.
Also, by a \emph{$(\psi,[\F_q^\times])$-module over $\F\bbra{N_0}$} we mean any $\F\bbra{N_0}$-module $D$ endowed with an additive map $\psi:D\rightarrow D$ such that $\psi(\varphi(a)d)=a\psi(d)$~$\forall\ (a,d)\in \F\bbra{N_0}\times D$ and a semilinear action of $[\F_q^\times]$ that commutes with $\psi$. We note that $\pi$ is indeed irreducible in the important special cases where either $\o r_v$ is irreducible or when $f = 2$.

\begin{proof}
The direction $\Rightarrow$ is obvious. The direction $\Leftarrow$ follows from Corollary \ref{limpsiiso} with Theorem
\ref{localforQ} (since we assume that $\pi$ is irreducible).
\end{proof}

\begin{rem}
When $\o r_v$ is semisimple, the locality of $\pi$ implies the one of $\pi(\o r)$. When $\o r_v$ is non-semisimple, there is also the problem of the nonsplit extensions between $\pi$ and the two principal series in $\pi(\o r)$. Since for principal series the map in Proposition \ref{limpsi} is not injective (see Lemma \ref{kernel}), and since we cannot prove the locality of $\pi$ anyway, we do not address this case in this text.
\end{rem}

Unfortunately, even in the simplest case $f=2$ and $\o r_v$ reducible (where $\pi$ is irreducible, see \cite[Thm.~3.105]{BHHMS2}), it seems impossible to describe explicitly the $\F\bbra{N_0}$-submodule $D_A(\pi)^{\natural}$ inside $D_A^\otimes(\o r_v(1))^{\ss}$. We explain this next.

\subsection{Trying to describe the \texorpdfstring{$\psi$}{psi}-module \texorpdfstring{$D_A(\pi)^\natural$}{D\_A(pi)\^{ }natural} ``by hand''}\label{hand}

We explain how difficult it is to explicitly describe the $\psi$-module $D_A(\pi)^\natural$.

When $f=1$, we have $\F\bbra{N_0}\cong \F\bbra{X}$, $A=\F\ppar{X}$, and the compact $\F\bbra{X}$-submodules of a finite-dimensional $\F\ppar{X}$-vector space are just the finitely generated $\F\bbra{X}$-sub\-modules (an easy exercise). However, as soon as $f>1$, this completely breaks down (one trivial example for $f=2$: $\F\bbra{Y_0,\frac{Y_1^2}{Y_0}}$ is a compact $\F\bbra{Y_0,Y_1}$-submodule of $A$ which is not finitely generated).

Unfortunately this also happens for the $\F\bbra{N_0}$-submodules $D_A(\pi)^{\natural}$ of Corollary \ref{limpsiiso}. We let $P^+ := \smatr{\ok\backslash\{0\}}{\ok}{0}{1}$, which is a submonoid of $P=\smatr{K\s}{K}{0}{1}$.

\begin{prop}\label{notfinite}
Assume $f>1$ and let $\pi$ be an irreducible admissible supersingular representation of $\GL_2(K)$ over $\F$ which is in the category $\mathcal C$ {(\S~\ref{natural})} and such that $D_A(\pi)\ne 0$. Then the compact $\F\bbra{N_0}$-submodule $D_A(\pi)^{\natural}$ of $D_A(\pi)$ is not finitely generated over $\F\bbra{N_0}$.
\end{prop}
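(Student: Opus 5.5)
The plan is to argue by contradiction. Suppose $M:=D_A(\pi)^\natural$ is finitely generated over $\F\bbra{N_0}\cong\F\bbra{Y_0,\dots,Y_{f-1}}$, say by $m_1,\dots,m_r$. Fix an $A$-basis $e_1,\dots,e_d$ of $D_A(\pi)$; this module is finite free and nonzero because $\pi\in\cC$ and $D_A(\pi)\ne 0$. Recall that $A$ is a completion of $\F\bbra{N_0}[1/Y_j,\ 0\le j\le f-1]$. Each $m_i$ therefore has only ``bounded denominators'': after enlarging $N$, we get
\[M\subset \Lambda_N:=\bigoplus_{k=1}^d (Y_0\cdots Y_{f-1})^{-N}\,A^{+}e_k,\]
where $A^+\subset A$ is the closed subring of power-bounded elements relative to one fixed choice of norm. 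The point is that $A^+$ is much bigger than $\F\bbra{N_0}$: it contains all monomials $\prod_j Y_j^{a_j}$ with $a_j\in\Z$ lying in a suitable cone. Still, finite generation over $\F\bbra{N_0}$ gives a much stronger constraint. The set of exponents $\un{a}\in\Z^f$ appearing in the expansions of elements of $M$ (in a fixed coordinate system $A\cong \F\ppar{Y_0}\langle (Y_j/Y_0)^{\pm1}\rangle$) is contained in finitely many translates $\un{b}_i+\Z_{\ge0}^f$. This is the constraint that must fail when $f>1$.

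Next, use the two structures on $M$: it generates $D_A(\pi)$ over $A$, and $\psi:M\to M$ is surjective. Work with $\vp$ and $\psi$ explicitly on $A$. Up to units, and as in \cite{BHHMS2,BHHMS3}, $\vp(Y_j)$ behaves like $Y_{j-1}^p$ (indices mod $f$). Also, $\psi$ of a monomial $\un Y^{\un a}$ is essentially $\un Y^{\un a'}$ when $\un a=p\,\tau(\un a')$, with $\tau$ the cyclic shift, and is ``small'' otherwise. Surjectivity of $\psi$ on $M$ means that every $m\in M$ is $\psi(m')$ for some $m'\in M$. Iterating, every $m$ equals $\psi^n(m_n)$ with $m_n\in M$ for all $n$. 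I would then show, using the $\vp$-semilinear matrix of $\vp$ on the basis $(e_k)$ (étaleness, $D_A(\pi)=D_A(\pi)^{\et}$ in the relevant cases, or else via Remark~\ref{remglobal}), that a preimage under $\psi^n$ of an element whose exponent set is not bounded below in the ``mixed'' directions must itself have exponent set escaping every finite union of cones $\un{b}_i+\Z_{\ge0}^f$. The relevant mixed directions are those where some $a_j<0$ is compensated by some $a_{j'}>0$, such as $Y_1^{p^n}/Y_0$. Concretely, I would look for mixed monomials like $(Y_{1}/Y_0)^{k}$ with $k$ unbounded, which lie in $A^+$ but not in $\un Y^{\un b}\F\bbra{N_0}$ for any fixed $\un b$. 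This is exactly the phenomenon of the example $\F\bbra{Y_0,Y_1^2/Y_0}$. It cannot occur for $f=1$, where there is only one direction.

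To produce such elements, I would use Corollary~\ref{limpsiiso}: $\pi^\vee\cong \vplim_\psi M$. Under finite generation, the Pontryagin dual $W:=M^\vee\subset$ (a quotient picture of) $\pi$ would be an $N_0$-representation with $W^{N_0}$ finite-dimensional and with $\pi=\bigcup_n \smatr{p^{-n}}001 W$. Combining this with the irreducibility of $\pi|_P$ (\cite[Thm.~1.1(i)]{paskunas-restriction}) and with the transformation rule $\smatr10{p^{n}\cO_K}1t=t\smatr10{p^{n+1}\cO_K}1$ from \eqref{tp}, I would derive a growth estimate. On one hand, $\dim W[\m_{N_0}^k]$ must be bounded by $r\cdot\dim \F\bbra{N_0}/\m_{N_0}^k$ with a uniform constant. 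On the other hand, the translates $t^{-n}W$ must exhaust $\pi$, whose $\m_{N_0}$-torsion grows in an incompatible way in the mixed directions.

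The main obstacle is this last compatibility: making precise, for $f\ge2$, that ψ-surjectivity together with the generation of $D_A(\pi)$ forces unbounded mixed denominators. Concretely, one must pin down how $\psi$ acts on monomials $\un Y^{\un a}$ with mixed-sign exponents, modulo terms of higher degree. I would first try the explicit estimate on the single rank-one model $A$ (taking $\pi$ replaced by a character-twisted situation), showing that any nonzero compact $\psi$-stable $\F\bbra{N_0}$-submodule of $A$ on which $\psi$ is surjective contains elements $\un Y^{\un a}$ with $a_0\to-\infty$ while $a_0+a_1$ stays bounded. I would then reduce the general case to this by a filtration/graded argument, using that $A$ is Zariski and that $\gr$ commutes with the relevant operations.
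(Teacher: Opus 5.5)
\textbf{There is a genuine gap.} Your strategy never uses anything that distinguishes $D_A(\pi)^\natural$ from other $\psi$-stable lattices, and the one concrete constraint you propose is false.

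Finite generation over $\F\bbra{N_0}$ does not confine exponents to finitely many translates $\un b_i+\Z_{\ge0}^f$. For $f=2$, the single element $\sum_{k\ge0}Y_0^{-k}Y_1^{2k}\in A$ already has $Y_0$-exponents unbounded below, yet it generates a cyclic module.

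More decisively, the module $\F\bbra{N_0}f_0\oplus\F\bbra{N_0}f_1$ of \S~\ref{hand} has every property your first two paragraphs rely on. It is compact, $\psi$-stable with $\psi$ surjective, an $A$-basis-generated lattice in $D_A(\pi)$ with étale $\vp$-matrix. Yet it is finitely generated; it fails only to be $\cO_K^\times$-stable, which you never use. Likewise, in your rank-one model, $\F\bbra{N_0}\subset A$ is itself compact and satisfies $\psi(\F\bbra{N_0})=\F\bbra{N_0}$ by Lemma~\ref{psi1}. So the intermediate claim you plan to prove there is false (compare Lemma~\ref{lm:DAfinitetype}). No monomial bookkeeping with $\psi$ can succeed: the contradiction has to come from $\pi$.

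Your third paragraph points the right way but stops before the real input. Dualizing $\pi^\vee\twoheadrightarrow D_A(\pi)^\natural$ gives a nonzero $P^+$-stable \emph{subspace} $V\subset\pi$, not a quotient. Finite generation indeed forces $\dim_\F V^{N_0}<\infty$.

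However, exhaustion $\pi=\bigcup_n t^{-n}V$ (with $t=\smatr p001$) together with irreducibility of $\pi|_P$ cannot contradict this. For $f=1$ exactly this situation occurs with Colmez's finitely generated $D^\natural$. Your ``growth estimate'' is not formulated in a way that detects $f>1$. Two ingredients are missing.

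First, any nonzero $P^+$-stable $V\subset\pi$ contains a Serre weight $\sigma$, hence contains $I^+(\sigma,\pi)=\langle P^+\cdot\sigma\rangle$ \cite{yongquan-jussieu}. The argument is as follows.
\begin{enumerate}
\item Twist so that $\smatr p00p$ acts trivially.
\item For $0\ne v\in V$, smoothness shows $t^nv$ is fixed by a subgroup $H_n$ with $I_1=(I_1\cap B)H_n$. Hence $\F[I_1]t^nv=\F[I_1\cap B]t^nv\subset V$, and so $V\cap\pi^{I_1}\ne0$.
\item Replace $v$ by $(S_0)^mv$ so that $S_0v=0$ (Lemma~\ref{lem:Sn-vanish}).
\item Then \cite[Lemma~3.4]{paskunas-restriction} gives $\smatr0110 v\in\langle P^+\cdot v\rangle$, whence $\langle\GL_2(\cO_K)\cdot v\rangle\subset V$.
\end{enumerate}

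Second, and this is the deep step that genuinely uses $f>1$, one needs that $I^+(\sigma,\pi)^{N_0}$ is infinite-dimensional. This is the heart of the non-finite-presentation theorem for supersingular representations: \cite[Cor.~2.10, Thm.~2.24]{Benj} for $f=2$, and \cite[Thm.~1.1]{Wu} for $f\ge2$. Without invoking (or reproving) that theorem, your argument cannot be completed.
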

\begin{proof}
(See also \cite[Rk.~3.21]{BHHMS2}.) Since $\pi$ has a central character, twisting by an unramified character {(and enlarging $\F$)} if necessary we can assume that $\smatr{p}{0}{0}{p}$ acts trivially on $\pi$ (this enables us to apply results of \cite{paskunas-restriction}). Let us first prove that any nonzero vector subspace $V$ of $\pi$ preserved by $P^+$ contains a Serre weight (i.e.,~an irreducible $\GL_2(\cO_K)$-subrepresentation of $\pi$). Let $v\in V$ be nonzero. As $\GL_2(K)$ acts smoothly, there exists an integer $n\geq0$ such that $v$ is fixed by $\smatr{1+p^{n+1}\cO_K}{p^{n+1}\cO_K}{p^{n+1}\cO_K}{1+p^{n+1}\cO_K}$ so that $t^nv$ is fixed by $H_n\coloneqq\smatr{1+p^{n+1}\cO_K}{p^{2n+1}\cO_K}{p\cO_K}{1+p^{n+1}\cO_K}$. The decomposition $I_1=(I_1 \cap B)H_n$ shows that $\F[I_1]t^nv=\F[I_1\cap B]t^nv\subset V$. Therefore $0\neq(\F[I_1]t^nv)^{I_1}\subset V \cap \pi^{I_1}$. We may therefore assume $v \in (V \cap \pi^{I_1}) \setminus\{0\}$. By Lemma~\ref{lem:Sn-vanish} we can replace $v$ by {$(S_0)^m v$} $\in V$ for some $m \ge 0$ to guarantee that $v \in (V \cap \pi^{I_1})\setminus \{0\}$ and $S_0 v = 0$. Hence we can apply \cite[Lemma~3.4]{paskunas-restriction} which implies $\smatr{0}{1}{1}{0}v\in \langle P^+\cdot v\rangle$, and thus $\langle \GL_2(\cO_K)\cdot v\rangle\subset \langle P^+\cdot v\rangle \subset V$ (using $\GL_2(\cO_K)\subset P^+I_1 \cup P^+\smatr{0}{1}{1}{0}I_1$). This proves the statement.

Now let $V$ be the continuous dual of $D_A(\pi)^{\natural}$ (under the discrete-compact duality), which is a nonzero vector subspace of $\pi$ preserved by $P^+$, and let $\sigma$ be a Serre weight such that $\langle P^+\cdot \sigma\rangle\subseteq V$. Note that $\langle P^+\cdot \sigma\rangle$ is denoted $I^+(\sigma,\pi)$ in \cite[\S~3.1.1]{yongquan-jussieu} (and does not depend on $\sigma$ by \cite[Cor.~3.17]{yongquan-jussieu}). If $D_A(\pi)^{\natural}$ is finitely generated over $\F\bbra{N_0}$, then $V^{N_0}$, and hence $I^+(\sigma,\pi)^{N_0}$, are finite-dimensional over $\F$. But by \cite[Cor.~2.10]{Benj} this contradicts \cite[Thm.~2.24]{Benj} (when $f=2$) and \cite[Thm.~1.1]{Wu} (when $f\geq 2$).
\end{proof}

\begin{rem}\label{rk:DAfinitetype}
If $\pi$ is (a constituent of) a principal series of $\GL_2(K)$ over $\F$, then $D_A(\pi)^{\natural}$ is finitely generated over $\F\bbra{N_0}$. See Lemma~\ref{lm:DAfinitetype} below.
\end{rem}

Let us spend some time on the simplest example:
\begin{equation}\label{example}
f=2\text{ \ \ and \ \ }\o r_v=\matr{\omega_2^h}{*}01\otimes \omega^{-1},
\end{equation}
where $h=h_0+ph_1$ with $0\leq h_i\leq p-1$ and $(h_0,h_1)\notin \{(0,0),(p-1,p-1)\}$, and where $\omega_2:\gK\rightarrow \F$ is Serre's niveau $2$ fundamental character that we take $\F$-valued via the fixed embedding $\sigma_0:\Fq\hookrightarrow \F$ (see \S~\ref{strengthening}).

By \ (\ref{form}) \ $D_A^\otimes(\o r_v(1))$ \ has \ the \ form \ $D_A^\otimes(\omega_2^h)\!\begin{xy} (0,0)*+{}="a"; (8,0)*+{}="b"; {\ar@{-}"a";"b"}\end{xy}\! D_A^\otimes(\o r_v(1))^{\ss} \!\begin{xy} (15,0)*+{}="a"; (23,0)*+{}="b"; {\ar@{-}"a";"b"}\end{xy} \!D_A^\otimes(1)$. \ Using \cite[Lemma 2.2.2]{BHHMS3} and \cite[(51)]{BHHMS3} we can check that
\begin{equation}\label{ssummand}
D_A^\otimes(\o r_v(1))^{\ss}=Ae_0 \oplus Ae_1\ \ \text{ with }\ \ \left\{\begin{array}{rcl} \varphi(e_0) &=& e_1\\ \varphi(e_1)&=&\big(\frac{Y_0}{\varphi(Y_0)}\big)^he_0\ =\ \big(\frac{Y_0}{Y_1^p}\big)^he_0\end{array}\right.
\end{equation}
and $[\F_q^\times]$ acting trivially on $e_0,e_1$ (we won't need the full $\cO_K^\times$-action and recall that the $\F$-algebra homomorphism $\vp : A \to A$ satisfies $\vp(Y_i) = Y_{i-1}^p$.) From \cite[Thm.~1.1.1]{BHHMS4} $\pi(\o r)$ has the form
\begin{equation}\label{f=2red}
\text{PS}_1 \!\begin{xy} (0,0)*+{}="a"; (8,0)*+{}="b"; {\ar@{-}"a";"b"}\end{xy}\! \pi \!\begin{xy} (15,0)*+{}="a"; (23,0)*+{}="b"; {\ar@{-}"a";"b"}\end{xy} \! \text{PS}_2,
\end{equation}
where $\text{PS}_i$ are two principal series and $\pi$ is irreducible supersingular with $\gr(\pi)$ killed by the ideal $J$ {(\S~\ref{strengthening})}.
By \cite[Thm.~3.1.3]{BHHMS3} and \cite[Thm.~1.1]{YW2}, at least under genericity assumptions on the integers $h_0$, $h_1$ (which must be ``far'' from $0$ and $p-1$, see \emph{loc.~cit.}~for details), we have that $\pi$ satisfies condition (C) in \S~\ref{strengthening} and that $D_A(\pi(\o r))\cong D_A^\otimes(\o r_v(1))$ with $D_A(\text{PS}_2)\cong D_A^\otimes(\omega_2^h)$, $D_A(\text{PS}_1)\cong D_A^\otimes(1)$ (see Lemma \ref{PS} below for $D_A(\text{principal series})$) and $D_A(\pi)\cong D_A^\otimes(\o r_v(1))^{\ss}$, cf.~\eqref{piglobal}. We are interested in $D_A(\pi)^\natural$.

It is convenient to make the following base change in $D_A(\pi)$:
\begin{equation}\label{basechange}
\left\{\begin{array}{rcl} f_0 &:=& \frac{Y_{1}^{h_1}}{Y_{0}}e_0,\\ 
f_1&:=&\frac{Y_{0}^h}{Y_{1}^{1+h_1}}e_1.\end{array}\right.
\end{equation}
Then $[\F_q^\times]$ acts on each $f_i$ by a (non-trivial) character and a computation yields
\begin{equation}\label{psi0}
\left\{\begin{array}{rcl} \varphi(f_0) &=& \frac{1}{Y_{0}^{h_0}Y_{1}^{p-1-h_1}}f_1,\\ \varphi(f_1)&=&\frac{1}{Y_{0}^{p-1-h_0}Y_{1}^{h_1}}f_0.\end{array}\right.
\end{equation}
In particular, using that $\psi(a\vp(x)) = \psi(a)x$ for $a \in A$, $x \in D_A(\pi)$, we have 
\begin{equation}\label{psi}
\left\{\begin{array}{rcl} \psi(f_0) &=& \psi(Y_{0}^{p-1-h_0}Y_{1}^{h_1})f_1,\\ 
\psi(f_1)&=&\psi(Y_{0}^{h_0}Y_{1}^{p-1-h_1})f_0.\end{array}\right.
\end{equation}
We will use the following lemma (available for any $f\geq 1$).

\begin{lem}\label{psi1}
Let $a\in A$ and $n\geq 1$. Then inside $A$ we have
\begin{equation}\label{eq:psi-formula}
\psi^n(\F\bbra{N_0}a)=\sum_{\underline{k}\in\{0,\dots,p^n-1\}^f}\F\bbra{N_0}a_{\underline k},
\end{equation}
where $a_{\underline k}\in A$ are the unique elements such that
\[a=\sum_{\underline{k}\in\{0,\dots,p^n-1\}^f}\!\!\!\varphi^n(a_{\underline k})\underline Y^{\underline k}.\]
\end{lem}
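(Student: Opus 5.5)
The plan is to reduce everything to the basic identity $\psi(b\,\varphi(c)) = \psi(b)\,c$ together with an explicit description of $\psi$ on monomials $\underline Y^{\underline k}$. First we settle the decomposition. We claim that $A$ is a free module over $\varphi^n(A)$ with basis $\{\underline Y^{\underline k} : \underline k\in\{0,\dots,p^n-1\}^f\}$. This holds already for $\F\bbra{N_0}$ over $\varphi^n(\F\bbra{N_0}) = \F\bbra{Y_0^{p^n},\dots,Y_{f-1}^{p^n}}$, because $\varphi(Y_i)=Y_{i-1}^p$, so $\varphi^n$ permutes the variables and raises them to the $p^n$-th power. It passes to $A$ by localizing at $\prod_i Y_i$. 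Since $\prod_i Y_i^{p^n}$ lies in $\varphi^n(\F\bbra{N_0})$, inverting $\prod_i Y_i$ is the same as inverting its $p^n$-th power, so freeness survives. It then survives completion, because the basis is finite and the filtrations are compatible. This gives existence and uniqueness of the $a_{\underline k}$.

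Second, we compute $\psi^n(\underline Y^{\underline k})$ for $\underline k\in\{0,\dots,p^n-1\}^f$. We use that $\psi$ on $\F\bbra{N_0}$ comes from the dual of multiplication by $p$ on $\cO_K$, so $\psi^n$ is the trace map $\F\bbra{\cO_K}\to\F\bbra{p^n\cO_K}\cong\F\bbra{\cO_K}$ divided appropriately. Concretely, $\psi^n(\delta_x) = \delta_{x/p^n}$ if $x\in p^n\cO_K$ and $0$ otherwise. Writing $\F\bbra{N_0} = \bigoplus_{\underline k}\varphi^n(\F\bbra{N_0})\,\underline Y^{\underline k}$, we expect
\[\psi^n(\underline Y^{\underline k}) = \delta_{\underline k,\underline k_{\max}}\quad\text{up to a unit, with }\underline k_{\max}=(p^n-1,\dots,p^n-1),\]
or at least $\psi^n(\underline Y^{\underline k})\in\F\bbra{N_0}$, with $\psi^n(\underline Y^{\underline k_{\max}})$ a unit. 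These facts are standard (cf.\ the formulas in \cite{BHHMS2}). The exact form is not needed; what we need is that $\psi^n(\underline Y^{\underline j})\in\F\bbra{N_0}$ for every $\underline j\geq 0$, and that some monomial $\underline Y^{\underline k_{\max}-\underline k}$ pushes $\underline Y^{\underline k}$ to a unit.

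Third, we prove the two inclusions. For $\subseteq$, take $\lambda\in\F\bbra{N_0}$ and expand $\lambda = \sum_{\underline j}\varphi^n(\lambda_{\underline j})\underline Y^{\underline j}$ with $\lambda_{\underline j}\in\F\bbra{N_0}$. Then
\[\psi^n(\lambda a) = \sum_{\underline j,\underline k}\lambda_{\underline j}\,a_{\underline k}\,\psi^n(\underline Y^{\underline j+\underline k}),\]
and each $\psi^n(\underline Y^{\underline j+\underline k})$ lies in $\F\bbra{N_0}$. For $\supseteq$, we apply $\psi^n$ to $\underline Y^{\underline k_{\max}-\underline k}a$. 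Write $\underline Y^{\underline k_{\max}-\underline k}\underline Y^{\underline k'} = \underline Y^{\underline k_{\max}}\underline Y^{\underline k'-\underline k}$. We expect this to produce $u\,a_{\underline k}$ plus $\F\bbra{N_0}$-combinations of $a_{\underline k'}$ with $\underline k'$ strictly larger in some coordinate, where $u$ is a unit. A triangular induction on $\underline k$, ordered componentwise starting from $\underline k_{\max}$, then shows that each $a_{\underline k}$ lies in $\psi^n(\F\bbra{N_0}a)$.

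The main obstacle is the explicit computation of $\psi^n(\underline Y^{\underline j})$ for $\underline j$ with entries up to $2p^n-2$. In particular we must confirm that it is triangular with unit diagonal, in the sense that it vanishes unless $\underline j\geq\underline k_{\max}$ componentwise and equals a unit when $\underline j=\underline k_{\max}$. We would derive this from the character-sum definition of $Y_\sigma$ and the mod-$p$ behaviour of $\psi$ on group-like elements, by comparison with the $f=1$ case $\F\bbra{X}$, where the formula is classical. If it proves messy, a fallback is the following. The basis $\underline Y^{\underline k}$ and $\F\bbra{N_0}$ are both $\F\bbra{N_0}$-modules under $x\mapsto\varphi^n(\cdot)x$, and $\psi^n$ gives a perfect pairing $(x,y)\mapsto\psi^n(xy)$ between them. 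Nondegeneracy of this pairing suffices to make the matrix invertible over $\F\bbra{N_0}$, which proves $\supseteq$ directly without triangularity.
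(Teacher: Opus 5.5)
Your decomposition step and the inclusion $\subseteq$ are fine. Your plan for $\supseteq$ (apply $\psi^n$ to $\underline{Y}^{\underline{k}_{\max}-\underline{\ell}}a$ and invert the resulting matrix) is also the paper's. But the only nontrivial step is not proved: invertibility of $\bigl(\psi^n(\underline{Y}^{\underline{k}_{\max}-\underline{\ell}+\underline{k}})\bigr)_{\underline{\ell},\underline{k}}$ over $\F\bbra{N_0}$. Moreover, the structure you predict for this matrix is false.

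Since $\psi^n(1)=1$, $\psi^n(\underline{Y}^{\underline{j}})$ does not vanish for all $\underline{j}\not\geq\underline{k}_{\max}$. So your triangular induction fails at its base: for $a=1$ one gets $\psi^n(a)=1$ although $a_{\underline{k}_{\max}}=0$. For the other exponents you can at best hope for congruences modulo $\mathfrak{m}_{N_0}$, not vanishing. For $n\geq 2$, not even the units are confined to $\underline{j}\in\{\underline{0},\underline{k}_{\max}\}$. For example, for $f=1$, $p=3$ one has $Y_0=\delta_1-\delta_{-1}$ and $Y_0^2=\delta_2+\delta_0+\delta_{-2}$, so $\psi^2(Y_0^2)=\psi(1)=1$. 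This is why the paper proves the case $n=1$ and inducts.

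For $n=1$ the paper shows that $\psi(\underline{Y}^{\underline{k}})\in\F\bbra{N_0}^\times$ if and only if $\underline{k}\in\{\underline{0},\underline{p-1}\}$. A character sum gives $\psi(\underline{Y}^{\underline{k}})\in\mathfrak{m}_{N_0}$ for the intermediate $\underline{k}$. The fact that $\psi(\underline{Y}^{\underline{p-1}})$ is a unit needs a real computation in auxiliary variables $T_0,T_1$ with $\psi(T_0^iT_1^j)=(-1)^{i+j}$; you cannot quote it as standard. Consequently $\bigl(\psi(\underline{Y}^{\underline{k}+\underline{p-1}-\underline{\ell}})\bigr)_{\underline{k},\underline{\ell}}$ is, modulo $\mathfrak{m}_{N_0}$, diagonal with unit entries plus a single entry at $(\underline{0},\underline{p-1})$, hence invertible.

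Your fallback is the right idea, and once justified it is cleaner than the paper's route: it is uniform in $f$ and $n$ and needs no character sums or auxiliary variables. But as written you simply assert that $(x,y)\mapsto\psi^n(xy)$ is perfect, and that is the entire content. It is easy to supply:

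1. Let $C\ni 0$ be a set of representatives of $\cO_K/p^n\cO_K$ and $R:=\varphi^n(\F\bbra{N_0})=\F\bbra{p^n\cO_K}$. Then $\F\bbra{N_0}=\bigoplus_{c\in C}R\,\delta_c$, and $\varphi^n\circ\psi^n$ is the projection onto the $\delta_0$-component.
2. Hence the $R$-bilinear form $B(x,y):=\varphi^n\psi^n(xy)$ has Gram matrix $\bigl(\varphi^n\psi^n(\delta_{c+c'})\bigr)_{c,c'}$. This matrix is monomial, and its nonzero entries are group elements, hence units of $R$.
3. In the other $R$-basis $\{\underline{Y}^{\underline{k}}\}$ the Gram matrix is $P^{\mathrm{t}}GP$ with $P$ invertible. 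After applying $\varphi^{-n}$, the matrix $\bigl(\psi^n(\underline{Y}^{\underline{k}+\underline{\ell}})\bigr)_{\underline{k},\underline{\ell}}$ is therefore invertible over $\F\bbra{N_0}$.
4. The identity $\psi^n(\underline{Y}^{\underline{\ell}}a)=\sum_{\underline{k}}\psi^n(\underline{Y}^{\underline{k}+\underline{\ell}})\,a_{\underline{k}}$ then yields $\supseteq$.

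As a by-product, reducing the row $\underline{k}=\underline{k}_{\max}$ modulo $\mathfrak{m}_{N_0}$ shows that $\psi^n(\underline{Y}^{\underline{k}_{\max}})$ is a unit.
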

\begin{proof}
We take $n = 1$, the general case following by induction. For $f \in \F\bbra{N_0}$ we have
\begin{equation}\label{eq:psi-fa}
\psi(fa) = \sum_{\un k} \psi(f \un Y^{\un k}) a_{\un k}.
\end{equation}
From the definitions we have $\psi(\F\bbra{N_0}) \subset \F\bbra{N_0}$, hence from~\eqref{eq:psi-fa} we deduce that ``$\subset$'' holds in~\eqref{eq:psi-formula}. For the other inclusion ``$\supset$'', note from~\eqref{eq:psi-fa} that
\begin{equation*}
\psi(\un Y^{\un{p-1}-\un \ell}a) = \sum_{\un k} a_{\un k} \lambda_{\un k, \un \ell},
\end{equation*}
where $\lambda_{\un k, \un \ell} := \psi(\un Y^{\un k + \un{p-1}-\un\ell}) \in \F\bbra{N_0}$ ($\un 0 \le \un k, \un \ell \le \un{p-1}$). It suffices to prove that for all $\un k \in \Z^f_{\ge 0}$ we have
\begin{equation}\label{eq:psi-Y}
\psi(\un Y^{\un k}) \in \F\bbra{N_0}^\times \iff \un k \in \{\un 0, \un{p-1}\},
\end{equation}
because this implies that the $p^f \times p^f$ matrix $(\lambda_{\un k, \un \ell})_{\un k,\un \ell}$ is invertible (even upper-triangular mod $\mathfrak{m}_{N_0}$).

To prove~\eqref{eq:psi-Y}, note that the cases where $\un k = \un 0$ or $k_j \ge p$ for some $j$ are trivial. We may thus suppose that $\un 0 \le \un k \le \un{p-1}$ and $\un k \ne \un 0$. We only give the proof for $f=2$, the interested reader can work out the general case. Write $\un k = (i,j)$. If $(i,j) \notin \{(0,0), (p-1,p-1)\}$, we may and will assume $i > 0$. Then we have 
\begin{equation*}
Y_0^i Y_1^j = \sum_{\lambda_s,\mu_s \in \F_q\s} \lambda_1^{-1}\cdots \lambda_i^{-1}\mu_1^{-p}\cdots\mu_j^{-p} \smatr 1{[\lambda_1]+\cdots+[\mu_j]}01,
\end{equation*}
so by definition of $\psi$, 
\begin{align*}
\psi(Y_0^i Y_1^j) &= \sum_{\substack{\lambda_s,\mu_s \in \F_q\s \\ \sum \lambda_s+\sum \mu_s = 0}} \lambda_1^{-1}\cdots \lambda_i^{-1}\mu_1^{-p}\cdots\mu_j^{-p} \smatr 1{\frac 1p([\lambda_1]+\cdots+[\mu_j])}01 \\
& \equiv \sum_{\substack{\lambda_s,\mu_s \in \F_q\s \\ \sum \lambda_s+\sum \mu_s = 0}} \lambda_1^{-1}\cdots \lambda_i^{-1}\mu_1^{-p}\cdots\mu_j^{-p} \pmod{\mathfrak{m}_{N_0}}.
\end{align*}
Write $\lambda_s = \lambda_1 \alpha_s$ and $\mu_s = \lambda_1 \beta_s$, with $\alpha_s,\beta_s \in \F_q\s$ (so $\alpha_1 = 1$). The above sum becomes
\begin{equation*}
\equiv \sum_{\substack{\lambda_1,\alpha_s,\beta_s \in \F_q\s, \alpha_1 = 1 \\ \sum \alpha_s+\sum \beta_s = 0}} \lambda_1^{-i-pj} \alpha_2^{-1} \cdots \alpha_i^{-1}\beta_1^{-p}\cdots\beta_j^{-p} \pmod{\mathfrak{m}_{N_0}},
\end{equation*}
which is $0 \pmod {\mathfrak{m}_{N_0}}$ by summing first over $\lambda_1$, as $0 < i+pj < q-1$ by assumption.

It remains to prove that $\psi(Y_0^{p-1}Y_1^{p-1}) \notin \mathfrak{m}_{N_0}$. Recall the elements $T_0, T_1$ and $Z_0, Z_1$ of $\F\bbra{N_0}$ from \cite[\S~3.3]{BHHMS3} (each of those pairs generate $\mathfrak{m}_{N_0}$). By definition, $Y_i \equiv Z_i \pmod{\mathfrak{m}_{N_0}}$, so $Y_0^{p-1}Y_1^{p-1} \equiv Z_0^{p-1}Z_1^{p-1} \pmod{\mathfrak{m}_{N_0}^{2p-1}}$. As $\psi(Y_i^p) \in \mathfrak{m}_{N_0}$ for each $i$, we deduce that $\psi(\mathfrak{m}_{N_0}^{2p-1}) \subset \mathfrak{m}_{N_0}$, so
\begin{equation}\label{eq:psi-Y-Z}
\psi(Y_0^{p-1}Y_1^{p-1}) \equiv \psi(Z_0^{p-1}Z_1^{p-1}) \pmod{\mathfrak{m}_{N_0}}.
\end{equation}
From the definitions, $Z_0 = \alpha T_0 + \beta T_1$, $Z_1 = \gamma T_0 + \delta T_1$, with $\smatr \alpha\beta\gamma\delta \in \GL_2(\F)$, and $\psi(T_0^i T_1^j) = (-1)^{i+j}$ for $0 \le i,j \le p-1$. Therefore $\psi(Z_0^{p-1}Z_1^{p-1}) \equiv C \psi(T_0^{p-1}T_1^{p-1}) \equiv C \pmod{\mathfrak{m}_{N_0}}$, where $C \in \F$ is the coefficient of $T_0^{p-1}T_1^{p-1}$ in $(\alpha T_0 + \beta T_1)^{p-1}(\gamma T_0 + \delta T_1)^{p-1}$, and by~\eqref{eq:psi-Y-Z} it suffices to show that $C \ne 0$. A short computation shows that $C = (\alpha\delta-\beta\gamma)^{p-1} \ne 0$.
\end{proof}

In particular if $f=2$ and $0\leq k_0,k_1\leq p-1$, we have by Lemma \ref{psi1} that
\begin{equation}\label{psi2}
\psi(\F\bbra{N_0}Y_{0}^{k_0}Y_{1}^{k_1})=\F\bbra{N_0}.
\end{equation}
By \eqref{psi0} and (\ref{psi}) we see that $\psi(\un Y^{\un k}f_i) = \psi(\un Y^{\un k+\un \ell^{(i)}})f_{1-i}$ for $i = 0,1$, where $\un\ell^{(0)} = (p-1-h_0,h_1)$ and $\un\ell^{(1)} = (h_0,p-1-h_1)$. Using moreover~\eqref{eq:psi-Y} and (\ref{psi2}), we see that $\F\bbra{N_0}f_0 \oplus \F\bbra{N_0}f_1\subset D_A(\pi)$ is a compact $\F\bbra{N_0}$-submodule preserved by $(\psi,[\F_q^\times])$ such that $\psi$ is surjective. However, it is finitely generated over $\F\bbra{N_0}$, and hence cannot be $D_A(\pi)^{\natural}$ by Proposition \ref{notfinite}. One can also argue that it is not stable by $1+p\cO_K$.

One could then consider the closure in $D_A(\pi)$ of the $\F\bbra{N_0}$-submodule $\langle \cO_K^\times\cdot (\F\bbra{N_0}f_0 \oplus \F\bbra{N_0}f_1)\rangle$, which we denote by $\o M$.

\begin{lem}\label{lm:closure-submod}
The closure $\o M$ of the $\F\bbra{N_0}$-submodule $\langle \cO_K^\times\cdot (\F\bbra{N_0}f_0 \oplus \F\bbra{N_0}f_1)\rangle$ of $D_A(\pi)$ is compact.
\end{lem}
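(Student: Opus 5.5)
It suffices to find a compact $\F\bbra{N_0}$-submodule $M_0$ of $D_A(\pi)$ containing $\langle \cO_K^\times\cdot(\F\bbra{N_0}f_0\oplus\F\bbra{N_0}f_1)\rangle$. Since $D_A(\pi)$ is Hausdorff, $M_0$ is then closed, so $\o M\subset M_0$ is a closed subset of a compact set and hence compact. The natural candidate is $M_0:=\F\bbra{N_0}\tfrac{1}{\underline Y^{\underline c}}f_0\oplus \F\bbra{N_0}\tfrac{1}{\underline Y^{\underline c}}f_1$ for a suitable exponent $\underline c$, or more robustly a lattice of the form $\F\bbra{N_0}\langle (Y_1/Y_0)^{\pm1}\rangle^{\circ}$-type bounded submodule. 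A finitely generated $\F\bbra{N_0}$-submodule of the finite free $A$-module $D_A(\pi)$ is compact, because it is the continuous image of $\F\bbra{N_0}^n$. So the task reduces to bounding the $\cO_K^\times$-orbits of $f_0,f_1$.

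The key step is the action of $a\in\cO_K^\times$ on $f_i$. Since $\cO_K^\times=[\F_q^\times]\times(1+p\cO_K)$ and $[\F_q^\times]$ acts on $f_i$ by characters, only $1+p\cO_K$ needs treatment. I will use the explicit $\cO_K^\times$-action on $D_A^\otimes(\o r_v(1))^{\ss}$ from \cite{BHHMS3}, or more intrinsically the following fact. Continuity of the action, together with the fact that $(1+p\cO_K)$ acts on $A$ through automorphisms preserving $\F\bbra{N_0}$ and congruent to the identity modulo a higher filtration step, gives a matrix of $a$ in the basis $(e_0,e_1)$ of the form $1+(\text{entries of small degree})$. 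More precisely, I would show that the matrix of $a$ in the basis $(f_0,f_1)$ has entries in $B:=\F\bbra{N_0}[\tfrac{1}{Y_0Y_1}]$ with uniformly bounded denominators, say in $\tfrac{1}{(Y_0Y_1)^{N}}\F\bbra{N_0}$ for some $N$ independent of $a$. Given this, $a(\lambda f_i)=a(\lambda)a(f_i)$ lies in $(Y_0Y_1)^{-N}(\F\bbra{N_0}f_0\oplus\F\bbra{N_0}f_1)$ for every $\lambda\in\F\bbra{N_0}$, since $a$ preserves $\F\bbra{N_0}$. We can then take $M_0:=(Y_0Y_1)^{-N}(\F\bbra{N_0}f_0\oplus\F\bbra{N_0}f_1)$.

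To get the uniform bound, I will use that the $\cO_K^\times$-action commutes with $\varphi$ and exploit the relations \eqref{psi0}. For $a\in1+p\cO_K$, write $a(f_i)=\sum_j m_{ij}(a)f_j$. Applying $\varphi$ and \eqref{psi0} gives a fixed-point equation of the form $M=\varphi(M)\cdot(\text{ratio of }a(Y)\text{ and }Y\text{ monomials})$. Here $a(Y_j)/Y_j\in\F\bbra{N_0}^\times$ (it is a unit, since $a$ preserves $\m_{N_0}$ and its reduction modulo $\m_{N_0}^2$). Iterating the equation, or arguing by contraction as in the uniqueness of the $\cO_K^\times$-action in \cite{BHHMS3}, should force $m_{ij}(a)$ to lie in $\F\bbra{N_0}$ up to a fixed monomial denominator. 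Alternatively, compactness of $1+p\cO_K$ and continuity of $a\mapsto m_{ij}(a)\in A$ show that $\{m_{ij}(a)\}$ is a compact, hence bounded, subset of $A$. A bounded subset of $A$ lies in $Y_0^{-N}\F\bbra{N_0}\langle Y_1/Y_0,Y_0/Y_1\rangle$-type lattices rather than in $Y^{-N}\F\bbra{N_0}$, so I would enlarge $M_0$ to $\mathcal{L}f_0\oplus\mathcal{L}f_1$ with $\mathcal{L}:=Y_0^{-N}A^\circ$-like bounded lattices. Such a lattice is compact only if it is an $\F\bbra{N_0}$-module of the form $Y_0^{-N}\F\bbra{Y_0,Y_1^{m}/Y_0}$-closure, which is compact, as in the example $\F\bbra{Y_0,Y_1^2/Y_0}$.

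The main obstacle is exactly this boundedness-versus-compactness issue. Bounded subsets of $A$ are not contained in compact $\F\bbra{N_0}$-modules in general, since $A^\circ$ is not compact. One therefore needs the finer statement that $m_{ij}(a)-\delta_{ij}$ lies in a fixed compact $\F\bbra{N_0}$-lattice such as $\F\bbra{Y_0,Y_1,Y_1^{k}/Y_0,Y_0^{k}/Y_1}$, stable under multiplication by $\F\bbra{N_0}$ and by $a$. I would try to prove this first by computing $a(f_i)$ modulo high powers using the explicit formulas for the action of $1+p\cO_K$ on $Y_j$ from \cite{BHHMS3}.
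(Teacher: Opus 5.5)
Your reduction is correct and is also the paper's: exhibit a compact $\F\bbra{N_0}$-submodule of $D_A(\pi)$ containing $a(\lambda f_i)$ for all $a\in\oks$ and $\lambda\in\F\bbra{N_0}$. But the proposal never produces such a module, and the concrete route you commit to rests on a false claim.

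It is not true that $a(Y_j)/Y_j\in\F\bbra{N_0}^\times$. The element $a(Y_i)$ agrees with $\o a^{p^i}Y_i$ only modulo $(Y_0^p,Y_1^p)$, the kernel of $\F\bbra{N_0}\to\F[N_0/N_0^p]$. The difference is in general not divisible by $Y_i$: for $a\in(1+p\ok)\setminus(1+p^2\ok)$ it is $\equiv cY_{1-i}^p$ modulo $\m_{N_0}^{p+1}$ with $c\in\F^\times$. So $a(Y_i)$ is then a prime element of $\F\bbra{N_0}$ not associate to $Y_0$ or $Y_1$. Consequently $f_{a,i}=\o a^{p^i}Y_i/a(Y_i)$ is not in $\F\bbra{N_0}[1/(Y_0Y_1)]$ at all. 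This is exactly what enters $a(f_i)$, through $a(e_i)=\vp^i(f_{a,i})^{h(1-\vp)/(1-q)}e_i$ and the monomials in \eqref{basechange}. It is also why $\F\bbra{N_0}[1/(Y_0Y_1)]$ is not $\oks$-stable and one must pass to the completion $A$. Hence the uniform bound $(Y_0Y_1)^{-N}$ behind your $M_0$ is unfounded. Your last paragraph names the right kind of object but leaves it unproved, and you explicitly discard compactness of the orbit as yielding only boundedness. As written there is no proof.

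The missing idea in the paper is to work with a compact \emph{ring}. Set $B:=\F\bbra{Y_0,Y_1,Y_0^p/Y_1,Y_1^p/Y_0}\subset A$; it is compact, local and $\vp$-stable. The congruence above gives $a(Y_i)/(\o a^{p^i}Y_i)\in 1+(Y_i^{p-1},Y_{1-i}^p/Y_i)\F\bbra{N_0}\subset 1+\m_B$, i.e.\ $f_{a,i}\in 1+\m_B$. Since $1+\m_B$ is a group stable under $\vp$ and under $\Z_p$-powers, the diagonal formula for $a(e_i)$ from \cite{BHHMS3} gives $a(f_i)\in\F f_{a,0}^{\Z}f_{a,1}^{\Z}\vp^i(f_{a,i})^{h(1-\vp)/(1-q)}f_i\subset Bf_i$. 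As $a(\F\bbra{N_0})=\F\bbra{N_0}\subset B$, everything lies in the compact $Bf_0\oplus Bf_1$. This is your lattice with $k=p$; its $a$-stability is not needed.

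Alternatively, the route you abandoned can be completed with no computation, because compactness (not boundedness) of the orbit suffices. Every compact $S\subset D_A(\pi)$ lies in a compact $\F\bbra{N_0}$-submodule. To see this, cover $S$ successively by finitely many cosets of the open subgroups $F_{-k}D_A(\pi)$. This writes each $s\in S$ as $\sum_n\epsilon_nx_n$ with $\epsilon_n\in\{0,1\}$, for a single null sequence $(x_n)$. The map $(\lambda_n)_n\mapsto\sum_n\lambda_nx_n$ is then a continuous $\F\bbra{N_0}$-linear map from the compact $\prod_n\F\bbra{N_0}$, and its image is the desired submodule. Apply this to the orbit $\oks\cdot\{f_0,f_1\}$, which is compact by continuity of the action, and use $a(\lambda f_i)=a(\lambda)a(f_i)$.
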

\begin{proof}
Define the subring $B := \F\bbra{N_0}\bbra{Y_0^pY_1^{-1},Y_1^pY_0^{-1}} = \F\bbra{Y_0,Y_1,Y_0^pY_1^{-1},Y_1^pY_0^{-1}}$ of $A$, which is compact, local, and $\varphi$-stable. It suffices to show that $\oks \cdot f_i \subset Bf_i$ for $i = 0,1$.

We claim that $f_{a,i} \in 1+\m_B$ for $a \in \oks$ and $i=0,1$, where we recall that $f_{a,i} = \frac{\o a^{p^i}Y_i}{a(Y_i)} \in 1+F_{1-p}A$ \cite[(21)]{BHHMS3}. From the definitions, $a(Y_i)-\o a^{p^i} Y_i$ is contained in the kernel of $\F\bbra{N_0} \onto \F[N_0/N_0^p]$, which is the ideal $(Y_0^p,Y_1^p)$ \cite[Lemma 3.37(i)]{BHHMS2}. Hence $\frac{a(Y_i)}{\o a^{p^i}Y_i} \in 1+(Y_i^{p-1},\frac{Y_{1-i}^p}{Y_i})\F\bbra{N_0} \subset 1+\m_B$, proving the claim.

From Lemma 2.2.2 and \cite[(51)]{BHHMS3} we know $a(e_i) = \vp^i(f_{a,i})^{h(1-\vp)/(1-q)} e_i$ for $a \in \oks$ and $i=0,1$. It follows that $a(f_i) \in \F f_{a,0}^{\Z} f_{a,1}^{\Z} \vp^i(f_{a,i})^{h(1-\vp)/(1-q)} f_i$, which is contained in $Bf_i$ by the claim, as desired.
\end{proof}

As moreover $\psi$ commutes with $\cO_K^\times$ we see that $\o M$ is a compact $\F\bbra{N_0}$-submodule of $D_A(\pi)$ that is preserved by $(\psi,\cO_K^\times)$ and on which $\psi$ is still surjective. Moreover, it is likely that $\o M$ is \emph{not} finitely generated over $\F\bbra{N_0}$. For some time, we thought that $\o M$ was a good candidate for $D_A(\pi)^{\natural}$.
However, it properly contains the $\psi$-submodule $\F\bbra{N_0}f_0 \oplus \F\bbra{N_0}f_1$, and hence cannot be $D_A(\pi)^{\natural}$ by Corollary \ref{naturalirr} {(assuming $\pi$ satisfies condition (C))}. In fact we have the following result which strengthens \emph{loc.~cit.}~in~this~case:

\begin{lem}\label{stronger}
The $\psi$-module $D_A(\pi)^{\natural}$ does not contain any proper nonzero closed $\F\bbra{N_0}$-submodule preserved by $\psi$.
\end{lem}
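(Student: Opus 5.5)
The plan is to reduce to Corollary~\ref{naturalirr}, which applies since $\pi$ is irreducible supersingular, $\gr(\pi)$ is killed by $J$, and $\pi$ satisfies (C) under our genericity assumptions. Let $0\ne M\subset D_A(\pi)^\natural$ be a closed $\F\bbra{N_0}$-submodule with $\psi(M)\subset M$, and put
\[M_\infty:=\bigcap_{n\ge 0}\psi^n(M).\]
Each $\psi^n(M)$ is a closed $\F\bbra{N_0}$-submodule of the compact module $D_A(\pi)^\natural$: it is compact as the continuous image of the compact set $M$, and it is an $\F\bbra{N_0}$-module because $\psi(\vp(a)d)=a\psi(d)$ and $\F\bbra{N_0}$ is finite free over $\vp(\F\bbra{N_0})$. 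These modules form a decreasing sequence. By Mittag-Leffler for linearly compact modules (\cite[Thm.~7.1]{jensen}, as in the proof of Proposition~\ref{limpsi}), $\psi:M_\infty\to M_\infty$ is surjective. Thus $M_\infty$ is a closed $\F\bbra{N_0}$-submodule of $D_A(\pi)^\natural$ preserved by $\psi$ with $\psi$ surjective. If $M_\infty\ne 0$, Corollary~\ref{naturalirr} gives $M_\infty=D_A(\pi)^\natural$, and hence $M\supset M_\infty=D_A(\pi)^\natural$. So everything reduces to proving $M_\infty\neq 0$.

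For the non-vanishing I would find an open neighbourhood $U$ of $0$ in $D_A(\pi)$, independent of $M$, such that $\psi^n(M)\not\subset U$ for every $n$ and every nonzero closed $\psi$-stable $M\subset D_A(\pi)^\natural$. Given such a $U$, the sets $\psi^n(M)\setminus U$ are nonempty, compact (closed in a compact set) and decreasing. Their intersection is therefore nonempty, which yields a nonzero element of $M_\infty$.

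To construct $U$, I would work in the basis $f_0,f_1$ of \eqref{basechange}, using the explicit formulas \eqref{psi0} and \eqref{psi}. Write $x=a_0f_0+a_1f_1$ with $a_i\in A$, and measure the size of $x$ by the $(Y_0,Y_1)$-adic degree (the $\m_{N_0}$-filtration on $A$). I would take for $U$ a set of the form $\mathfrak m_{N_0}^N(\F\bbra{N_0}f_0\oplus\F\bbra{N_0}f_1)$, enlarged by a suitable piece $F_{d}A f_0+F_dAf_1$ of the filtration, where $N$ and $d$ depend only on $h_0$, $h_1$ and $p$.

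The key claim is the following. For any $0\neq y\in D_A(\pi)^\natural$ and any $n$, the module $\psi^n(\F\bbra{N_0}y)$ contains an element outside $U$. To see this, write $y=\sum_{\un k}\vp^n(y_{\un k})\un Y^{\un k}$ componentwise in the basis $\vp^n(f_i)$ and apply Lemma~\ref{psi1}. Since $\vp^n(f_i)=c_{n,i}f_{i+n}$ for explicit monomials $c_{n,i}$ computed from \eqref{psi0}, we get that $\psi^n(\F\bbra{N_0}y)$ is generated by the elements $\psi^n$-components $y_{\un k}$, multiplied by explicit bounded monomials. At least one of these is nonzero and has degree bounded below uniformly. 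Here I would use that $y$ lies in the compact module $D_A(\pi)^\natural$, so the degrees of elements of $D_A(\pi)^\natural$ are bounded below (compare the ring $B$ in Lemma~\ref{lm:closure-submod}). Using $y\in\psi^{n}(M)\Rightarrow y=\psi^n(m)$ for some $m\in M$ together with $\F\bbra{N_0}$-stability of $\psi^n(M)$, the claim gives $\psi^n(M)\not\subset U$.

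The main obstacle is making this uniform estimate precise. One needs the degree bounds from \eqref{psi0} for $\vp^n(f_i)$ (the exponents $h_0$, $p-1-h_0$, $h_1$, $p-1-h_1$ alternate, so the factors stay controlled), combined with \eqref{eq:psi-Y} to show $\psi$ cannot push all components of every element deep into $U$. I would first try to do this with $\psi^2$, which maps $f_i$ to an explicit multiple of $f_i$, so that only a single-component estimate for $\psi^{2}$ on $A$ is required.
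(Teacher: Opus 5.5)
You have the same architecture as the paper: pass to $M_\infty=\bigcap_n\psi^n(M)$, get surjectivity of $\psi$ on $M_\infty$ by compactness, and conclude with Corollary~\ref{naturalirr}. Your Mittag-Leffler argument is equivalent to the paper's nested-closed-sets argument. Your nonempty-intersection device for producing $0\neq x\in M_\infty$ is the contrapositive of the paper's step ``if $M_\infty=0$ then $\psi^n(M)\subset F_{-C}D_A(\pi)$ for $n\gg0$''. The gap is in the non-vanishing estimate, which is where all the content lies.

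First, your key claim is false as stated. For a fixed open $U$, take $n=0$ and $y\in \un Y^{\un N}D_A(\pi)^\natural$ with $N$ large; then $\F\bbra{N_0}y\subset U$. You can only hope for $\psi^n(\F\bbra{N_0}y)\not\subset U$ for $n\gg0$ depending on $y$. That suffices, since the $\psi^n(M)$ decrease, and no uniformity in $M$ is needed.

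Second, and more seriously, the mechanism you propose points the wrong way. Compactness of $D_A(\pi)^\natural$ gives $D_A(\pi)^\natural\subset F_dD_A(\pi)$ for some $d$, a lower bound on degrees, which only says elements are not too \emph{large}. To leave a small neighbourhood $U$ you need some component of \emph{small} degree, i.e.\ an upper bound. Also, the monomials relating $f_i$ and $\vp^n(f_i)$ are not bounded: their total degree grows like $p^n$. What makes them harmless is that their exponents are at most $q^n-1$, so they can be absorbed by $\vp^{2n}(Y_0Y_1)$ at the cost of a shift by $2$.

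The missing ingredient is that $\vp$ multiplies degrees by $p$ (Lemma~\ref{lm:phi-psi-cts}(i)), used as in Lemma~\ref{psi3}:
\begin{enumerate}
\item Iterating \eqref{psi0} gives $f_i=\un Y^{\un e_n}\vp^{2n}(f_i)$ with $\un 0\le\un e_n\le\un{q^n-1}$. So for $y=a_0f_0+a_1f_1$ and $\lambda\in\F\bbra{N_0}$, the $f_i$-coordinate of $\psi^{2n}(\lambda y)$ is $\psi^{2n}(\lambda a_i\un Y^{\un e_n})$.
\item Suppose $\psi^{2n}(\F\bbra{N_0}y)\subset F_{-C}Af_0\oplus F_{-C}Af_1$. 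Take $\lambda\in\F\bbra{N_0}\un Y^{\un{p^{2n}}-\un e_n}$ and use $\psi^{2n}(\mu\,\vp^{2n}(Y_0Y_1))=\psi^{2n}(\mu)Y_0Y_1$. This gives $\psi^{2n}(\F\bbra{N_0}a_i)\subset F_{-C+2}A$.
\item By Lemma~\ref{psi1}, every $a_{i,\un k}$ in $a_i=\sum_{\un k}\vp^{2n}(a_{i,\un k})\un Y^{\un k}$ then lies in $F_{-C+2}A$. Hence $a_i\in F_{-q^n(C-2)}A$.
\item For $C\ge 3$ and $n\gg0$ this forces $a_i=0$.
\end{enumerate}
Take $U=F_{-C}Af_0\oplus F_{-C}Af_1$ with $C\ge3$; the $\mathfrak{m}_{N_0}^N$-term is not needed. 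With this choice, the argument above is exactly what makes your compactness step go through.
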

The point is that we can drop the surjectivity assumption for $\psi$ in Corollary \ref{naturalirr}.

\begin{proof}
Let $M$ be an $\F\bbra{N_0}$-submodule as in the statement of the lemma, and define $M_\infty:=\bigcap_{n\geq 0}\psi^n(M)$. This is also a compact proper $\F\bbra{N_0}$-submodule of $D_A(\pi)^{\natural}$ which is preserved by $\psi$.

Let us first prove $M_\infty\ne 0$. We assume $M_\infty=0$ and seek a contradiction. Then we claim that for any $C\geq 0$ we have $\psi^n(M)\subset F_{-C}D_A(\pi)$ for $n\gg 0$. Indeed, if there is a sequence $x_m\in \psi^{n_m}(M)\backslash F_{-C}D_A(\pi)$ with $n_{m+1}>n_m$, we can extract a converging subsequence as $M$ is compact in the ultrametric (hence metric) space $D_A(\pi)$. Then its limit $x_\infty$ satisfies $x_\infty \in D_A(\pi)\backslash F_{-C}D_A(\pi)$ (as the latter in closed in $D_A(\pi)$) hence $x_\infty\ne 0$. But we also have $x_\infty\in M_\infty$ which contradicts $M_\infty=0$. Now let $0\ne v\in M\subset A f_0 \oplus A f_1$, then for any $C\geq 0$ we have $\psi^{2n}(v)\in (F_{-C}A) f_0\oplus (F_{-C}A) f_1$ for $n \gg 0$, using that the filtration on $D_A(\pi)$ is good (\cite[Def.~I.5.1]{LiOy}). Writing $v=a_0f_0+a_1f_1$ with $a_i \in A$, by an iteration using (\ref{psi0}) we can express $f_i$ in terms of $\varphi^{2n}(f_i)$ for $i=0,1$. This enables us to compute $\psi^{2n}(\F\bbra{N_0}v)$, and an easy calculation shows that $\psi^{2n}(\F\bbra{N_0}a_i \un Y^{\un {p^{2n}}}) \subset F_{-C}A$, i.e.,\ $\psi^{2n}(\F\bbra{N_0}a_i) \subset F_{-C+f}A$ for all $i \in \{0,1\}$ and $n \gg 0$. By Lemma \ref{psi3} below this implies $a_i=0$ hence $v=0$, contradiction. Thus $M_\infty\ne 0$.

Now let us prove that $\psi$ is surjective on $M_\infty$. Let $0\ne v\in M_\infty$ and for any $n\geq 0$ let $ M_n:=\{x\in \psi^n(M) : \psi(x)=v\}$. Then the $M_n$ are nonempty closed subsets of $M$ such that $M_{n+1}\subset M_n$. If $\bigcap_{n\geq 0}M_n=\emptyset$, then as $M$ is compact we have a \emph{finite} number of $M_n$ such that $\bigcap_n M_n= \emptyset$, which is impossible. So $M_\infty$ is a nonzero compact proper $\F\bbra{N_0}$-submodule of $D_A(\pi)$ preserved by $\psi$ on which $\psi$ is surjective. This contradicts Corollary \ref{naturalirr} and finishes the proof. 
\end{proof}

\begin{lem}\label{psi3}
Let $a\in A$ and assume there is $C\geq 1$ such that for all $n\gg 0$ we have $\psi^{nf}(\F\bbra{N_0}a)\subseteq F_{-C}A$. Then $a=0$.
\end{lem}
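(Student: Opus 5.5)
The plan is to use Lemma~\ref{psi1} to turn the hypothesis into a statement about the $\varphi^{nf}$-decomposition of $a$, and then to compare filtration degrees. Recall that $A$ carries the filtration $F_\bullet A$ (degree $\le d$ part) for which $\F\bbra{N_0}\subset F_0A$ and $Y_i\in F_{-1}A\setminus F_{-2}A$. The Frobenius $\varphi$ multiplies degrees by $p$; more precisely $\varphi^{nf}(Y_i)=Y_i^{p^{nf}}$ since $\varphi^f$ fixes the indices. Suppose $a\ne0$ and write, as in Lemma~\ref{psi1} with $n$ replaced by $nf$,
\[a=\sum_{\un k\in\{0,\dots,p^{nf}-1\}^f}\varphi^{nf}(a_{\un k})\un Y^{\un k}.\]
By Lemma~\ref{psi1}, the hypothesis gives $\F\bbra{N_0}a_{\un k}\subset F_{-C}A$ for all $\un k$ and all $n\gg0$. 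In particular each $a_{\un k}$ lies in $F_{-C}A$ (take the coefficient $1$).

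The key step is a lower bound on the degree of $a$ in terms of the $a_{\un k}$. Let $d$ be the degree of $a$, i.e.\ $a\in F_dA\setminus F_{d-1}A$. I would pass to the associated graded ring $\gr A\cong \F[y_0^{\pm1},\dots,y_{f-1}^{\pm1}]_{\text{(deg)}}$, a Laurent-type ring in which the symbol of $\varphi^{nf}(b)$ is the $p^{nf}$-th power of the symbol of $b$ (Frobenius on a reduced graded domain). The monomials $\un y^{\un k}$ with $\un k\in\{0,\dots,p^{nf}-1\}^f$ form a basis of $\gr A$ over $(\gr A)^{p^{nf}}$. Hence the leading terms of the summands $\varphi^{nf}(a_{\un k})\un Y^{\un k}$ of maximal degree cannot cancel. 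Therefore $d=\max_{\un k}\bigl(p^{nf}\deg(a_{\un k})-\|\un k\|\bigr)$, and we obtain
\[d\le p^{nf}(-C)-0<-p^{nf}C.\]
For fixed $a\ne0$ and $n\to\infty$ with $C\ge1$, the right-hand side tends to $-\infty$. This contradicts the fact that $d$ is a fixed integer, so $a=0$.

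The main obstacle is justifying the non-cancellation of leading terms, i.e.\ that $\gr$ of the decomposition in Lemma~\ref{psi1} is the graded decomposition over $(\gr A)^{p^{nf}}$. It also has to be checked that the symbol of $\varphi^{nf}(b)$ is the $p^{nf}$-th power of the symbol of $b$ for general $b\in A$ (not just in $\F\bbra{N_0}$). For the latter I would use $\varphi(Y_i)=Y_{i-1}^p$ and continuity, together with the fact that $\gr A$ is a domain, as in \cite{BHHMS2}. If the non-cancellation proves delicate, there is a cruder fallback that suffices. Since $\varphi^{nf}(F_{-C}A)\subset F_{-p^{nf}C}A$ and $\un Y^{\un k}\in F_{-\|\un k\|}A\subset F_0A$, we get directly $a\in F_{-p^{nf}C}A$ for all $n\gg0$. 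The filtration on $A$ is separated, so $\bigcap_m F_{-m}A=0$ and $a=0$. This fallback avoids leading terms altogether, and it is what I would actually write. The only remaining verification is then $\varphi^{nf}(F_{-C}A)\subset F_{-p^{nf}C}A$, which follows from $\varphi(F_dA)\subset F_{pd}A$, a standard property of the filtration (\cite{BHHMS2}).
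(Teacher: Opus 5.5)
Your fallback argument is exactly the paper's proof: Lemma~\ref{psi1} gives $a_{\un k}\in\psi^{nf}(\F\bbra{N_0}a)\subseteq F_{-C}A$ for every $\un k$, hence $\varphi^{nf}(a_{\un k})\un Y^{\un k}\in F_{-p^{nf}C}A$ and so $a\in F_{-p^{nf}C}A$ for all $n\gg0$, and separatedness of the filtration gives $a=0$. The leading-term analysis is unnecessary, and you rightly drop it; note also that its final bound should be $d\le -p^{nf}C$, not the strict inequality $d<-p^{nf}C$.
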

\begin{proof}
Let $n\gg 0$ such that $\psi^{nf}(\F\bbra{N_0}a)\subseteq F_{-C}A$ and write as in Lemma \ref{psi1} $a=\sum_{\underline{k}\in\{0,\dots,q^n-1\}^f}\varphi^{nf}(a_{\underline k})\underline Y^{\underline k}$. Then \emph{loc.~cit.}~and the assumption imply $a_{\underline k}\in F_{-C}A$ for all $\underline k$, hence $\varphi^{nf}(a_{\underline k})\in F_{-q^nC}A$, hence $a\in F_{-q^nC}A$. Since this holds for all $n\gg 0$ and the filtration on $A$ is separated, we deduce $a=0$.
\end{proof}

\begin{rem}
It is likely that Lemma \ref{stronger} extends to more general cases.
\end{rem}

Lemma \ref{stronger} has the following consequence, which implies that it is not straightforward to find explicit elements in $D_A(\pi)^{\natural}$:

\begin{lem}\label{flippant}
Let $S$ be the multiplicative subset of $\F\bbra{N_0}$ generated by $Y_0$, $Y_1$, then
\[D_A(\pi)^{\natural} \cap (\F\bbra{N_0}_Se_0 \oplus \F\bbra{N_0}_Se_1)=0.\]
\end{lem}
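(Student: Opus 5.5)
Since $\F\bbra{N_0}_S e_i = \F\bbra{N_0}_S f_i$ (the base change \eqref{basechange} only involves monomials in $Y_0,Y_1$ and their inverses), it is enough to show that $N:=D_A(\pi)^{\natural}\cap(\F\bbra{N_0}_S f_0\oplus \F\bbra{N_0}_S f_1)$ vanishes. The plan is to exhibit, inside $D_A(\pi)^\natural$, a proper nonzero closed $\F\bbra{N_0}$-submodule preserved by $\psi$ whenever $N\neq 0$. This contradicts Lemma \ref{stronger}.

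\textbf{Step 1.} Suppose $0\neq v\in N$. Write $v=a_0f_0+a_1f_1$ with $a_i\in\F\bbra{N_0}_S$, and clear denominators: choose $\un m\in\Z_{\ge0}^2$ with $\un Y^{\un m}a_i\in\F\bbra{N_0}$ for $i=0,1$. Then $w:=\un Y^{\un m}v$ lies in $D_A(\pi)^\natural\cap(\F\bbra{N_0}f_0\oplus\F\bbra{N_0}f_1)$. This $w$ is nonzero because $A$ is a domain (alternatively, $D_A(\pi)$ is free over $A$ and $Y_i$ is invertible in $A$).

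\textbf{Step 2.} Put $M_0:=\F\bbra{N_0}f_0\oplus\F\bbra{N_0}f_1$ and $M:=D_A(\pi)^\natural\cap M_0$. This $M$ is a closed $\F\bbra{N_0}$-submodule of $D_A(\pi)^\natural$, since $M_0$ is compact and hence closed. It is preserved by $\psi$, because both $D_A(\pi)^\natural$ and $M_0$ are $\psi$-stable; for $M_0$ this follows from \eqref{psi} together with $\psi(\F\bbra{N_0})\subset\F\bbra{N_0}$. By Step 1, $M\ni w\neq 0$. Lemma \ref{stronger} therefore forces $M=D_A(\pi)^\natural$, that is, $D_A(\pi)^\natural\subset M_0$. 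In particular $D_A(\pi)^\natural$ is a closed submodule of the finitely generated module $M_0$ over the noetherian ring $\F\bbra{N_0}$, so it is finitely generated. This contradicts Proposition \ref{notfinite}, whose hypotheses hold in the present setting: $\pi$ is irreducible supersingular, lies in $\mathcal C$, and has $D_A(\pi)\ne0$, with $f=2$.

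\textbf{Main obstacle.} I expect the only delicate point to be checking that $M_0$ really is $\psi$-stable and closed, and that the passage from $S$-localized coefficients to integral ones in Step 1 stays inside $D_A(\pi)^\natural$. The latter holds because $D_A(\pi)^\natural$ is an $\F\bbra{N_0}$-module and $Y_i\in\F\bbra{N_0}$. Closedness of $M_0$ follows from compactness, and $\psi$-stability is already recorded in the text right after \eqref{psi2}. If any of these checks failed, I would instead apply Lemma \ref{stronger} to the closure of $\F\bbra{N_0}w$ together with all its images under powers of $\psi$, with the same conclusion.
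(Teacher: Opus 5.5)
Your proof is correct and is essentially the paper's argument. You clear denominators to get a nonzero element of $M=D_A(\pi)^\natural\cap(\F\bbra{N_0}f_0\oplus\F\bbra{N_0}f_1)$, observe that $M$ is closed and $\psi$-stable, and combine Lemma \ref{stronger} with Proposition \ref{notfinite}. The only difference is the order: the paper first uses Proposition \ref{notfinite} to see that $M$ is proper and then contradicts Lemma \ref{stronger}, whereas you use Lemma \ref{stronger} to force $M=D_A(\pi)^\natural$ and then contradict Proposition \ref{notfinite}.
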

\begin{proof}
Assume that the intersection in the statement is nonzero and set
\[M:=D_A(\pi)^{\natural} \cap (\F\bbra{N_0} f_0 \oplus \F\bbra{N_0} f_1).\]
Then $M$ is also nonzero: take any nonzero element in $D_A(\pi)^{\natural} \cap (\F\bbra{N_0}_Se_0 \oplus \F\bbra{N_0}_Se_1)$, multiply it by suitable powers of $Y_0$ and $Y_1$ and use (\ref{basechange}). Thus $M$ is a proper nonzero closed $\F\bbra{N_0}$-submodule of $D_A(\pi)^{\natural}$ preserved by $\psi$ (we cannot have $D_A(\pi)^{\natural} \subset \F\bbra{N_0} f_0 \oplus \F\bbra{N_0} f_1$ by Proposition \ref{notfinite}). By Lemma \ref{stronger} it cannot exist.
\end{proof}

To this day, we have no explicit description of the $\F\bbra{N_0}$-submodule $D_A(\pi)^{\natural}$ in $D_A(\pi)$, though $D_A(\pi)$ itself is very simple (\ref{ssummand}). In particular we do not know if $D_A(\pi)^{\natural}$ is local as a $(\psi,[\F_q^\times])$-module over $\F\bbra{N_0}$ (see Corollary \ref{localfail1}).

\subsection{The map \texorpdfstring{$\pi^\vee \to \protect\varprojlim_\psi D_A(\pi)^\natural$}{pi\^{ } -> lim D\_A(pi)\^{ }natural} is not always injective}\label{sec:remarks-map-pivee}

We take a small digression in our quest for the ``Locality'' by proving that the surjection $\pi^\vee \twoheadrightarrow \protect\varprojlim_\psi D_A(\pi)^\natural$ in Proposition \ref{limpsi} is not injective for principal series. We again suppose that $f$ is arbitrary.

We denote by $T$ the torus of diagonal matrices in $\GL_2(K)$.
Recall from \cite[Prop.\ 3.78]{BHHMS2}:

\begin{lm}\label{PS}
Suppose that $\pi$ is a (smooth) principal series $\Ind_B^{\GL_2(K)}(\chi)$ for some smooth character $\chi = \chi_1 \otimes \chi_2:T\rightarrow \F^\times$. Then $\pi$ lies in the category $\cC$ {(\S~\ref{natural})} and $D_A(\pi)$ is \'etale and free of rank $1$ over $A$. More precisely, let $\kappa \in \pi^\vee$ be the element sending $f \in \Ind_B^{\GL_2(K)}(\chi)$ to $f(\smatr{0}{1}{1}{0}) \in \F$. Then the image $\o\kappa$ of $\kappa$ in $D_A(\pi)$ by $\pi^\vee\ra D_A(\pi)$ is a basis of $D_A(\pi)$, and we have
\begin{align}
\vp(\o\kappa) &= \chi_2(p)^{-1} \o\kappa, \label{eq:6} \\
a(\o\kappa) &= \chi_2(a)^{-1} \o\kappa \text{\ \ $\forall ~a \in \oks$.}\label{eq:7}
\end{align}
\end{lm}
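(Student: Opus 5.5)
The plan is to describe $\pi^\vee$ explicitly and compute $\kappa$ and the structure of $D_A(\pi)$ by hand. Restrict $\pi=\Ind_B^{\GL_2(K)}(\chi)$ to the big cell. Using the Bruhat decomposition, functions supported on $B\smatr0110 N$ with $N=\smatr1K01$ identify with locally constant compactly supported functions on $K$. The functions supported on $B\smatr0110 N_0$ give the $N_0$-subrepresentation $C^\infty(\cO_K,\F)$, and its dual is $\F\bbra{N_0}$. Under this identification $\kappa$ is the Dirac mass at $0$, i.e.\ evaluation at $\smatr0110$. I then compute the actions of $\smatr{a}001$ and $\smatr p001$ on $\kappa$. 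Since $\smatr0110\smatr a001=\smatr100a\smatr0110$, the element $\smatr{a^{-1}}001$ acts on $\kappa$ through $\chi_2(a)^{\pm1}$, and this gives (\ref{eq:7}) with the paper's convention $f\mapsto f\circ\smatr{a^{-1}}001$. Similarly, $\psi(\kappa)=\kappa\circ\smatr p001$ gives $\chi_2(p)\kappa$, up to a sign convention that I will check carefully.

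Next I establish that $\pi\in\cC$ and that $D_A(\pi)$ is free of rank $1$ with basis $\o\kappa$. The key observation is that $\pi^\vee$ is generated over $\F\bbra{I_1}$ by the two vectors dual to the two $I$-orbits, namely $\kappa$ and the evaluation at $1$. This follows from $\GL_2(K)=BI\sqcup B\smatr0110 I$, up to finitely many translates. On the graded level this gives $\gr(\pi^\vee)$ as a quotient of modules of the form $\F[y_j,z_j]/(z_j)$ and $\F[y_j,z_j]/(y_j)$, so it is killed by $J$, hence a power of $J$ kills $\gr D_A(\pi)$ and $\pi\in\cC$. After tensoring with $A$, the piece on which the $y_j$ act by $0$ dies, because the $Y_j$ are invertible in $A$. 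The piece generated by $\kappa$ is $\gr$-free of rank one over $\F[y_j]$, since $\F\bbra{N_0}\kappa\cong\F\bbra{N_0}$ is the dual of $C^\infty(\cO_K)$. Therefore $\gr D_A(\pi)\cong\gr(A)\o\kappa$, and since $A$ is Zariski (see \cite[\S~II.2]{LiOy}), $\o\kappa$ is an $A$-basis.

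Étaleness follows once I show that $\psi(\o\kappa)=\chi_2(p)\o\kappa$ with $\chi_2(p)\neq0$. Then $\vp(\o\kappa):=\chi_2(p)^{-1}\o\kappa$ is an étale Frobenius, because it satisfies $\psi(a\vp(d))=\psi(a)d$. This yields (\ref{eq:6}).

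The main obstacle I expect is the graded bookkeeping in the second paragraph. I need to check that the image of the other $I$-orbit contributes only $Y$-torsion in $\gr$ and so vanishes after applying $A\wh\otimes$. I also need to check that the filtration induced on $\F\bbra{N_0}\kappa$ agrees with the $\m_{N_0}$-adic filtration, because otherwise completeness could change the rank. To handle this I will cite the explicit structure of $\gr$ of principal series from \cite{BHHMS2}.
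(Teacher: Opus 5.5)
Your route is the one behind the result the paper quotes. The paper gives no argument of its own and simply cites \cite[Prop.~3.78]{BHHMS2}. Its proofs of Lemmas~\ref{kernel} and~\ref{lm:DAfinitetype} show that this result rests on the $I$-stable splitting $\pi=\pi_1\oplus\pi_2$ by support in $BI$ and $B\smatr0110 I$, on $D_A(\pi_1)=0$, and on $\pi_2|_{N_0}\cong\Ind_1^{N_0}1$. Your computations $a(\kappa)=\chi_2(a)^{-1}\kappa$ and $\psi(\kappa)=\chi_2(p)\kappa$ in $\pi^\vee$ are correct.

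The gap is the \'etaleness step. The operator $\psi$ on $D_A(\pi)$ is not $A$-linear; it only satisfies $\psi(\vp(a)d)=a\psi(d)$. So its value on the basis vector $\o\kappa$ does not determine it. Writing $c=\sum_{\un k}\vp(c_{\un k})\un Y^{\un k}$ as in Lemma~\ref{psi1}, one has $\psi(c\,\o\kappa)=\sum_{\un k}c_{\un k}\psi(\un Y^{\un k}\o\kappa)$, and you know nothing about $\psi(\un Y^{\un k}\o\kappa)$ for $\un k\neq\un 0$. For your proposed $\vp(\o\kappa)=\chi_2(p)^{-1}\o\kappa$, checking $\psi(a\vp(d))=\psi(a)d$ is \emph{equivalent} to the identity $\psi(c\,\o\kappa)=\chi_2(p)\psi(c)\o\kappa$ for all $c\in A$. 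You have not established this identity.

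The identity is true but not formal, because the splitting $\pi=\pi_1\oplus\pi_2$ is stable under $I$ but not under $\smatr p001$. Concretely, for $b\in\oks$, $\psi(\smatr1b01\kappa)$ is the functional $\phi\mapsto\phi\big(\smatr0110\smatr pb01\big)$. Since $\smatr0110\smatr pb01=\smatr{-p/b}{1}{0}{b}\smatr10{p/b}1\in BI$, this functional is a nonzero multiple of evaluation at $\smatr10{p/b}1$. It is therefore a nonzero element of $\pi_1^\vee$, whereas $\psi(\smatr1b01)=0$ in $\F\bbra{N_0}$. For $b\in p\cO_K$ one gets $\chi_2(p)\smatr1{b/p}01\kappa$. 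Hence $\psi(\lambda\kappa)-\chi_2(p)\psi(\lambda)\kappa\in\pi_1^\vee$ for all $\lambda\in\F\bbra{N_0}$.

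You must therefore use $D_A(\pi_1)=0$ a second time, now to kill these cross terms, to obtain $\psi(\lambda\o\kappa)=\chi_2(p)\psi(\lambda)\o\kappa$ in $D_A(\pi)$. Then extend to all of $A$ by $\vp$-semilinearity and continuity; this works because every element of $\F\bbra{N_0}[1/(Y_0\cdots Y_{f-1})]$ has the form $\vp(s)^{-1}\lambda$. With that in place, $D_A(\pi)\cong A$ with $\psi$ twisted by $\chi_2(p)$. This is \'etale with $\vp(\o\kappa)=\chi_2(p)^{-1}\o\kappa$, which gives (\ref{eq:6}), and the rest of your plan then goes through.
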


We still suppose $\pi = \Ind_B^{\GL_2(K)}(\chi)$ as in Lemma \ref{PS}. Then recall we have an exact sequence of $B$-representations (see for instance \cite[Thm.~1]{GAFAVigneras})
\begin{equation}\label{eq:5}
 0 \to \sigma \to \pi|_B \to \chi \to 0,
\end{equation}
where the surjection is evaluation at $1$ and where $\sigma$ is irreducible. The sequence is nonsplit if and only if $\chi_1 \ne \chi_2$. We call $\Theta$ the $P$-equivariant composition
\[\Theta:\pi^\vee\twoheadrightarrow \vplim_{\psi}D_A(\pi)^{\natural}\hookrightarrow \vplim_{\psi}D_A(\pi).\]

\begin{lm}\label{kernel}
With the above notation, the kernel of $\Theta$ is precisely $\chi^\vee=\chi^{-1}$.
\end{lm}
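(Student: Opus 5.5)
Dualizing \eqref{eq:5} gives a short exact sequence of $B$-modules $0\to\chi^{-1}\to\pi^\vee\to\sigma^\vee\to 0$, with $\chi^\vee=\chi^{-1}$ the subspace of $\pi^\vee$ of linear forms that factor through evaluation at $1$. The plan has two halves: show that $\Theta$ kills $\chi^{-1}$, and show that $\ker\Theta$ is no larger than $\chi^{-1}$.

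\textbf{Step 1: $\chi^{-1}\subset\ker\Theta$.} Let $\mathrm{ev}_1\in\pi^\vee$ be evaluation at $1$. It is fixed by $N_0$ up to the sign convention of Remark \ref{rk:action-FN0}, so it is killed by $\m_{N_0}$, and in particular by $Y_0$. Its image $\bar x\in D_A(\pi)$ is therefore killed by $Y_0$. Since $Y_0$ is invertible in $A$ and $D_A(\pi)$ is free over $A$ (Lemma \ref{PS}), $\bar x=0$. The same argument applies to every element of $\chi^{-1}$. Moreover $\chi^{-1}\subset\pi^\vee$ is $P$-stable, and $\Theta$ is $P$-equivariant. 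For $x\in\chi^{-1}$, the $n$-th coordinate of $\Theta(x)$ is the image of $\smatr{p^{-n}}{0}{0}{1}x\in\chi^{-1}$ (up to the twist) in $D_A(\pi)$, which vanishes. So every coordinate is $0$ and $\Theta(x)=0$.

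\textbf{Step 2: $\ker\Theta\subset\chi^{-1}$.} Since $\ker\Theta$ is a closed $P$-stable subspace of $\pi^\vee$ containing $\chi^{-1}$, it corresponds to a closed $P$-stable subspace of $\sigma^\vee$. It also corresponds dually to a $P$-subrepresentation of $\sigma$, namely the orthogonal complement. I would show that $\sigma|_P$ is irreducible. The natural model is $\sigma\cong \cInd$-type functions on $N\cong K$, i.e.\ $\sigma\cong C_c^\infty(K,\F)$ with $P$ acting by translations and dilations twisted by a character. Irreducibility of this $P$-module is standard (cf.\ \cite{GAFAVigneras}, \cite[Thm.~1.1]{paskunas-restriction}): translations and dilations generate all $\mathbbm{1}_{a+p^n\cO_K}$ from any nonzero element. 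Granting this, $\ker\Theta/\chi^{-1}$ is either $0$ or all of $\sigma^\vee$. In the second case $\Theta=0$. This is impossible, because $\kappa\mapsto\o\kappa\ne0$ by Lemma \ref{PS}, and hence the $0$-th coordinate of $\Theta(\kappa)$ is nonzero. Therefore $\ker\Theta=\chi^{-1}$.

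The main obstacle is the irreducibility of $\sigma|_P$ (rather than $\sigma|_B$) with the correct twist; the torus part $\smatr{1}{0}{0}{\cO_K^\times}$ is not in $P$. I would handle it concretely. Given a nonzero $\phi\in C_c^\infty(K)$, translate it and take finite differences using $N$ to reach a nonzero function supported on a small ball. Then use $\smatr{p^{\Z}}{0}{0}{1}$ and averaging over translations to obtain the indicator of $\cO_K$. Finally, translations and dilations produce every $\mathbbm{1}_{a+p^n\cO_K}$, which span $C_c^\infty(K)$.
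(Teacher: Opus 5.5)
Your proof is correct. It has the same skeleton as the paper's proof. Irreducibility of $\sigma|_P$ reduces the lemma to two facts: $\Theta\neq0$, and $\Theta$ kills $\chi^{-1}$. Since $\chi^{-1}$ is $P$-stable (equivalently $\psi(\kappa')=\chi_1(p)\kappa'$), the second fact reduces to showing that the single vector $\kappa'=\mathrm{ev}_1$ maps to $0$ in $D_A(\pi)$.

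You differ from the paper only in proving this key vanishing.
\begin{itemize}
\item The paper puts $\kappa'$ in the dual of the summand $\pi_1$ of \cite[(3.68)]{BHHMS2} (functions supported on $BI$). It re-examines the proof of \cite[Prop.~3.76(ii)]{BHHMS2} to get $\gr D_A(\pi_1)=0$, and then deduces $D_A(\pi_1)=0$ from \cite[Prop.~I.4.2.2(1)]{LiOy}. This yields the stronger fact that all of $\pi_1^\vee$ dies in $D_A(\pi)$, which the paper reuses in Lemma~\ref{lm:DAfinitetype}.
\item Your argument is shorter and self-contained. $\kappa'$ is $N$-invariant, so it is killed by $Y_0\in\m_{N_0}$. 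The map $\pi^\vee\to D_A(\pi)$ is $\F\bbra{N_0}$-linear and $Y_0$ is a unit of $A$, so the image of $\kappa'$ vanishes. Freeness of $D_A(\pi)$ is not even needed.
\end{itemize}

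The ``main obstacle'' you flag is not one. We have $\smatr{a}{b}{0}{d}=\smatr{a/d}{b/d}{0}{1}\smatr{d}{0}{0}{d}$, so $B=PZ$ with $Z$ the centre. Since $\pi$ has a central character, $P$-stable and $B$-stable subspaces of $\sigma$ coincide. Hence irreducibility of $\sigma|_B$, recalled after \eqref{eq:5}, gives irreducibility of $\sigma|_P$ at once; this is exactly the paper's one-line justification.

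Your direct argument on $C_c^\infty(K)$ also works, with two adjustments:
\begin{itemize}
\item In characteristic $p$, ``averaging'' must mean summing over coset representatives, not dividing by their number.
\item Take a nonzero function that is $p^m\cO_K$-periodic and supported in $p^{-M}\cO_K$. Iterated finite differences naturally lead to a nonzero multiple of the indicator of the \emph{large} ball $p^{-M}\cO_K$, namely the socle of $\F[p^{-M}\cO_K/p^m\cO_K]$. They do not lead to a small ball. A suitable power of $\smatr{p}{0}{0}{1}$ then carries this to the indicator of $\cO_K$.
\end{itemize}
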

\begin{proof}
The map $\Theta$ is $P$-equivariant, continuous and nonzero (as $\pi^\vee \to D_A(\pi)$ is nonzero). By (\ref{eq:5}) (and using that $\sigma\vert_P$ is still irreducible as $\pi$ has a central character), it suffices to show that $\Theta$ kills $\chi^{-1}$. Let $\kappa' \in \pi^\vee$ be defined by $\kappa'(f) := f(1)$ for $f\in \Ind_B^{\GL_2(K)}(\chi)$, so that $\kappa'$ spans the 1-dimensional $P$-subrepresentation $\chi^{-1}$. Since $\psi(\kappa') = \chi_1(p)\kappa'$, it suffices to show that $\kappa'$ is killed by the canonical map $\pi^\vee \to D_A(\pi)$. But $\kappa'$ belongs to the dual of the first direct summand $\pi_1$ of \cite[(3.68)]{BHHMS2} (which is the $I$-equivariant subspace of $f\in \Ind_B^{\GL_2(K)}(\chi)$ with support in $BI$). Since $\pi_1$ is an admissible $I$-representation, one can consider $D_A(\pi_1)$ and an examination of the proof of \cite[Prop.\ 3.76(ii)]{BHHMS2} shows that $\gr D_A(\pi_1)=0$. Since the filtration on $D_A(\pi_1)$ is exhaustive and separated, we have $D_A(\pi_1)=0$ (\cite[Prop.~I.4.2.2(1)]{LiOy}). By functoriality of $D_A$ it follows that $\kappa'$ is sent to $0$ by $\pi^\vee \to D_A(\pi)$.
\end{proof}

Suppose now $\pi = \pi(\o r)$ with $\o r_v$ generic and reducible. Then the surjection $\pi^\vee \twoheadrightarrow \protect\varprojlim_\psi D_A(\pi)^\natural$, or equivalently the map $\Theta$, is not an injection. But their kernel is at least finite-dimensional over $\F$: $\pi$ is of finite length (\cite{BHHMS4}) and this follows by a d\'evissage as in the proof of Corollary \ref{limpsiiso} (using the injectivity of $\Theta$ for supersingular constituents of $\pi$ by \emph{loc.~cit.}).

We can now provide the argument for Remark \ref{rk:DAfinitetype}:

\begin{lem}\label{lm:DAfinitetype}
Suppose that $\pi$ is a subquotient of a principal series $\Ind_B^{\GL_2(K)}(\chi)$ for some smooth character $\chi = \chi_1 \otimes \chi_2:T\rightarrow \F^\times$. Then $D_A(\pi)^\natural$ is a free $\F\bbra{N_0}$-module of rank at most 1. It is of rank 0 if and only if $\pi$ is 1-dimensional.
\end{lem}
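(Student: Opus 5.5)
The plan is to treat the three possible subquotients separately: $\pi$ is either the full principal series $\Ind_B^{\GL_2(K)}(\chi)$, or (when $\chi_1=\chi_2$) its $1$-dimensional subquotient $\chi_1\circ\det$, or the irreducible subquotient $\sigma'$ (a twist of Steinberg). In every case the key input is Lemma \ref{PS}. By \cite[Prop.~3.12]{BHHMS2}, $D_A$ is exact on $\cC$, and $D_A$ of a principal series has rank $1$, so $D_A(\pi)$ has rank $\le 1$. Since $D_A(\pi)^\natural\subset D_A(\pi)$, the candidate for freeness is to realize $D_A(\pi)^\natural$ as $\F\bbra{N_0}\cdot x$ for a single element $x$ of a rank one free $A$-module. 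Because $A$ is a domain and $\F\bbra{N_0}\subset A$, any such $x\neq 0$ automatically generates a free module of rank $1$.

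\textbf{Principal series.} First I handle $\pi=\Ind_B^{\GL_2(K)}(\chi)$. Decompose $\pi|_I=\pi_1\oplus\pi_2$ as in \cite[(3.68)]{BHHMS2}. Here $\pi_1$ consists of the functions supported in $BI$, and $\pi_2$ of those supported in $B\smatr0110 I$. As in the proof of Lemma \ref{kernel}, $D_A(\pi_1)=0$, so $\pi_1^\vee$ maps to $0$ and $D_A(\pi)^\natural$ is the image of $\pi_2^\vee$.

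Restriction to $\smatr0110 N_0$ identifies $\pi_2$ with $C^\infty(\cO_K,\F)$ as an $N_0$-module, so $\pi_2^\vee\cong\F\bbra{N_0}$ is free of rank one, generated by $\kappa$ (evaluation at $\smatr0110$). Hence $D_A(\pi)^\natural=\F\bbra{N_0}\o\kappa$, and since $\o\kappa$ is an $A$-basis of $D_A(\pi)$ by Lemma \ref{PS}, this module is free of rank $1$. The step needing care is checking that $\pi_2^\vee$ really is cyclic over $\F\bbra{N_0}$ generated by $\kappa$ (with the twisted action of Remark \ref{rk:action-FN0}). This follows from the Bruhat/Iwasawa description of $B\smatr0110 N_0$-supported functions.

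\textbf{Subquotients.} For the case $\chi_1=\chi_2$, I use functoriality of $\pi\mapsto D_A(\pi)^\natural$. A surjection $\pi\onto\pi'$ gives an injection $\pi'^\vee\into\pi^\vee$ compatible with the maps to $D_A$, and by exactness $D_A(\pi')\into D_A(\pi)$. So $D_A(\pi')^\natural$ is the image of $\pi'^\vee$ inside $\F\bbra{N_0}\o\kappa$. It is therefore an $\F\bbra{N_0}$-submodule of a free rank one module over the domain $\F\bbra{N_0}$. An injection $\pi''\into\pi$ is handled dually: $D_A(\pi'')^\natural$ is a quotient of $\F\bbra{N_0}\o\kappa$, which is generated by one element inside a rank $\le1$ free $A$-module.

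\textbf{Case $\pi$ one-dimensional.} Here $\pi^\vee$ is killed by $\m_{N_0}$, so its image is killed by the $Y_i$. These are invertible in $A$, so $D_A(\pi)^\natural=0$, giving rank $0$.

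\textbf{Case $\pi=\sigma'$.} Here $D_A(\sigma')\cong D_A(\pi)$ has rank $1$, since the $1$-dimensional piece contributes $0$. In the ordering where $\sigma'$ is a quotient of $\Ind$, $D_A(\sigma')^\natural$ is the image of $\F\bbra{N_0}\o\kappa$, namely $\F\bbra{N_0}\o\kappa'$ with $\o\kappa'\ne 0$ a basis image, hence free of rank $1$. In the other ordering, $\sigma'^\vee$ is a quotient of $\pi^\vee$ through which the map to $D_A$ factors, so again the image is $\F\bbra{N_0}\o\kappa$.

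\textbf{Main obstacle.} I expect the main obstacle to be showing that $D_A(\pi)^\natural$ is \emph{nonzero} for $\pi$ not $1$-dimensional. This follows since the image generates $D_A(\pi)\neq0$ as an $A$-module. Alongside it, I must make the ``submodule of $\F\bbra{N_0}\o\kappa$ is free'' step precise: a nonzero cyclic submodule of $A$ generated over the domain $\F\bbra{N_0}$ is free of rank $1$.
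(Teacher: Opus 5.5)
Your argument for the full principal series $\pi_\chi:=\Ind_B^{\GL_2(K)}(\chi)$ is correct, and slightly shorter than the paper's. Since $\pi_2^\vee=\F\bbra{N_0}\kappa$ and $\o\kappa$ is an $A$-basis of $D_A(\pi_\chi)$ by Lemma~\ref{PS}, injectivity of $\pi_2^\vee\to D_A(\pi_\chi)$ follows at once from $\F\bbra{N_0}\subset A$. The paper instead gets it from injectivity on graded modules \cite[Prop.~3.76(ii)]{BHHMS2} together with \cite[Thm.~I.4.2.4(5)]{LiOy}. Your $1$-dimensional case is also fine.

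There is a genuine gap in the twisted Steinberg case, and it is exactly the case that occurs. With the paper's conventions, when $\chi_1=\chi_2$ the line $L:=\F(\chi_1\circ\det)$ is a \emph{sub}representation of $\pi_\chi$ and $\sigma'=\pi_\chi/L$ is the quotient; the ordering in which your argument works does not occur. So $\sigma'^\vee=L^\perp$ is a codimension-one subspace of $\pi_\chi^\vee$, and $D_A(\sigma')^\natural$ is its image in $D_A(\sigma')=D_A(\pi_\chi)$. It is not ``the image of $\F\bbra{N_0}\o\kappa$''.

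A priori this image is an $\F\bbra{N_0}$-submodule of codimension $\le 1$ in $\F\bbra{N_0}\o\kappa$. Hence it equals either $\F\bbra{N_0}\o\kappa$ or $\m_{N_0}\o\kappa$, and the latter is not free for $f\ge 2$. So ``a submodule of a free rank one module over a domain'' proves nothing here. The real obstacle is cyclicity, not nonvanishing.

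What is missing is the paper's observation that the kernel of $\pi_\chi^\vee\to D_A(\pi_\chi)$ surjects onto $L^\vee$. Concretely, $\kappa'$ (evaluation at $1$) lies in $\pi_1^\vee$, so it maps to $0$, while $\kappa'(\chi_1\circ\det)=1$. Hence $\kappa-\chi_1(-1)\kappa'$ lies in $\sigma'^\vee$ and maps to $\o\kappa$. Therefore $D_A(\sigma')^\natural=\F\bbra{N_0}\o\kappa$ is free of rank $1$.

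Alternatively, you can exclude $\m_{N_0}\o\kappa$ using $\psi$. The module $D_A(\sigma')^\natural$ is $\psi$-stable, but $\psi(\m_{N_0}\o\kappa)=\F\bbra{N_0}\o\kappa$, because $\psi(\F\bbra{N_0}Y_0)=\F\bbra{N_0}$ by Lemma~\ref{psi1}.
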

\begin{proof}
Suppose first that $\pi = \Ind_B^{\GL_2(K)}(\chi)$. We write $\pi = \pi_1 \oplus \pi_2$ as in \cite[(3.68)]{BHHMS2}, i.e.,\ $\pi_1$ (resp.\ $\pi_2$) consists of functions supported on $BI$ (resp.\ $B\smatr 0110 I$). From the proof of Lemma~\ref{kernel} we know $D_A(\pi_1) = 0$, so that $D_A(\pi)^\natural$ is isomorphic to the image of $\theta : \pi_2^\vee \to D_A(\pi_2)$. Noting $\pi_2|_{N_0} \cong \Ind_1^{N_0} 1$, we see that $\pi_2^\vee$ is a free $\F\bbra{N_0}$-module of rank 1, and it remains to show that $\theta$ is injective. The induced map $\gr(\theta) : \gr(\pi_2^\vee) \to \gr(D_A(\pi_2)) = \gr(\pi_2^\vee)_S$ on associated graded modules is an injection by \cite[Prop.\ 3.76(ii)]{BHHMS2}. Then $\theta$ is injective, as $\pi_2^\vee$ is complete and $D_A(\pi_2)$ is separated (using \cite[Thm.\ I.4.2.4(5)]{LiOy}).

Let us consider the general case. Let $\pi_\chi := \Ind_B^{\GL_2(K)}(\chi)$ and let $\pi$ be a subquotient. If $\pi \ne \pi_\chi$, then $\chi_1 = \chi_2$, so $\chi$ extends uniquely to a smooth character of $\GL_2(K)$, still denoted by $\chi$, and we have an exact sequence (with obvious notation)
\begin{equation*}
0 \to \chi \to \pi_\chi \to \St \otimes \chi \to 0.
\end{equation*}
If $\pi = \chi$ is 1-dimensional, then $D_A(\pi) = 0$ from the definitions, cf.\ \cite[Prop.\ 3.76(i)]{BHHMS2}. If $\pi = \St \otimes \chi$, by exactness of $D_A$ the injection $D_A(\pi) \hookrightarrow D_A(\pi_\chi)$ is an isomorphism. It is not difficult to check from the explicit description in the previous paragraph that the kernel of $\pi_\chi^\vee \to D_A(\pi_\chi)$ surjects onto $\chi^\vee$, and hence the map $D_A(\pi)^\natural \to D_A(\pi_\chi)^\natural$ is an isomorphism.
\end{proof}

\section{Towards the locality of \texorpdfstring{$\pi$}{pi} via perfectoids I}\label{perfI}

For $\pi$ of global origin having only supersingular constituents as in (\ref{piglobal}), we define two $P$-equivariant subspaces of $\vplim_{\psi}D_A(\pi)$: $D_{A_\infty}(\pi) \cap \protect\varprojlim_\psi D_A(\pi)^\natural$ in \S~\ref{sec:embedd-Ainfty} and $H^0(\zlt,\cF)^G$ in \S~\ref{geometricinter}, the latter being purely local. We hope, but cannot prove, that they are equal and dense in $\vplim_{\psi}D_A(\pi)^{\natural}$, which would imply the locality of $\vplim_{\psi}D_A(\pi)^{\natural}$. %

\subsection{The embedding \texorpdfstring{$D_{A_\infty}(\pi) \into\protect\varprojlim_\psi D_A(\pi)$}{D\_Ainfty(pi) -> lim D\_A(pi)\^{ }natural}}\label{sec:embedd-Ainfty}

Since we could not prove ``by hand'' the locality of $D_A(\pi)^{\natural}$ as $\psi$-module over $\F\bbra{N_0}$ in the simplest case (see \S~\ref{hand}), we called on the perfectoid theory for help. As some preliminary, we construct here a canonical $P$-equivariant embedding $A_\infty \otimes_A D_A(\pi) \into\protect\varprojlim_\psi D_A(\pi)$ that motivates the geometric perfectoid approach.

Recall that $A_\infty$ is the perfectoid version of $A$ (\cite[Lemma 2.4.2]{BHHMS3} {and also \S~\ref{sec:intro}}). %
For $n\geq 0$ let $A^{1/p^n}:=\varphi^{-n}(A)\subset A_\infty$ and
\[A_{(\infty)}:=\varinjlim_{\vp} A \cong \bigcup_{n \ge 0} A^{1/p^n}\subset A_\infty,\]
where the isomorphism in the middle comes from the following obvious commutative diagram in the category of $\F$-algebras, where the maps on the bottom are inclusions:
\begin{equation}\label{eq:diag-A}
  \begin{gathered}
    \xymatrix{
      A \ar@{^{(}->}^{\vp}[r]\ar@{=}[d] & A \ar@{^{(}->}^{\vp}[r] & A \ar@{^{(}->}^{\vp}[r] & \cdots \\
      A \ar@{^{(}->}[r] & A^{1/p} \ar@{^{(}->}[r]\ar^{\cong}_{\vp}[u] & A^{1/p^2} \ar@{^{(}->}[r]\ar^{\cong}_{\vp^2}[u] &
      \cdots }
  \end{gathered}
\end{equation}
The ring $A_{(\infty)}$ is the perfection of $A$, and the ring $A_\infty$ the completion of $A_{(\infty)}$ for the ``$(Y_0,Y_1,\dots,Y_{f-1})$-adic'' topology, see \cite[Rk.~3.3(iii)]{BHHMS2} and \cite[Lemma 2.4.2]{BHHMS3} with \cite[(50)]{BHHMS3} (we use quotation marks as all the $Y_i$ are units, see above (\ref{Yk})). 

Let $D$ be a finite free \'etale $(\vp,\oks)$-module over $A$, i.e.,~$1 \otimes \vp$ induces an isomorphism $A \otimes_{\vp,A} D \congto D$. We have a commutative diagram analogous to (\ref{eq:diag-A}):
\begin{equation}\label{eq:diag-D}
  \begin{gathered}
    \xymatrix{
      D \ar@{^{(}->}^{\vp}[r]\ar@{=}[d] & D \ar@{^{(}->}^{\vp}[r] & D \ar@{^{(}->}^{\vp}[r] & \cdots \\
      D \ar@{^{(}->}[r] & A^{1/p} \otimes_A D \ar@{^{(}->}[r]\ar^{\cong}_{\vp \otimes \vp}[u] & A^{1/p^2} \otimes_A D
      \ar@{^{(}->}[r]\ar^{\cong}_{\vp^2 \otimes \vp^2}[u] & \cdots }
  \end{gathered}
\end{equation}
which gives an isomorphism
\[D_{(\infty)}:=A_{(\infty)} \otimes_A D\cong \bigcup_{n \ge 0} (A^{1/p^n} \otimes_A D) \congto \varinjlim_{\vp} D\]
and endows $\varinjlim_{\vp} D$ with a canonical structure of $A_{(\infty)}$-module. Note that $\vp:=\vp\otimes \vp$ is now a bijection on $D_{(\infty)}$. It will be convenient to note that $\vp^{-n}(D) = \vp^{-n}(A) \otimes_A D = A^{1/p^n} \otimes_A D$ (inside $D_{(\infty)}$) for all $n \ge 0$ and that we have a commutative diagram
\begin{equation}\label{eq:diag-D2}
  \begin{gathered}
    \xymatrix{
      A^{1/p^n} \otimes_A D \ar^(0.55)\sim_(0.55){\vp^n \otimes \vp^n}[rr]&& D. \\
      \vp^{-n}(D) \ar@{=}[u]\ar^\sim_{\vp^n}[urr] }
  \end{gathered}
\end{equation}

Recall that $\psi : D \to D$ is a canonical left inverse of $\vp$ defined in \cite[\S~3.2]{BHHMS3}. We define a canonical injection
\[\theta_0 : D_{(\infty)}\cong \varinjlim_{\vp}  D \hookrightarrow \plim_\psi D\]
(where the indexing set is $\Z_{\ge 0}$ in both limits) as follows: the composition with the $n$-th projection ($n \ge 0$) on the right is defined by the commutative diagram
\begin{equation}
\begin{gathered}\label{rajout}
\xymatrix{
&&&& D  \\
D \ar_{\vp}[r]\ar^{\vp^n}[urrrr] & D \ar_{\!\!\vp}[r]\ar_{\vp^{n-1}}[urrr] & \cdots & D \ar_\vp[r]\ar_{\vp}[ur] & D \ar_\vp[r]\ar@{=}[u] & D \ar_\vp[r]\ar_{\psi}[ul] & \cdots \\
    }
\end{gathered}
\end{equation}
In general, the $m$-th copy of $D$ in ${\ilim}_{\vp} D$ is mapped to the $n$-th copy of $D$ in $\plim_\psi D$ by $\vp^{n-m}$ if $n \ge m$ and by $\psi^{m-n}$ if $n \le m$. Everything is well defined as $\psi \circ \vp = 1$~on~$D$. 

Note that $\plim_\psi D$ is also an $A_{(\infty)}$-module by making $a\in A^{1/p^m}$ act on $(x_n)_{n\geq 0}\in \plim_\psi D$ by $(\psi^{m-n}(\varphi^m(a)x_m)\text{ if }n<m,\ \varphi^n(a)x_n\text{ if }n\geq m)$. The following formal lemma is left to the reader.

\begin{lem}\label{linear}
The injection $\theta_0$ is $A_{(\infty)}$-linear. 
\end{lem}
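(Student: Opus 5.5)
We have to show that $\theta_0(a x) = a\,\theta_0(x)$ for $a \in A_{(\infty)}$ and $x \in D_{(\infty)}$. Both sides are additive in $a$ and $x$, and $A_{(\infty)}=\bigcup_m A^{1/p^m}$, $D_{(\infty)}=\bigcup_m \vp^{-m}(D)$. So it suffices to treat $a\in A^{1/p^m}$ and $x\in \vp^{-m'}(D)$. Since $A^{1/p^m}\subset A^{1/p^{m''}}$ for $m''\ge m$, we may enlarge indices and take $m=m'$.

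Under the identification (\ref{eq:diag-D2}), the element $x$ corresponds to $y:=\vp^m(x)\in D$ sitting in the $m$-th copy of $D$ in $\varinjlim_\vp D$. Its image $\theta_0(x)=(x_n)_n$ is then
\[x_n=\vp^{n-m}(y)\ \ (n\ge m),\qquad x_n=\psi^{m-n}(y)\ \ (n\le m).\]
The product $ax$ lies in $\vp^{-m}(D)$. Since $\vp^m$ is multiplicative on $D_{(\infty)}$ (being $\vp\otimes\vp$), it corresponds to $\vp^m(a)y$ in the $m$-th copy.

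We now compare coordinates of $\theta_0(ax)$ and $a\theta_0(x)$.
\begin{itemize}
\item For $n\ge m$: $\theta_0(ax)_n=\vp^{n-m}(\vp^m(a)y)=\vp^n(a)\vp^{n-m}(y)=\vp^n(a)x_n$. This is exactly the definition of $(a\theta_0(x))_n$ for $n\ge m$.
\item For $n<m$: $\theta_0(ax)_n=\psi^{m-n}(\vp^m(a)y)=\psi^{m-n}(\vp^m(a)x_m)$, since $x_m=y$. This is again the definition of the $A_{(\infty)}$-action on $\plim_\psi D$.
\end{itemize}
Hence the two sides agree coordinatewise.

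The computation above is only meaningful once two bookkeeping points are settled.

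First, the $A_{(\infty)}$-module structure on $\plim_\psi D$ must be well defined. It has to be independent of the choice of $m$ with $a\in A^{1/p^m}$; this follows from $\psi(\vp(b)z)=b\psi(z)$ together with $\psi(x_{m+1})=x_m$. The result must also be $\psi$-compatible; for $n\ge m$ this is $\psi(\vp^{n+1}(a)x_{n+1})=\vp^n(a)x_n$, again by the same identity.

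Second, the description of $\theta_0$ must be independent of the chosen copy of $D$. Replacing $(y,m)$ by $(\vp(y),m+1)$ changes $\psi^{m+1-n}(\vp(y))$ into $\psi^{m-n}(y)$, because $\psi\circ\vp=\id$.

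This bookkeeping, rather than the coordinate comparison itself, is the only step requiring care. Once it is in place, the lemma follows by the direct computation above.
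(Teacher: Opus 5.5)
Your proof is correct. The paper gives no argument for this lemma (it is stated as a formal lemma left to the reader), and your approach is the intended direct verification: reduce to $a\in A^{1/p^m}$ and $x\in\varphi^{-m}(D)$, compare coordinates, and use $\psi\circ\varphi=\mathrm{id}$ and $\psi(\varphi(b)z)=b\,\psi(z)$ to see that both the module structure on $\varprojlim_\psi D$ and $\theta_0$ are well defined.
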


One also checks that the image of $\theta_0$ consists of all $(x_n)_{n\geq 0}\in \plim_\psi D$ such that the sequence $\vp^{-n}(x_n)$ is eventually constant in $D_{(\infty)}$ (take $m\gg 0$ such that $\varphi^{-n}(x_n)=\varphi^{-n-1}(x_{n+1})$ for $n\geq m$, then its preimage is $x_m$ seen in the the $m$-th copy of $D$ in ${\ilim}_{\vp} D$, or equivalently $\varphi^{-m}(x_m)\in D_{(\infty)}$).

Let $D_\infty := A_\infty \otimes_A D$, an \'etale $(\vp,\oks)$-module over $A_\infty$ for which $\vp:=\vp\otimes \vp$ is again bijective. It is a straightforward exercise to check that the following formula defines a semilinear action of the monoid $P^+=\smatr{\ok\backslash\{0\}}{\ok}{0}{1}$ on $D_{\infty}$ which preserves $D$ and $D_{(\infty)}$ and which uniquely extends to an action of $P=\smatr{K\s}{K}{0}{1}$ which preserves $D_{(\infty)}$:
\begin{equation}\label{action}
\matr{p^n a}{b}{0}{1} x = b \vp^n(ax)\text{ \ for \ all \ }n \geq 0, \ a \in \oks, \ b \in \ok, \ x \in D_\infty,
\end{equation}
where we see $b$ in $\F\bbra{N_0}\cong \F\bbra{\ok}$.

Recall that $\plim_\psi D$ also has a natural $P$-action, see the beginning of \S~\ref{natural}.

\begin{lem}\label{P}
The injection $\theta_0$ is $P$-equivariant.
\end{lem}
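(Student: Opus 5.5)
Since $P$ is generated by $\smatr{p}{0}{0}{1}$, $\smatr{p^{-1}}{0}{0}{1}$, $\smatr{a}{0}{0}{1}$ ($a\in\oks$) and $\smatr{1}{b}{0}{1}$ ($b\in K$), and both actions are group actions, I will check $P$-equivariance of $\theta_0$ on these generators only. I will test it on an element $x\in D_{(\infty)}$ written as $x=\vp^{-m}(y)$ with $y\in D$, i.e.\ $y$ in the $m$-th copy of $D$ in $\ilim_\vp D$. Then $\theta_0(x)_n=\vp^{n-m}(y)$ for $n\geq m$ and $\theta_0(x)_n=\psi^{m-n}(y)$ for $n\le m$. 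Equivalently, $\theta_0(x)_n=\psi^{k}(\vp^{n+k}(x))$ for any $k\ge 0$ with $\vp^{n+k}(x)\in D$. This uniform formula is what I will use throughout.

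\emph{Diagonal elements.} By (\ref{action}), $\smatr{p}{0}{0}{1}x=\vp(x)$, which is the class of $y$ in copy $m-1$ (or $\vp(y)$ in copy $0$ if $m=0$). The uniform formula then gives $\theta_0(\vp(x))_n=\theta_0(x)_{n+1}$, which is the shift. Since $\smatr{p^{-1}}{0}{0}{1}$ acts by $\vp^{-1}$ on $D_{(\infty)}$, we get $\theta_0(\vp^{-1}x)_n=\theta_0(x)_{n-1}$ for $n\ge1$ and $\theta_0(\vp^{-1}x)_0=\psi(\theta_0(x)_0)$, matching the definition in \S~\ref{natural}. For $a\in\oks$ the action commutes with $\vp$ and $\psi$, so $\theta_0(ax)_n=a(\theta_0(x)_n)$ follows immediately.

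\emph{Unipotent elements.} This is the step needing care. For $b\in K$, choose $k$ large so that $p^kb\in\ok$. The action of $\smatr{1}{b}{0}{1}$ on $D_{(\infty)}$ is forced by the extension from $P^+$: writing $\smatr1b01=\smatr{p^{-k}}001\smatr1{p^kb}01\smatr{p^k}001$, it acts by $x\mapsto \vp^{-k}\big([p^kb]\,\vp^k(x)\big)$, where $[c]\in\F\bbra{N_0}$ denotes the group element of $c\in\ok$. On $\plim_\psi D$, coordinate $n$ of the image is $[p^nb]\,\theta_0(x)_n$ for $n\ge k$. By cofinality it therefore suffices to show
\[
\theta_0\big(\vp^{-k}([p^kb]\vp^k(x))\big)_n=[p^nb]\,\theta_0(x)_n \quad\text{for } n\geq \max(k,m).
\]
In that range both sides lie in the $\vp$-part, and the left side equals $\vp^{n-k}([p^kb]\vp^k(x))$. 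I will use that $\vp$ on $\F\bbra{N_0}$ is induced by multiplication by $p$, so $\vp^{n-k}([p^kb])=[p^nb]$. Since $\vp$ is multiplicative for the semilinear structure, the left side becomes $[p^nb]\vp^n(x)=[p^nb]\,\theta_0(x)_n$, as required.

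The remaining point is that a $\plim_\psi$ element is determined by its coordinates for $n\gg0$, because the lower coordinates are obtained by applying $\psi$. This also shows that the abuse of notation in the definition of the $P$-action on $\plim_\psi D$ is consistent with the above.

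Finally, I will confirm that the formula (\ref{action}) really extends to a $P$-action on $D_{(\infty)}$ in the way used above. This is stated just before the lemma, so I may assume it. The main obstacle I expect is bookkeeping for small $n$ in the unipotent case, and making sure the convention for the $N_0$-action (Remark~\ref{rk:action-FN0}) matches on both sides. Since both actions were defined with the same convention on $D$, I expect no sign discrepancy to arise.
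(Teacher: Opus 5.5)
Your proof is correct and takes the same route as the paper. The paper reduces to checking $\theta_0$ against the generators $\smatr{p}{0}{0}{1}$, $\smatr{a}{0}{0}{1}$ ($a\in\oks$) and $\smatr{1}{b}{0}{1}$ ($b\in\ok$) of the monoid $P^+$, and leaves the verification as an exercise; your uniform formula $\theta_0(x)_n=\psi^k(\vp^{n+k}(x))$, combined with $\psi(\vp(\lambda)d)=\lambda\psi(d)$, carries that exercise out cleanly. Your separate checks for $\smatr{p^{-1}}{0}{0}{1}$ and for $b\in K\setminus\ok$ are harmless but redundant, since $P^+$ already generates $P$ (and for $b\in\ok$ the identity $\psi^k([p^{n+k}b]\,\vp^{n+k}(x))=[p^nb]\,\psi^k(\vp^{n+k}(x))$ gives equality in every coordinate directly).
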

\begin{proof}
Since $P$ acts on both sides and the monoid $P^+$ generates the group $P$, it is enough to prove the $P^+$-equivariance, i.e.,~that $\theta_0$ commutes with $\smatr{p}{0}{0}{1}$, $\smatr{a}{0}{0}{1}$ for $a\in \oks$ and $\smatr{1}{b}{0}{1}$ for $b\in \ok$. This is a straightforward exercise keeping track of all the previous actions.
\end{proof}

\begin{rem}
Note that $\psi$ is not defined on $D_\infty$ and the canonical injection $D\hookrightarrow D_\infty$ does not intertwine $\psi$ on $D$ with $\varphi^{-1}$ on $D_\infty$ as $\psi$ is not injective.
\end{rem}

Recall that $D$, $D_{(\infty)}$, $D_\infty$ have canonical topologies as finite free modules over $A$, $A_{(\infty)}$, $A_\infty$ (the topology on all these rings being the ``$(Y_0,Y_1,\dots,Y_{f-1})$-adic'' topology). We endow $\plim_\psi D$ with the projective limit topology. The above $P$-actions are easily checked to be continuous for these topologies.

\begin{prop}\label{embedding}\
  \begin{enumerate}
  \item By continuity, $\theta_0$ extends to a $P$-equivariant $A_{(\infty)}$-linear morphism $\theta : D_\infty \longrightarrow \plim_\psi D$.
  \item If $x\in D_\infty$ and $\theta(x) = (x_n)_{n\geq 0}$, then $\lim_{n \to \infty} \vp^{-n}(x_n) = x$ in $D_\infty$.
  \item The map $\theta$ is injective with image
    \[\Big\{ (x_n)_{n\geq 0} \in \plim_\psi D : \text{the sequence }\vp^{-n}(x_n)\text{ converges in }D_\infty\text{ as }n\rightarrow \infty\Big\},\]
    which is dense in $\plim_\psi D$.
  \end{enumerate}
\end{prop}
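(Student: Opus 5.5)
The plan is to reduce everything to a uniform continuity estimate for $\theta_0$ coordinatewise, using the filtration $F_\bullet$ on $A$ (degree filtration for the ``$(Y_i)$-adic'' topology) and a good filtration on $D$. Fix an $A$-basis $e_1,\dots,e_d$ of $D$ with $\vp(e_j)=\sum_k c_{kj}e_k$, $(c_{kj})\in\GL_d(A)$ by étaleness. The key estimate is: there is a constant $c$ such that for $a\in A^{1/p^m}$ with $a\in F_{-C}A_\infty$ (valuation $\geq C$), and $x=ae_j$, the $n$-th coordinate $\theta_0(x)_n$ lies in $F_{-p^nC+c_n}D$ when $n\geq m$, where $c_n$ grows at most like $p^n\cdot$const coming from the matrices $\vp^n$ of the basis. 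For $n<m$ the coordinate is $\psi^{m-n}(\vp^m(a)\,\cdot)$, and since $\psi$ is continuous on $D$ (indeed $\psi(F_{-C'}D)\subset F_{-\lfloor C'/p\rfloor + c'}D$, an estimate coming from Lemma~\ref{psi1}: $\psi$ of $\vp(b)\underline Y^{\underline k}$ is $b$ times a bounded element), we get a bound $F_{-p^nC+c''_n}D$ too. In each fixed coordinate $n$, the map $D_{(\infty)}\to D$, $x\mapsto \theta_0(x)_n$, is thus uniformly continuous for the topology of $D_{(\infty)}$ induced from $D_\infty$; since $D_{(\infty)}$ is dense in $D_\infty$ (as $A_{(\infty)}$ is dense in $A_\infty$) and $D$ is complete, each coordinate extends continuously, giving $\theta$ into $\plim_\psi D$ (the relations $\psi(x_{n+1})=x_n$ persist by continuity of $\psi$). $A_{(\infty)}$-linearity and $P$-equivariance follow from Lemmas~\ref{linear} and \ref{P} by density and continuity of all actions. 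This gives (i).

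For (ii), note that on $D_{(\infty)}$ the sequence $\vp^{-n}(x_n)$ is eventually equal to $x$. For general $x\in D_\infty$ write $x=y+z$ with $y\in D_{(\infty)}$, $z$ small in $D_\infty$. Then $\vp^{-n}(\theta(z)_n)$ must be shown small uniformly in $n$: here the estimate $\theta(z)_n\in F_{-p^nC+O(p^n)}D$ combined with $\vp^{-n}$ dividing valuations by $p^n$ gives $\vp^{-n}(\theta(z)_n)\in F_{-C+O(1)}D_\infty$, uniformly in $n$. This uniformity (that the error constants scale exactly like $p^n$, i.e.\ that $\psi$ and $\vp^{-1}$ contract the valuation by exactly a factor $p$ up to a bounded shift, using the fixed étale basis) is the main obstacle and where I would spend care; I would first reduce to a basis adapted to a $\vp$-stable lattice $\F\bbra{N_0}$-type, or simply use that $\vp^{-1}$ on $D_\infty$ is an isometry up to bounded shift relative to $\vp^{-1}$-twisted basis.

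For (iii), injectivity follows from (ii): $\theta(x)=0$ gives $x=\lim \vp^{-n}(0)=0$. If $(x_n)$ has $\vp^{-n}(x_n)\to x$, then $\theta(x)$ and $(x_n)$ agree: for fixed $k$, $x_k=\psi^{n-k}(x_n)$ and $\theta(x)_k=\psi^{n-k}(\theta(x)_n)$, and $\vp^{-n}(x_n-\theta(x)_n)\to 0$ means $x_n-\theta(x)_n\in F_{-p^n\epsilon_n}D$ with $\epsilon_n$ eventually large, so applying $\psi^{n-k}$ with the contraction estimate shows $x_k-\theta(x)_k$ is arbitrarily small, hence $0$. Density: given $(x_n)$ and $N$, the element $\theta_0$ of $x_N$ in the $N$-th copy has the same coordinates $0,\dots,N$, so the image of $\theta_0$ is already dense in the projective limit topology.
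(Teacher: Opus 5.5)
Your argument is correct, but you obtain the key degree estimates by a different, more computational route than the paper.

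\emph{Your route.} You keep a fixed basis $(e_j)$ of $D$ and track how the matrices of $\vp^{\pm n}$ and $\psi$ shift degrees. This does close up. If the matrix of $\vp$ and its inverse have entries of degree $\ge -s$, then $\deg \vp^n(w)\ge p^n\deg(w)-s\frac{p^n-1}{p-1}$ and $\deg\vp^{-n}(w)\ge p^{-n}\deg(w)-\frac{s}{p-1}$. Writing $e_k=\sum_l d_{lk}\vp(e_l)$, one also gets $\deg\psi^k(w)\ge p^{-k}\deg(w)-c$ with $c$ independent of $k$. So for $n<m$ the loss coming from $\vp^m$ is divided by $p^{m-n}$, and your $c''_n$ is indeed independent of $m$. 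Likewise the $O(p^n)$ errors in (ii) become $O(1)$ after applying $\vp^{-n}$. No $\vp$-stable lattice is needed.

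\emph{The paper's route.} It avoids this bookkeeping with the identity $\psi^{n-m}(a\vp^n(x))=\psi^{n-m}(a)\vp^m(x)$. Since $(\vp^m(e_j))_j$ is again an $A$-basis by \'etaleness, the $m$-th projection $D_{(\infty)}\to D$ becomes, in coordinates, the scalar map $A_{(\infty)}\to A$, $\vp^{-n}(a)\mapsto\psi^{n-m}(a)$. For (ii), the paper writes $x=\sum_n y_n$ with $y_n\in\vp^{-n}(D)$, $y_n\to 0$, and the tail $\sum_{n>m}\vp^{-m}(\psi^{n-m}(\vp^n(y_n)))$ is then computed coordinatewise in $A$ in the same way. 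In the scalar case $\vp^{\pm1}$ scales degrees exactly by $p^{\pm1}$. So the only loss is the constant $f$ of Lemma~\ref{lm:phi-psi-cts}(ii), and the uniformity you single out as the main obstacle is automatic.

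\emph{The image in (iii).} The paper also skips estimates here: $\theta_0(\vp^{-n}(x_n))$ has $m$-th coordinate $\psi^{n-m}(x_n)=x_m$ for every $n\ge m$. Continuity of $\theta$ alone then gives $\theta(x)=(x_n)_n$. You instead pass through (ii) and the contraction of $\psi^{n-k}$, which also works.

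\emph{Comparison.} Your version makes explicit, for an arbitrary basis, that the degree of the $n$-th coordinate is $p^n$ times the degree in $D_\infty$ up to bounded shifts. The paper's version gives explicit constants independent of $D$ and a much shorter proof.
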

\begin{proof}
(i) By Lemma \ref{linear} and Lemma \ref{P}, and as $D_\infty$ is the completion of $D_{(\infty)}$ and $D$ is complete, it suffices to check that the $m$-th projection $D_{(\infty)} \to D$ is continuous for any fixed $m \ge 0$. Via the identification in \eqref{eq:diag-D}, this map becomes
\[A_{(\infty)} \otimes_A D \longrightarrow D,\ \vp^{-n}(a) \otimes x \longmapsto \psi^{n-m}(a)\vp^m(x)\]
for any $a \in A$, $x\in D$ and $n \ge m$ (using that $\psi(a\vp(x)) = \psi(a)x$). Picking any basis $(e_j)_j$ of $D$ over $A$ and using that $(\vp^m(e_j))_j$ is another basis of $D$ over $A$, we reduce to showing continuity in the case $D = A$, i.e.,\ to showing that the map for $n \ge m$
\[\pi_m : A_{(\infty)} \longrightarrow A,\ \vp^{-n}(a) \longmapsto \psi^{n-m}(a)\]
is continuous. For this we use the total degree in the variables $Y_j$, $\deg : A_\infty \to \Z[1/p] \cup \{\infty\}$. By Lemma~\ref{lm:phi-psi-cts} below we deduce that $\deg(\pi_m(a)) \ge p^m \deg(a)-f$ for all $a \in A_{(\infty)}$, and we are done.

(ii) We first write $x \in D_\infty$ as $x = \sum_{n = 0}^\infty y_n$ with $y_n \in \vp^{-n}(D) = A^{1/p^n} \otimes_A D$ and $y_n \to 0$ as $n \to \infty$. By continuity, $\theta(x) = \sum_{n = 0}^\infty \theta(y_n)$. By \eqref{eq:diag-D} and \eqref{eq:diag-D2}, one easily checks that the $m$-th projection of $\theta(x)$ in $D$ equals
\[x_m = \vp^m\Big(\sum_{n=0}^m y_n\Big) + \sum_{n > m} \psi^{n-m}(\vp^n(y_n)),\]
or equivalently $\vp^{-m}(x_m) = \sum_{n=0}^m y_n + \sum_{n > m} \vp^{-m}(\psi^{n-m}(\vp^n(y_n)))$ in $\vp^{-m}(D)$. It suffices to show that the tail $\sum_{n > m} \vp^{-m}(\psi^{n-m}(\vp^n(y_n)))$ tends to 0 in $\vp^{-m}(D)$ as $m \to \infty$. By choosing a basis $(e_j)_j$ of $D$ and using $\psi^{n-m}(a\vp^n(e_j)) = \psi^{n-m}(a)\vp^m(e_j)$ for $a\in A$, as in (i) we reduce to the case $D = A$. Then Lemma~\ref{lm:phi-psi-cts} below implies $\deg(\vp^{-m}(\psi^{n-m}(\vp^n(y_n)))) \ge \deg(y_n)-p^{-m}f$ (now $y_n \in \vp^{-n}(A)$), which finishes the proof of (ii) since $\deg(y_n)\rightarrow \infty$.

(iii) The injectivity follows from (ii). The image of $\theta$ is dense because, by construction, all projections of $\theta_0$ on $D$ are surjective. Finally by (ii) it suffices to show that if $(x_n)_{n\geq 0} \in \plim_\psi D$ is such that $\vp^{-n}(x_n)$ converges to $x\in D_\infty$ as $n\rightarrow \infty$, then we have $\theta(x) = (x_n)_{n\geq 0}$. From the definition of $\theta_0$ and the discussion below (\ref{rajout}), the coordinate of $\theta(\vp^{-n}(x_n))=\theta_0(\vp^{-n}(x_n))\in \plim_\psi D$ in the $m$-th copy of $D$ in $\plim_\psi D$ equals $\psi^{n-m}(x_n) = x_m$ for any $n \ge m\geq 0$. As $\theta$ is continuous, we deduce that $\theta(x) = (x_n)_{n\geq 0}$.
\end{proof}

\begin{lm}\label{lm:phi-psi-cts}\
  \begin{enumerate}
  \item For $a \in A_\infty$ and $n \in \Z$ we have $\deg(\vp^n(a)) = p^n\deg(a)$.
  \item For $a \in A$ and $n \ge 0$ we have $\deg(\psi^n(a)) \ge p^{-n}\deg(a)-f$.
  \end{enumerate}
\end{lm}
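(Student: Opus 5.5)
Part (i) is a monomial computation. For $a\in A_\infty$ the total degree is $\deg(a)=\min$ of the total degrees of the monomials $\underline{Y}^{\underline{k}}$ ($\underline{k}\in\Z[1/p]^f$) occurring in the convergent expansion of $a$. The Frobenius acts by $\vp(Y_i)=Y_{i-1}^p$, so it sends a monomial of total degree $d$ to a monomial of total degree $pd$. Moreover $\vp$ is an automorphism of $A_\infty$ that permutes monomials bijectively (up to the cyclic shift of indices) and multiplies every degree by $p$. Hence it sends the degree-$d$ homogeneous part of $a$ to the degree-$pd$ homogeneous part of $\vp(a)$, which gives $\deg(\vp(a))=p\deg(a)$. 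Iterating this, and using $\vp^{-1}$ for negative $n$, gives the claim for all $n\in\Z$.

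For part (ii) I reduce to $n=1$ and then iterate. Suppose we know $\deg(\psi(a))\ge p^{-1}\deg(a)-c$ with $c\le f(1-p^{-1})$. Iterating then gives
\[\deg(\psi^n(a))\ge p^{-n}\deg(a)-c(1+p^{-1}+\cdots+p^{-(n-1)})\ge p^{-n}\deg(a)-f,\]
since the geometric sum is at most $c\,p/(p-1)\le f$. So the target for $n=1$ is $\deg(\psi(a))\ge p^{-1}(\deg(a)-(p-1)f)$, using the decomposition of Lemma~\ref{psi1}. Write $a=\sum_{\underline{k}\in\{0,\dots,p-1\}^f}\vp(a_{\underline{k}})\underline{Y}^{\underline{k}}$. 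Then
\[\psi(a)=\sum_{\underline{k}}\psi(\underline{Y}^{\underline{k}})\,a_{\underline{k}},\]
and $\psi(\underline{Y}^{\underline{k}})\in\F\bbra{N_0}$ has degree $\ge 0$. So $\deg(\psi(a))\ge\min_{\underline{k}}\deg(a_{\underline{k}})=p^{-1}\min_{\underline{k}}\deg(\vp(a_{\underline{k}}))$ by (i). It remains to show that $\deg(\vp(a_{\underline{k}}))\ge\deg(a)-\|\underline{k}\|\ge\deg(a)-(p-1)f$.

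This last inequality is the main obstacle. It says that the decomposition $a\mapsto(a_{\underline{k}})$ is compatible with degree, i.e.\ that $A=\bigoplus_{\underline{k}}\vp(A)\underline{Y}^{\underline{k}}$ is a degree-respecting (strict) direct sum. The plan is to pass to the graded ring. One has $\gr A=\F[y_0^{\pm1},\dots,y_{f-1}^{\pm1}]_{\deg}$, Laurent polynomials graded by total degree, where Frobenius becomes $y_i\mapsto y_{i-1}^p$ and has image the Laurent polynomials in the $y_i^p$. In $\gr A$ the monomials $y^{\underline{m}}$ ($\underline{m}\in\Z^f$) are distinct. Each one lies in exactly one summand $\vp(\gr A)\,\underline{y}^{\underline{k}}$, namely the one with $\underline{k}\equiv\underline{m}\pmod p$ componentwise. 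So the graded decomposition is a direct sum of homogeneous pieces with no cancellation between the summands.

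I then lift this to the filtered ring. Suppose $\deg(\sum_{\underline{k}}\vp(a_{\underline{k}})\underline{Y}^{\underline{k}})$ were strictly larger than $\min_{\underline{k}}(\deg\vp(a_{\underline{k}})+\|\underline{k}\|)$. Then the leading symbols of the minimal-degree terms would sum to zero in $\gr A$, contradicting directness there. Hence $\deg(a)=\min_{\underline{k}}(\deg\vp(a_{\underline{k}})+\|\underline{k}\|)$ exactly, which gives the required inequality. The one technical point to verify is that the symbol of $\vp(a_{\underline{k}})$ lies in $\vp(\gr A)$, i.e.\ that $\gr\vp=\vp_{\gr}$. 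I expect this to hold because $\vp(Y_i)=Y_{i-1}^p$ holds on the nose in $A$, so $\vp$ is strictly degree-multiplying by (i).
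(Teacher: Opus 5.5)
Your proof is correct. It has the same skeleton as the paper's: part (i) by monomials, and an induction reducing (ii) to $\deg(\psi(a))\ge p^{-1}\deg(a)-f\frac{p-1}{p}$. That single step is then handled with the decomposition $a=\sum_{\un k}\vp(a_{\un k})\un Y^{\un k}$ and $\psi(\vp(x)y)=x\psi(y)$.

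The difference lies in the input used for the $n=1$ step.
\begin{itemize}
\item The paper switches to the non-canonical variables $Z_0,\dots,Z_{f-1}$ of \cite[\S~3.3]{BHHMS3}. It then invokes the explicit shape of $\psi(\un Z^{\un k})$ for $\un k\in\{0,\dots,p-1\}^f$, taken from the proof of \cite[Prop.~3.3.1]{BHHMS3}.
\item You stay with the canonical $Y_i$ and use only $\psi(\F\bbra{N_0})\subset\F\bbra{N_0}$, i.e.\ $\deg\psi(\un Y^{\un k})\ge 0$.
\end{itemize}
Nonnegativity is all a lower bound needs; the upper bound on $\|\un j\|$ quoted in the paper plays no role in the inequality. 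Your version is therefore more self-contained. It also avoids any appeal to continuity of $\psi$ on infinite monomial expansions, since it only uses the finite decomposition from Lemma~\ref{psi1}.

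The step you flag as the main obstacle is easier than your graded argument suggests. Since $\vp(Y_i)=Y_{i-1}^p$ holds exactly and $\vp$ is continuous, $\vp(A)$ consists of the convergent series in monomials $\un Y^{\un m}$ with $\un m\in p\Z^f$. By uniqueness of the decomposition, $\vp(a_{\un k})\un Y^{\un k}$ is literally the sub-series of $a$ formed by the monomials $\un Y^{\un m}$ with $\un m\equiv\un k \pmod p$. Hence $\deg(\vp(a_{\un k}))+\|\un k\|\ge\deg(a)$ is immediate. Your leading-symbol argument in $\gr A$ is also correct, including the check that the symbol of $\vp(a_{\un k})$ lies in $\F[y_0^{\pm p},\dots,y_{f-1}^{\pm p}]$, but it is not needed.
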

\begin{proof}
Part (i) is obvious. For part (ii), by induction it is enough to show that $\deg(\psi(a)) \ge p^{-1}\deg(a)-f\frac{p-1}p$. To prove this, it is convenient to use the (non-canonical) variables $Z_0, Z_1,\dots,Z_{f-1}$ of $\F\bbra{N_0}$ (instead of the variables $Y_j$) as defined in the beginning of \cite[\S~3.3]{BHHMS3} (note that $A$ is defined similarly using these variables). Defining $\un Z^{\un k}$ as in (\ref{Yk}), the result follows from the fact that $\psi(\un Z^{\un k})$ for $\un k\in \{0,\dots,p-1\}^f$ is of the form $\un Z^{\un j}$ for some $\un j \in \Z_{\geq 0}^f$ with $\Vert \un j\Vert \le p^{-1}\Vert \un k\Vert$ (see the uniqueness part of the proof of \cite[Prop.\ 3.3.1]{BHHMS3}).
\end{proof}

\begin{rem}\label{noaction}\ 
\begin{enumerate}
\item
One can check that there is no natural action of $A_\infty$ on $\plim_{\psi} D$ extending the action of $A_{(\infty)}$ (see below (\ref{rajout})) and the action of $A_\infty$ on the image of $\theta$.

\noindent
To see this, it suffices to show that there exist sequences $(a_m)_{m \ge 1},(b_m)_{m \ge 1} \in A$ such that $\psi(b_m) = b_{m-1}$ for all $m$, $\val(a_m)/p^m \to 0$ as $m \to \infty$, and $\psi^m(a_mb_m) \not\to 0$ as $m \to \infty$. We define $T_0,\dots,T_{f-1}$ as at the beginning of the proof of \cite[Prop.\ 3.3.1]{BHHMS3}, so that in particular $\F\bbra{N_0} = \F\bbra{T_0,\dots,T_{f-1}}$ and $\psi((1+T_0)^i) = 0$ for $1 \le i \le p-1$. Pick any sequence $(x_m)_{m \ge 1}$ in $A$ such that $x_m \not\to 0$ as $m \to \infty$, and let $k_m := (1+T_0)^{p^m-1}\vp^m(x_m)$. Setting $a_m := T_0^{p^m(1+m)}(1+T_0)$ and $b_m := \vp^{m-1}(k_1) + \cdots + k_m$, it is easily checked that all conditions are verified.
\item
We can also view $\plim_\psi D$ as $\plim_{n \ge 0} \vp^{-n}(D)$ via the commutative diagram
\begin{equation*}
\xymatrix{
D & D \ar^\psi[l] & D \ar^\psi[l] & \ar^\psi[l] \cdots \\
D\ar@{=}[u] & \vp^{-1}(D) \ar^\vp_\cong[u]\ar[l] & \vp^{-2}(D) \ar^{\vp^2}_\cong[u]\ar[l] & \cdots \ar[l]
  }
\end{equation*}
where the transition map $\vp^{-n}(D) \to \vp^{-(n-1)}(D)$ is $\vp^{-(n-1)} \psi \vp^n$ (it is $A^{1/p^{n-1}}$-linear for $n \ge 1$ via the inclusion $A^{1/p^{n-1}} \into A^{1/p^n}$ and restricts to the identity on $\vp^{-(n-1)}(D)$). From $\psi \circ \vp = 1$ we deduce inductively that $\vp^{-n}(D) = D \oplus \bigoplus_{m = 1}^n \vp^{-m}(\ker \psi)$ ($n \ge 0$), and hence that
\[\plim_\psi D\cong \varprojlim_{n \ge 0} \vp^{-n}(D) \cong D \oplus \prod_{m=1}^\infty \vp^{-m}(\ker\psi).\]
\end{enumerate}
\end{rem}

Let now $\pi$ be an admissible smooth representation of $\GL_2(K)$ over $\F$ in the category $\mathcal C$ {(\S~\ref{natural})} such that $D_A(\pi)$ is \'etale. We thus have a canonical $P$-equivariant $A_{(\infty)}$-linear embedding with dense image
\begin{equation}\label{embeddingpi}
\theta:A_\infty \otimes_A D_A(\pi) \into\protect\varprojlim_\psi D_A(\pi).
\end{equation}
Assume moreover that $\pi$ has finite length and that all its constituents $\pi'$ are supersingular with $D_A(\pi')\ne 0$. Then from Corollary \ref{limpsiiso} we have a $P$-equivariant cartesian diagram
\begin{equation}\label{square}
\begin{gathered}
\xymatrix{D_{A_\infty}(\pi):=A_\infty \otimes_A D_A(\pi)  \ar@{^{(}->}^{\ \ \ \ \ \ \ \ \ \ \theta}[r] \ar@{}[dr] | {\square} & \smash{\varprojlim_\psi}\: D_A(\pi) & \\
  \mathop{\vphantom{{}^2}}X \ \ar@{^{(}->}[r] \ar@{^{(}->}[u] & \varprojlim_\psi D_A(\pi)^\natural \ar@{^{(}->}[u] & \ar^{\ \ \ \ \ \ \sim}[l]\pi^\vee}
\end{gathered}
\end{equation}
where by definition $X$ is the intersection of $D_{A_\infty}(\pi)$ and $\varprojlim_\psi D_A(\pi)^\natural$. The main reason to consider the diagram (\ref{square}) is that, when $\pi$ is the maximal subquotient of $\pi(\o r)$ with only supersingular constituents as in Corollary \ref{localfail1}, the $(\varphi,\oks)$-module $D_{A_\infty}(\pi)$ has a natural geometric perfectoid interpretation, see \cite{BHHMS3}. And there is also a possible geometric candidate for $X$ that we will define (and try to study)~below.

We denote by $\F\bbra{N_0^{p^{-\infty}}}$ the completed perfection of $\F\bbra{N_0}$. We have $\F\bbra{N_0^{p^{-\infty}}} \cong \F\bbra{Y_0^{p^{-\infty}},\dots,Y_{f-1}^{p^{-\infty}}}\hookrightarrow A_\infty$. When working with $\varprojlim_\psi D_A(\pi)^\natural$ we at least have an action of the subring $\F\bbra{N_0^{p^{-\infty}}}$ of $A_\infty$ (compare with Remark \ref{noaction}(i)):

\begin{lem}\label{lem:action-extends}
The action of $\bigcup_{n\geq 0}\varphi^{-n}(\F\bbra{N_0})\subset A_{(\infty)}$ on $\varprojlim_\psi D_A(\pi)^\natural$ extends by continuity to an action of $\F\bbra{N_0^{p^{-\infty}}} \cong \F\bbra{Y_0^{p^{-\infty}},\dots,Y_{f-1}^{p^{-\infty}}}$.
\end{lem}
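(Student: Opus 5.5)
The plan is to make the action of $\varphi^{-m}(\F\bbra{N_0})$ explicit on $\varprojlim_\psi D_A(\pi)^\natural$, bound it uniformly, and then pass to limits using compactness. By the formula below (\ref{rajout}), an element $\varphi^{-m}(\lambda)$ with $\lambda\in\F\bbra{N_0}$ acts on $(x_n)_{n\geq0}$ as follows:
\[
(x_n)_n\longmapsto\bigl(\psi^{m-n}(\lambda x_m)\ \text{for } n<m,\ \ \varphi^{n-m}(\lambda)x_n\ \text{for } n\ge m\bigr).
\]
This preserves $\varprojlim_\psi D_A(\pi)^\natural$. Indeed, $D_A(\pi)^\natural$ is an $\F\bbra{N_0}$-module stable under $\psi$, and $\varphi^{n-m}(\lambda)\in\F\bbra{N_0}$ for $n\ge m$. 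The rings $\varphi^{-m}(\F\bbra{N_0})$ increase with $m$, so the actions are compatible (Lemma~\ref{linear}), and we get an action of $R:=\bigcup_m\varphi^{-m}(\F\bbra{N_0})$. Moreover $R$ is dense in $\F\bbra{N_0^{p^{-\infty}}}\cong\F\bbra{Y_0^{p^{-\infty}},\dots,Y_{f-1}^{p^{-\infty}}}$ for its natural (total-degree, i.e.\ $(Y_0,\dots,Y_{f-1})$-adic) topology.

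The key step is an equicontinuity estimate: for every $n\ge0$ and every open $\F\bbra{N_0}$-submodule $U$ of $D_A(\pi)^\natural$, there should exist $c$ such that for all $r\in R$ with $\deg(r)\ge c$ and all $x\in\varprojlim_\psi D_A(\pi)^\natural$, the $n$-th coordinate of $r\cdot x$ lies in $U$. To prove it, write $r=\varphi^{-m}(\lambda)$ with $m\ge n$; the relevant coordinate is $\psi^{m-n}(\lambda x_m)$. Since $D_A(\pi)^\natural$ is compact and $\mathfrak m_{N_0}$-adically separated, there is $k$ with $\mathfrak m_{N_0}^kD_A(\pi)^\natural\subset U$. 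Now $\deg(\lambda)=p^m\deg(r)$ by Lemma~\ref{lm:phi-psi-cts}(i), and I would like to show that $\psi^{m-n}$ maps $\lambda D_A(\pi)^\natural$ into $\mathfrak m_{N_0}^kD_A(\pi)^\natural$ once $\deg(\lambda)\ge p^{m-n}(k+f)$. This would use Lemma~\ref{psi1} in the form $\psi^{j}(\underline Y^{\underline{k}p^j}\mu\, d)=\underline Y^{\underline k}\psi^j(\mu d)$, where $\psi^j(\mu d)\in D_A(\pi)^\natural$ because $\psi$ preserves $D_A(\pi)^\natural$. One then decomposes $\lambda$ into monomials; monomials not divisible by a $p^j$-th power of high degree contribute only bounded loss, as in Lemma~\ref{lm:phi-psi-cts}(ii). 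So the threshold is $\deg(r)\ge p^{-n}(k+f)$, which is uniform in $m$, as required.

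Given the estimate, the action of $R$ is uniformly continuous in $r$, uniformly in $x$. For $r\in\F\bbra{N_0^{p^{-\infty}}}$, choose $r_i\in R$ with $r_i\to r$ and define $r\cdot x:=\lim_i r_i\cdot x$. The limit exists coordinatewise because each $D_A(\pi)^\natural$ is complete, being compact; the result lies in the closed subspace $\varprojlim_\psi D_A(\pi)^\natural$; and it is independent of the chosen sequence. The module axioms then pass to the limit by continuity, using that multiplication in $\F\bbra{N_0^{p^{-\infty}}}$ is continuous and $R$ is dense. The main obstacle is the uniform bound in the estimate. The crude bound $\deg\psi^j(a)\ge p^{-j}\deg a-f$ only controls degree inside $A$, not membership in $\mathfrak m_{N_0}^kD_A(\pi)^\natural$. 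If the monomial argument fails, I would instead bound the $(Y)$-adic degree inside $D_A(\pi)$ via Lemma~\ref{lm:phi-psi-cts}, and use that on the compact set $D_A(\pi)^\natural$ the subspace topology induced from $D_A(\pi)$ agrees with its own compact topology, both being Hausdorff and $D_A(\pi)^\natural$ being compact.
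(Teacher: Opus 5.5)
Your proposal is correct and is essentially the paper's proof. There one writes $a=\sum_n a_n$ with $a_n=\sum_{\un i}\un Y^{\un i/p^n}a_{n,\un i}\in\varphi^{-n}(\F\bbra{N_0})$ and $a_{n,\un i}\in\F\bbra{N_0}$ tending to $0$ uniformly in $\un i$; this is your residue-class decomposition of $\lambda$. The paper then concludes from $\psi^{n-m}(\varphi^n(a_n)x_n)=\sum_{\un i}\varphi^m(a_{n,\un i})\,\psi^{n-m}(\un Y^{\un i}x_n)$ and the compactness of $D_A(\pi)^\natural$, exactly as in your equicontinuity estimate, so your fallback via degrees in $D_A(\pi)$ is unnecessary.

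Two minor points. The identity you attribute to Lemma~\ref{psi1} is really just $\psi(\varphi(a)d)=a\psi(d)$. Also, since $\varphi(Y_i)=Y_{i-1}^p$, one has $\varphi^j(\un Y^{\un k})=\un Y^{p^j\un k}$ only up to a cyclic shift of the indices, which is harmless.
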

\begin{proof}
Write $a \in \F\bbra{Y_0^{p^{-\infty}},\dots,Y_{f-1}^{p^{-\infty}}}$ as $a = \sum_{n = 0}^\infty a_n$ with $a_n \in \vp^{-n}(\F\bbra{N_0})$ and $a_n \to 0$ as $n \to \infty$. Let $x=(x_n)_{n\geq 0}\in \varprojlim_\psi D_A(\pi)^\natural$, from the action of $A_{(\infty)}$ on $x$ (see before Lemma \ref{linear}) it suffices to check that for any $m\geq 0$,
\begin{equation}\label{eq:zero-seq}
\psi^{n-m}(\varphi^n(a_n)x_n) \to 0\text{ in }D_A(\pi)^\natural\text{ as }n\to \infty.
\end{equation}
Write $a_n = \sum_{\un i} \un Y^{\un i/p^n} a_{n,\un i}$ with $a_{n,\un i} \in \F\bbra{N_0}$, and where the sum runs through $\un i \in \{0,1,\dots,p^n-1\}^f$. As $a_n \to 0$ as $n \to \infty$ we still have $a_{n,\un i} \to 0$ as $n \to \infty$ (uniformly in $\un i$). We can write $\psi^{n-m}(\varphi^n(a_n)x_n) = \sum_{\un i} \vp^m(a_{n,\un i})\psi^{n-m}(\un Y^{\un i} x_n)$. As $\vp^m(a_{n,\un i}) \to 0$ uniformly in $\un i$ and as $\psi^{n-m}(\un Y^{\un i} x_n)$ are contained in the (linearly) compact $\F\bbra{N_0}$-module $D_A(\pi)^\natural$, we deduce that~\eqref{eq:zero-seq} holds.
\end{proof}

Since, evidently, the action of $\bigcup_{n\geq 0}\varphi^{-n}(\F\bbra{N_0})$ extends by continuity to an action of $\F\bbra{N_0^{p^{-\infty}}}$ also on $D_{A_\infty}(\pi)$, we deduce that $X$ in \eqref{square} is an $\F\bbra{N_0^{p^{-\infty}}}$-module.

Ideally, we would like the bottom injection in (\ref{square}) to have dense image, at least for $\pi$ the maximal subquotient of $\pi(\o r)$ with supersingular constituents. This is the case when $f=1$ as follows from \cite[Lemma IV.2.2]{Colmez2} (with \cite{Colmez}). This density combined with Proposition \ref{notfinite} would have the following consequence:

\begin{lem}\label{notfinitebis}
Assume $f > 1$. Let $\pi$ as in (\ref{square}) and assume that $X$ is dense in $\varprojlim_\psi D_A(\pi)^\natural$. Then $X$ is not finitely generated as an $\F\bbra{N_0^{p^{-\infty}}}$-module.
\end{lem}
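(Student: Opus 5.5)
We argue by contradiction: assume $X$ is generated over $\F\bbra{N_0^{p^{-\infty}}}$ by finitely many elements $x^{(1)},\dots,x^{(r)}$. The aim is to deduce that $D_A(\pi)^\natural$ is finitely generated over $\F\bbra{N_0}$, which contradicts Proposition \ref{notfinite}. To apply that proposition to an irreducible constituent, we first reduce as in the proof of Corollary \ref{limpsiiso}. Alternatively, we note that the argument of Proposition \ref{notfinite} only uses that the dual of $D_A(\pi)^\natural$ contains some $I^+(\sigma,\pi')$ for a supersingular constituent $\pi'$.

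The key observation is how the zeroth projection $\pr_0:\varprojlim_\psi D_A(\pi)^\natural\to D_A(\pi)^\natural$, $(x_n)_n\mapsto x_0$, interacts with the $\F\bbra{N_0^{p^{-\infty}}}$-action of Lemma \ref{lem:action-extends}. By the formula defining the $A_{(\infty)}$-action, for $a=\varphi^{-m}(b)$ with $b\in\F\bbra{N_0}$ we have $\pr_0(a\cdot x)=\psi^m(b\,x_m)$. In particular, writing $b=\sum_{\un i}\varphi^m(b_{\un i})\un Y^{\un i}$ with $\un i\in\{0,\dots,p^m-1\}^f$, we get
\[\pr_0(a\cdot x)=\sum_{\un i} b_{\un i}\,\psi^m(\un Y^{\un i}x_m).\]
The elements $\psi^m(\un Y^{\un i}x_m)$ are the $0$-th coordinates of $\varphi^{-m}(\un Y^{\un i})\cdot x$, which here is just $\un Y^{\un i/p^m}\cdot x$. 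Hence
\[\pr_0\big(\F\bbra{N_0^{p^{-\infty}}}\cdot x\big)\subseteq \overline{\sum_{\un k\in\Z_{\ge0}[1/p]^f\cap[0,1)^f}\F\bbra{N_0}\,\pr_0(\un Y^{\un k}\cdot x)},\]
and passing to limits uses the continuity established in Lemma \ref{lem:action-extends}. This shows that $\pr_0(X)$ is not obviously finitely generated, so the naive approach is insufficient. I would therefore switch to a finer invariant that is compatible with both structures: the $N_0^{p^{-\infty}}$-coinvariants, or dually the $N_0$-invariants.

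Concretely, I would argue on the dual side. Put $\m_\infty:=$ the maximal ideal of $\F\bbra{N_0^{p^{-\infty}}}$. Since $\F\bbra{N_0^{p^{-\infty}}}$ is perfect, we have $\m_\infty^2=\m_\infty$, so finite generation of $X$ gives $\dim_\F X/\overline{\m_\infty X}\le r$. By density of $X$ in $\varprojlim_\psi D_A(\pi)^\natural\cong\pi^\vee$ (Corollary \ref{limpsiiso}), continuity of the action shows that the closure of $\m_\infty\pi^\vee$ has codimension at most $r$ in $\pi^\vee$. Next we identify the closure of $\m_\infty\pi^\vee$ with the closure of $\sum_{n}\varphi^{-n}(\m_{N_0})\pi^\vee$. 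Under the $P$-action this is $\sum_n t^{-n}\m_{N_0}t^n\pi^\vee$, i.e.\ the augmentation ideal of $\smatr1K01$ acting on $\pi^\vee$. Dualizing, we obtain $\dim_\F\pi^{\smatr1K01}\le r$.

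Finally we need a contradiction, and this last step is the one I expect to be the main obstacle, since the identification above must be checked carefully against the twisted $P$-action of Remark \ref{rk:action-FN0}. For an admissible supersingular (or any infinite-dimensional irreducible) $\pi$, the invariants $\pi^{\smatr1K01}$ are $0$, because a nonzero vector fixed by $\smatr1K01$ and by the centre would generate a finite-dimensional representation. So the bound $\dim\le r$ gives no contradiction. I would therefore refine the argument by considering the quotients $X/\overline{(Y_0,\dots,Y_{f-1})X}$ instead of $X/\overline{\m_\infty X}$. These are finite-dimensional of dimension at most $r$ if $X$ is finitely generated. Dually, and by density, this gives $\dim_\F\pi^{N_0}\le r$, and more generally $\dim \pi[\m_{N_0}^{k}]\le r\cdot\dim\F\bbra{N_0^{p^{-\infty}}}/(Y_j^{k})$. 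That last quantity is infinite, so this is not yet decisive. The step I would actually use is that $\pi^{N_0}\supseteq I^+(\sigma,\pi)^{N_0}$, and the latter is infinite-dimensional by \cite[Thm.~2.24]{Benj} and \cite[Thm.~1.1]{Wu}, exactly as in Proposition \ref{notfinite}. This contradicts $\dim\pi^{N_0}\le r$ and finishes the argument.
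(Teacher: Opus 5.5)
\textbf{There is a genuine gap.} The step you use for the contradiction fails. You assert that $X/\overline{(Y_0,\dots,Y_{f-1})X}$ has dimension at most $r$, and deduce $\dim_\F\pi^{N_0}\le r$. But finite generation only says that $X/(Y_0,\dots,Y_{f-1})X$ is generated by $r$ elements over $\F\bbra{N_0^{p^{-\infty}}}/(Y_0,\dots,Y_{f-1})$. That ring is infinite-dimensional over $\F$: the classes of all $\un Y^{\un a}$ with $\un a\in(\Z[1/p]\cap[0,1))^f$ are linearly independent in it. This is just the case $k=1$ of the infinite quantity you point out in the very next sentence.

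So finite generation over $\F\bbra{N_0^{p^{-\infty}}}$ gives no bound on $\pi^{N_0}$. Comparing with the infinite-dimensionality of $I^+(\sigma,\pi)^{N_0}$ from \cite{Benj}, \cite{Wu} therefore yields no contradiction. The only finite-dimensional quotient you actually control is $X/\m X$, with $\m$ the maximal ideal of $\F\bbra{N_0^{p^{-\infty}}}$. As you noticed yourself, on the dual side this only sees $\pi^{\smatr{1}{K}{0}{1}}=0$.

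The missing idea is what makes your first, ``naive'' approach via $\pr_0$ work: the generators of $X$ can be chosen inside $D_A(\pi)$. The paper gets this from the $\varphi$-structure, as follows.
\begin{enumerate}
\item Since $X$ is $P$-stable, $\varphi$ is bijective on $X$. Lifting a basis of $X/\m X$ (Nakayama) gives a $\varphi$-equivariant surjection onto $X$ from a finite free \'etale $\varphi$-module $Y$ over the perfect ring $\F\bbra{N_0^{p^{-\infty}}}$.
\item An $\F$-linear variant of \cite[Prop.~3.2.7]{KL1} gives $Y\cong\F\bbra{N_0^{p^{-\infty}}}\otimes_\F V$ with $V$ a finite-dimensional \'etale $\varphi$-space.
\item The $\varphi$-version of \cite[Thm.~2.6.4]{BHHMS3} shows that the image of $V$ in $D_{A_\infty}(\pi)$ lies in $D_A(\pi)$.
\item For $x\in D_A(\pi)$ one has $x_m=\varphi^m(x)$. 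Your own formula then becomes $\pr_0(\varphi^{-m}(b)\cdot x)=\psi^m(b\,\varphi^m(x))=\psi^m(b)\,x$. Since $\psi(\F\bbra{N_0})=\F\bbra{N_0}$ (Lemma~\ref{psi1}), this gives $\pr_0(\F\bbra{N_0^{p^{-\infty}}}x)=\F\bbra{N_0}x$.
\item Hence $\pr_0(X)$ is a finitely generated, thus compact and closed, $\F\bbra{N_0}$-submodule of $D_A(\pi)^\natural$. It is also dense, by the density of $X$ and the surjectivity of $\pr_0$. So it equals $D_A(\pi)^\natural$, contradicting Proposition~\ref{notfinite}.
\end{enumerate}
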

\begin{proof}
The density assumption implies that the composition $X\hookrightarrow \varprojlim_\psi D_A(\pi)^\natural$ with the projection to the first factor $D_A(\pi)^\natural$ has dense image. Assume $X$ is finitely generated over $\F\bbra{N_0^{p^{-\infty}}}$, then by Proposition \ref{notfinite} it is enough to prove that the image of this composition in $D_A(\pi)^\natural$ is finitely generated over $\F\bbra{N_0}$ (as it will be closed and hence equal to the whole $D_A(\pi)^\natural$). Let $\m$ be the maximal ideal of the local ring $\F\bbra{N_0^{p^{-\infty}}}$ and $d:=\dim_{\F}X/\m X$. Since $\varphi$ is bijective on $X$ (as $X$ is a $P$-representation), it is bijective on $X/\fm X$. Let $(e_i)_{1\leq i\leq d}\in X^d$ lifting a basis of $X/\fm X$, by Nakayama's lemma we have $\sum_{i=1}^d\F\bbra{N_0^{p^{-\infty}}}e_i=X$ and the action of $\varphi$ on $X$ is given on the vectors $e_i$ by a (possibly non-unique) matrix in $\GL_d(\F\bbra{N_0^{p^{-\infty}}})$. Choosing such a matrix gives a $\varphi$-equivariant surjection of \'etale $\varphi$-modules $Y:=\bigoplus_{i=1}^d\F\bbra{N_0^{p^{-\infty}}} e_i\twoheadrightarrow X$. By an $\F$-linear variant of \cite[Prop.~3.2.7]{KL1} (see also \cite[Prop.~2.6.3]{BHHMS3}), there is a $d$-dimensional \'etale $\varphi$-vector subspace $V$ of $Y$ such that $\F\bbra{N_0^{p^{-\infty}}}\otimes_{\F}V\congto Y$. Moreover the composition $Y\twoheadrightarrow X\subset  D_{A_{\infty}}(\pi)$ induces a $\varphi$-equivariant morphism $A_{\infty} \otimes_\F V \rightarrow D_{A_{\infty}}(\pi)$, which by (the $\varphi$-version of) \cite[Thm.~2.6.4]{BHHMS3} maps $V$ to $D_A(\pi)\subset D_{A_{\infty}}(\pi)$. It follows that $X$ is generated over $\F\bbra{N_0^{p^{-\infty}}}$ by finitely many vectors of $D_A(\pi)$. Hence it is enough to prove that, if $x\in D_A(\pi)$, then the image of the composition of $\theta$ with the projection on the first factor:
\[\F\bbra{N_0^{p^{-\infty}}}x\buildrel\theta\over\hookrightarrow \varprojlim_\psi D_A(\pi)\twoheadrightarrow D_A(\pi)\]
is a finitely generated $\F\bbra{N_0}$-submodule of $D_A(\pi)$. But since $x\in D_A(\pi)$, arguing as in the proof of Proposition \ref{embedding}(i), we see that this image consists of all $\sum_{n \ge 0} \psi^n(\vp^n(a_n))x$, where $a_n \in \F\bbra{N_0^{p^{-n}}}$ and $a_n \to 0$ as $n \to \infty$. Hence the image equals $\F\bbra{N_0}x$ (noting that $\psi(\F\bbra{N_0}) = \F\bbra{N_0}$ by Lemma~\ref{psi1}).
\end{proof}

When $\pi$ is irreducible (supersingular), to have $X$ dense it would be enough to prove $X\ne 0$ by Corollary \ref{limpsiiso} as $\pi^\vee\vert_P$ is then (topologically) irreducible. However, even in the simplest case $f=2$ and $\pi$ as in (\ref{f=2red}), we do not know if $X$ is $0$ or not for lack of a better understanding of $D_A(\pi)^\natural$. Likewise we do not know if the geometric candidate for $X$ in \S~\ref{geometricinter} below is $0$ or not (let alone if it is isomorphic to $X$).

\begin{rem}\label{rem:X-fg}
If $f = 1$ and we suppose for simplicity that $\pi$ is irreducible, then $X$ is still not finitely generated as $\F\bbra{N_0^{p^{-\infty}}}$-module, as we now briefly explain. In this case $\pi$ corresponds to an irreducible Galois representation $\o \rho : G_{\Qp} \to \GL_2(\F)$, and we have $D_A(\pi) \cong D(\o\rho)$ (at least up to twist that we can forget here) with $D(\o\rho) = \F\ppar T e_1 \oplus \F\ppar T e_2$ and $\varphi(e_1) = e_2$, $\varphi(e_2) = \lambda T^{-h(p-1)} e_1$ for some $\lambda \in \F\s$ and some $h \in \Z$, $p+1 \nmid h$. By \cite[Cor.\ II.5.12]{Colmez2} our $D_A(\pi)^\natural$ contains Colmez's $(D_A(\pi))^\natural$ as a closed $\F\bbra T$-submodule that is preserved by $\psi$, $\Zp\s$, and on which $\psi$ is surjective. Hence by Corollary~\ref{limpsiiso} (and \cite[Prop.\ 3.76(iv)]{BHHMS2}) we deduce that $D_A(\pi)^\natural = (D_A(\pi))^\natural$ (noting that $\pi|_P$ is irreducible). {(Alternatively one can deduce this from \cite[Prop.~IV.4.16]{Colmez}.)} Thus by \cite[Lemme IV.2.2(ii),\;(iii)]{Colmez2} we deduce that
\begin{equation}\label{eq:X1}
X = \big\{ z \in D_{A_\infty}(\pi) : \text{the sequence $\{\varphi^n(z)\}_{n \ge 0}$ is bounded}\big\}
\end{equation}
and that $X$ is dense in $\varprojlim_\psi D_A(\pi)^\natural$. An explicit calculation using~\eqref{eq:X1} shows that
\begin{equation*}
X = \Big\{ x e_1 + y e_2 : x,y \in F\ppar{T^{p^{-\infty}}}, \val(x) \ge \frac h{p+1}, \val(y) \ge \frac {hp}{p+1} \Big\},
\end{equation*}
where val is the $T$-adic valuation. As $p+1 \nmid h$, it follows easily that $X$ is not finitely generated as $\F\bbra{T^{p^{-\infty}}}$-module. On the other hand, it is almost finitely generated as an $\F\bbra{T^{p^{-\infty}}}$-module with respect to its maximal ideal. Namely, if we let $X_n$ (for $n \ge 0$) denote the $\F\bbra{T^{p^{-\infty}}}$-submodule of $X$ that is generated by $\varphi^{-n}(X \cap D_A(\pi))$, then each $X_n$ is finitely generated, and for any $c \in \Z[1/p]_{>0}$ there exists an $n$ such that $T^c X \subset X_n$. For $f > 1$ it is natural to ask if $X$ (assumed to be dense in $\varprojlim_\psi D_A(\pi)^\natural$) is an almost finitely generated $\F\bbra{N_0^{p^{-\infty}}}$-module, though this might be quite optimistic.
\end{rem}

\subsection{The perfectoid space \texorpdfstring{$\zlt$}{Z\_LT}}\label{sec:space-zlt}

We \ explicitly \ describe \ the \ global \ sections \ $H^0(\zlt,\cO_{\zlt})$ \ and \ its \ subspace $H^0(\zlt,\cO_{\zlt})^{G\cap (p^{\Z})^f}$. These results will be crucially used in the sequel.

We consider the perfectoid space $\zlt$ of \cite[\S~2.3]{BHHMS3} defined as the open subset of $\Spa \F\bbra{T_0^{q^{-\infty}},T_1^{q^{-\infty}},\dots,T_{f-1}^{q^{-\infty}}}$ obtained by removing the vanishing locus of $T_0T_1\cdots T_{f-1}$ (so $\zlt$ can be seen as a perfectoid space over the perfectoid field $\F\ppar{T_i^{q^{-\infty}}}$ for each $i$). Here each $T_i$ for $i\in \{0,\dots,f-1\}$ is the formal variable of the Lubin--Tate formal group over $\ok$ associated to the uniformizer $p$ such that the logarithm is $\sum_{n\geq 0}\frac{T_i^{q^n}}{p^n}$, where we use the fixed embedding $\sigma_0:\F_q\hookrightarrow \F$ of \S~\ref{strengthening} to work with $\F$-coefficients. Equivalently we have
\[Z_{\LT}=\Spa\big(\F\ppar{T_0^{q^{-\infty}}},\F\bbra{T_0^{q^{-\infty}}}\big)\times_{\Spa(\F)}\cdots \times_{\Spa(\F)}\Spa\big(\F\ppar{T_{f-1}^{q^{-\infty}}},\F\bbra{T_{f-1}^{q^{-\infty}}}\big),\]
where the fiber product {is taken} in the category of adic spaces. We refer to Corollary \ref{section} below for a concrete description of its global sections
\begin{equation}\label{global}
H^0(\zlt,\cO_{\zlt})=\F\ppar{T_0^{q^{-\infty}}}\widehat\otimes_{\F}\cdots\widehat\otimes_{\F}\F\ppar{T_{f-1}^{q^{-\infty}}}
\end{equation}
in the case $f=2$, {(where $\widehat\otimes_{\F}$ is a certain Fr\'echet completion,}
see the corollary for a precise statement), the general case being similar, {see Remark~\ref{casf>2}}.

It follows that $\zlt$ is naturally endowed with an action of the semidirect product $(K^\times)^f\rtimes S_f$, where $S_f$ acts on the left on $(K^\times)^f$ by permuting the factors. Here the $i$-th factor $K^\times$ acts trivially on $T_j$ for $j\ne i$ and acts $\F$-linearly on the variable $T_i$ by the Lubin--Tate action for $\oks$ and by $\varphi_q(T_i)=T_i^q$ for $p$, and $S_f$ permutes the $T_i$, see the beginning of \cite[\S~2.4]{BHHMS3}. Moreover $\zlt$ is a quasi-Stein adic space (see Lemma \ref{stein} below for the case $f=2$, the general case is similar). Hence a locally {finite} free $\cO_{\zlt}$-module $\mathcal F$ on $\zlt$ is determined by its global sections $H^0(\zlt,{\mathcal F})$ (which follows for instance from \cite[Thm.~2.5.5]{KL2} and \cite[Cor.~2.6.4]{KL2}).

We now determine explicitly the global sections $H^0(\zlt)=H^0(\zlt,\cO_{\zlt})$. For simplicity we assume in the remainder of this subsection that $f = 2$. For $N \ge 1$ we define
\[Z_N := \{ |T_0^N| \le |T_1| \ne 0, |T_1^N| \le |T_0| \ne 0 \},\]
a rational open subset of $Z:=\Spa \F\bbra{T_0^{q^{-\infty}},T_1^{q^{-\infty}}}$ that is contained in $\zlt$.

\begin{lm}\label{lm:projN}
We have $H^0(\zlt) = \varprojlim_{N \ge 1} H^0(Z_N)$ (as complete topological rings).
\end{lm}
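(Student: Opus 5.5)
The plan is to show that the rational opens $Z_N$ form an increasing exhaustion of $\zlt$, and then to pass to global sections using the sheaf property.

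\textbf{Monotonicity.} For $N\le N'$ we have $Z_N\subset Z_{N'}$. Indeed, at a point of $Z_N$ we have $|T_0|,|T_1|\le 1$ and $|T_0|\neq 0$. Hence $|T_0^{N'}|\le |T_0^N|\le |T_1|$, and symmetrically $|T_1^{N'}|\le |T_0|$.

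\textbf{Covering.} Next I would show $\zlt=\bigcup_{N\ge1}Z_N$. Clearly each $Z_N$ lies in $\zlt$, since $T_0T_1$ does not vanish on it. Conversely, let $x\in\zlt$. Then $|T_0(x)|$ and $|T_1(x)|$ are nonzero.
\begin{itemize}
\item At analytic points it suffices that $|T_0(x)|<1$ and $|T_1(x)|<1$. This holds because $x$ lies over $\Spa\F\ppar{T_0^{q^{-\infty}}}$, where $T_0$ is topologically nilpotent, and likewise over the $T_1$-factor. So $|T_0^N(x)|\to0$ in the sense that it eventually drops below the nonzero value $|T_1(x)|$.
\item Rank $>1$ valuations need care, since $|T_0^N|\le|T_1|$ for some $N$ requires $|T_1(x)|$ to be cofinal relative to $T_0$. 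Here I would use that $x\in\zlt$ is analytic and that $T_1(x)$ is topologically nilpotent and nonzero. The condition $|T_0^N(x)|\le|T_1(x)|$ for $N\gg0$ then follows from the definition of the topology on the residue field: $T_0^N\to0$ in the valuation topology, and $\{|y|\le|T_1(x)|\}$ is a neighbourhood of $0$ since $T_1(x)\neq0$ in an analytic point.
\end{itemize}
Alternatively, one can argue via the fibre product description: points of $\zlt$ come from pairs of points of the two perfectoid fields, and both variables are pseudo-uniformizers.

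\textbf{Passing to sections.} Once $\zlt$ is the increasing union of the opens $Z_N$, the sheaf property gives $H^0(\zlt)=\varprojlim_N H^0(Z_N)$ as rings. The topology on $H^0(\zlt)$ is defined, or at least characterized, as the inverse limit topology of the $H^0$ of a quasi-compact exhaustion. Each $H^0(Z_N)$ is a complete (perfectoid) Tate ring, so the limit is complete.

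\textbf{Quasi-compactness and the main obstacle.} To see that this is the correct Fr\'echet topology, I would note that every quasi-compact open of $\zlt$ lies in some $Z_N$, by the covering argument applied to finitely many rational subsets. Hence the $Z_N$ are cofinal among quasi-compact opens, and the limit topologies agree. The main obstacle is the covering step for higher-rank points, which is resolved by the analyticity argument above.
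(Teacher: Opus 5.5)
Your proposal is correct. The crux in both arguments is that the $Z_N$ are cofinal among rational subsets of $Z$ contained in $\zlt$, but you reach it by the reverse route from the paper.

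\textbf{The paper's route.} It starts from the definition $H^0(\zlt)=\varprojlim_U \cO_Z(U)$, with $U$ running over rational subsets contained in $\zlt$ \cite[Def.~3.1.6]{SWBerkeley}. It proves cofinality directly: on a quasi-compact $U$ with $|T_0T_1|\neq 0$, the uniform estimate \cite[Lemma 7.31]{Wedhorn:adic} gives an $N$ with $|a|<|T_0T_1|$ on $U$ for all $a\in (T_0,T_1)^{N+1}$, hence $U\subset Z_N$. The exhaustion $\zlt=\bigcup_N Z_N$ is deduced afterwards, because rational subsets form a basis.

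\textbf{Your route.} You prove the exhaustion pointwise, then obtain cofinality from quasi-compactness and monotonicity: a quasi-compact $U\subset\zlt$ is covered by the increasing opens $Z_N$, so it lies in a single one. That is the precise content of your phrase ``the covering argument applied to finitely many rational subsets''. Your route is more elementary, since it needs only the continuity of each individual valuation rather than a bound uniform over $U$. The paper's route gets the uniform statement in one step.

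\textbf{Clean-ups.}
\begin{enumerate}
\item The pointwise step needs no split by rank. For $x\in\zlt$ the value $|T_1(x)|$ is a nonzero element of the value group of $x$. By continuity of $x$, the set $\{a:|a(x)|<|T_1(x)|\}$ is an open neighbourhood of $0$ in $\F\bbra{T_0^{q^{-\infty}},T_1^{q^{-\infty}}}$, so it contains $T_0^N$ for $N\gg0$. This should replace your appeal to a ``valuation topology on the residue field''.
\item Once cofinality is known, both the ring statement and the topological statement follow directly from the definition of $\cO_Z(\zlt)$ as an inverse limit over rational subsets. The separate appeal to the sheaf property is therefore unnecessary.
\item Drop the aside that points of $\zlt$ ``come from pairs of points'' of the two factors. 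Each factor $\Spa(\F\ppar{T^{q^{-\infty}}},\F\bbra{T^{q^{-\infty}}})$ is a single point, so this gives no information.
\end{enumerate}
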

\begin{proof}
By \cite[Def.~3.1.6]{SWBerkeley} we have $H^0(\zlt) = \cO_Z(\zlt) = \varprojlim \cO_Z(U)$, where the limit runs over rational subsets $U \subset Z$ contained in $\zlt$. If $U$ is such a subset, then $U$ is quasi-compact and $|T_0T_1| \ne 0$ on $U$, so by \cite[Lemma 7.31]{Wedhorn:adic} we deduce that there exists $N \ge 0$ such that $|a| < |T_0T_1|$ on $U$ for all $a \in (T_0,T_1)^{N+1} \F\bbra{T_0^{q^{-\infty}},T_1^{q^{-\infty}}}$. Thus $U \subset Z_{N}$, so we established cofinality and are done.
\end{proof}

The proof shows that we have
\begin{equation}\label{rk:union-rational}
\bigcup_{N \ge 1} Z_N = \zlt
\end{equation}
because rational subsets form a basis of the topology of $Z$.

\begin{lm}\label{lm:global-sec-XN}
We have
\begin{multline*}
H^0(Z_N) = \bigg\{\sum_{n=0}^\infty \lambda_n T_0^{d_{0,n}} T_1^{d_{1,n}} : \lambda_n \in \F, d_{0,n},d_{1,n} \in \Z[1/p];\\
Nd_{0,n}+d_{1,n},d_{0,n}+Nd_{1,n} \to \infty \bigg\}.
\end{multline*}
This is a Banach ring, with topology given by the norm 
\[ \bigg|\sum_{n=0}^\infty \lambda_n T_0^{d_{0,n}} T_1^{d_{1,n}}\bigg|_N := 2^{{}-\min\{Nd_{0,n}+d_{1,n},d_{0,n}+Nd_{1,n} \;:\; n \ge 0\}}.\]
(Here we implicitly assume that all $\lambda_n$ are nonzero and that the $(d_{0,n},d_{1,n})$ are pairwise distinct.)
\end{lm}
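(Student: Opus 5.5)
We plan to compute $H^0(Z_N)$ by the standard description of functions on a rational subset. The rational subset $Z_N$ is defined by the two conditions $|T_0^N|\le|T_1|\ne 0$ and $|T_1^N|\le|T_0|\ne 0$. By the definition of the structure presheaf on rational subsets of $Z=\Spa R$, with $R:=\F\bbra{T_0^{q^{-\infty}},T_1^{q^{-\infty}}}$, we have
\[
H^0(Z_N)=R\langle T_0^N/T_1,\ T_1^N/T_0\rangle\big[\tfrac{1}{T_0T_1}\big].
\]
More precisely, $H^0(Z_N)$ is the completion of $R[1/(T_0T_1)]$ with respect to the topology whose ring of definition is $R[T_0^N/T_1, T_1^N/T_0]$, equipped with the $(T_0,T_1)$-adic topology. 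Since $R$ is perfect, we may as well adjoin all $p$-power roots of these fractions. So the plan is to identify this completion explicitly with the space of series in the statement, and the norm $|\cdot|_N$ with the natural Banach norm.

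\textbf{Step 1: monomials and a valuation.} Every element of $R[1/(T_0T_1)]$ is a (possibly infinite) $\F$-linear combination of monomials $T_0^{d_0}T_1^{d_1}$ with $d_i\in\Z[1/p]$. Define
\[
v_N(d_0,d_1):=\min(Nd_0+d_1,\ d_0+Nd_1).
\]
For $N\ge 2$ the linear map $(d_0,d_1)\mapsto (Nd_0+d_1,\ d_0+Nd_1)$ is invertible over $\Q$. The cone $\{Nd_0+d_1\ge 0,\ d_0+Nd_1\ge 0\}$ is generated by the exponent vectors $(-1,N)$ and $(N,-1)$, i.e.\ by $T_1^N/T_0$ and $T_0^N/T_1$. (For $N=1$ the condition reads $d_0+d_1\ge 0$, and the rational subset is $|T_0|=|T_1|$; the same argument works with generators $T_0/T_1$, $T_1/T_0$ and $T_0$.)

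We will check that the ring of integral elements $H^0(Z_N)^\circ$ consists exactly of the convergent series $\sum \lambda_n T_0^{d_{0,n}}T_1^{d_{1,n}}$ with $v_N\ge 0$. On one hand, $T_0$ and $T_1$ lie in the closure of the span of such monomials: $T_0=(T_0^N/T_1)^{a}(T_1^N/T_0)^{b}$ with $a,b\in\Z[1/p]_{\ge0}$ solving the linear system, which is possible since $(1,0)$ lies in the cone. On the other hand, each such monomial is power-bounded on $Z_N$.

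\textbf{Step 2: identifying the completion.} The topology on $H^0(Z_N)$ is $I$-adic on the ring of definition with $I=(T_0,T_1)$. A monomial lies in $I^k$, up to the ring of definition, roughly iff $v_N\ge k\cdot c$ for a constant $c>0$. Hence convergence of $\sum\lambda_n T_0^{d_{0,n}}T_1^{d_{1,n}}$ is equivalent to $v_N(d_{0,n},d_{1,n})\to\infty$. This gives both that the displayed series converge, and that every element of the completion is such a series: write the element as a limit of Cauchy finite sums and use that monomials are "orthogonal", so the coefficients stabilize.

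\textbf{Step 3: the norm.} The function $|\cdot|_N$ is multiplicative on monomials, since $v_N$ is superadditive and in fact additive up to choosing the min. It is nonarchimedean on series by the usual min argument, and it defines exactly the topology of Step 2. Completeness then follows from the explicit description. Submultiplicativity $|fg|_N\le|f|_N|g|_N$ comes from $v_N(a+b)\ge v_N(a)+v_N(b)$.

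\textbf{Main obstacle.} The delicate point is showing that the completion contains no more than these series, i.e.\ the orthogonality of distinct monomials with respect to $|\cdot|_N$, so that a Cauchy sequence of finite sums has coefficientwise-stable limits. Here I would use that $v_N$ takes discrete-enough values on bounded exponent sets, together with the fact that $|\cdot|_N$ of a sum is the min over monomials. Perfectness of $R$ makes the exponent monoid $\Z[1/p]^2$ rather than $\Z^2$; this requires care in Step 1 when comparing $I$-adic and $v_N$-topologies, but only up to the bounded constant $c$.
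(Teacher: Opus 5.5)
Your route is the paper's: write $H^0(Z_N)$ as the completion of $R[1/(T_0T_1)]$ for the ring of definition $R[T_0^N/T_1,T_1^N/T_0]$ with its $(T_0,T_1)$-adic topology. Then determine which monomials occur in the completed ring of definition, compare the $(T_0,T_1)$-adic filtration with $v_N$, and complete. The paper keeps the non-perfect ring of definition $A_0$, whose exponent set is cut out by the extra condition $d_0<-i\Rightarrow d_1\ge (i+1)N$ (and symmetrically), and inverts $T_0T_1$ only after completing. Adjoining $p$-power roots, as you do, is legitimate: $T_0T_1$ times your ring of definition already lies in $A_0$, so both rings of definition give the same topology. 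It also gives a cleaner intermediate statement: the completed ring of definition is exactly the set of series with $v_N\ge 0$ and $v_N\to\infty$. So the plan works, but several supporting claims are false as written.

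\textbf{The linear system in Step 1.} $T_0=(T_0^N/T_1)^a(T_1^N/T_0)^b$ forces $a=N/(N^2-1)$ and $b=1/(N^2-1)$. These are not in $\Z[1/p]$ in general (e.g.\ $N=2$, $p\neq 3$): the cone $\{v_N\ge 0\}$ is spanned by $(N,-1)$ and $(-1,N)$ only with rational coefficients. The claim is beside the point anyway, since $T_0,T_1\in R$. What you need is the converse. If $v_N(d)\ge 0$ and $d_0=-s<0$, then $T_0^{d_0}T_1^{d_1}=(T_1^N/T_0)^{s}\,T_1^{d_1+Nd_0}$ with $d_1+Nd_0\ge 0$, and symmetrically if $d_1<0$. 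Together with superadditivity of $v_N$ and Step 2, this identifies the completed ring of definition. Your claim about $H^0(Z_N)^\circ$ is not needed, and what you wrote does not prove it.

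\textbf{Multiplicativity in Step 3.} $|\cdot|_N$ is not multiplicative on monomials, because $v_N$ is only superadditive. For $N=2$, $v_2(1,0)=v_2(0,1)=1$ but $v_2(1,1)=3$; the paper points this out. Submultiplicativity, which is all you use, is fine.

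\textbf{The main obstacle is misplaced.} Orthogonality of monomials is built into the definition of $|\cdot|_N$: the monomial $T_0^{d_0}T_1^{d_1}$ cannot occur in $f$ if $|f|_N<2^{-v_N(d)}$. So the coefficients of a Cauchy sequence stabilize without any discreteness input. In any case $v_N$ is not discrete, since $v_N(d_0,0)=d_0$ for every $d_0\in\Z[1/p]_{\ge 0}$.

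\textbf{What Step 2 must actually prove.} The substantive step is Step 2, which should be proved as two inclusions. First, $(T_0,T_1)^k$ times the ring of definition lies in $\{v_N\ge k\}$, since $v_N(j,k-j)\ge k$ for $0\le j\le k$. Conversely, $v_N\ge k$ gives membership in $(T_0,T_1)^m$ times the ring of definition for some $m$ of size about $2k/(N+1)$. The constant really is multiplicative: $T_0^aT_1^a$ ($a\in\Z_{\ge 1}$) has $v_N=(N+1)a$ but $(T_0,T_1)$-adic order $2a$. So your estimate $v_N\ge kc$ has the right shape. Incidentally, the paper's additive estimate $|f|_N\le 2^{-i}\Rightarrow f\in(T_0,T_1)^{i-N}\wh A_0$ fails for $N\ge 2$ (take $N=2$, $f=T_0^3T_1^3$, $i=9$). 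Any linear comparison of this kind still suffices for the equality of topologies.
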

\begin{proof}
As in \cite[\S~3.1]{SWBerkeley} we obtain $H^0(Z_N) = \cO_Z(Z_N)$ by taking \[A_0 := \F\bbra{T_0^{q^{-\infty}},T_1^{q^{-\infty}}}\Big[\frac{T_0^N}{T_1},\frac{T_1^N}{T_0}\Big],\] completing for the $(T_0,T_1)$-adic topology and inverting $T_0T_1$. (Incidentally, note that the $(T_0,T_1)$-adic topology equals the $T_i$-adic topology for any $i$.) The ring $A_0$ is given explicitly by the series $\sum_{n=0}^\infty \lambda_n T_0^{d_{0,n}} T_1^{d_{1,n}}$ such that
\begin{itemize}
\item $d_{0,n},d_{1,n} \in \Z[1/p]$,
\item the subset $\{d_{0,n},d_{1,n} : n \ge 0 \} \subset \R$ is bounded below,
\item $d_{0,n}+d_{1,n} \to \infty$ as $n \to \infty$,
\item for all $i \in \Z_{\ge 0}$ we have $d_{0,n} < -i \Rightarrow d_{1,n} \ge (i+1)N$ and $d_{1,n} < -i \Rightarrow d_{0,n} \ge (i+1)N$.
\end{itemize}
Note (by the second item) that the third item can be replaced by the condition
\begin{itemize}
\item $Nd_{0,n}+d_{1,n} \to \infty$ and $d_{0,n}+Nd_{1,n} \to \infty$ as $n \to \infty$.
\end{itemize}
Moreover, the fourth item implies that $Nd_{0,n}+d_{1,n} \ge 0$, $d_{0,n}+Nd_{1,n} \ge 0$ for all $n$. In particular, the $(T_0,T_1)$-adic topology is separated. It is then straightforward to see that the $(T_0,T_1)$-adic completion $\wh A_0$ of $A_0$ is given by the series $\sum_{n=0}^\infty \lambda_n T_0^{d_{0,n}} T_1^{d_{1,n}}$ such that
\begin{itemize} \item $d_{0,n},d_{1,n} \in \Z[1/p]$,
\item $Nd_{0,n}+d_{1,n} \to \infty$ and $d_{0,n}+Nd_{1,n} \to \infty$ as $n \to \infty$,
\item for all $i \in \Z_{\ge 0}$ we have $d_{0,n} < -i \Rightarrow d_{1,n} \ge (i+1)N$ and $d_{1,n} < -i \Rightarrow d_{0,n} \ge (i+1)N$.
\end{itemize}
Another short calculation shows that $\wh A_0[(T_0T_1)^{-1}]$ is precisely the ring in the statement of this lemma. (To see that we obtain all series in the statement, by symmetry it is enough to consider series with $d_{0,n} < 0$ for all $n$ and then write $T_0^{d_{0,n}} T_1^{d_{1,n}} = (T_0^{-1}T_1)^{-\lfloor d_{0,n}\rfloor} T_0^{d_{0,n}-\lfloor d_{0,n}\rfloor} T_1^{d_{1,n}+\lfloor d_{0,n}\rfloor}$.)

For the claim on the topology, it is not hard to verify that for $f \in \wh A_0$ and $i \ge N$,
\begin{equation*}
  f \in (T_0,T_1)^i \wh A_0 \ \Longrightarrow \ |f|_N \le 2^{-i} \ \Longrightarrow \ f \in (T_0,T_1)^{i-N} \wh A_0.
\end{equation*}
(Also, $|{\cdot}|_N$ is not multiplicative, only sub-multiplicative.)
\end{proof}

\begin{rem}
Alternatively we get the Banach topology on $H^0(Z_N)$ from the fact that $H^0(Z_N)$ is Tate with pseudo-uniformizer $T_0$ (or $T_1$).
\end{rem}

From Lemma \ref{lm:projN} and Lemma \ref{lm:global-sec-XN} we obtain the following description of the Fr\'echet ring $H^0(\zlt)$.

\begin{cor}\label{section}
We have 
\begin{multline*}
H^0(\zlt) = \bigg\{\sum_{n=0}^\infty \lambda_n T_0^{d_{0,n}} T_1^{d_{1,n}} : \lambda_n \in \F, d_{0,n},d_{1,n} \in \Z[1/p];\\
xd_{0,n}+yd_{1,n} \to \infty \ \forall \ x,y > 0\bigg\}.
\end{multline*}
\end{cor}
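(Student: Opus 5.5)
We combine Lemma \ref{lm:projN} with Lemma \ref{lm:global-sec-XN}. By Lemma \ref{lm:projN}, $H^0(\zlt)=\varprojlim_N H^0(Z_N)$, and the transition maps are the restrictions $H^0(Z_{N+1})\to H^0(Z_N)$, which are injective: in the explicit descriptions they are just inclusions of sets of formal series. Indeed $Z_N\subset Z_{N+1}$, and if $Nd_0+d_1\to\infty$ and $d_0+Nd_1\to\infty$, then $(N+1)d_0+d_1$ and $d_0+(N+1)d_1$ tend to $\infty$ as well, being positive combinations. To be careful one should check that restriction is compatible with the series expansions. Both rings embed into the common ring of formal sums $\sum\lambda T_0^{d_0}T_1^{d_1}$ via the $(T_0,T_1)$-adic completions $\wh A_0$ constructed in the proof of Lemma \ref{lm:global-sec-XN}, and $A_0^{(N+1)}\subset A_0^{(N)}$ compatibly. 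Hence the inverse limit is the intersection $\bigcap_N H^0(Z_N)$ inside this ambient space of formal series.

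It remains to identify the intersection with the set in the statement. Fix a series $\sum\lambda_nT_0^{d_{0,n}}T_1^{d_{1,n}}$. If it lies in every $H^0(Z_N)$, let $x,y>0$ be given. We have $\min(x,y)>0$, and we may choose $N$ with $N\ge \max(x,y)/\min(x,y)$. Then for $N\ge 1$ we can write $(x,y)$ as a nonnegative combination $\alpha(N,1)+\beta(1,N)$ with $\alpha,\beta\ge 0$. This works because $(x,y)$ lies in the cone spanned by $(N,1)$ and $(1,N)$ exactly when $1/N\le y/x\le N$. It follows that
\[xd_{0,n}+yd_{1,n}=\alpha(Nd_{0,n}+d_{1,n})+\beta(d_{0,n}+Nd_{1,n})\to\infty.\]
Conversely, if $xd_{0,n}+yd_{1,n}\to\infty$ for all $x,y>0$, we take $(x,y)=(N,1)$ and $(1,N)$ to get membership in each $H^0(Z_N)$.

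The main point needing care is the first step: making rigorous that the inverse limit of these Banach rings is literally the intersection of the series spaces. This requires that expansions are unique, i.e.\ that a convergent series is zero only if all its coefficients vanish. We would argue this via the separatedness noted in the proof of Lemma \ref{lm:global-sec-XN}, using that the norm $|\cdot|_N$ is given by a minimum over the exponents. The limit topology then agrees with the Fréchet topology defined by the family of norms $|\cdot|_N$.
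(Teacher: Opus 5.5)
Your route is the paper's: the corollary is deduced directly from Lemmas \ref{lm:projN} and \ref{lm:global-sec-XN}. Your cone argument is correct and is exactly what identifies $\bigcap_N H^0(Z_N)$ with the stated set: once $1/N\le y/x\le N$, you write $(x,y)$ as a nonnegative combination of $(N,1)$ and $(1,N)$.

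One correction concerns the implication you give for the transition maps. It points the wrong way and is false. For example, take $\sum_{n\ge 0}T_0^{-n}T_1^{(N+1)n}$. Here $Nd_{0,n}+d_{1,n}=n\to\infty$ and $d_{0,n}+Nd_{1,n}=(N^2+N-1)n\to\infty$, but $(N+1)d_{0,n}+d_{1,n}=0$. So this series lies in $H^0(Z_N)$ but not in $H^0(Z_{N+1})$.

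What is true is the reverse inclusion $H^0(Z_{N+1})\subseteq H^0(Z_N)$ as sets of series. This is what restriction along $Z_N\subset Z_{N+1}$ requires, and it matches your correct remark that $A_0^{(N+1)}\subset A_0^{(N)}$. It follows from your own cone argument, since $(N,1)$ and $(1,N)$ lie in the cone spanned by $(N+1,1)$ and $(1,N+1)$. With this fixed, continuity of restriction and uniqueness of expansions make the transition maps inclusions of series sets. The inverse limit is then the intersection, and the rest of your proof goes through.
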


\begin{ex}
For example, $\sum_{i \ge 0} T_0^{-i} T_1^{i^2} \in H^0(\zlt)$.
\end{ex}

{\begin{rem}\label{rem:H0ZLT-topology}
    By Lemma~\ref{lm:projN} it follows that the topology of $H^0(\zlt)$ is given by the family of norms $\lvert\cdot\rvert_N$ for $N \ge 1$ (restricted to $H^0(\zlt)$).
    It is then easy to see that the induced topology on $\F\bbra{T_0^{q^{-\infty}},T_1^{q^{-\infty}}}$ is the $(T_0,T_1)$-adic topology.
    In particular, $\F\bbra{T_0^{q^{-\infty}},T_1^{q^{-\infty}}}$ is a closed subring of $H^0(\zlt)$ (as it is complete).
\end{rem}

\begin{lm}\label{stein}
The adic space $\zlt$ is quasi-Stein \cite[Def.\ 2.6.2]{KL2}.
\end{lm}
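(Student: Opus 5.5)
We will verify the definition of quasi-Stein in \cite[Def.~2.6.2]{KL2} directly, using the rational subsets $Z_N$ already introduced. The definition asks for an increasing exhaustion of $\zlt$ by affinoid (here: affinoid perfectoid) open subspaces $U_1\subset U_2\subset\cdots$ such that each restriction map $H^0(U_{N+1})\to H^0(U_N)$ has dense image. We take $U_N:=Z_N$.

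\textbf{Step 1: the $Z_N$ form an increasing exhaustion by affinoids.} Each $Z_N$ is a rational open subset of the affinoid perfectoid space $Z=\Spa \F\bbra{T_0^{q^{-\infty}},T_1^{q^{-\infty}}}$. It is therefore affinoid perfectoid, with ring of global sections the Banach ring described in Lemma~\ref{lm:global-sec-XN}. It is contained in $\zlt$ because $|T_0T_1|\ne 0$ on $Z_N$.

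To see that $Z_N\subset Z_{N+1}$, note that on $Z_N$ we have $|T_0|\le 1$. Hence $|T_0^{N+1}|\le |T_0^N|\le |T_1|$, and symmetrically $|T_1^{N+1}|\le |T_0|$.

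Finally, \eqref{rk:union-rational} gives $\bigcup_N Z_N=\zlt$. We will also invoke Lemma~\ref{lm:projN}, so that $H^0(\zlt)=\varprojlim_N H^0(Z_N)$ is compatible with this exhaustion.

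\textbf{Step 2: density of the restriction maps.} This is the only real content, and I expect it to be the main (though mild) obstacle. It suffices to show that a subring common to all the $H^0(Z_N)$ is dense in each $H^0(Z_N)$. A natural choice is the ring $R:=\F[T_0^{\pm q^{-\infty}},T_1^{\pm q^{-\infty}}]$ of finite Laurent polynomials with $\Z[1/p]$-exponents (or even $\F\bbra{T_0^{q^{-\infty}},T_1^{q^{-\infty}}}[(T_0T_1)^{-1}]$). This ring lies in $H^0(Z_{N+1})$, and it maps into $H^0(Z_N)$ compatibly with restriction.

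Now let $f=\sum_n\lambda_nT_0^{d_{0,n}}T_1^{d_{1,n}}\in H^0(Z_N)$. By Lemma~\ref{lm:global-sec-XN} we have $Nd_{0,n}+d_{1,n}\to\infty$ and $d_{0,n}+Nd_{1,n}\to\infty$. Consequently the partial sums $f_M:=\sum_{n\le M}\lambda_nT_0^{d_{0,n}}T_1^{d_{1,n}}\in R$ satisfy
\[|f-f_M|_N=2^{-\min_{n>M}\min\{Nd_{0,n}+d_{1,n},\,d_{0,n}+Nd_{1,n}\}}\longrightarrow 0 .\]
So $R$ is dense in $H^0(Z_N)$, and a fortiori the image of $H^0(Z_{N+1})\to H^0(Z_N)$ is dense.

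The one point to double-check is that the norm topology $|\cdot|_N$ coincides with the canonical Banach topology on $H^0(Z_N)$. This is asserted in Lemma~\ref{lm:global-sec-XN} via the comparison with the $(T_0,T_1)$-adic filtration on $\wh A_0$. Once that is in hand, the conditions of \cite[Def.~2.6.2]{KL2} are all satisfied, and $\zlt$ is quasi-Stein. The case of general $f$ goes the same way, using analogous sets $Z_N$ defined by $|T_i^N|\le |T_j|\ne 0$ for all $i\ne j$.
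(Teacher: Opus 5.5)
Your proposal is correct and follows essentially the same route as the paper: you exhaust $\zlt$ by the increasing rational affinoids $Z_N$ via \eqref{rk:union-rational}, and you get density of $H^0(Z_{N+1})\to H^0(Z_N)$ from truncated partial sums, which are finite Laurent monomial sums. The one thing to add is the trivial observation, which the paper also records, that $T_0$ is a topologically nilpotent unit in $H^0(\zlt)$, so that you are in the setting of \cite[Def.~2.6.2]{KL2}.
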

\begin{proof}
This follows from the more general statement that $X,Y$ quasi-Stein in $\Perf_{\fq}$ implies $X \times_{\fq} Y$ quasi-Stein (using the proof of \cite[Lemma 8.3.5]{SWBerkeley}), but we provide an explicit argument here. By (\ref{rk:union-rational}) we have an increasing union $\zlt = \bigcup_{N \ge 1} Z_N$, and $T_0$ is a topologically nilpotent unit in $H^0(\zlt)$. It thus suffices to show the (injective) map $H^0(Z_{N+1}) \to H^0(Z_N)$ has dense image. Take $F := \sum_{n=0}^\infty \lambda_n T_0^{d_{0,n}} T_1^{d_{1,n}} \in H^0(Z_N)$, with $\lambda_n \in \F, d_{0,n},d_{1,n} \in \Z[1/p]; Nd_{0,n}+d_{1,n},d_{0,n}+Nd_{1,n} \to +\infty$. Then for each $k \ge 0$ the partial series $F_k := \sum_{n=0}^k \lambda_n T_0^{d_{0,n}} T_1^{d_{1,n}}$ lies in (the image of) $H^0(Z_{N+1})$ and $F_k \to F$ as $k \to +\infty$.
\end{proof}

The following lemma about invariants will be very useful.

\begin{lm}\label{invariant}
  We have $H^0(\zlt)^{(p,p^{-1})^\Z}=\F\bbra{T_0^{q^{-\infty}},T_1^{q^{-\infty}}}^{(p,p^{-1})^\Z}$.
\end{lm}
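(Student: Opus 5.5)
The element $(p,p^{-1})\in (K^\times)^2$ acts on $H^0(\zlt)$ by $T_0\mapsto \varphi_q(T_0)=T_0^q$ and $T_1\mapsto T_1^{q^{-1}}$, since $p^{-1}$ acts through $\varphi_q^{-1}$ on the second factor. Everything will therefore be read off from the series description of Corollary \ref{section}. Write $F=\sum_n \lambda_n T_0^{d_{0,n}}T_1^{d_{1,n}}\in H^0(\zlt)$ with pairwise distinct exponents and nonzero $\lambda_n$. The action sends a monomial to a monomial, acting on exponents by $(d_0,d_1)\mapsto (qd_0,q^{-1}d_1)$. Using uniqueness of such expansions, $F$ is invariant if and only if its support $S\subset \Z[1/p]^2$ is stable under this bijection and its inverse, with $\lambda$ constant on orbits. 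Uniqueness holds because the expansion is unique in each $H^0(Z_N)$ by Lemma \ref{lm:global-sec-XN}, and $H^0(\zlt)\hookrightarrow H^0(Z_N)$.

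The key step is to show that every point of $S$ has $d_0\ge 0$ and $d_1\ge 0$. Suppose $(d_0,d_1)\in S$ with $d_0<0$. Applying $(p,p^{-1})^k$ for $k\ge 0$ shows that $(q^kd_0,q^{-k}d_1)\in S$ for all $k$. Take $x=1$, $y=1$. Then $q^kd_0+q^{-k}d_1\to -\infty$, since $q^{-k}d_1$ stays bounded. These are infinitely many distinct points of $S$ (distinct because $d_0\neq0$), so this contradicts the condition $xd_{0,n}+yd_{1,n}\to\infty$. The case $d_1<0$ is symmetric, using $k\to-\infty$. Hence $S\subset \Z[1/p]_{\ge0}^2$.

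It remains to check that such an $F$ lies in $\F\bbra{T_0^{q^{-\infty}},T_1^{q^{-\infty}}}$. Every element of $H^0(\zlt)$ involves exponents in $q^{-\infty}\Z$ by construction, and the support is now nonnegative. Taking $x=y=1$ gives $d_{0,n}+d_{1,n}\to\infty$, so for each bound only finitely many terms have total degree below it. This is precisely membership in the completed perfection $\F\bbra{T_0^{q^{-\infty}},T_1^{q^{-\infty}}}$, viewed as a closed subring by Remark \ref{rem:H0ZLT-topology}. The reverse inclusion is trivial. The only mild care needed is the uniqueness of expansions together with the observation that the orbit points are genuinely distinct and infinite; I expect no real obstacle.
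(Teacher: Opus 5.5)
Your proposal is correct and follows essentially the same route as the paper's proof. The paper also notes that an invariant series must contain the whole orbit $T_0^{q^na}T_1^{q^{-n}b}$ of each of its monomials, and that $a<0$ (or symmetrically $b<0$) contradicts the growth condition of Corollary~\ref{section}. You additionally spell out the uniqueness of expansions and the identification with $\F\bbra{T_0^{q^{-\infty}},T_1^{q^{-\infty}}}$, which the paper leaves implicit.
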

\begin{proof}
Let $F\in H^0(\zlt)$ such that $(p,p^{-1})(F)= F$. This implies that if a monomial $T_0^{a} T_1^{b}$ appears in the expansion of $F$ (see Corollary \ref{section}), then all monomials $T_0^{q^na}T_1^{q^{-n}b}$ for $n\in \Z$ must also appear. If $a<0$, then $q^na+q^{-n}b\rightarrow -\infty$ as $n\rightarrow \infty$, which contradicts the condition in Corollary \ref{section}. The same holds by symmetry if $b<0$. Hence all monomials $T_0^{a}T_1^{b}$ appearing in $F$ must be such that $a\geq 0$ and $b\geq 0$.
\end{proof}

\begin{rem}
\label{casf>2}
For $f\geq 1$ it is not difficult to show that
\begin{multline*}
H^0(\zlt) = \bigg\{\sum_{n=0}^\infty \lambda_n T_0^{d_{0,n}}T_1^{d_{1,n}}\cdots T_{f-1}^{d_{f-1,n}} : \lambda_n \in \F, d_{i,n}\in \Z[1/p]; \\
\sum_{i=0}^{f-1}x_id_{i,n} \to \infty \ \forall \ x_0,\dots,x_{f-1} > 0\bigg\}
\end{multline*}
by using the covering (\ref{rk:union-rational}) with
\[Z_N := \{ |T_0^N| \le |T_i| \ne 0\ \forall\ i,\ |T_i^N| \le |T_0| \ne 0\ \forall \ i\}.\]  
Then \ the \ same \ proof \ as \ for \ Lemma \ref{invariant} \ (using \ elements \ of \ the \ form $(1,\dots ,1,p,1,\dots,1,p^{-1},1,\dots,1)\in G\cap (p^{\Z})^f$ for various positions of $p,p^{-1}$) shows that $H^0(\zlt)^{G\cap (p^{\Z})^f}= \F\bbra{T_0^{q^{-\infty}},\dots,T_{f-1}^{q^{-\infty}}}^{G\cap (p^{\Z})^f}$ (cf.\ Lemma \ref{power}).
\end{rem}

\subsection{A geometric candidate for \texorpdfstring{$D_{A_\infty}(\pi) \cap \protect\varprojlim_\psi D_A(\pi)^\natural$}{D\_Ainfty(pi) cap lim D\_A(pi)\^{ }natural}}\label{geometricinter}

We recall the geometric interpretation of $D_{A_\infty}(\pi(\o r))$ and give a possible (local) candidate for $D_{A_\infty}(\pi) \cap \protect\varprojlim_\psi D_A(\pi)^\natural$, where $\pi$ is the maximal subquotient of $\pi(\o r)$ with only supersingular constituents. We assume $f>1$.

{Recall from \S~\ref{sec:space-zlt} that the perfectoid space $\zlt$ is endowed with an action of $(K^\times)^f\rtimes S_f$.}
We define $G:=\ker((K^\times)^f\rtimes S_f\twoheadrightarrow K^\times,\ (k_0,\dots,k_{f-1},w)\mapsto \prod_{i=0}^{f-1}k_i)$. For any $(K^\times)^f\rtimes S_f$-equivariant locally {finite} free $\cO_{\zlt}$-module $\cF$ the $\F$-vector space
\begin{equation}\label{H0}
H^0(\zlt,\cF)^G
\end{equation}
is endowed with the residual action of $K^\times$. Unfortunately, we do not understand the spaces (\ref{H0}) except in the case where $\cF=\cO_{\zlt}$ (cf.\ \eqref{G} and also \S~\ref{sec:aside-trivial-galois}). {Recall the following result from Lemma \ref{invariant} and Remark \ref{casf>2}.}

\begin{lm}\label{power}
We have $\F\bbra{T_0^{q^{-\infty}},T_1^{q^{-\infty}},\dots,T_{f-1}^{q^{-\infty}}}^{G\cap (p^{\Z})^f}\congto H^0(\zlt,\cO_{\zlt})^{G\cap (p^{\Z})^f}$.
\end{lm}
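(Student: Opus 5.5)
The plan is to realize the map as the restriction of the natural inclusion $\F\bbra{T_0^{q^{-\infty}},\dots,T_{f-1}^{q^{-\infty}}}\subset H^0(\zlt,\cO_{\zlt})$ to $G\cap (p^{\Z})^f$-invariants, and to show that it is a bijection. Injectivity is immediate: the power series ring is a (closed) subring of $H^0(\zlt,\cO_{\zlt})$, cf.\ Remark \ref{rem:H0ZLT-topology} for $f=2$, and this remains true for general $f$ by the same argument with the covering $Z_N$ of Remark \ref{casf>2}. The inclusion is equivariant for the action of $(K^\times)^f\rtimes S_f$, in particular for $(p^{\Z})^f$, which acts through $\varphi_q$ on individual variables. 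Hence it induces an injection on $G\cap (p^{\Z})^f$-invariants.

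For surjectivity I would take $F\in H^0(\zlt,\cO_{\zlt})^{G\cap(p^{\Z})^f}$ and expand it as $F=\sum_n\lambda_n \prod_i T_i^{d_{i,n}}$ using the description of global sections in Corollary \ref{section} (for $f=2$) or Remark \ref{casf>2} (general $f$). The condition is that $\sum_i x_id_{i,n}\to\infty$ for all $x_i>0$. The first step is to check that the expansion is unique, which follows from the construction through the Banach rings $H^0(Z_N)$ with their monomial norms. Given uniqueness, invariance under the element $g_{j,k}\in G\cap (p^{\Z})^f$ having $p$ in position $j$, $p^{-1}$ in position $k$, and $1$ elsewhere means the following: $g_{j,k}$ sends $T_j\mapsto T_j^q$ and $T_k\mapsto T_k^{q^{-1}}$, so the set of monomials occurring (with their coefficients) is stable under $(d_j,d_k)\mapsto (q^md_j,q^{-m}d_k)$ for all $m\in\Z$.

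Now suppose some monomial in $F$ has $d_j<0$ for some index $j$. Pick any $k\neq j$ (possible since $f>1$). Applying $g_{j,k}^m$ with $m\to+\infty$ yields infinitely many distinct monomials of $F$ whose $j$-th exponent is $q^md_j\to-\infty$, while the $k$-th exponent $q^{-m}d_k$ stays bounded and the other exponents stay fixed. Taking all $x_i=1$, the sums $\sum_i d_{i}$ along these monomials tend to $-\infty$. This contradicts the convergence condition, because along any infinite subfamily of the indexing the sums must tend to $+\infty$. Hence every exponent is nonnegative.

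A series with nonnegative exponents in $\Z[1/p]$ whose total degree tends to $\infty$ is exactly an element of $\F\bbra{T_0^{q^{-\infty}},\dots,T_{f-1}^{q^{-\infty}}}$. One small point: the exponents lie in $q^{-\infty}\Z$ rather than just $\Z[1/p]$, but $\Z[1/p]=\Z[1/q]$, so this is harmless. Thus $F$ lies in the image, and the map is an isomorphism, in fact a topological one by closedness.

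The only step that is not formal is establishing the monomial description and the uniqueness of expansions for general $f$, i.e.\ Remark \ref{casf>2}. I would prove it exactly as in Lemmas \ref{lm:projN} and \ref{lm:global-sec-XN}, computing $H^0(Z_N)$ via the rational localization $\F\bbra{T_i^{q^{-\infty}}}[T_i^N/T_0,T_0^N/T_i]$ and completing. The invariance argument itself is the straightforward one of Lemma \ref{invariant}.
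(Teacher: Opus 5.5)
Your proposal is correct and is essentially the paper's argument: the paper derives this lemma from Lemma~\ref{invariant} (for $f=2$) and Remark~\ref{casf>2} (for general $f$). That argument likewise uses the monomial description of $H^0(\zlt,\cO_{\zlt})$ together with invariance under the elements with $p$ and $p^{-1}$ in two positions, which forces all exponents to be nonnegative.

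One small caution: the topological refinement you add needs that the subspace topology on the power series ring is its natural adic one (Remark~\ref{rem:H0ZLT-topology}), not merely closedness.
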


From Lemma \ref{power} we have in particular
\begin{equation}\label{G}
\F\bbra{T_0^{q^{-\infty}},T_1^{q^{-\infty}},\dots,T_{f-1}^{q^{-\infty}}}^G\congto H^0(\zlt,\cO_{\zlt})^{G}.
\end{equation}
{This is a topological isomorphism by Remark~\ref{rem:H0ZLT-topology}.}

We consider the perfectoid space $\zok$ of \cite[\S~2.3]{BHHMS3} defined as the open subset of $\Spa \F\bbra{N_0^{p^{-\infty}}} = \Spa \F\bbra{Y_0^{p^{-\infty}},Y_1^{p^{-\infty}},\dots,Y_{f-1}^{p^{-\infty}}}$ obtained by removing the unique non-analytic point (equivalently the simultaneous vanishing locus of all $Y_i$). The perfectoid space $\zok$ is not quasi-Stein anymore, but it is quasi-compact {because of the following finite covering of $\zok$ by rational subsets of $\Spa \F\bbra{N_0^{p^{-\infty}}}$:
\begin{equation}\label{eq:U_i}
U_i:= \mathrm{Spa}\Big(\F\ppar{Y_i^{p^{-\infty}}}\left\langle \big(\frac{Y_j}{Y_i}\big)^{p^{-\infty}}, j\ne i\right\rangle, \F\bbra{Y_i^{p^{-\infty}}}\left\langle \big(\frac{Y_j}{Y_i}\big)^{p^{-\infty}}, j\ne i\right\rangle\Big),
\end{equation}
where $i\in \{0,\dots,f-1\}$. (This covering is analogous to the covering in the proof of \cite[Lemma 2.4.2(iv)]{BHHMS3}.)}
{The perfectoid space $\zok$ is also quasi-separated, since it is open in the affinoid $\Spa \F\bbra{N_0^{p^{-\infty}}}$.}

\begin{lem}\label{hartogs}
We have {a topological isomorphism} $$\F\bbra{N_0^{p^{-\infty}}} = \F\bbra{Y_0^{p^{-\infty}},\dots,Y_{f-1}^{p^{-\infty}}} \congto H^0(\zok,{\mathcal O}_{\zok}).$$
\end{lem}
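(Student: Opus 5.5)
We compute $H^0(\zok,\cO_{\zok})$ with the finite rational covering $\zok=\bigcup_{i=0}^{f-1}U_i$ of \eqref{eq:U_i}, using explicit monomial expansions as in the description of $H^0(\zlt)$. Write $R:=\F\bbra{Y_0^{p^{-\infty}},\dots,Y_{f-1}^{p^{-\infty}}}$. The map $R\to H^0(\zok,\cO_{\zok})$ is restriction of functions from $\Spa R$. Its injectivity is clear, since already $R\to H^0(U_0,\cO)$ is injective: $U_0$ is a rational subset whose ring is a localisation-completion of $R$ in which $Y_0$ is inverted, and $R$ is a domain with $Y_0$-adically separated topology.

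Since $\zok$ is quasi-separated and quasi-compact, every section is determined by compatible sections on the $U_i$, so
\[H^0(\zok,\cO)=\ker\Big(\prod_i H^0(U_i,\cO)\to\prod_{i<j}H^0(U_i\cap U_j,\cO)\Big).\]
The first step is to describe each $H^0(U_i,\cO)=\F\ppar{Y_i^{p^{-\infty}}}\langle (Y_j/Y_i)^{p^{-\infty}}\rangle$ as a ring of series $\sum\lambda_n\underline Y^{\underline d_n}$ with $\underline d_n\in\Z[1/p]^f$. The exponents must satisfy $d_{j,n}\ge 0$ for $j\ne i$ and $\sum_j d_{j,n}$ bounded below, and these sums must tend to $\infty$. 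The intersections $U_i\cap U_j$ are again rational, defined by $|Y_i|=|Y_j|$ together with the other inequalities, and they admit a similar description. All these rings embed compatibly into a common ring, namely the ring of $U_0\cap\dots\cap U_{f-1}$ (which is $\Spa(A_\infty,A_\infty^\circ)$), where monomial expansions are unique.

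Given compatible sections $F_i$, they all map to the same element of $A_\infty$, so they are a single series $F=\sum\lambda_n\underline Y^{\underline d_n}$ lying in every $H^0(U_i)$. Membership in $H^0(U_i)$ forces $d_{j,n}\ge 0$ for all $j\ne i$. Since $f>1$, intersecting over $i$ gives $d_{j,n}\ge0$ for every $j$. The total degrees $\sum_j d_{j,n}$ tend to $\infty$, so $F\in R$. Here the hypothesis $f>1$ is essential: for $f=1$ there is only one chart and negative exponents survive, which is why the statement fails there.

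For the topological claim, the topology on $H^0(\zok,\cO)$ is the subspace topology from $\prod_i H^0(U_i)$. On $R$ the norm of $U_i$ restricts to the norm given by the minimal total degree, that is, the $(Y_0,\dots,Y_{f-1})$-adic topology. Since the map is a continuous bijection that is isometric for these norms, it is a homeomorphism.

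The main obstacle is the first step: justifying the explicit series description of $H^0(U_i)$ and $H^0(U_i\cap U_j)$, and uniqueness of expansions inside $A_\infty$. I would handle this exactly as in Lemma~\ref{lm:global-sec-XN}, by completing $R[Y_j/Y_i]$ $Y_i$-adically and then inverting $Y_i$.
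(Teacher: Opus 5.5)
Your proposal is correct and is essentially the paper's own proof: embed every $H^0(U_i,\cO_{U_i})$ into $A_\infty=H^0(\bigcap_i U_i,\cO)$, note that $H^0(U_i,\cO_{U_i})$ consists of the series having denominators only in $Y_i$, so a compatible family has no denominators at all when $f>1$, and then check that the subspace topology from $\bigoplus_i H^0(U_i,\cO_{U_i})$ is the $(Y_0,\dots,Y_{f-1})$-adic one. You also never actually use an explicit description of $H^0(U_i\cap U_j,\cO)$; it is enough that $\bigcap_k U_k\subset U_i\cap U_j$ and that each $H^0(U_i,\cO_{U_i})\to A_\infty$ is injective.
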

\begin{proof}
There is a description of $U_i\cap U_j$ for $i\ne j$ {analogous to~\eqref{eq:U_i}}. Moreover all the perfectoid Tate algebras $H^0(U_i,{\mathcal O}_{U_i})$ and $H^0(U_i \cap U_j,{\mathcal O}_{U_i\cap U_j})$ embed into
\[H^0\Big(\bigcap_i U_i,{\mathcal O}_{\bigcap_i U_i}\Big)=\F\ppar{Y_0^{p^{-\infty}}}\left\langle \big(\frac{Y_j}{Y_0}\big)^{p^{-\infty}}\!, \big(\frac{Y_0}{Y_j}\big)^{p^{-\infty}}\!, j\ne 0\right\rangle\]
and $\F\ppar{Y_i^{p^{-\infty}}}\langle (\frac{Y_j}{Y_i})^{p^{-\infty}}, j\ne i\rangle$ coincides with the subring of elements which only have denominators in the variable $Y_i$. It is then clear that any element in the kernel of the map $\bigoplus_{i} \!H^0(U_i,{\mathcal O}_{U_i}) \rightarrow \bigoplus_{i\ne j} \!H^0(U_i\cap U_j,{\mathcal O}_{U_i\cap U_j})$ \ is \ an \ element \ of \ $\F\ppar{Y_0^{p^{-\infty}}}\langle (\frac{Y_j}{Y_0})^{p^{-\infty}}\!\!, (\frac{Y_0}{Y_j})^{p^{-\infty}}\!\!, j\ne 0\rangle$ which has no denominator in any variable $Y_i$ for $i\in \{0,\dots,f-1\}$, hence which lies in $\F\bbra{Y_0^{p^{-\infty}},\dots,Y_{f-1}^{p^{-\infty}}}$. {As ${\mathcal O}_{\zok}$ is a sheaf of topological rings, $H^0(\zok,{\mathcal O}_{\zok})$ carries the subspace topology in $\bigoplus_{i} \!H^0(U_i,{\mathcal O}_{U_i})$, and one checks that it is the $(Y_0,\dots,Y_{f-1})$-adic topology.}
\end{proof}

\begin{rem}
  Note that Lemma \ref{hartogs} does not hold when $f = 1$; in that case, we have $H^0(Z_{\mathcal{O}_K}, \mathcal{O}_{Z_{\mathcal{O}_K}}) = \F\ppar{Y_0^{p^{-\infty}}}$.  See also equation \eqref{global}.
\end{rem}

We let ${\rm Perf}_{\F}$ be the category of perfectoid spaces over ${\rm Spa}(\F,\F)$ and ${{\rm Perf}_{\F}}^{\!\!\!\sim}$ the category of sheaves of sets on the big pro-\'etale site of ${\rm Perf}_{\F}$ (\cite[\S~8.2]{SWBerkeley}). Recall that, if $W$ is any adic space over ${\rm Spa}(\F,\F)$, the functor $h_W(-):= {\rm Hom}_{\F}(-,W)$ lies in ${{\rm Perf}_{\F}}^{\!\!\!\sim}$. 

It turns out that there is a morphism of perfectoid spaces
\begin{equation}\label{m}
m:\zlt\longrightarrow \zok
\end{equation}
which induces an isomorphism of sheaves in ${{\rm Perf}_{\F}}^{\!\!\!\sim}$
\begin{equation}\label{isosheaves}
h_{\zlt}/\underline G\congto h_{\zok},
\end{equation}
where $h_{\zlt}/\underline G$ means the sheaf associated to the presheaf $W\mapsto {\rm Hom}_{\F}(W,\zlt)/\underline G(W)$ {with $\underline G(W):=\cC^0(\vert W\vert, G)$ acting through its action $\underline G \times \zlt\to \zlt$ on $\zlt$}, see \cite[Lemma~7.6]{FarguesAJ} or \cite[\S~2.4]{BHHMS3}.

\begin{prop}\label{globalsections}
We have a commutative diagram in which all arrows are isomorphisms:
\begin{equation}\label{globalsections2}
\begin{gathered}
\xymatrix{H^0\big(\zlt,{\mathcal O}_{\zlt}\big)^G\!\!&H^0\big(\zok,{\mathcal O}_{\zok}\big)\ar^{\sim}[l]\\
\F\bbra{T_{0}^{q^{-\infty}},\dots,T_{f-1}^{q^{-\infty}}}^G\ar^{\wr}[u] &\F\bbra{Y_0^{p^{-\infty}},\dots,Y_{f-1}^{p^{-\infty}}}\rlap{${} \cong \F\bbra{N_0^{p^{-\infty}}}$.}\ar^{\ \sim}[l]\ar^{\wr}[u].}
\end{gathered}
\end{equation}
{The vertical maps are homeomorphisms and the horizontal maps continuous.}
\end{prop}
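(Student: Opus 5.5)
The plan is to establish three of the four arrows directly and deduce the fourth. The left vertical arrow is the isomorphism \eqref{G}, which comes from Lemma \ref{power}. The right vertical arrow is Lemma \ref{hartogs}. Both are topological isomorphisms, by Remark \ref{rem:H0ZLT-topology} and the statement of Lemma \ref{hartogs} respectively.

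The top horizontal arrow is pullback along $m$. Since $m$ is $G$-invariant, i.e.\ $m\circ g=m$ for $g\in G$, it lands in $G$-invariants. To see that it is an isomorphism, I would use \eqref{isosheaves}. Functions on a perfectoid space $W$ are $\mathrm{Hom}(W,\mathbb{A}^1)$, and in ${{\rm Perf}_{\F}}^{\!\!\!\sim}$ one has
\[
\mathrm{Hom}(h_{\zok},h_{\mathbb{A}^{1,\mathrm{perf}}})=\mathrm{Hom}(h_{\zlt}/\underline G,\,h_{\mathbb{A}^{1,\mathrm{perf}}})=\mathrm{Hom}(h_{\zlt},h_{\mathbb{A}^{1,\mathrm{perf}}})^{G}.
\]
For the last equality, $\underline G$-invariance for the locally constant group sheaf reduces to $G$-invariance, because a map out of $h_{\zlt}$ that is invariant under each constant $g$ is invariant under continuous families. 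Since $\zok$ and $\zlt$ are perfect (perfectoid over $\F$), maps to the perfection $\varprojlim_{x\mapsto x^p}\mathbb{A}^1$ are the same as global functions. By Yoneda for the (sub-canonical) big pro-\'etale topology, this identifies $H^0(\zok,\cO)$ with $H^0(\zlt,\cO)^G$.

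Commutativity and the bottom arrow then follow. Concretely, $m^*(Y_i)$ is an explicit $G$-invariant power series in the $T_j^{q^{-\infty}}$, namely the formula from \cite[\S~2.3--2.4]{BHHMS3}. So $m^*$ restricted to $\F\bbra{Y^{p^{-\infty}}}$ lands in $\F\bbra{T^{q^{-\infty}}}^G$; this defines the bottom map and gives commutativity of the square. The bottom map is then an isomorphism because the other three arrows are. Continuity of the horizontal maps follows because $m^*$ is continuous: it is a morphism of sheaves of topological rings and restricts to continuous maps on the rational covers.

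The main obstacle is the descent step: making sure that $\mathcal{O}$ descends along the quotient \eqref{isosheaves}, which is not a torsor. I would argue purely sheaf-theoretically as above, so that no torsor property is needed; only the isomorphism of sheaves \eqref{isosheaves} and representability of $\mathcal{O}^{\mathrm{perf}}$ are used. If subtleties arise with $\underline G=\cC^0(|W|,G)$ rather than constant $G$, I would note that $G$-invariant functions are automatically invariant under locally constant families, by working locally on $W$.
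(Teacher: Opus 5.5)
Your route is the paper's: Yoneda with the affine line as target (the paper uses $W=\Spa(\F[T],\F)$, and your perfected version gives the same functor on $\Perf_{\F}$), then the isomorphism \eqref{isosheaves}, then the vertical homeomorphisms \eqref{G} and Lemma~\ref{hartogs}, with the bottom arrow forced by commutativity. The continuity claims for the horizontal maps are also fine.

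The one step whose justification does not work is the passage from $\underline G$-invariance to $G$-invariance. The sheaf $\underline G$ is not locally constant. Indeed $\underline G(W)=\cC^0(|W|,G)$, and $G$ contains the non-discrete profinite group $\{(a,a^{-1}):a\in\oks\}$. So a section need not be locally constant on $|W|$: take $W=\underline{\oks}\times\Spa(C,C^+)$ with its tautological section. Hence ``working locally on $W$'' never reduces you to a constant $g$.

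The correct reduction is to the universal family. By Yoneda on the perfectoid space $\underline G\times\zlt$, the morphism defined by $F\in H^0(\zlt)$ is $\underline G$-invariant if and only if $\alpha^*F=\pr_2^*F$ in $H^0(\underline G\times\zlt)$. The paper then identifies $H^0(\underline G\times\zlt)\cong\cC^0(G,H^0(\zlt))$. Under this identification $\pr_2^*F$ is the constant function $F$ and $\alpha^*F$ is $g\mapsto g^*F$, so the equalizer is exactly $H^0(\zlt)^G$. An equivalent argument: $\alpha^*F-\pr_2^*F$ restricts to $0$ on each fibre $\{g\}\times\zlt$, hence vanishes at every point of the uniform space $\underline G\times\zlt$, hence is $0$.

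With this repair your proof is complete.
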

\begin{proof}
Let $W:= {\rm Spa}(\F[T],\F)$.  As $\zlt$, $\zok$ are in ${\rm Perf}_{\F}$ and using Yoneda's lemma, we deduce:
{\small
\begin{multline*}
H^0(\zok,{\mathcal O}_{\zok})={\rm Hom}_{\F}(\zok,W)={\rm Hom}_{{{\rm Perf}_{\F}}^{\!\!\!\sim}}(h_{\zok},h_W)\congto {\rm Hom}_{{{\rm Perf}_{\F}}^{\!\!\!\sim}}(h_{\zlt}/\underline G,h_W)\\
={\rm Hom}_{{{\rm Perf}_{\F}}^{\!\!\!\sim}}(h_{\zlt},h_W)^{\underline G},
\end{multline*}}%
where the isomorphism follows from (\ref{isosheaves}). {In other words, $H^0(\zok,{\mathcal O}_{\zok})$ is the co-equalizer of the maps $\un G \times \zlt \rightrightarrows \zlt \to W$, where the first two arrows are given by projection, respectively the action map.
Equivalently, it is the equalizer of the maps $H^0(\zlt) \rightrightarrows H^0(\un G \times \zlt) \cong \cC^0(G,H^0(\zlt))$, which equals $H^0(\zlt)^G$.}
We deduce (\ref{globalsections2}) by noting that the vertical maps are homeomorphisms by (\ref{G}) and Lemma \ref{hartogs}.
\end{proof}

{Below we show that the horizontal isomorphisms in (\ref{globalsections2}) are also homeomorphisms.}

\begin{cor}\label{FN0}
Let $\cF$ be a $(K^\times)^f\rtimes S_f$-equivariant locally {finite} free $\cO_{\zlt}$-module. Then the $\F$-vector space $H^0(\zlt,\cF)^{G}$ is naturally an $\F\bbra{N_0^{p^{-\infty}}}$-module.
\end{cor}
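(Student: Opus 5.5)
The plan is to realize $H^0(\zlt,\cF)^G$ as a module over $H^0(\zlt,\cO_{\zlt})^G$ and then transport the module structure through the isomorphisms of Proposition \ref{globalsections}.

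First, $H^0(\zlt,\cF)$ is a module over the ring $H^0(\zlt,\cO_{\zlt})$ because $\cF$ is an $\cO_{\zlt}$-module. Next I will use the $(K^\times)^f\rtimes S_f$-equivariance of $\cF$. It means that for each $g$ in this group the action on sections is semilinear: $g(a s)=g(a)\,g(s)$ for $a\in H^0(\zlt,\cO_{\zlt})$ and $s\in H^0(\zlt,\cF)$. Hence if $a$ and $s$ are both $G$-invariant, then so is $as$. Consequently $H^0(\zlt,\cF)^G$ is a module over the subring $H^0(\zlt,\cO_{\zlt})^G$.

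Finally, I compose with the ring isomorphism
\[\F\bbra{N_0^{p^{-\infty}}}\cong\F\bbra{Y_0^{p^{-\infty}},\dots,Y_{f-1}^{p^{-\infty}}}\congto H^0(\zok,\cO_{\zok})\congto H^0(\zlt,\cO_{\zlt})^G\]
from Proposition \ref{globalsections}. This map is pullback along $m$, namely $a\mapsto m^*a$. It is a ring map, so $H^0(\zlt,\cF)^G$ becomes an $\F\bbra{N_0^{p^{-\infty}}}$-module.

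For ``naturally'', I will check two things. The structure is functorial in equivariant morphisms $\cF\to\cF'$, which is immediate because such morphisms are $\cO_{\zlt}$-linear and commute with the group action. It is also compatible with the residual $K^\times$-action, in the sense $k(as)=k(a)k(s)$, because $m$ intertwines the actions: $m\circ((a_i),w)=(\prod a_i)\circ m$.

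There is essentially no obstacle here; everything is formal once Proposition \ref{globalsections} is available. The only point that needs a moment's care is the semilinearity of the group action on the sections of an equivariant sheaf. This is part of the definition of an equivariant $\cO_{\zlt}$-module: one has isomorphisms $g^*\cF\simeq\cF$ satisfying the cocycle condition, and these make $g$ act $g$-semilinearly on global sections.
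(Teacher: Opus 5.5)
Your proposal is correct and is the same argument as the paper's: $H^0(\zlt,\cF)^G$ is a module over $H^0(\zlt,\cO_{\zlt})^G$ (by semilinearity of the equivariant structure), and this ring is identified with $\F\bbra{N_0^{p^{-\infty}}}$ through the isomorphisms of Proposition \ref{globalsections}. Your extra remarks on functoriality and on compatibility with the residual $K^\times$-action are fine but go beyond what the statement requires.
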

\begin{proof}
It is obviously an $H^0(\zlt,\cO_{\zlt})^{G}$-module, whence the result by (\ref{globalsections2}).
\end{proof}

Since $H^0(\zlt,\cF)^{G}$ is also endowed with a residual $K^\times$-action, by Corollary \ref{FN0} and the formula (\ref{action}) it becomes a $P$-representation. We also note the following lemma.

\begin{lem}\label{lm:pushforward-sheaf}
  We have $(m_*{\mathcal O}_{\zlt})^{G}  \simeq {\mathcal O}_{\zok}$.
\end{lem}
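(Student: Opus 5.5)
We reduce the statement to Proposition~\ref{globalsections}, applied not just over $\zok$ but over every quasi-compact open $V\subseteq\zok$ (in particular every rational open), and then glue.

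The target is the identity
\[\big((m_*\cO_{\zlt})^G\big)(V)=H^0(m^{-1}(V),\cO_{\zlt})^G .\]
The right-hand side is what the presheaf $(m_*\cO_{\zlt})^G$ assigns to $V$. It is already a sheaf, since taking invariants commutes with the equalizers that express the sheaf condition. Pullback of functions along $m$ gives a natural map
\[\cO_{\zok}(V)\to H^0(m^{-1}(V),\cO_{\zlt})^G .\]
Its image lands in the $G$-invariants because $m\circ g=m$ for all $g\in G$. It is enough to prove that this map is bijective for a basis of opens, for instance the rational opens $V$, and to check compatibility with restriction.

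The argument repeats the proof of Proposition~\ref{globalsections} with $V$ in place of $\zok$.

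\textbf{Step 1.} Restrict the isomorphism of pro-\'etale sheaves (\ref{isosheaves}) to $h_V$. Fibre products commute with sheafification, so
\[h_{m^{-1}(V)}/\underline G\;\congto\; h_{\zlt}\times_{h_{\zok}}h_V/\underline G\;\congto\; h_V .\]

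\textbf{Step 2.} Apply Yoneda with $W=\Spa(\F[T],\F)$:
\[\cO(V)=\Hom_{{\rm Perf}_\F^{\sim}}(h_V,h_W)=\Hom(h_{m^{-1}(V)},h_W)^{\underline G}.\]

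\textbf{Step 3.} Identify the last term with the equalizer of
\[H^0(m^{-1}(V))\rightrightarrows H^0(\underline G\times m^{-1}(V))\cong \cC^0\big(G,H^0(m^{-1}(V))\big).\]
This identification uses that $m^{-1}(V)$ is a perfectoid space, being open in $\zlt$, and that $\underline G\times m^{-1}(V)$ is the disjoint union over $G/G_0$ of profinite-set-type products for a compact open subgroup $G_0\subseteq G$. The equalizer is exactly $H^0(m^{-1}(V))^G$, where the invariants are for continuous maps; since $G$ acts through each element, this agrees with honest invariants. Naturality in $V$ is clear from functoriality of Yoneda, so the maps glue to an isomorphism of sheaves $\cO_{\zok}\congto(m_*\cO_{\zlt})^G$.

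The main obstacle is Step~3. We need $H^0(\underline G\times Y,\cO)\cong\cC^0(G,H^0(Y,\cO))$ for $Y=m^{-1}(V)$, which is not quasi-compact. For a profinite set $S$ and an affinoid perfectoid $Y$ one has $H^0(\underline S\times Y)=\cC^0(S,H^0(Y))$; for general $Y$ we would obtain this by writing $Y$ as a union of affinoids and passing to the limit. The passage to the limit requires $\cC^0(S,-)$ to commute with inverse limits, which holds. The infinite discrete part $G/G_0$ is handled by a product decomposition. We would also double-check, as in the proof of Proposition~\ref{globalsections}, that the statement is really the isomorphism of sheaves of rings, so that no topological subtlety arises.
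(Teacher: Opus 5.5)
Your argument is correct and is essentially the paper's proof: base-change the quotient isomorphism (\ref{isosheaves}) along $h_V\to h_{\zok}$ to get $h_{m^{-1}(V)}/\underline G\cong h_V$, then rerun the Yoneda/equalizer argument of Proposition~\ref{globalsections}. The paper does this directly for every open subset of $\zok$; quasi-compactness is never used, so passing to a basis of rational opens is harmless but unnecessary. Your justification of $H^0(\underline G\times Y,\cO)\cong\cC^0(G,H^0(Y,\cO))$ for non-quasi-compact $Y$ spells out a step the paper leaves implicit.
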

\begin{proof}
Let $U\subseteq \zok$ be an open subset and $V:= U\times_{\zok}\zlt$. Then $\underline G$ acts on the perfectoid space $V$ and there is a $\underline G$-equivariant isomorphism $h_V\simeq h_U\times_{h_{\zok}}h_{\zlt}$ in ${{\rm Perf}_{\F}}^{\!\!\!\sim}$, where $\underline G$ acts on $h_U\times_{h_{\zok}}h_{\zlt}$ through its action on $h_{\zlt}$. We deduce $h_V/\underline G\simeq h_U\times_{h_{\zok}}(h_{\zlt}/\underline G) \simeq h_U$ in ${{\rm Perf}_{\F}}^{\!\!\!\sim}$, where the last isomorphism follows from (\ref{isosheaves}). The same proof as in Proposition \ref{globalsections} then gives $H^0(U,{\mathcal O}_U)\simeq H^0(V,{\mathcal O}_V)^{G}$.
\end{proof}

We show that the horizontal isomorphisms in Lemma~\ref{globalsections} are homeomorphisms. Consider the diagram
\begin{equation*}
\xymatrix{\bigoplus_{i=0}^1 H^0\big(m^{-1}(U_i),{\mathcal O}_{\zlt}\big)^G&\bigoplus_{i=0}^1 H^0\big(U_i,{\mathcal O}_{\zok}\big)\ar^(0.43){\sim}[l]\\
H^0\big(\zlt,{\mathcal O}_{\zlt}\big)^G\ar@{^{(}->}[u]&H^0\big(\zok,{\mathcal O}_{\zok}\big)\ar^{\sim}[l]\ar@{^{(}->}[u]}
\end{equation*}
for the open cover $U_0$, $U_1$ of $\zok$ in~\eqref{eq:U_i}.
The vertical maps are closed embeddings by the sheaf axiom, and the horizontal maps are isomorphisms by Lemma~\ref{lm:pushforward-sheaf}.
It then suffices to show that the top horizontal map is a homeomorphism.
Moreover, by Lemma~\ref{lem:preimage-m} below we can write $m^{-1}(U_i) = \bigcup_{n=0}^\infty W_n$ with $W_n$ affinoid so that $H^0(m^{-1}(U_i),{\mathcal O}_{\zlt}) = \vplim_n H^0(W_n,\cO_{\zlt})$ is a Fr\'echet space over $\F\ppar{Y_i^{p^{-\infty}}}$ (each $H^0(W_n,\cO_{\zlt})$ being Banach).
Hence its closed subspace $H^0(m^{-1}(U_i),{\mathcal O}_{\zlt})^G$ is also Fr\'echet, and $H^0(U_i,{\mathcal O}_{\zok})$ is even Banach. By the open mapping theorem (applied separately to each direct factor), the top horizontal map is a homeomorphism.

{\begin{rem}
An analogous proof using Lemma \ref{lem:preimage-m} below shows that Lemma \ref{lm:pushforward-sheaf} is an isomorphism of sheaves of topological rings.
\end{rem}}

\begin{lem}\label{lem:preimage-m}
  Suppose that $U$ is any quasi-compact open subset of $\zok$.
  Then $m^{-1}(U)$ can be covered by countably many affinoid open subsets of $\zlt$.
\end{lem}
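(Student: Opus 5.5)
The plan is to make the map $m$ explicit and then use the exhaustion $\zlt=\bigcup_{N\ge1}Z_N$ from (\ref{rk:union-rational}). We work with $f=2$ for notational simplicity; the general case only changes notation, using Remark \ref{casf>2}.

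By \cite[\S~2.3--2.4]{BHHMS3}, $m^*(Y_i)$ is an explicit element of $H^0(\zlt,\cO_{\zlt})$. It is given by a convergent series in $T_0^{q^{-\infty}},T_1^{q^{-\infty}}$ whose leading term is a monomial $T_0^{a_i}T_1^{b_i}$ with $a_i,b_i\in\Z[1/p]_{>0}$ (up to a unit), and the remaining terms have strictly larger degree. On each $Z_N$ we therefore control $|m^*(Y_i)|$ by $|T_0|$ and $|T_1|$.

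\textbf{Step 1: countable affinoid cover of $m^{-1}(U)$.} Since $\zok$ has a basis of rational subsets of $\Spa\F\bbra{N_0^{p^{-\infty}}}$ and $U$ is quasi-compact, $U$ is a finite union of rational subsets. It thus suffices to treat a rational subset
\[U=\{|g_j|\le|g_0|\neq0,\ j=1,\dots,r\},\qquad g_j\in\F\bbra{N_0^{p^{-\infty}}}.\]
Then $m^{-1}(U)\cap Z_N$ is the subset of the affinoid $Z_N$ cut out by $|m^*g_j|\le|m^*g_0|\ne0$.

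We want to write this as a countable union of rational subsets of $Z_N$. For each $k\ge1$ take
\[W_{N,k}=\{x\in Z_N : |m^*g_j(x)|\le|m^*g_0(x)|,\ |T_0^k(x)|\le|m^*g_0(x)|\}.\]
This is rational in $Z_N$, so affinoid, and any point of $m^{-1}(U)\cap Z_N$ lies in some $W_{N,k}$ because $m^*g_0$ does not vanish there. More precisely, the value $|m^*g_0(x)|$ is nonzero, and $|T_0(x)|<1$ is a topologically nilpotent unit's value, hence is eventually smaller. (If $x$ has a rank $>1$ valuation, we use instead that $T_0$ is a pseudo-uniformizer of $Z_N$, so that $|m^*g_0(x)|\ge|T_0(x)|^k$ for some $k$ by definition of analytic points.)

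Then $m^{-1}(U)=\bigcup_{N,k}W_{N,k}$ is a countable cover by affinoid opens of $\zlt$, using (\ref{rk:union-rational}).

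\textbf{Main obstacle.} The delicate point is the last claim: for every point $x$ of the analytic adic space $Z_N$ with $g_0(m(x))\neq0$, there exists $k$ with $|T_0^k(x)|\le|m^*g_0(x)|$. I would argue this via the standard fact that on an analytic Tate affinoid with pseudo-uniformizer $\varpi=T_0$, the nonvanishing locus of an element $h$ is the increasing union of the rational sets $\{|\varpi^k|\le|h|\}$.

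To justify this, I would use that continuous valuations on a Tate ring satisfy the following: if $|h(x)|\neq0$ then $|h(x)|$ is not infinitesimal relative to $|\varpi(x)|$, i.e. not smaller than all powers of $|\varpi(x)|$. This holds because the rank one generization at $x$ agrees on the comparison $|h|\ge|\varpi^k|$ for large $k$; this is \cite[Lemma 7.31]{Wedhorn:adic}-type reasoning, as in the proof of Lemma \ref{lm:projN}. If this gives trouble for higher rank points, I would instead replace the condition $|T_0^k|\le|m^*g_0|$ by rational conditions defined using the explicit leading monomials of $m^*(Y_i)$ on $Z_N$.
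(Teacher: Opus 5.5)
Your argument is correct, but it takes a genuinely different route from the paper's proof.

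\textbf{The paper's route.} The paper factors $m$ as $\zlt\to\zlt/(p,p^{-1})^{\Z}\to\zok$. It shows, via a point-set argument with maximal generizations, that the rational set $\{|T_0|^{q^2}\le|T_1|\le|T_0|\}$ meets every $(p,p^{-1})^{\Z}$-orbit. It then uses that the map from this set to $\Spa\F\bbra{Y_0^{p^{-\infty}},Y_1^{p^{-\infty}}}$ is adic, so preimages of rational subsets are rational, to conclude that the second map is quasi-compact. Finally it covers $m^{-1}(U)$ by the countably many $(p,p^{-1})^{\Z}$-translates of a quasi-compact open.

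\textbf{Your route.} You use only the exhaustion $\zlt=\bigcup_N Z_N$ of (\ref{rk:union-rational}) together with the auxiliary condition $|T_0^k|\le|m^*g_0|$. Since $T_0^k$ is a unit of $H^0(Z_N)$, each $W_{N,k}$ is a rational subset of $Z_N$ contained in $m^{-1}(U)$; you do not even need to know that $m$ is adic. The $W_{N,k}$ then exhaust $m^{-1}(U)\cap Z_N$.

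\textbf{Comparison.} Your proof is shorter, avoids the group action entirely, and works verbatim for any $f$ (indeed for any morphism from a countable union of Tate affinoids to an affinoid). The paper's proof gives more: the map $\zlt/(p,p^{-1})^{\Z}\to\zok$ is quasi-compact, so $\zlt/(p,p^{-1})^{\Z}$ is itself quasi-compact (take $U=\zok$). This is exactly what is reused from ``the proof of'' Lemma~\ref{lem:preimage-m} in the proof of Proposition~\ref{avoir}, and your argument does not provide it.

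\textbf{The ``main obstacle''.} This is settled in one line by continuity of the valuation at $x$. The set $\{a : |a(x)|<|m^*g_0(x)|\}$ is an open neighbourhood of $0$ in $H^0(Z_N)$, and $T_0^k\to 0$. Hence $|T_0^k(x)|<|m^*g_0(x)|$ for $k\gg 0$, whatever the rank of $x$. No generization argument and no fallback are needed.

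\textbf{The leading-monomial claim.} You should drop the preliminary assertion that $m^*(Y_i)$ has a single monomial as leading term. It is never used in Step~1, and you give no justification for it (it is not clear that it holds).
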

\begin{proof}
Write $m:\zlt\twoheadrightarrow \zok$ as the composition $\zlt \buildrel{m_1}\over \twoheadrightarrow \zlt/(p,p^{-1})^{\Z} \buildrel{m_2}\over \twoheadrightarrow \zok$. Then $m_1$ is a $(p,p^{-1})^{\Z}$-torsor between perfectoid spaces, {with $(p,p^{-1})^\Z$ acting properly discontinuously on $\zlt$ {(using the map $\kappa_{12}$ in the proof of \cite[Prop.\ 2.4.4]{BHHMS3})} and we claim that} $m_2$ a quasi-compact morphism of perfectoid spaces. {Assuming the claim,} $m_2^{-1}(U)$ is a quasi-compact perfectoid space, and this implies that $m^{-1}(U)=m_1^{-1}(m_2^{-1}(U))$ can be covered by countably many quasi-compact open subsets, or equivalently countably many affinoid open subsets.

To prove the claim, we first show that the rational open subset $W := \{ |T_0|^{q^2} \le |T_1| \le |T_0| \}$ of $\zlt$ surjects onto $\zlt/(p,p^{-1})^{\Z}$, or equivalently that every $(p,p^{-1})^\Z$-orbit in $\zlt$ intersects $W$. Suppose that $x \in \zlt$ with maximal generization $\wt x$. Using the action of $(p,p^{-1})^{\Z}$ we may assume that $\wt x \in W$. {If $x \notin W$, then $|T_1|_{x}>|T_0|_{x}$ or $|T_1|_{x}< |T_0|_{x}^{q^2}$. By \cite[Lemma~4.2.2]{SWBerkeley} with \cite[Prop.~4.2.5]{SWBerkeley} we have (respectively) $|T_1|_{\wt x}\geq |T_0|_{\wt x}$ or $|T_1|_{\wt x}\leq |T_0|_{\wt x}^{q^2}$, which forces (respectively) $|T_1|_{\wt x}= |T_0|_{\wt x}$ or $|T_1|_{\wt x}= |T_0|_{\wt x}^{q^2}$ since $\wt x\in W$. This implies $|T_1|_{\wt x}< |T_0|^{1/q^2}_{\wt x}$ hence also $|T_1|_{x}< |T_0|^{1/q^2}_{x}$ by \emph{loc.~cit.}, or $|T_0|^{q^4}_{\wt x} < |T_1|_{\wt x}$ hence also $|T_0|^{q^4}_{x} < |T_1|_{x}$. Thus we obtain $|T_0|_{x} < |T_1|_{x} < |T_0|^{1/q^2}_{x}$ which is equivalent to $|T_0|^q_{x} < |T_1|^q_{x} < |T_0|^{1/q}_{x}$ and thus $(p,p^{-1})^{- 1}(x) \in W$, or $|T_0|^{q^4}_{x} < |T_1|_{x} < |T_0|^{q^2}_{x}$ which is equivalent to $|T_0|^{q^3}_{x} < |T_1|^{1/q}_{x} < |T_0|^q_{x}$ and thus $(p,p^{-1})(x) \in W$.}

  Consider the composition $f : W \onto \zlt/(p,p^{-1})^{\Z} \to \zok \subset \Spa \F\bbra{Y_0^{p^{-\infty}},Y_1^{p^{-\infty}}}$ which sends analytic points to analytic points. By \cite[Prop.~5.1.3]{SWBerkeley} the map $f$ is adic and by \cite[Prop.~5.1.5]{SWBerkeley} the preimage of any rational subset is rational, hence quasi-compact, in $W$. By surjectivity of the first map {(and since the continuous image of a quasi-compact subset is quasi-compact)}, the preimage of any rational subset of $\Spa \F\bbra{Y_0^{p^{-\infty}},Y_1^{p^{-\infty}}}$ is quasi-compact in $\zlt/(p,p^{-1})^{\Z}$, and hence $m_2$ is quasi-compact.
\end{proof}

\begin{rem}\label{enplus}\
We see the bottom isomorphism in (\ref{globalsections2}) as some sort of ``miracle''.
\begin{enumerate}
\item
For any $d > 1$ and any compact open subgroup $H$ of $\ker((K^\times)^d\twoheadrightarrow K^\times)$, one has $\F\bbra{T_{0},\dots,T_{d-1}}^H=\F$. As a consequence the embedding $\F\bbra{N_0}\hookrightarrow \F\bbra{T_{0}^{q^{-\infty}},\dots,T_{f-1}^{q^{-\infty}}}$ does not factor through $\F\bbra{T_{0}^{q^{-n}},\dots,T_{f-1}^{q^{-n}}}$ for any $n \ge 0$!

Indeed, we may assume that $H \subseteq (1+p\cO_K)^d$ and $n =0$. Suppose $F \!\in \F\bbra{T_{0},\dots,T_{d-1}}^H$ is non-constant. Write $F = \sum_{i=0}^\infty (\lambda_i+g_i(T_0,\dots,T_{d-2})) T_{d-1}^i$ with $\lambda_i \in \F$ and $g_i \in \F\bbra{T_{0},\dots,T_{d-2}}$ with $g_i(0)=0$. Assume first that $g_i \ne 0$ for some $i$, and let $i_0 \ge 0$ be minimal such that $g_{i_0} \ne 0$. It follows from $a(T) \equiv T \pmod{T^2}$ for $a \in 1+p\cO_K$ that $g_{i_0}$ is invariant under an open subgroup of $(\cO_K^\times)^{d - 1}$.   An easy induction on $d \ge 2$ shows that $\F\bbra{T_{0},\dots,T_{d-2}}^{H'}=\F$ for any open subgroup $H'$ of $(\cO_K^\times)^{d-1}$, using the field of norms when $d = 2$. Hence $g_{i_0} \in \F$, a contradiction. Otherwise, $F \in \F\bbra{T_{d-1}}^{H'}$ for some open subgroup $H'$ of $\cO_K^\times$, so $F \in \F$, contradiction.
\item
For any $d> 1$ one can also define $\F\bbra{T_{\text{cyc},0},\dots,T_{\text{cyc},d-1}}$ with the cyclotomic variables $T_{\text{cyc},i}$ instead of the Lubin--Tate variables $T_i$. But in that case we have $\F\bbra{T_{\text{cyc},0}^{p^{-\infty}},\dots,T_{\text{cyc},d-1}^{p^{-\infty}}}^H=\F$ for any compact open subgroup $H$ of $\ker((\Qp^\times)^d\twoheadrightarrow \Qp^\times)$. Indeed, by an argument similar to the one in the proof of Proposition \ref{embedding}, we have a $(\Qp^\times)^d$-equivariant embedding
\[\F\bbra{T_{\text{cyc},0}^{p^{-\infty}},\dots,T_{\text{cyc},d-1}^{p^{-\infty}}} \hookrightarrow \varprojlim_{\psi_0,\dots,\psi_{d-1}}\F\bbra{T_{\text{cyc},0},\dots,T_{\text{cyc},d-1}}, \]
where the projective limit on the right is induced by the usual operators $\psi_i:\F\bbra{T_{\text{cyc},i}}\twoheadrightarrow \F\bbra{T_{\text{cyc},i}}$ of the cyclotomic theory (which do not exist for the Lubin--Tate theory). It follows that, for any compact open subgroup $H$ of $\ker((\Qp^\times)^d\twoheadrightarrow \Qp^\times)$, \ $\F\bbra{T_{\text{cyc},0}^{p^{-\infty}},\dots,T_{\text{cyc},d-1}^{p^{-\infty}}}^H$ \ embeds \ into $\varprojlim_{\psi_0,\dots,\psi_{d-1}}\F\bbra{T_{\text{cyc},0},\dots,T_{\text{cyc},d-1}}^H$ which is $\F$ by (i).
\end{enumerate}
\end{rem}

Since $S_f$ obviously does not act freely on $\zlt$, the morphism $m$ is (unfortunately) {\it not} a $G$-torsor. It is also not pro-\'etale, though it is quasipro\'etale (\cite[\S~9.2]{SWBerkeley}) by \cite[Lemma~7.6]{FarguesAJ}. But $\zok^{\text{gen}}:= {\rm Spa}(A_\infty,A_\infty^\circ)$ ($A_\infty^\circ$ are the power-bounded elements) is an affinoid open subspace of $\zok$ (the subspace of valuations such that $\vert Y_0\vert = \vert Y_1\vert = \cdots = \vert Y_{f-1}\vert \ne 0$, see \cite[\S~2.4]{BHHMS3}, in fact from the proof of Lemma \ref{hartogs} one has $\zok^{\text{gen}}=U_0\cap \cdots \cap U_{f-1}$), and the map $m$ restricts to $m:\zlt^{\text{gen}}:=m^{-1}(\zok^{\text{gen}}) \rightarrow \zok^{\text{gen}}$ where it is now a $G$-torsor (\cite[Prop.~2.4.4]{BHHMS3}).

Let $n\geq 1$ and recall that to any $\rhobar:\gK\rightarrow \GL_n(\F)$ one can associate a perfectoid \'etale Lubin--Tate $(\varphi_q,\oks)$-module $D_{\text{LT},\infty}(\rhobar)$ over $\F\ppar{T_0^{q^{-\infty}}}$ (see for instance \cite[Rk.~2.1.3]{BHHMS3}). Note that $\varphi_q$ is bijective on $D_{\text{LT},\infty}(\rhobar)$, so that $\varphi_q$ and $\oks$ combine into an action of $K^\times$, where $p$ acts by $\varphi_q$. One can then define the $(K^\times)^f\rtimes S_f$-equivariant locally free $\cO_{\zlt}$-module $\cF_{\rhob}$ on the quasi-Stein $\zlt$ associated to its global sections $D_{\text{LT},\infty}(\rhobar)\widehat\otimes_{\F}\cdots\widehat\otimes_{\F}D_{\text{LT},\infty}(\rhobar)$ (where the latter is loosely defined as in (\ref{global}) remembering that $D_{\text{LT},\infty}(\rhobar)$ is an $n$-dimensional $\F\ppar{T_0^{q^{-\infty}}}$-vector space). And one has (see \cite[\S~2.7]{BHHMS3}):
\begin{equation}\label{dainfinitenseur}
D_{A_\infty}^\otimes(\rhobar):=A_\infty \otimes_A D_A^\otimes(\rhobar)=H^0(\zlt^{\text{gen}},\cF_{\rhob})^G=H^0(\zok^{\text{gen}},(m_*\cF_{\rhob})^G).
\end{equation}
Going back to $\rhobar=\o r_v(1)$, where $\o r$ is a $2$-dimensional global Galois representation such that its restriction $\o r_v$ to a decomposition group at a fixed place $v \vert p$ is sufficiently generic, we thus have $D_{A_\infty}(\pi(\o r))\cong H^0(\zlt^{\text{gen}},\cF_{\o r_v(1)})^G$.

If $\o r_v(1)\cong \smatr{\chi_1}{*}0{\chi_2}$ is reducible, similarly to (\ref{form}) we have extensions of $(K^\times)^f\rtimes S_f$-equivariant locally free $\cO_{\zlt}$-modules
\begin{equation}\label{ss}
\cF_{\o r_v(1)}\ \cong\ \Big(\cF_{\chi_1}\!\begin{xy} (0,0)*+{}="a"; (8,0)*+{}="b"; {\ar@{-}"a";"b"}\end{xy}\! \cF_{\o r_v(1)}^{\ss} \!\begin{xy} (15,0)*+{}="a"; (24,0)*+{}="b"; {\ar@{-}"a";"b"}\end{xy} \!\cF_{\chi_2}\Big).
\end{equation}

\begin{definit}\label{cF}
If $\o r_v$ is irreducible we define $\cF:=\cF_{\o r_v(1)}$ and if $\o r_v$ is reducible we define $\cF:=\cF_{\o r_v(1)}^{\ss}$. 
\end{definit}

Denoting by $\pi$ the maximal subquotient of $\pi(\o r)$ with only supersingular constituents, we see that we have
\begin{equation}\label{globalinter}
D_{A_\infty}(\pi)\cong H^0(\zlt^{\text{gen}},\cF)^G\cong H^0(\zok^{\text{gen}},(m_*\cF)^G)\cong D_{A_\infty}^\otimes(\o r_v(1))^{\ss}.
\end{equation}
The $P$-equivariant injective restriction map
\begin{equation}\label{eq:3}
 H^0(\zlt,\cF)^G \buildrel \text{res}\over \hookrightarrow H^0(\zltg,\cF)^G
\end{equation}
shows that $H^0(\zlt,\cF)^G$ can be seen as an ``integral structure'' in $D_{A_\infty}(\pi)$ preserved by $P$.
(Note that this map is injective since $\cF$ is free of finite rank and the map $H^0(\zlt,\cO_{\zlt}) \to H^0(\zltg,\cO_{\zlt})$ is injective
  {(in fact, even the restriction to the open subset denoted $U_{\un n_0} \subset \zltg$ in \cite[\S~2.4]{BHHMS3} is injective by combining
  Corollary~\ref{section} and \cite[Lemma 2.4.7]{BHHMS3}).)}
But we have seen in (\ref{square}) another ``integral structure'' in $D_{A_\infty}(\pi)$ preserved by $P$: $X=D_{A_\infty}(\pi) \cap \varprojlim_\psi D_A(\pi)^\natural$. It is therefore natural to ask the following question:
 
\begin{ques}\label{qu:intersec}
Do we have $H^0(\zlt,\cF)^G\cong D_{A_\infty}(\pi) \cap \varprojlim_\psi D_A(\pi)^\natural$ inside $D_{A_\infty}(\pi) $?
\end{ques}

A related (more important) question is:

\begin{ques}\label{density?}
If \ $H^0(\zlt,\cF)^G\subset  \varprojlim_\psi \!\!D_A(\pi)^\natural$, \ is \ $H^0(\zlt,\cF)^G$ \ dense \ in $\varprojlim_\psi \!\!D_A(\pi)^\natural$?
\end{ques}

Note first that, if $H^0(\zlt,\cF)^G$ is dense in $\varprojlim_\psi D_A(\pi)^\natural$, the proof of Lemma \ref{notfinitebis} shows that $H^0(\zlt,\cF)^G$ is not finitely generated as $\F\bbra{N_0^{p^{-\infty}}}$-module. More importantly, if $H^0(\zlt,\cF)^G$ is dense in $\varprojlim_\psi D_A(\pi)^\natural$, then $\varprojlim_\psi D_A(\pi)^\natural$ coincides with the closure of $H^0(\zlt,\cF)^G$ in $\varprojlim_\psi D_A(\pi)$ via the composition $H^0(\zlt,\cF)^G \buildrel \eqref{eq:3}\over\hookrightarrow D_{A_\infty}(\pi)\buildrel\eqref{embeddingpi} \over \hookrightarrow \varprojlim_\psi D_A(\pi)$ (note that $\varprojlim_\psi D_A(\pi)^\natural$ is closed in $\varprojlim_\psi D_A(\pi)$). It follows that $\varprojlim_\psi D_A(\pi)^\natural$ is local, hence $\pi^\vee\vert_P$ is local by Corollary \ref{limpsiiso}, hence $\pi\vert_P$ is local, hence $\pi$ is local as $\GL_2(K)$-representation (\cite[Thm.~4.4]{paskunas-restriction}). 

Unfortunately, just like $D_{A_\infty}(\pi) \cap \varprojlim_\psi \!D_A(\pi)^\natural$, we do not even know if $H^0(\zlt,\cF)^G$ is $0$ or not, even when $f=2$.

\begin{rem}\label{fargues}
Fargues conjectures that, for any $(K^\times)^f\rtimes S_f$-equivariant locally finite free $\cO_{\zlt}$-module $\cG$ on $\zlt$, the sheaf $(m_*\cG)^G$ is generated by its global sections $H^0(\zlt,\cG)^G$ (and in particular that we have $H^0(\zlt,\cG)^G \ne 0$ {if $\cG \ne 0$}), see \cite[Conj.~10.1, Thm.~10.2]{Fargues}.
\end{rem}

\subsection{The principal series case I}\label{sec:princ-seri-case}

We briefly discuss the analogs of Questions~\ref{qu:intersec} and~\ref{density?} in the much simpler principal series case.
Namely, we assume that $\o r_v$ is reducible and consider the rank 1 subsheaf $\cF_{\chi_1}$ of $\cF_{\o r_v(1)}$ in (\ref{ss}), resp.\ the principal series quotient of $\pi(\o r)$.

Concretely, we write again $\o r_v(1)\cong \smatr{\chi_1}{*}0{\chi_2}$, so that we have a quotient map
\begin{equation*}
 \pi(\o r)\twoheadrightarrow J := \Ind_B^G (\chi_2^{-1} \omega \otimes \chi_1^{-1}),
\end{equation*}
which gives rise to the following commutative diagram, where the left isomorphism comes from Lemma~\ref{PS} and \cite[Lemma 2.9.6]{BHHMS3}:
\begin{equation}\label{eq:DAJ}
  \begin{gathered}
    \xymatrix{D_A(J) \ar@{^{(}->}[r]\ar@{=}^\wr[d] & D_A(\pi(\o r))\ar@{=}^\wr[d] \\
      D_A^\otimes(\chi_1) \ar@{^{(}->}[r] & D_A^\otimes(\o r_v(1))}
  \end{gathered}
\end{equation}
Recall from~\eqref{square} that we have (by definition) a cartesian diagram
\begin{equation*}
\begin{gathered}
\xymatrix{D_{A_\infty}(J)  \ar@{^{(}->}^(0.45){\theta}[r] \ar@{}[dr] | {\square} & \smash{\varprojlim_\psi}\: D_A(J) & \\
  \mathop{\vphantom{{}^2}}X \ \ar@{^{(}->}[r] \ar@{^{(}->}[u] & \varprojlim_\psi D_A(J)^\natural \ar@{^{(}->}[u] & \ar@{->>}_(0.31){\Theta}[l]J^\vee}
\end{gathered}
\end{equation*}
with the one difference that in our situation the map $\Theta : J^\vee \onto \plim_\psi D_A(J)^\natural$ has a 1-dimensional kernel by Lemma~\ref{kernel}.
We know that $X$ is a $P$-representation and an $\F\bbra{N_0^{p^{-\infty}}}$-module (by Lemma~\ref{lem:action-extends}).
We also know that $\varprojlim_\psi D_A(J)^\natural$ is topologically irreducible as $P$-representation, cf.\ \S~\ref{sec:remarks-map-pivee}.

Recall the element $\kappa \in J^\vee$ from Lemma~\ref{PS}, and let $\o\kappa \ne 0$ denote its image in $D_A(J)^\natural$.
As $\psi(\kappa) = \chi_1(p)^{-1}\kappa$ in $J^\vee$ (from the definitions) we deduce that
\begin{align*}
  \Theta(\kappa) &= (\o\kappa,\chi_1(p)\o\kappa,\chi_1(p)^2\o\kappa,\dots) \\
  &= (\o\kappa,\vp(\o\kappa),\vp^2(\o\kappa),\dots) \\
  &= \theta(\o\kappa),
\end{align*}
by the definition of $\theta$ in \S~\ref{sec:embedd-Ainfty}, so $\o\kappa \in X$.

We claim that $X = \F\bbra{N_0^{p^{-\infty}}} \o\kappa$ (inside $D_{A_\infty}(J)$).
Note that the containment ``$\supset$'' is clear, as $X$ is an $\F\bbra{N_0^{p^{-\infty}}}$-module.
For the containment ``$\subset$'', first note that $D_A(J)^\natural = \F\bbra{N_0} \o\kappa$, cf.\ the proof of Lemma~\ref{lm:DAfinitetype}.
It follows from Proposition~\ref{embedding}(ii) that if $x \in X$, then $x$ is a limit of a sequence in $\F\bbra{N_0^{1/p^n}} \o\kappa$ and hence that $x \in \F\bbra{N_0^{p^{-\infty}}} \o\kappa$.

On the other hand, we compute the image of
$$H^0(\zok,(m_*\cF_{\chi_1})^G) \into H^0(\zokg,(m_*\cF_{\chi_1})^G) = D_{A_\infty}^\otimes(\chi_1).$$
{Let $f_0,f_1$ be a basis of the Lubin--Tate $(\varphi_q,\oks)$-module $D_\LT(\o r_v(1))$ over $\F\ppar{T}$ of $\o r_v(1)$ such that $f_0=T^he_0$ is a basis of $D_\LT(\chi_1)\subset D_\LT(\o r_v(1))$ with $h,e_0$ as in \cite[Lemma 2.1.5]{BHHMS3} (applied with $d=1$ {and $\o \rho = \chi_1$}).
{Then $D_{\text{LT},\infty}(\chi_1)\widehat\otimes_{\F}D_{\text{LT},\infty}(\chi_1)$ is a direct summand of $D_{\text{LT},\infty}(\o r_v(1))\widehat\otimes_{\F}D_{\text{LT},\infty}(\o r_v(1))$ which is free of rank 1, generated by the vector $f_0 \otimes f_0$.
  One checks that the vector $f_0 \otimes f_0$ is $G$-invariant, so that $D_{A_\infty}^\otimes(\chi_1) = A_\infty (f_0 \otimes f_0)$ by (\ref{dainfinitenseur}).}
By Lemmas~\ref{lm:pushforward-sheaf} and~\ref{hartogs} we deduce $H^0(\zok,(m_*\cF_{\chi_1})^G) = \F\bbra{N_0^{p^{-\infty}}}(f_0 \otimes f_0)$.

The left vertical isomorphism in~\eqref{eq:DAJ} is not canonical, but $K\s$ acts via the same character (namely $\chi_1$) on $\o\kappa$ and $f_0 \otimes f_0$, so by \cite[Cor.\ 3.9]{BHHMS2} we deduce that $\o\kappa \in \F(f_0 \otimes f_0)$.
It follows that $H^0(\zok,(m_*\cF_{\chi_1})^G) = X$ via the left vertical isomorphism in~\eqref{eq:DAJ}.
Overall we obtain a positive answer to the analogs of Questions~\ref{qu:intersec}, \ref{density?} in this case (using that $\varprojlim_\psi D_A(J)^\natural$ is topologically irreducible for the second question).

Since we were not successful with {$H^0(\zok,(m_*\cF_{\o r_v(1)}^{\ss})^G)=H^0(\zlt,\cF_{\o r_v(1)}^{\ss})^G = H^0(G,\cF_{\o r_v(1)}^{\ss}(\zlt))$ we {next} pass to $H^1_{\mathrm{cts}}(G,\cF_{\o r_v(1)}^{\ss}(\zlt))$ (i.e.,\ $H^1_{\mathrm{cts}}(G,\cF(\zlt))$ with $\cF$ as in Definition \ref{cF})}.

\section{Towards the locality of \texorpdfstring{$\pi$}{pi} via perfectoids II}\label{perfII}

For $\pi$ of global origin having only supersingular constituents as in (\ref{piglobal}), we construct a $P$-equivariant surjection $D_{A_\infty}(\pi)\cong D_{A_\infty}^\otimes(\o r_v(1))^{\ss} \twoheadrightarrow \pi \otimes \eta$ (for a certain local twist $\eta$). When $f=2$ we construct a local quotient of $D_{A_\infty}^\otimes(\o r_v(1))^{\ss}$ by using the geometric interpretation of $D_{A_\infty}^\otimes(\o r_v(1))^{\ss}$ in (\ref{globalinter}) and prove that it is an infinite-dimensional smooth $P$-representation. We hope that this quotient equals $\pi \otimes \eta$.

\subsection{The morphism \texorpdfstring{$A_\infty \otimes_A \Hom_A(D_A(\pi),A) \to \pi \otimes \chi$}{Ainfty otimes Hom\_A(D\_A(pi),A) -> pi otimes chi}}\label{sec:surj-Ainfty-pi}

For a smooth representation $\pi$ of $\GL_2(K)$ that is contained in category $\mathcal C$ (\S~\ref{natural}) we construct a $P$-equivariant morphism $A_\infty \otimes_A \Hom_A(D_A(\pi),A) \to \pi$ (up to twist).

We make no assumption on $f$ and let $\pi$ be an admissible smooth representation of $\GL_2(K)$ over $\F$ in the category $\mathcal C$ {(\S~\ref{natural})} such that $D_A(\pi)$ is \'etale. By \cite[Lemma 3.2.2]{BHHMS3} $\Hom_A(D_A(\pi),A)$ is also naturally a finite free \'etale $(\vp,\oks)$-module over $A$. Recall that $P^+ = \smatr{\ok\backslash\{0\}}{\ok}{0}{1}$ acts on any \'etale $(\vp,\oks)$-module over $A$, for instance on $\Hom_A(D_A(\pi),A)$, by the formulas in (\ref{action}).

\begin{rem}\label{remglobal2}
One can avoid the assumption that $D_A(\pi)$ is \'etale by replacing $D_A(\pi)$ by its quotient $D_A(\pi)^{\et}$ (see the beginning of \S~\ref{natural}) in all the results of this section. Recall that in the cases of global interest we have $D_A(\pi) = D_A(\pi)^{\et}$, see~Remark~\ref{remglobal}.
\end{rem}

By duality the natural map $\pi^\vee \to D_A(\pi)$ gives rise to a map $\Hom_\F^{\cont}(D_A(\pi),\F) \to \Hom_\F^{\cont}(\pi^\vee,\F) = \pi$, which we can further compose to obtain
\begin{equation*}
  \epsilon : \Hom_A(D_A(\pi),A) \xrightarrow{\mu\circ} \Hom_\F^{\cont}(D_A(\pi),\F) \to \Hom_\F^{\cont}(\pi^\vee,\F) = \pi,
\end{equation*}
where the continuous morphism $\mu : A \to \F$ was defined in \cite[\S~3.3]{BHHMS3}.
However the actions of $\vp$ and $\oks$ become twisted.
More precisely,  we let $\chi : P=\smatr{K^\times}{K}{0}{1} \longrightarrow \F\s$ denote the smooth character \[\chi(\smatr{p^n a}{b}{0}{1}) = (-1)^{(f-1)n} N_{\mathbb{F}_q/\mathbb{F}_p} (\o a)^{-1}\] for $n \in \Z$, $a \in \oks$, $b \in K$, where $N_{\Fq/\Fp}$ is the norm $\Fq\rightarrow \Fp(\hookrightarrow \F)$.

\begin{lm}\label{lm:cts-P0-mor}
  The above morphism
  \[\epsilon: \Hom_A(D_A(\pi),A) \longrightarrow \pi \otimes \chi\]
  is $\F\bbra{N_0}$-linear, $P^+$-equivariant, and continuous, where $\pi$ carries the discrete topology.
\end{lm}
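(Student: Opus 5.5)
Write $\epsilon$ as the composite of two maps. The first is $\mu\circ-:\Hom_A(D_A(\pi),A)\to\Hom^{\cont}_\F(D_A(\pi),\F)$, $h\mapsto\mu\circ h$. The second is the transpose of the canonical map $\iota:\pi^\vee\to D_A(\pi)$. I will check each property separately on these two pieces.

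\textbf{Linearity.} Make $\F\bbra{N_0}$ act on the continuous dual $\Hom^{\cont}_\F(D_A(\pi),\F)$ by precomposition with multiplication, consistent with Remark~\ref{rk:action-FN0}. The transpose of $\iota$ is then $\F\bbra{N_0}$-linear, because $\iota$ is. The first map is also linear: $h\mapsto \mu\circ h$ intertwines multiplication by $\lambda\in\F\bbra{N_0}$ on $h$ with precomposition by $\lambda$, since $\mu(\lambda h(d))=\mu(h(\lambda d))$ by $A$-linearity of $h$. Under the identification $\pi=(\pi^\vee)^\vee$, the resulting action of $N_0$ on $\pi$ matches the twisted one; the sign convention $y\mapsto -y$ has to be tracked here.

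\textbf{Continuity.} Since $\pi$ is discrete, it suffices to show that the kernel of $\epsilon$ contains an open submodule. I would use that $\mu$ kills $F_{-C}A$ for some $C$ (from its construction in \cite[\S~3.3]{BHHMS3}, $\mu$ is a residue-type functional, continuous for the filtration topology). Fix a vector $v\in\pi$ and a basis of $D_A(\pi)$. The image $\iota(\pi^\vee)=D_A(\pi)^\natural$ is compact, hence bounded: it lies in $\bigoplus_j F_{-c}A\,e_j$ for some $c$. So if the coefficients of $h$ in the dual basis lie in $F_{-C-c}A$, then $\mu\circ h$ vanishes on $\iota(\pi^\vee)$, and therefore $\epsilon(h)=0$. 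This gives continuity.

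\textbf{$P^+$-equivariance.} The monoid $P^+$ is generated by $N_0$, $\smatr{a}{0}{0}{1}$ with $a\in\oks$, and $\smatr p001$, and $N_0$ is already handled by linearity. For $a\in\oks$, both the dual action on $\Hom_A(D_A(\pi),A)$ and $\iota$ are $\oks$-equivariant. The character $N_{\Fq/\Fp}(\o a)^{-1}$ then comes from how $\mu$ transforms under $a$, namely $\mu\circ a=N(\o a)^{\pm1}\mu$; I would take this from \cite[\S~3.3]{BHHMS3}, where $\mu$ is built from the $T_i$ variables.

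For $\smatr p001$, the action on the Hom side is $\vp$, with $(\vp h)(\vp d)=\vp(h(d))$. On $\pi^\vee$ we have $\psi=(\cdot)\circ\smatr p001$. So the required identity is $\mu(\vp(h)(x))=(-1)^{f-1}\,\mu(h(\psi x))$ for $x\in D_A(\pi)$. To prove it, write $x=\sum_{\un k}\un Y^{\un k}\vp(x_{\un k})$ over a $\vp$-basis decomposition. Then $\vp(h)(x)=\sum_{\un k}\un Y^{\un k}\vp(h(x_{\un k}))$, while $h(\psi x)=\sum_{\un k}\psi(\un Y^{\un k})h(x_{\un k})$. This reduces the identity to the scalar statement $\mu(b\,\vp(a))=(-1)^{f-1}\mu(\psi(b)a)$, i.e.\ $\mu\circ\vp$ is adjoint to $\psi$ up to sign. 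I expect this to be the main obstacle, and I would try first to obtain it from the explicit formula $\psi(T_0^iT_1^j)=(-1)^{i+j}$ and the definition of $\mu$ in \cite[\S~3.3]{BHHMS3}, checking it on monomials $\un Z^{\un k}$ and extending by continuity. The sign $(-1)^{f-1}$ should appear there as $\prod_i(-1)^{p-1}$-type contributions from the top monomial. Equivariance for $\smatr p001$ then follows by transposing through $\iota$.
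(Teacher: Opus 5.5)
Your proposal is correct and follows the paper's proof. Linearity is by construction. Equivariance reduces to the transformation rules of $\mu$ under $\psi$ and $\oks$, which the paper takes directly from \cite[Lemma 3.3.5(ii),(iii)]{BHHMS3}, so you do not need to rederive the scalar identity $\mu(b\,\vp(a))=(-1)^{f-1}\mu(\psi(b)a)$. Continuity comes from $\mu(F_eA)=0$ combined with a bound $h(D_A(\pi)^\natural)\subset F_dA$.

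The one difference is how you get that bound. You use compactness of $D_A(\pi)^\natural$: the subgroups $\bigoplus_j F_dA\,e_j$ form an increasing cover of $D_A(\pi)$ by open subgroups, so a compact set lies in one of them. The paper instead uses the identification $\Hom_A(D_A(\pi),A)=\HOM_{\F\bbra{N_0}}(\pi^\vee,A)$ coming from good filtrations. Your route is more elementary and gives a bound uniform in $h$.

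There is one index slip to fix. If the image lies in $\bigoplus_j F_{-c}A\,e_j$ and $\mu(F_{-C}A)=0$, you need $h(e_j)\in F_{-C+c}A$, not $F_{-C-c}A$; your version only works when $c\ge 0$.
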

\begin{proof}
By \cite[Lemma 3.3.5(ii),(iii)]{BHHMS3} and the definition of $\chi$ we have $\epsilon(\vp(h)) = \smatr{p}{0}{0}{1}\epsilon(h)$ and $\epsilon(ah) = \smatr{a}{0}{0}{1}\epsilon(h)$ for $a \in \oks$, hence $\epsilon$ is $P^+$-equivariant.
By construction and Remark~\ref{rk:action-FN0}, $\epsilon$ is $\F\bbra{N_0}$-linear (as $\chi$ is trivial on $N_0$). 

To justify continuity: as the source is a finite free $A$-module, it suffices to show that $A \rightarrow \pi\otimes \chi$, $a \mapsto \epsilon(ah)$ is continuous for any $h \in \Hom_A(D_A(\pi),A)$. As the natural filtration on $D_A(\pi)$ is good (\cite[\S~I.5]{LiOy}), we have
\[\Hom_A(D_A(\pi),A)\!=\!\HOM_A(D_A(\pi),A) \!=\! \HOM_{\F\bbra{N_0}_S}((\pi^\vee)_S,A) \!=\! \HOM_{\F\bbra{N_0}}(\pi^\vee,A),\]
where $\HOM$ is as in \cite[\S~I.2.5]{LiOy} and $S$ is the multiplicative subset of $\F\bbra{N_0}$ generated by $Y_0\cdots Y_{f-1}$ (see \cite[\S~3.1.1]{BHHMS2}). See \cite[Prop.~I.6.6]{LiOy} for the first equality, the second equality follows by completion and the last by localization and checking compatibility with filtrations by unraveling definitions. In particular, $h(D_A(\pi)^\natural) \subset F_d A$ for some $d \in \Z$ (recall $A$ is a filtered ring with ascending filtration $(F_dA)_{d\in \Z}$) with $D_A(\pi)^\natural$ as in (\ref{pinatural}). As $\mu$ is continuous, $\mu(F_e A) = 0$ for some $e \in \Z$. We deduce from the definitions that $\epsilon(F_{e-d} A\cdot h) = 0$, i.e., $\epsilon$ is continuous.
\end{proof}

Recall that $\pi^\vee$ has dense image in $D_A(\pi)$. Hence if $D_A(\pi)$ is nonzero then the morphism $\epsilon$ is also nonzero (as $\epsilon(h)\ne 0$ for any $h\in \Hom_A(D_A(\pi),A)$ such that $h(D_A(\pi))=A$). As in the proof of Lemma \ref{P}, we deduce from Lemma \ref{lm:cts-P0-mor} and the formulas in (\ref{action}) a $P$-equivariant morphism
\begin{equation}\label{eq:lim-eps}
\varinjlim_\vp \epsilon : A_{(\infty)}\otimes_A \Hom_A(D_A(\pi),A) \longrightarrow  \pi \otimes \chi
\end{equation}
which is nonzero if $D_A(\pi)\ne 0$.

\begin{lm}
The $P$-equivariant morphism (\ref{eq:lim-eps}) induces a continuous $\F\bbra{N_0^{p^{-\infty}}}$-linear and $P$-equivariant morphism
\begin{equation}\label{eq:1}
A_\infty \otimes_A \Hom_A(D_A(\pi),A) \longrightarrow \pi \otimes \chi.
\end{equation}
\end{lm}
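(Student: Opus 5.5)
We extend $\varinjlim_\vp\epsilon$ by continuity from $A_{(\infty)}\otimes_A \Hom_A(D_A(\pi),A)$ to its completion $A_\infty\otimes_A\Hom_A(D_A(\pi),A)$. Write $D':=\Hom_A(D_A(\pi),A)$, a finite free étale $(\vp,\oks)$-module over $A$. As in \eqref{eq:diag-D}, we have $A_{(\infty)}\otimes_A D'=\bigcup_{n}\vp^{-n}(D')$, and $A_\infty\otimes_A D'$ is its completion for the ``$(Y_0,\dots,Y_{f-1})$-adic'' topology. Since $\pi\otimes\chi$ is discrete, it suffices to show that $\varinjlim_\vp\epsilon$ kills some open neighbourhood of $0$ in $A_{(\infty)}\otimes_A D'$. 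Such a map then extends uniquely, and the extension remains continuous.

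\textbf{Step 1.} Make the map explicit. By construction (as in Lemma~\ref{P} and \eqref{action}), for $x=\vp^{-n}(y)$ with $y\in D'$ we have
\[(\varinjlim_\vp\epsilon)(x)=\smatr{p}{0}{0}{1}^{-n}\epsilon(y).\]
Fix an $A$-basis $(h_j)_j$ of $D'$. Since $D'$ is étale, $(\vp^n(h_j))_j$ is again a basis, so $\vp^{-n}(D')=\bigoplus_j A^{1/p^n}h_j$. For $a\in A$ we then get
\[(\varinjlim_\vp\epsilon)(\vp^{-n}(a)h_j)=\smatr{p}{0}{0}{1}^{-n}\epsilon(a\vp^n(h_j)).\]

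\textbf{Step 2.} This is the main point: obtain a bound uniform in $n$. Lemma~\ref{lm:cts-P0-mor} gives continuity of $\epsilon$, i.e.\ $\epsilon(F_{-C}A\cdot h)=0$ for some $C$ depending on $h$. However, here $h=\vp^n(h_j)$ varies with $n$. The proof of Lemma~\ref{lm:cts-P0-mor} shows that $C$ depends only on $d$ with $h(D_A(\pi)^\natural)\subset F_dA$.

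I would control this using $\psi$. Since $\psi(D_A(\pi)^\natural)=D_A(\pi)^\natural$ and $\vp^n(h)(\vp^n(x))=\vp^n(h(x))$, one would like to show that $\vp^n(h_j)(D_A(\pi)^\natural)\subset F_{p^nd}A$ up to a bounded error. Then $\epsilon$ would kill $F_{-C'p^n}A\cdot\vp^n(h_j)$, which after applying $\vp^{-n}$ corresponds to elements of degree $\ge C'$ in $A^{1/p^n}h_j$, uniformly in $n$.

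If this estimate proves awkward, I would argue on the other side instead. We have $\epsilon(a\vp^n(h))=\smatr{p}{0}{0}{1}^n\epsilon(\psi^n(a)h)$, which follows from the twisted compatibility $\epsilon\circ\psi$ versus $\smatr{p}{0}{0}{1}^{-1}$ in \cite[Lemma 3.3.5]{BHHMS3}. This gives
\[(\varinjlim_\vp\epsilon)(\vp^{-n}(a)h_j)=\epsilon(\psi^n(a)h_j).\]
Lemma~\ref{lm:phi-psi-cts}(ii) then yields $\deg\psi^n(a)\ge p^{-n}\deg(a)-f=\deg(\vp^{-n}(a))-f$. Combined with the continuity bound for the finitely many fixed $h_j$, this shows that the map kills all elements of $\vp^{-n}(D')$ of degree $\ge C+f$, uniformly in $n$. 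This gives the required open neighbourhood. I expect this second route to be the cleaner one, and the main check is the identity $\epsilon(a\vp^n(h))=\smatr{p}{0}{0}{1}^n\epsilon(\psi^n(a)h)$.

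\textbf{Step 3.} Deduce the remaining properties. $P$-equivariance of the extension follows from $P$-equivariance on the dense subspace together with continuity of the $P$-actions. $\F\bbra{N_0^{p^{-\infty}}}$-linearity follows from $\F\bbra{N_0}$-linearity of $\epsilon$: this gives $\vp^{-n}(\F\bbra{N_0})$-linearity on $\vp^{-n}(D')$ via the action of $\smatr{p}{0}{0}{1}^{-n}$, and we then pass to the limit using density of $\bigcup_n\vp^{-n}(\F\bbra{N_0})$ in $\F\bbra{N_0^{p^{-\infty}}}$.
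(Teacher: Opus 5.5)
Your reduction is fine: since the target is discrete, it suffices to find an open neighbourhood of $0$ killed by $\varinjlim_\vp\epsilon$. The formula $(\varinjlim_\vp\epsilon)(\vp^{-n}(y))=\smatr{p}{0}{0}{1}^{-n}\epsilon(y)$ and Step 3 are also fine. Step 2, which carries the whole content, has a genuine gap.

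The route you prefer rests on the identity $\epsilon(a\vp^n(h))=\smatr{p}{0}{0}{1}^n\epsilon(\psi^n(a)h)$, and this identity is false. \cite[Lemma 3.3.5]{BHHMS3} only gives $\epsilon\circ\vp=\smatr{p}{0}{0}{1}\circ\epsilon$, and Lemma~\ref{lm:cts-P0-mor} gives $\F\bbra{N_0}$-linearity. Neither lets you turn an arbitrary scalar $a\in A$ in front of $\vp^n(h)$ into $\psi^n(a)$, since $\psi$ is not multiplicative.

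Here is a concrete contradiction. Take $n=1$ and $a=1+T_0$ with $T_0$ as in Remark~\ref{noaction}(i), so that $a\in\F\bbra{N_0}^\times$ and $\psi(a)=0$.
\begin{enumerate}
\item Your identity would give $\epsilon(a\vp(h))=0$.
\item By $\F\bbra{N_0}$-linearity, $\epsilon(\vp(h))=a^{-1}\epsilon(a\vp(h))=0$.
\item Hence $\smatr{p}{0}{0}{1}\epsilon(h)=0$, so $\epsilon(h)=0$ for every $h$.
\end{enumerate}
This contradicts $\epsilon\neq0$ whenever $D_A(\pi)\neq0$. So this route collapses.

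Your first route is the right one, but you leave the key estimate as ``one would like to show''. The tool you suggest, $\psi$-stability of $D_A(\pi)^\natural$, is not what makes it work. What you need is a growth bound for $\vp^n(h)$ inside $D':=\Hom_A(D_A(\pi),A)$ itself, linear in $p^n$.
\begin{enumerate}
\item Fix a basis $(e_i)$ of $D'$ and put $F_dD':=\sum_i F_dA\cdot e_i$.
\item $\vp$ multiplies degrees by $p$ on $A$ (Lemma~\ref{lm:phi-psi-cts}(i)), and the matrix of $\vp$ in this basis has entries in some $F_cA$. Hence $\vp(F_dD')\subset F_{pd+c}D'$.
\item By induction, $h\in F_{d(h)}D'$ implies $\vp^n(h)\in F_{p^nd(h)+\frac{(p^n-1)c}{p-1}}D'$.
\item Lemma~\ref{lm:cts-P0-mor} gives $e$ with $\epsilon(F_eD')=0$. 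Choose $\delta$ with $p^n(d(h)+\delta)+\frac{(p^n-1)c}{p-1}\le e$ for all $n\ge0$.
\item Then $\epsilon(F_{p^n\delta}A\cdot\vp^n(h))=0$ for all $n$. Equivalently, $\varinjlim_\vp\epsilon$ kills $F_\delta A_{(\infty)}\cdot h$, uniformly in $n$.
\end{enumerate}
This is exactly the paper's argument. With it in place of your Step 2, the rest of your proposal goes through. For Step 3, also note that $\pi$ is an $\F\bbra{N_0^{p^{-\infty}}}$-module by smoothness, so the passage to the limit makes sense.
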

\begin{proof}
From (\ref{eq:diag-D}), the restriction of~(\ref{eq:lim-eps}) to $A^{1/p^n} \otimes_A \Hom_A(D_A(\pi),A)$ is given by the map $\epsilon_n$ in the following diagram:
\begin{equation}\label{eq:level-n}
\begin{gathered}
\xymatrix{
\Hom_A(D_A(\pi),A) \ar^(0.65){\epsilon}[r] & \pi \otimes \chi \\
A^{1/p^n} \otimes_A \Hom_A(D_A(\pi),A) \ar^{\wr}_{\vp^n \otimes \vp^n}[u]\ar^(0.75){\epsilon_n}[r] & \pi \otimes \chi.\ar^{\wr}_{\vp^n = \smatr{p^n}{0}{0}{1}}[u] }
\end{gathered}
\end{equation}
We show that $\varinjlim_\vp \epsilon : A_{(\infty)} \otimes_A \Hom_A(D_A(\pi),A) \to \pi \otimes \chi$ is continuous, where $\pi$ carries the discrete topology, so that it extends to (\ref{eq:1}) by completion. Note that the topology of $A_{(\infty)}$ is still given by the total degree, i.e.,\ by the filtration $F_d A_{(\infty)} = \varinjlim_{n \ge 0} (F_{p^n d} A)^{1/p^n}$ ($d\in \Z$). As in the proof of Lemma~\ref{lm:cts-P0-mor} it suffices to show that for any fixed $h \in \Hom_A(D_A(\pi),A)$ the map $A_{(\infty)} \to \pi \otimes \chi$, $a \mapsto (\varinjlim_\vp \epsilon)(ah)$ is continuous.

By picking a basis $(e_i)_{i=1}^m$ of $D := \Hom_A(D_A(\pi),A)$ and corresponding filtration $F_d D := \sum_{i=1}^m F_d A \cdot e_i$ we see that there exists some $c \in \Z$ such that $\vp(F_d D) \subset F_{pd+c}D$ for all $d \in \Z$. Choose $d(h) \in \Z$ such that $h \in F_{d(h)}D$. Choose $e \in \Z$ such that $\epsilon(F_e D) = 0$ (by Lemma~\ref{lm:cts-P0-mor}) and $\delta \in \Z$ so that $p^n(d(h)+\delta)+\frac{(p^n-1)c}{p-1} \le e$ for all $n \ge 0$. We claim that $(\varinjlim_\vp\epsilon)(F_\delta A_{(\infty)} \otimes h) = 0$, which will complete the proof. It suffices to show that $\epsilon_n((F_{p^n\delta} A)^{1/p^n} \otimes h) = 0$ for all $n \ge 0$, or equivalently that $\epsilon(F_{p^n\delta} A \cdot \vp^n(h)) = 0$, cf.~(\ref{eq:level-n}). By induction we have $\vp^n(h) \in F_{p^n d(h) + \frac{(p^n-1)c}{p-1}} D$, thus by our choice of $\delta$ the map (\ref{eq:1}) is continuous.

By continuity, the map (\ref{eq:1}) is $P$-equivariant. Also, from \eqref{eq:level-n} the map $\epsilon_n$ is $\F\bbra{N_0^{1/p^n}}$-linear and (\ref{eq:1}) is $\varinjlim_n \F\bbra{N_0^{1/p^n}}$-linear. Recall $\F\bbra{N_0^{p^{-\infty}}} \subset A_\infty$, consisting of series in the $Y_i$ with non-negative coefficients and total degree going to $\infty$. By smoothness, any element of $\pi$ is killed by terms of large enough degree, so $\pi$ becomes a $\F\bbra{N_0^{p^{-\infty}}}$-module. By continuity, (\ref{eq:1}) is $\F\bbra{N_0^{p^{-\infty}}}$-linear.
\end{proof}

Assume now moreover that $\pi$ has finite length and that all its irreducible constituents $\pi'$ are supersingular with $D_A(\pi')\ne 0$. Note that using the exactness of the functor $\pi''\mapsto D_A(\pi'')$ (\cite[Prop.~3.12]{BHHMS2}), all $D_A(\pi')$ are automatically \'etale as $D_A(\pi)$ is. Then since each $\pi'\vert_P$ is irreducible by \cite[Thm.~1.1(i)]{paskunas-restriction} (with the fact that $\pi$, hence $\pi'$, has a central character), we deduce that the morphism (\ref{eq:1}) is surjective in that case.

Finally, let $\pi$ as in (\ref{piglobal}). Since $\Hom_A(D_A^\otimes(\o r_v(1)),A)\cong D_A^\otimes(\o r_v^\vee(-1))$ (see \cite[end of \S~5]{YW3}) and $\o r_v^\vee\cong \o r_v\otimes {\det}(\o r_v)^{-1}$, it easily follows from \cite[Lemma 2.9.6]{BHHMS3} that the $(\vp,\oks)$-module $\Hom_A(D_A^\otimes(\o r_v(1))^{\ss},A)$ is isomorphic to $D_A^\otimes(\o r_v(1))^{\ss}$ up to a twist. Therefore, from the above results with (\ref{piglobal}), we obtain a continuous $P$-equivariant surjection
\begin{equation}\label{surj}
D_{A_\infty}^\otimes(\o r_v(1))^{\ss}\twoheadrightarrow \pi \otimes \eta
\end{equation}
for some explicit (local) character $\eta:P\rightarrow \F^\times$.

\begin{rem}
The morphism (\ref{eq:1}) is not surjective when $\pi$ is a principal series, see \S~\ref{sec:princ-seri-case2}.
\end{rem}

In the next sections we assume $f=2$ and our aim is to find a local smooth $P$-representation occurring as a quotient of the $P$-representation $D_{A_\infty}^\otimes(\o r_v(1))^{\ss}$.

\subsection{A smoothness result}\label{sec:smoothness-result}

{ For a continuous representation $\rhobar:\gK\rightarrow \GL_2(\F)$, recall $\cF_{\rhob}$ is the $(K\s)^f\rtimes S_f$-equivariant locally free sheaf on $\zlt$ defined in \S~\ref{geometricinter}. We define $D_A^\otimes(\rhob)^{\ss}$ as in (\ref{form}) and $\cF$ as in Definition \ref{cF}, replacing $\o r_v(1)$ by $\rhobar$ in \emph{loc.~cit.} In this section we assume $f=2$ and prove that the natural action of $P=\smatr{K^\times}{K}{0}{1}$ on $H^1(p^{\Z,\mathrm{a}},H^0(\zlt,\cF))^{G}$ is smooth (the ``a'' stands for ``antidiagonal''). We will use the latter space to construct a local smooth quotient of $D_{A_\infty}^\otimes(\rhob)^{\ss}=A_\infty\otimes_AD_{A}^\otimes(\rhob)^{\ss}$.}

\begin{lm}\label{lm:p-1-zlt}
  Let $\lambda \in \F\s$, $t \in \Z_{\ge 1}$, $r,s \in \Z[1/p]$, and $F = \sum_{n=0}^\infty \lambda_n T_0^{d_{0,n}} T_1^{d_{1,n}} \in H^0(\zlt)$ such that for all $n$ we have either $d_{0,n} > 0$ or $d_{1,n} > 0$.
  Then
  $$F \in ((p^t)^{\textnormal{a}}-\lambda^{-1}T_0^{-r}T_1^{-s})H^0(\zlt).$$
\end{lm}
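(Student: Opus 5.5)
The plan is to write $g:=(p^t)^{\mathrm a}=(p^t,p^{-t})$. It acts on $H^0(\zlt)$ by the continuous ring automorphism $T_0\mapsto T_0^{q^t}$, $T_1\mapsto T_1^{q^{-t}}$, and $c:=\lambda^{-1}T_0^{-r}T_1^{-s}$ is a unit. We want $G\in H^0(\zlt)$ with $g(G)-cG=F$. There are two formal inverses of $g-c$:
\[
(g-c)^{-1}=-\sum_{k\ge0}(c^{-1}g)^k c^{-1}
\qquad\text{and}\qquad
(g-c)^{-1}=\sum_{k\ge0}\bigl(g^{-1}c\bigr)^k g^{-1}.
\]
The first should converge on monomials whose $T_0$-exponent is large, since $g^k$ multiplies it by $q^{tk}$. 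The second should converge on monomials whose $T_1$-exponent is large, since $g^{-k}$ multiplies that exponent by $q^{tk}$.

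Concretely, the $k$-th term of the first series applied to $T_0^aT_1^b$ is, up to a scalar in $\F^\times$,
\[
T_0^{\,q^{tk}a+r(1+q^t+\cdots+q^{tk})}\,T_1^{\,q^{-tk}b+s(1+q^{-t}+\cdots+q^{-tk})},
\]
and all these exponents lie in $\Z[1/p]$. To see when this converges, I would formally conjugate by $u=T_0^{\alpha}T_1^{\beta}$ with $\alpha=-r/(q^t-1)$ and $\beta=s/(1-q^{-t})$. Then $g(u)/u=c\lambda$, so $c^{-1}g$ becomes $\lambda\,g$ after conjugation. The shifted exponent is $a-\alpha$, and the $T_0$-exponent of the $k$-th term is $q^{tk}(a-\alpha)+O(1)$. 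The criterion of Corollary~\ref{section} ($xd_0+yd_1\to\infty$ for all $x,y>0$) then holds for the first series exactly when $a-\alpha>0$. Symmetrically, the second series converges when $b-\beta'>0$ for the corresponding shift $\beta'$.

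The steps, in order, are as follows.
\begin{enumerate}
\item Split $F=F_0+F_1$ according to which exponent is positive. Both pieces are again in $H^0(\zlt)$, because subseries of a series satisfying the condition of Corollary~\ref{section} still satisfy it.
\item Apply the forward series to $F_0$ and the backward series to $F_1$.
\item Verify convergence uniformly over all monomials. I would do this with the norms $|\cdot|_N$ of Lemma~\ref{lm:global-sec-XN}, showing that the double series lies in each $H^0(Z_N)$ and then using Lemma~\ref{lm:projN}.
\item Check that $g-c$ applied to the result gives back $F$, by continuity of $g$ and of multiplication by $c$.
\end{enumerate}

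The main obstacle is the shift: positivity of $a$ alone does not obviously imply $a-\alpha>0$ when $r<0$. To handle this, I would first try to exploit freedom in the choice of summation for each monomial: for fixed $(r,s)$, only monomials in a bounded-from-above region fail both tests. For those, I would try to show that the relevant $T_0^aT_1^b$ is itself of the form $(g-c)$ applied to a finite combination of monomials, by solving a short telescoping recursion that moves the exponents into the convergent range. Failing that, I would recheck whether the intended statement implicitly restricts $r,s$; for instance, in the application $r,s$ might come from $\varphi$-matrices with nonnegative shifts.
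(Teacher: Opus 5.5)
\textbf{There is a genuine gap, and it cannot be closed: the statement is false as written whenever $r<0$ or $s>0$.} Your iterates are computed correctly, but the step you flag at the end fails, and no telescoping can move the bad monomials into range.

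A minor correction first. The forward threshold is $a-\alpha+r=a+\frac{rq^t}{q^t-1}$, not $a-\alpha$, because you dropped the first factor $c^{-1}$. The backward threshold is $b>\frac{s}{q^t-1}$.

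Here $g$ and multiplication by $c$ send monomials to monomials and act termwise on the expansions of Corollary~\ref{section}. Compare coefficients of $T_0^{q^tx}T_1^{q^{-t}y}$ in $g(G)-cG=F$. This gives $G_{(x,y)}-\lambda^{-1}G_{\phi(x,y)}=F_{(q^tx,q^{-t}y)}$, with $\phi(x,y)=(q^tx+r,\,q^{-t}y+s)$. So the equation decouples along $\phi$-orbits of exponents.

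Now take $t=1$, $\lambda=1$, $r=-1$, $s=0$ (so $c=T_0$) and $F=T_0$, which satisfies the hypothesis.
\begin{enumerate}
\item Along $Q_j:=\phi^j(\frac1q,0)=(x_j,0)$, $j\in\Z$, the equation reads $G_{Q_j}-G_{Q_{j+1}}=\delta_{j,0}$.
\item Hence the coefficients of $G$ on this orbit equal a constant $C$ for $j\le 0$ and $C-1$ for $j\ge 1$.
\item The $x_j$ are pairwise distinct, with $x_j\to-\infty$ as $j\to+\infty$ and $x_j\to\frac1{q-1}$ as $j\to-\infty$.
\item So Corollary~\ref{section} forces both $C-1=0$ and $C=0$.
\end{enumerate}
Hence $T_0\notin(p^{\mathrm a}-T_0)H^0(\zlt)$. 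The same argument works for any monomial strictly inside $a<-\frac{rq^t}{q^t-1}$, $b<\frac{s}{q^t-1}$, and such monomials exist with $a>0$ or $b>0$ exactly when $r<0$ or $s>0$.

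\textbf{How the paper's proof avoids this.} Its $g_n$ uses exponents $q^{it}d_{0,n}+(i+1)r$ and $q^{-it}d_{1,n}+(i+1)s$. That treats $(p^t)^{\mathrm a}$ as fixing $T_0^rT_1^s$, which is false. The claimed identity $\bigl((p^t)^{\mathrm a}-\lambda^{-1}T_0^{-r}T_1^{-s}\bigr)g_n=T_0^{d_{0,n}}T_1^{d_{1,n}}$ therefore holds only for $r=s=0$, which is the case used in Corollary~\ref{cor:smooth}. Your geometric-sum exponents are the correct ones; they are also the ones used in the proof of Lemma~\ref{lem:invariant-complete}.

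\textbf{What your strategy does prove.} It proves the amended lemma whose hypothesis is that every monomial satisfies $d_{0,n}>-\frac{rq^t}{q^t-1}$ or $d_{1,n}>\frac{s}{q^t-1}$.
\begin{enumerate}
\item Assign a monomial to the forward series when $d_{0,n}+\frac{rq^t}{q^t-1}\ge d_{1,n}-\frac{s}{q^t-1}$, and to the backward series otherwise.
\item The larger of the two shifted exponents then tends to $+\infty$ with $n$.
\item The paper's estimate of $v_{x,y}(g_n)$, redone with the shifted exponents, gives convergence in $H^0(\zlt)$.
\end{enumerate}
This amended version is all that Corollary~\ref{cor:smoothness-gp-ring}, Lemma~\ref{lm:smoothness-diagonal} and Corollary~\ref{cor:smooth-irr-case} need. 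In the last of these $r,s\neq 0$, but multiplying by a large enough power of $(T_0,T_1)$ still pushes every monomial into the allowed range. So drop the rescue attempt and take your fallback: the statement itself must be amended.
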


{Here (and in what follows), given $x \in K^\times$, we write $x^{\mathrm{a}}$ for the antidiagonal element $(x,x^{-1}) \in G$.}

\begin{proof}
Let $I := \{ n \ge 0 : d_{0,n} \ge d_{1,n} \}$.
For $n \in I$ we let
\begin{equation}\label{eq:g_n}
  g_n := -\sum_{i=0}^{+\infty} (\lambda T_0^r T_1^s)^{i+1}(p^{it})^{\textrm{a}}(T_0^{d_{0,n}} T_1^{d_{1,n}}) = -\sum_{i=0}^{+\infty} \lambda^{i+1}T_0^{q^{it} d_{0,n}+(i+1)r} T_1^{q^{-it} d_{1,n}+(i+1)s}.
\end{equation}
Then $g_n$ converges in $H^0(\zlt)$ as $d_{0,n} > 0$ (by assumption).
Similarly for $n \notin I$ put
\begin{equation*}
  g_n := \sum_{i < 0} (\lambda T_0^rT_1^s)^{i+1}(p^{it})^{\textrm{a}}(T_0^{d_{0,n}} T_1^{d_{1,n}}) = \sum_{i=1}^{+\infty} (\lambda T_0^rT_1^s)^{1-i} T_0^{q^{-it} d_{0,n}} T_1^{q^{it} d_{1,n}}.
\end{equation*}
Again this converges in $H^0(\zlt)$, and in either case we have $((p^t)^{\textnormal{a}}-\lambda^{-1}T_0^{-r}T_1^{-s})g_n = T_0^{d_{0,n}} T_1^{d_{1,n}}$.

It remains to check that $g := \sum_{n=0}^\infty \lambda_n g_n \in \F\bbra{T_0^{q^{-\infty}},T_1^{q^{-\infty}}}$ converges in $H^0(\zlt)$ because this implies that $((p^t)^{\textnormal{a}}-\lambda^{-1}T_0^{-r}T_1^{-s})g = F$.
Without loss of generality we assume that $I = \Z_{\ge 0}$ -- as the argument for indices $n \notin I$ is analogous (by symmetry) -- so that in particular $d_{0,n} (\ge \frac 12(d_{0,n}+d_{1,n})) \to \infty$ as $n\to \infty$.

For $x, y \in \R_{> 0}$ we let $v_{x,y}$ denote the valuation on $H^0(\zlt)$ given by $v_{x,y}(F) := \min\{xd_{0,n}+yd_{1,n} : n \ge 0 \}$ (here we implicitly assume that all $\lambda_n$ are nonzero and that the $(d_{0,n},d_{1,n})$ are pairwise distinct).
From~\eqref{eq:g_n} we obtain, with $i$ running through $\Z_{\ge 0}$:
\begin{align*}
  v_{x,y}(g_n) &= \min\{ x[q^{it} d_{0,n} + (i+1)r]+y[q^{-it}d_{1,n}+(i+1)s] \} \\
  &\ge \min\{ x[(1+(q-1)it) d_{0,n}+(i+1)r] + y[q^{-it}d_{1,n}+(i+1)s] \} \\
  &= \min\{ [xd_{0,n}+yq^{-it}d_{1,n}+xr+ys] + i[x(q-1)td_{0,n}+xr+ys] \}.
\end{align*}
Choosing $n$ large enough such that $x(q-1)td_{0,n}+xr+ys \ge 0$, we get
\begin{equation*}
  v_{x,y}(g_n) \ge xd_{0,n} + \min\{ yq^{-it}d_{1,n} \} +xr+ys.
\end{equation*}
Hence if $d_{1,n} \ge 0$, then $v_{x,y}(g_n) \ge xd_{0,n}+xr+ys$, whereas if $d_{1,n} < 0$ then $v_{x,y}(g_n) \ge xd_{0,n}+yd_{1,n}+xr+ys$.
In either case we see that $v_{x,y}(g_n) \to \infty$ as $n \to \infty$.
\end{proof}

\begin{cor}\label{cor:smoothness-gp-ring}
For $\lambda \in \F\s$, $t \in \Z_{\ge 1}$, $r,s \in \Z[1/p]$ the action of $H^0(Z) = \F\bbra{T_0^{q^{-\infty}},T_1^{q^{-\infty}}}$ on $H^0(\zlt)/((p^t)^{\textnormal{a}}-\lambda^{-1}T_0^{-r}T_1^{-s})H^0(\zlt)$ is smooth, in the sense that for each element $F \in H^0(\zlt)/((p^t)^{\textnormal{a}}-\lambda^{-1}T_0^{-r}T_1^{-s})H^0(\zlt)$ there exists $N \in \Z_{\ge 0}$ such that the ideal $(T_0,T_1)^N$ kills $F$.
\end{cor}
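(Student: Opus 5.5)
The plan is to deduce the corollary directly from Lemma~\ref{lm:p-1-zlt}. Fix a class $\overline F$ in the quotient, represented by $F = \sum_{n\ge 0} \lambda_n T_0^{d_{0,n}} T_1^{d_{1,n}} \in H^0(\zlt)$ as in Corollary~\ref{section}. It is enough to find $N$ such that $T_0^aT_1^bF$ satisfies the hypothesis of Lemma~\ref{lm:p-1-zlt} for all $a,b\in\Z_{\ge 0}$ with $a+b=N$. Indeed, the lemma then gives $T_0^aT_1^bF \in ((p^t)^{\mathrm a}-\lambda^{-1}T_0^{-r}T_1^{-s})H^0(\zlt)$. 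Since every element of $(T_0,T_1)^N\subset H^0(Z)$ is an $H^0(Z)$-combination of such monomials, and the denominator $((p^t)^{\mathrm a}-\lambda^{-1}T_0^{-r}T_1^{-s})H^0(\zlt)$ is stable under multiplication by $H^0(Z)$, this shows that $(T_0,T_1)^N$ kills $\overline F$. I need to be careful here: the ideal $(T_0,T_1)^N$ of $\F\bbra{T_0^{q^{-\infty}},T_1^{q^{-\infty}}}$ is generated by the monomials $T_0^aT_1^b$ with $a+b=N$, with $a,b$ integers. The subspace is an $H^0(\zlt)$-submodule, since $(p^t)^{\mathrm a}$ is an element of the group ring acting on the right and the subspace is its image as an operator composed with multiplication. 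It is closed under left multiplication by functions only if multiplication commutes appropriately. Since $(p^t)^{\mathrm a}$ acts as a ring automorphism rather than linearly over $H^0(\zlt)$, the image is not obviously an ideal. I will therefore avoid needing this and reduce directly to monomial multiples of $F$, each of which is again a series to which the lemma applies.

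So the key step is the choice of $N$. By Corollary~\ref{section}, taking $x=y=1$, we have $d_{0,n}+d_{1,n}\to\infty$. Hence only finitely many $n$ satisfy $d_{0,n}+d_{1,n}\le 0$, and so $m:=\min_n (d_{0,n}+d_{1,n})$ is finite. Multiplying by $T_0^aT_1^b$ shifts the exponents to $(d_{0,n}+a,\,d_{1,n}+b)$, which have sum $d_{0,n}+d_{1,n}+N$. Choose an integer $N > -m$. Then for every $n$ this sum is positive, so at least one of $d_{0,n}+a$ and $d_{1,n}+b$ is positive. This is exactly the hypothesis of Lemma~\ref{lm:p-1-zlt}. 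The product $T_0^aT_1^bF$ still lies in $H^0(\zlt)$ and is again written as a series of the same shape, since multiplying by a monomial preserves the growth condition of Corollary~\ref{section}.

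Applying Lemma~\ref{lm:p-1-zlt} to $T_0^aT_1^bF$ then gives $T_0^aT_1^b\overline F=0$ for all $a+b=N$. To pass to the whole ideal $(T_0,T_1)^N$, I would note that any element of it has the form $\sum_{a+b=N} c_{a,b}T_0^aT_1^b$ with $c_{a,b}\in H^0(Z)$. Each $c_{a,b}T_0^aT_1^bF$ is itself a series whose exponents $(e_0,e_1)$ satisfy $e_0+e_1\ge d_{0,n}+d_{1,n}+N>0$, because $c_{a,b}$ has nonnegative exponents. So the lemma applies to the whole product $\big(\sum c_{a,b}T_0^aT_1^b\big)F$ in one step, and no module-structure issue arises. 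The only point I expect to need some care is confirming that the action of $H^0(Z)$ on the quotient is well defined, i.e.\ that the denominator is $H^0(Z)$-stable. The argument above sidesteps this: it shows that $gF$ lies in the image for every $g\in(T_0,T_1)^N$, and for every representative $F$ of the class, after possibly enlarging $N$.
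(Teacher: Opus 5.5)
Your argument is correct and is essentially the paper's proof: choose $N$ with $d_{0,n}+d_{1,n}>-N$ for all $n$ (possible since $d_{0,n}+d_{1,n}\to\infty$), and observe that $hF$ satisfies the hypothesis of Lemma~\ref{lm:p-1-zlt} for every $h\in(T_0,T_1)^N$, because $h$ only has monomials with nonnegative exponents of total degree $\ge N$. Your final paragraph already contains the whole proof, so you can drop the detour through the monomials $T_0^aT_1^b$ and the claim that the image of $(p^t)^{\mathrm a}-\lambda^{-1}T_0^{-r}T_1^{-s}$ is $H^0(Z)$-stable, which you rightly retracted.
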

\begin{proof}
Take any $F = \sum_{n=0}^\infty \lambda_n T_0^{d_{0,n}} T_1^{d_{1,n}} \in H^0(\zlt)$.
In particular, $d_{0,n}+d_{1,n} \to \infty$ as $n \to \infty$.
Choosing $N \ge 0$ such that $d_{0,n}+d_{1,n} >-N$ for all $n$, we see that $hF$ satisfies the conditions of Lemma \ref{lm:p-1-zlt} for any element $h\in (T_0,T_1)^N$.
\end{proof}

\begin{lm}\label{lm:smoothness-diagonal}
For $\lambda \in \F\s$, $t \in \Z_{\ge 1}$, $r,s \in \Z[1/p]$ the action of $\oks \times \oks$ on $H^0(\zlt)/((p^t)^{\textnormal{a}}-\lambda^{-1}T_0^{-r}T_1^{-s})H^0(\zlt)$ is smooth.
\end{lm}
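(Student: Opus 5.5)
We reduce smoothness of $\oks\times\oks$ to the smoothness of the $(T_0,T_1)$-action from Corollary~\ref{cor:smoothness-gp-ring}, using that $\oks\times\oks$ acts through the Lubin--Tate action and that elements close to $1$ act on $T_i$ as $T_i$ plus higher-order terms.

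First we check that the submodule $M:=((p^t)^{\mathrm a}-\lambda^{-1}T_0^{-r}T_1^{-s})H^0(\zlt)$ is stable under $\oks\times\oks$, so that the action on the quotient is defined. For $(a,b)\in\oks\times\oks$ we have $(a,b)\circ(p^t)^{\mathrm a}=(p^t)^{\mathrm a}\circ(a,b)$ because the group is commutative. We also have $(a,b)(T_0^{-r}T_1^{-s})=T_0^{-r}T_1^{-s}u_{a,b}$ with $u_{a,b}\in 1+(T_0,T_1)\F\bbra{T_0^{q^{-\infty}},T_1^{q^{-\infty}}}$ a unit; here we use that $a(T_0)/(\o aT_0)\in 1+T_0\F\bbra{T_0}$ and then take $q^{-\infty}$-th roots. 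Hence $(a,b)(M)$ is generated by $(p^t)^{\mathrm a}-\lambda^{-1}T_0^{-r}T_1^{-s}u_{a,b}$, which is not obviously $M$. So we replace this step by the following one: we show that for $(a,b)$ in a small enough open subgroup the difference $(a,b)F-F$ lies in $M$.

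Fix $F\in H^0(\zlt)$. By Corollary~\ref{cor:smoothness-gp-ring} there is $N$ such that $(T_0,T_1)^NF\subset M$. Moreover, by Lemma~\ref{lm:p-1-zlt} any element all of whose monomials satisfy $d_{0,n}>0$ or $d_{1,n}>0$ lies in $M$. Choose $k\gg0$ and take $(a,b)\in(1+p^k\cO_K)^2$. For such elements $a(T_0)=T_0+O(T_0^{q^k})$, and likewise for every $q^{-m}$-th root: $a(T_0^{q^{-m}})=T_0^{q^{-m}}(1+O(T_0^{q^{k-m}(q-1)}))$. Applied monomial by monomial, this gives
\[(a,b)(T_0^{d_0}T_1^{d_1})-T_0^{d_0}T_1^{d_1}=T_0^{d_0}T_1^{d_1}\cdot(\text{series in }(T_0,T_1)\text{ of positive degree}).\]
The positive degree grows with $k$ and is uniform in the exponent only after multiplying by $|d_i|$. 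The key step is to show that every monomial of $(a,b)F-F$ has $d_0>0$ or $d_1>0$. We write $F=F_{\ge}+F_{<}$, where $F_{<}$ collects the monomials with $d_0+d_1\le C$ for a suitable $C$. By Corollary~\ref{section} only monomials with both exponents bounded below in a controlled way can occur. For monomials with $d_0\le0$ and $d_1\le0$ (a finite-type set by Corollary~\ref{section}, since $d_0+d_1\to\infty$), the correction factor $(a,b)(\cdot)/(\cdot)-1$ has $T_0$-degree at least $q^{k}\cdot\min(\text{denominator scale})$. This beats $|d_0|$, so the resulting monomials have positive exponent in $T_0$ or $T_1$. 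Monomials that already have some positive exponent keep it, up to corrections that also have positive exponent. We then apply Lemma~\ref{lm:p-1-zlt}, after checking that the resulting series converges in $H^0(\zlt)$ using the valuations $v_{x,y}$.

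The main obstacle is the uniformity in the last step. We need a single $k$ that works for all monomials of $F$, including those with negative and highly $p$-divisible-denominator exponents. We expect that since $d_0+d_1\to\infty$ only finitely many monomials have both exponents nonpositive, even though there are infinitely many with one exponent negative. Monomials with one negative exponent already have the other exponent positive. After multiplying by the correction factor $1+(\text{higher terms in }T_0^{q^{-m}})$, the new monomials $T_0^{d_0+e}T_1^{d_1+e'}$ with $e,e'\ge0$ preserve the positive exponent. So only the finitely many monomials with both exponents nonpositive require choosing $k$, and the required $k$ exists because the correction degree $q^{k-m}(q-1)$ for $T_0^{q^{-m}}$ is at least $q^{k}(q-1)|d_0|$-ish. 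Concretely, for $T_0^{d}$ with $d\in\Z[1/p]$ we use $a(T_0)^{d}=T_0^{d}(1+O(T_0^{q^k-1}))^{d}$, and $(1+x)^{d}$ makes sense in characteristic $p$ via $q$-power roots. This gives corrections of degree $\ge q^k-1$, which exceeds $|d_0|+|d_1|$ for these finitely many monomials once $k$ is large.
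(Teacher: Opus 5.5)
Your argument is correct and is essentially the paper's proof. You split $F$ into the monomials with a positive exponent in $T_0$ or $T_1$ and the finitely many monomials with both exponents $\le 0$. For the first kind, the $\oks\times\oks$-translates still involve only such monomials, so the differences lie in $M$ by Lemma~\ref{lm:p-1-zlt}. For the second kind, $a(T)\equiv T$ modulo a high power of $T$ when $a$ is close to $1$, so the differences again contain only monomials with a positive exponent. The paper does the first split modulo $M$ in advance, representing every class by a polynomial in $T_0^{-q^{-n}},T_1^{-q^{-n}}$.

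One small correction: for $a\in 1+p^k\ok$ and $d=e/p^j$ with $e\in\Z$, the factor $a(T_0^{d})/T_0^{d}-1$ only has $T_0$-adic order $\ge (q^k-1)/p^j$. It is neither $q^{k-m}(q-1)$ nor $\ge q^k-1$ as you state at the end. This is harmless, because only finitely many monomials, and hence boundedly many denominators, need to be controlled.
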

\begin{proof}
For any $N \ge 1$, the Lubin--Tate action of $\oks$ on $\F\bbra{T}/T^N$ is smooth ($T$ is the Lubin--Tate variable as in \S~\ref{geometricinter}).
Indeed, by induction, note that if $[a](T) \in T + T^d \F\bbra{T}$ for $a \in 1+p\cO_K$ and $d > 1$, then $[a^p](T) \in T+T^{d+1} \F\bbra{T}$.
Choose then $i(N) \in \Z_{> 0}$ such that for $a \in 1+p^{i(N)} \ok$ we have $a(T) \in T(1+T^{N-1} \F\bbra{T})$.
By Lemma~\ref{lm:p-1-zlt} any element of $H^0(\zlt)/((p^t)^{\textnormal{a}}-\lambda^{-1}T_0^{-r}T_1^{-s})H^0(\zlt)$ is represented by a \emph{polynomial} in $T_0^{-q^{-n}}$, $T_1^{-q^{-n}}$ (for some $n$).
It thus suffices to check that the action of $(\oks)^2$ on $T_0^{-r/q^n}T_1^{-s/q^n} + ((p^t)^{\textnormal{a}}-\lambda^{-1}T_0^{-r}T_1^{-s})H^0(\zlt)$ is smooth, for any fixed $r,s \in \Z_{\ge 0}$ and $n \ge 0$.
By Lemma~\ref{lm:p-1-zlt} this is true, because $(1+p^{i(r+2)} \ok) \times (1+p^{i(s+2)} \ok)$ acts trivially on this coset.
\end{proof}

\subsubsection{The case of reducible \texorpdfstring{$\rhob$}{rho}}\label{sec:case-reducible-rhob}

Recall that $\omega_{nf}$ for $n \ge 1$ denotes Serre's niveau $n$ character of order $q^n-1$ (which is $\F$-valued via any embedding $\F_{q^n}\into \F$ that extends $\sigma_0:\Fq\into \F$, the choice of which does not matter).

\begin{lm}\label{lm:convenient-basis}\
\begin{enumerate}
  \item
Suppose that $\rhob \cong \smatr{\omega_2^{h_1} \mathrm{unr}(\lambda_1) }{0}0{\omega_2^{h_2}\mathrm{unr}(\lambda_2)}$ is split reducible. We have an isomorphism $H^0(\zlt, \cF_{\rhob}) \cong H^0(\zlt)^{\oplus 4}$ as $H^0(\zlt)$-modules such that the antidiagonal action of $p$, equivalently of $\vp_q$, corresponds to $(\vp_q, \lambda_1\lambda_2^{-1}\vp_q, \lambda_2\lambda_1^{-1}\vp_q, \vp_q)$ and the action of $(a,1) \in \oks \times \oks$ corresponds to $(\o a^{h_1} [a] \otimes 1,\o a^{h_1}[a] \otimes 1,\o a^{h_2}[a] \otimes 1,\o a^{h_2}[a] \otimes 1)$.
 \item
{Suppose that $\rhob \cong \smatr{\omega_2^{h_1} \mathrm{unr}(\lambda_1) }{*}0{\omega_2^{h_2}\mathrm{unr}(\lambda_2)}$ is reducible.} We have $H^0(\zlt,\cF) \cong H^0(\zlt)^{\oplus 2}$ \ as \ $H^0(\zlt)$-modules \ with \ antidiagonal \ action \ of \ $p$ \ given \ by $(\lambda_1\lambda_2^{-1}\vp_q, \lambda_2\lambda_1^{-1}\vp_q)$ and action of $(a,1) \in \oks \times \oks$ by $(\o a^{h_1}[a] \otimes 1,\o a^{h_2}[a] \otimes 1)$.
\end{enumerate}
\end{lm}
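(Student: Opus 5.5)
The plan is to reduce everything to the rank one case and then compute the tensor product explicitly on $\zlt$ for $f=2$.

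\textbf{Step 1: the rank one Lubin--Tate modules.} For a character $\omega_2^{h}\mathrm{unr}(\lambda)$, I would use \cite[Lemma 2.1.5]{BHHMS3} (with $d=1$), as in \S~\ref{sec:princ-seri-case}. It gives a basis $e$ of $D_{\LT,\infty}$ over $\F\ppar{T_0^{q^{-\infty}}}$ with $\vp_q(e)=\lambda e$ up to the normalization of the basis. After rescaling $e$ by a suitable power $T_0^{c}$ with $c\in\Z[1/p]$ (allowed in the perfect ring), I would arrange the $\oks$-action to be $a(e)=\o a^{h}\,e$ twisted by the Lubin--Tate action $[a]$. That is, $a$ acts on $Fe$ by $\o a^{h}[a](F)e$. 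The same rescaling changes $\vp_q(e)$ only by a power of $T_0$ times a unit, and this can be absorbed because $\vp_q(T_0^{c})=T_0^{qc}$. I would choose $c$ so that $\vp_q(e)=\lambda e$ exactly.

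\textbf{Step 2: the split case (i).} Since $\rhob$ is split, $D_{\LT,\infty}(\rhob)=\F\ppar{T_0^{q^{-\infty}}}e_1\oplus\F\ppar{T_0^{q^{-\infty}}}e_2$, with $e_i$ as in Step 1 for $\omega_2^{h_i}\mathrm{unr}(\lambda_i)$. Then by the definition of $\cF_{\rhob}$ in \S~\ref{geometricinter}, $H^0(\zlt,\cF_{\rhob})$ is free over $H^0(\zlt)$ on $e_i\otimes e_j$. I would order the basis as $e_1\otimes e_1,\ e_1\otimes e_2,\ e_2\otimes e_1,\ e_2\otimes e_2$.

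The antidiagonal element $p^{\mathrm a}=(p,p^{-1})$ acts by $\vp_q$ on the first factor and $\vp_q^{-1}$ on the second. It sends $e_i\otimes e_j$ to $\lambda_i\lambda_j^{-1}\,e_i\otimes e_j$, while on coefficients it acts as $\vp_q$ (meaning $T_0\mapsto T_0^q$, $T_1\mapsto T_1^{q^{-1}}$). This produces $(\vp_q,\lambda_1\lambda_2^{-1}\vp_q,\lambda_2\lambda_1^{-1}\vp_q,\vp_q)$. The element $(a,1)$ acts only on the first tensor factor. It therefore multiplies $e_i\otimes e_j$ by $\o a^{h_i}$ and acts on coefficients by $[a]\otimes1$, which gives the stated formula.

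\textbf{Step 3: the reducible case (ii).} Here $\cF=\cF^{\ss}_{\rhob}$ is the middle graded piece of the filtration \eqref{ss}. This filtration is induced by the line $D_{\LT,\infty}(\chi_1)\subset D_{\LT,\infty}(\rhob)$. I would take a basis $e_1,\tilde e_2$ with $e_1$ spanning that sub and $\tilde e_2$ lifting the quotient basis $e_2$ from Step 1. The successive quotients are then spanned by $e_1\otimes e_1$, by the images of $e_1\otimes \tilde e_2$ and $\tilde e_2\otimes e_1$, and by $\tilde e_2\otimes\tilde e_2$. In the middle quotient, the images of $e_1\otimes\tilde e_2$ and $\tilde e_2\otimes e_1$ are free generators. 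The group action on them differs from the split formulas only by terms that lie in the span of $e_1\otimes e_1$, and those terms die in the quotient. Hence the middle piece is free of rank $2$ with the two middle diagonal entries of (i), as claimed.

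\textbf{Main obstacle.} The only delicate point is Step 1. One must check that a single normalization of $e$ makes the Frobenius eigenvalue exactly $\lambda$ and the $\oks$-action exactly $\o a^{h}[a]$, with no extra cocycle factor such as a $T_0$-power ratio $[a](T_0)/T_0$ raised to a fractional exponent. I would handle this using the explicit formulas in \cite[Lemma 2.1.5]{BHHMS3}. If a residual factor $([a](T_0)/\o aT_0)^{c}$ appears, I would remove it by changing $e$ by a unit of $\F\bbra{T_0^{q^{-\infty}}}$ built as the infinite product $\prod_{n\ge0}\vp_q^{n}(\cdot)$ of such factors. This product converges in $\F\bbra{T_0^{q^{-\infty}}}$ because the factors are $\equiv1 \bmod T_0$.
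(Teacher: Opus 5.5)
Your proposal is correct and is essentially the paper's proof. The paper replaces the basis $e_0$ of \cite[Lemma 2.1.5]{BHHMS3} (with $d=1$) by $T^h e_0$, which turns $\vp_q$ into $\lambda\vp_q$ and $a\in\oks$ into $\o a^{h}[a]$, and then tensors the two rank-one pieces; for (ii) it observes that the middle graded piece $\cF$ is the same as for the semisimplification, which is exactly what your adapted-basis computation in Step 3 verifies. The single choice $c=h\in\Z$ fixes the Frobenius eigenvalue and the $\oks$-action at the same time, so your ``main obstacle'' never arises; the infinite-product fallback would in fact have broken $\vp_q(e)=\lambda e$, since multiplying $e$ by $\prod_{n\ge0}\vp_q^n(u)$ multiplies the Frobenius eigenvalue by $u^{-1}$.
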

\begin{proof}
(i) Let $D_{\text{LT}}(\rhob)$ denote the Lubin--Tate $(\vp_q,\oks)$-module over $\F\ppar{T}$ associated to $\rhob$.
Then by changing the basis from $e_0$ to $T^h e_0$ in \cite[Lemma 2.1.5]{BHHMS3} {(applied with $d=1$)}, we see that $D_{\text{LT}}(\rhob) \cong \F\ppar{T}^{\oplus 2}$ such that $\vp_q$ (resp.\ $a \in \oks$) on the left corresponds to $(\lambda_1 \vp_q,\lambda_2 \vp_q)$ (resp.\ $(\o a^{h_1}[a],\o a^{h_2}[a])$) on the right. The result follows. (ii) also easily follows since, if $\rhob$ is nonsplit, one verifies that the sheaf $\cF$ is the \emph{same} as in the split case (for the semisimplification of $\o r_v(1)$).
\end{proof}

Recall from \S~\ref{geometricinter} that $G \cong K^\times \rtimes S_2$.
We recall how the group $P = \smatr{K\s}{K}{0}{1}$ acts on $H^1_{\mathrm{cts}}(G,H^0(\zlt,\cF_{\rhob}))$ and $H^1(p^{\Z,\textrm{a}},H^0(\zlt,\cF_{\rhob}))^{G}$.
The point is that both have actions of the ring $H^0(\zlt)^{G}$ and of the group $K\s = ((K\s)^2 \rtimes S_2)/G$.
From (\ref{globalsections2}) we have $H^0(\zlt)^{G} \cong \F\bbra{N_0^{p^{-\infty}}}$, thus we get an action of the group $\bigcup_{n \ge 0} N_0^{1/p^n} = \smatr{1}{K}{0}{1}$.
As $K\s$ acts semilinearly, we get an action of the semidirect product $P \cong \smatr{1}{K}{0}{1} \rtimes K\s$.

\begin{cor}\label{cor:smooth}\
\begin{enumerate}
  \item
Suppose that $\rhob : G_K \to \GL_2(\F)$ is split reducible. Then the action of $P$ on $H^1(p^{\Z,\mathrm{a}},H^0(\zlt,\cF_{\rhob}))^{G} = (H^0(\zlt,\cF_{\rhob})/(p^{\textnormal{a}}-1)H^0(\zlt,\cF_{\rhob}))^{G}$ is smooth.
  \item
{ Suppose \ that \ $\rhob : G_K \to \GL_2(\F)$ \ is \ reducible. \ Then \ the \ action \ of \ $P$ \ on $H^1(p^{\Z,\mathrm{a}},H^0(\zlt,\cF))^{G}$ is smooth.}
\end{enumerate}
\end{cor}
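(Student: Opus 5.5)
We work with the explicit description of Lemma \ref{lm:convenient-basis}. In case (i), $H^0(\zlt,\cF_{\rhob})\cong H^0(\zlt)^{\oplus 4}$. On each summand the antidiagonal element $p^{\mathrm a}=(p,p^{-1})$ acts by $\mu\vp_q$ for a scalar $\mu\in\{1,\lambda_1\lambda_2^{-1},\lambda_2\lambda_1^{-1}\}$. The group $\oks\times\oks$ acts through characters times the Lubin--Tate action. So $H^1(p^{\Z,\mathrm a},-)=(-)/(p^{\mathrm a}-1)$ splits as a direct sum of four quotients of the form $H^0(\zlt)/(\mu(p)^{\mathrm a}-1)H^0(\zlt)$. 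Such a quotient is $H^0(\zlt)/((p)^{\mathrm a}-\mu^{-1})H^0(\zlt)$, which is the case $t=1$, $r=s=0$, $\lambda=\mu$ of Lemma \ref{lm:p-1-zlt}. Case (ii) is identical, using part (ii) of Lemma \ref{lm:convenient-basis}: only the two middle summands occur. It therefore suffices to show that $P$ acts smoothly on the $G$-invariants of such a sum.

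We first reduce to the generators of $P$. Via the identification $P\cong\smatr1K01\rtimes K\s$, an element $x$ of the $G$-invariants has open stabilizer as soon as three things hold. First, $x$ is fixed by an open subgroup of $\smatr{\oks}{0}{0}{1}$. Second, $x$ is killed by the augmentation-ideal power $\m_{N_0}^M$ of $\F\bbra{N_0}$ for some $M$, which makes $x$ fixed by an open subgroup of $N_0$. Third, the remaining $\smatr{p^\Z}001$-part causes no trouble, since it is discrete. Note that $\smatr{1}{p^{-n}b}{0}{1}$ acts through $\vp^{-n}$-twists of $N_0$; the stabilizer only needs to be open, not all of $\smatr1K01$, so $N_0$ suffices.

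For the $\oks$-part, the element $\smatr a001$ acts by $(a,1)$ (any lift in $(K\s)^2$, modulo $G$). On each summand this is a character times the Lubin--Tate action on $T_0$. By Lemma \ref{lm:smoothness-diagonal}, every element of the quotient is fixed by an open subgroup of $\oks\times\oks$, and the finite-order characters $\o a^{h_i}$ are trivial on $1+p\ok$. Hence $x$ is fixed by an open subgroup of $\oks$.

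For the $N_0$-part, recall from (\ref{globalsections2}) that $\F\bbra{N_0^{p^{-\infty}}}\cong\F\bbra{T_0^{q^{-\infty}},T_1^{q^{-\infty}}}^G\subset H^0(Z)$. The maximal ideal of the left side lands in the ideal $(T_0,T_1)$ of $H^0(Z)$, because the bottom isomorphism in (\ref{globalsections2}) is a continuous local homomorphism. Corollary \ref{cor:smoothness-gp-ring}, applied summandwise, gives $N$ with $(T_0,T_1)^N$ killing $x$. Since $Y_0,Y_1$ lie in $(T_0,T_1)$, the power $\m_{N_0}^{2N}$ kills $x$. Then $x$ is fixed by the open subgroup $\ker(N_0\to(\F\bbra{N_0}/\m_{N_0}^{2N})^\times)$, using $g-1\in\m_{N_0}$ and the fact that in characteristic $p$ the element $g^{p^k}-1=(g-1)^{p^k}$ lies in $\m_{N_0}^{2N}$ for large $k$.

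I expect the main obstacle to be checking that the identifications are compatible. Concretely, one must verify that the $P$-action on $H^1$ defined through $H^0(\zlt)^G$ and $K\s$ matches the summandwise actions above, and that the residual action of $\oks$ is computed on $G$-invariants by the lift $(a,1)$. Everything after that is a direct combination of Lemma \ref{lm:smoothness-diagonal} and Corollary \ref{cor:smoothness-gp-ring}.
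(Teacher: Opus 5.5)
Your argument is the paper's: reduce via Lemma~\ref{lm:convenient-basis} to quotients $H^0(\zlt)/(p^{\mathrm a}-\lambda)H^0(\zlt)$, get $N_0$-smoothness from Corollary~\ref{cor:smoothness-gp-ring} with $t=1$, $r=s=0$, and get $(1+p\ok)\times 1$-smoothness from Lemma~\ref{lm:smoothness-diagonal}. Part (ii) then follows by keeping only the two middle summands.

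One justification is wrong, though it is easily repaired.

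\emph{The false claim.} The maximal ideal of $\F\bbra{N_0^{p^{-\infty}}}$ does \emph{not} map into the ideal $(T_0,T_1)$ of $\F\bbra{T_0^{q^{-\infty}},T_1^{q^{-\infty}}}$. The image of $Y_0^{1/p^n}$ is the $p^n$-th root of the image of $Y_0$. So for $n\gg 0$ it contains monomials $T_0^aT_1^b$ of total degree $<1$, hence with $a,b<1$, and these are not in $(T_0,T_1)$. A continuous local homomorphism only tells you that the images of $Y_0,Y_1$ lie in the \emph{maximal} ideal. Your deduction ``$Y_0,Y_1\in(T_0,T_1)$'' is therefore unsupported as written.

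\emph{The fix.} You only need something weaker. The image of $Y_i$ lies in the maximal ideal, so all its monomials have total degree $\ge\epsilon$ for some $\epsilon>0$. Hence $Y_i^k\in(T_0,T_1)$ as soon as $k\epsilon\ge 2$, since a monomial of total degree $\ge 2$ has some exponent $\ge 1$. Then $\mathfrak{m}_{N_0}^{2kN}$, rather than $\mathfrak{m}_{N_0}^{2N}$, kills $x$. Alternatively, observe that the proof of Corollary~\ref{cor:smoothness-gp-ring} only uses total degrees, so any $h$ whose monomials all have total degree $\ge N$ works. This is what the paper uses implicitly; compare $I^{2q^n}\subset(T_0,T_1)$ in the proof of Lemma~\ref{lem:invariant-complete}.

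With this change the rest of your argument goes through unchanged, including the stabilizer argument via $\ker\big(N_0\to(\F\bbra{N_0}/\mathfrak{m}_{N_0}^{M})^\times\big)$.
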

\begin{proof}
(i) It suffices to check separately that $N_0$ and $(1+p\cO_K) \times 1 \subset K\s\times 1$ act smoothly on $H^1(p^{\Z,\mathrm{a}},H^0(\zlt,\cF_{\rhob}))$ (hence the same is true on the $G$-invariant subspace).
  By Lemma~\ref{lm:convenient-basis} it suffices to show the same property for $H^0(\zlt)/(p^{\mathrm{a}}-\lambda)H^0(\zlt)$ for any $\lambda \in \F\s$.
  
First we show that the action of $N_0$ is smooth.
We have $\F\bbra{N_0} = \F\bbra{Y_0,Y_1}$, hence the $Y_i$ ($i = 0,1$) are topologically nilpotent.
So the image of $Y_i$ in $\F\bbra{T_0^{q^{-\infty}},T_1^{q^{-\infty}}}$ by \eqref{globalsections2} lies in the maximal ideal of $\F\bbra{T_0^{q^{-\infty}},T_1^{q^{-\infty}}}$.
Thus the smoothness follows from Corollary \ref{cor:smoothness-gp-ring} (with $t = 1$ and $r=s=0$).
Finally, the action of $(1+p\cO_K) \times 1$ is smooth by Lemma \ref{lm:smoothness-diagonal}.

{Part (ii) follows from part (i) by arguing as in Lemma \ref{lm:convenient-basis}(ii).}
\end{proof}

\subsubsection{The case of irreducible \texorpdfstring{$\rhob$}{rho}}\label{sec:case-irreducible-rhob}

We continue using the notation of \S~\ref{sec:case-reducible-rhob}.
Recall from \cite[\S~2.1]{BHHMS3} that $\rhob \cong (\ind \omega_{df}^h) \otimes \mathrm{unr}(\lambda)$ for some $d \ge 1$, $h \in \Z$ (not divisible by $\frac{q^d-1}{q^{d'}-1}$ for any proper divisor $d'|d$) and $\lambda \in \F\s$.
Here, $\ind \omega_{df}^h$ is determined by having determinant $\omega_f^h \cdot\mathrm{unr}(-1)^{d-1}$ and $(\ind\omega_{df}^h)\vert_{I_K}\cong \omega_{df}^h\oplus \omega_{df}^{qh}\oplus \cdots \oplus \omega_{df}^{q^{d-1}h}$.
For us, $d = f = 2$, so $h$ is not divisible by $q+1$.
As in \cite[\S~2.1]{BHHMS3} we let $f_{a,i}^{\LT} := \frac{\sigma_0(\o a)T_i}{a(T_i)}$ for $a \in \oks$ and $0 \le i \le f-1$.

\begin{lm}\label{lm:convenient-basis-irr}
Suppose that $\rhob \cong (\ind \omega_4^h) \otimes \mathrm{unr}(\lambda)$ is irreducible of dimension 2.
We have an isomorphism $H^0(\zlt, \cF_{\rhob}) \cong H^0(\zlt)^{\oplus 4}$ as $H^0(\zlt)$-modules such that the antidiagonal action of $p^2$, equivalently of $\vp_q^2$, corresponds to
$$((T_0T_1^{1/q^2})^{-h(q-1)} \vp_q^2, (T_0 T_1^{1/q})^{-h(q-1)}\vp_q^2, (T_0^q T_1^{1/q^2})^{-h(q-1)} \vp_q^2, (T_0^{q} T_1^{1/q})^{-h(q-1)}\vp_q^2)$$
and the action of $(a,1) \in \oks \times \oks$ corresponds to
$$((f_{a,0}^{\LT})^{h/(q+1)} [a] \otimes 1,(f_{a,0}^{\LT})^{h/(q+1)} [a] \otimes 1,(f_{a,0}^{\LT})^{hq/(q+1)} \o a^{h_2}[a] \otimes 1,(f_{a,0}^{\LT})^{hq/(q+1)} [a] \otimes 1).$$
\end{lm}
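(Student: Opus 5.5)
Following the proof of Lemma \ref{lm:convenient-basis}, we first write down an explicit basis of the Lubin--Tate $(\vp_q,\oks)$-module $D_{\LT}(\rhob)$ over $\F\ppar{T}$. By \cite[Lemma 2.1.5]{BHHMS3} (applied with $d=2$), $D_{\LT}(\rhob)$ has a basis $e_0,e_1$ with $\vp_q(e_0)=e_1$ and $\vp_q(e_1)=\mu\,T^{-h(q-1)}e_0$ for some $\mu\in\F\s$; the constant $\mu$ comes from $\lambda$ and the sign in the determinant. The elements $a\in\oks$ act by $a(e_0)=(f_a^{\LT})^{h/(q+1)}e_0$ and $a(e_1)=(f_a^{\LT})^{hq/(q+1)}e_1$, where $f^{\LT}_a=\sigma_0(\o a)T/a(T)$. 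Here $(f^{\LT}_a)^{h/(q+1)}$ makes sense because $f_a^{\LT}\in 1+T\F\bbra{T}$ and $q+1$ is prime to $p$. Up to a harmless change of basis (a rescaling by powers of $T^{1/q}$, or by an element of $\F\s$ to absorb $\mu$ and the unramified twist), this gives
\[
\vp_q^2(e_0)=T^{-h(q-1)}e_0,\qquad \vp_q^2(e_1)=T^{-hq(q-1)}e_1 .
\]
Passing to the perfection we may use fractional powers $T^{1/q^n}$. For instance $\vp_q^{-1}$ sends $T^{-h(q-1)}$ to $T^{-h(q-1)/q}$, which is how the exponents $T_1^{1/q}$ and $T_1^{1/q^2}$ in the statement arise.

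Next we take the tensor product. Then $H^0(\zlt,\cF_{\rhob})=D_{\LT,\infty}(\rhob)\wh\otimes_{\F}D_{\LT,\infty}(\rhob)$ is free over $H^0(\zlt)$ on the four vectors $e_i\otimes e_j$, with the first factor in the variable $T_0$ and the second in $T_1$. The antidiagonal element $(p^2)^{\mathrm a}=(p^2,p^{-2})$ acts as $\vp_q^2\otimes\vp_q^{-2}$. On $e_i\otimes e_j$ it therefore acts by $\vp_q^2$ times the product of the factor from $\vp_q^2(e_i)$ in $T_0$ and the factor from $\vp_q^{-2}(e_j)$ in $T_1$.

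To compute $\vp_q^{-2}(e_j)$ we invert the relation $\vp_q^2(e_j)=c_jT^{-m_j}e_j$. This gives $\vp_q^{-2}(e_j)=\vp_q^{-2}(c_jT^{-m_j})^{-1}e_j=c_j^{-1}T^{m_j/q^2}e_j$. So the $T_1$-factor has an exponent divided by $q^2$, with the sign reversed.

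This does not yet produce the exponents in the statement: for example $T_1^{-h(q-1)/q}$ appears in the second entry. The fix is to modify the basis vectors on the second factor by a suitable monomial, for example replacing $e_1$ by $T^{c}e_1$ or using $e'_1=\vp_q^{-1}(e_1)$-type normalizations. Then one solves the small linear system in the exponents so that each of the four coefficients becomes exactly the stated monomial, where $\vp_q^2$ applied to a monomial multiplies $T_0$-exponents by $q^2$ and $T_1$-exponents by $q^{-2}$. Scalars are absorbed by rescaling the basis by elements of $\F\s$, after enlarging $\F$ to take square roots if needed. This is possible because $\vp_q^2$ fixes scalars, so rescaling multiplies the coefficient only by a ratio of scalars; the product over the antidiagonal is balanced.

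The $(a,1)$-action only touches the first tensor factor. So it is $[a]\otimes1$ times the cocycle of $e_i$, or of its rescaled version: $(f_{a,0}^{\LT})^{h/(q+1)}$ for $i=0$ and $(f_{a,0}^{\LT})^{hq/(q+1)}$ for $i=1$. Any monomial rescaling $T_0^c$ contributes $(f^{\LT}_{a,0})^{-c}$ times a power of $\o a$, which must be tracked. I expect the main obstacle to be this bookkeeping: choosing the monomial normalizations so that simultaneously the $p^2$-coefficients match the stated ones and the $\oks$-cocycles carry no extra power of $\o a$. The stray $\o a^{h_2}$ in the third entry of the statement looks like a typo, and I would verify it against this computation.
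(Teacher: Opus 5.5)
Your set-up is exactly the paper's proof: take the basis of \cite[Lemma 2.1.5]{BHHMS3}, on which $\vp_q^2$ acts by $\lambda^2T^{-m_j}$ with $m_0=h(q-1)$, $m_1=hq(q-1)$ and $a$ acts by $(f^{\LT}_a)^{hq^j/(q+1)}$, and tensor. Your computation that $(p^2)^{\mathrm a}$ acts on $e_i\otimes e_j$ by $T_0^{-m_i}T_1^{+m_j/q^2}\vp_q^2$ is also correct. The gap is the step where you ``solve the small linear system''. With the diagonal renormalization $e_j\mapsto T^{c_j}e_j$ on the second factor, the coefficient is multiplied by $T_1^{c_j(q^{-2}-1)}$. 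To flip the sign you would need $c_j=2m_j/(q^2-1)$, i.e.\ $c_0=2h/(q+1)$ and $c_1=2hq/(q+1)$. These lie in $\Z[1/p]$ only if $(q+1)\mid 2h$, which fails in general, since irreducibility only gives $(q+1)\nmid h$. Rescaling in $T_0$ cannot help either, because the $T_0$-exponents already agree. So the system as you set it up has no solution in general, and the real obstacle is not the $\oks$-bookkeeping you anticipated.

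What is needed is a \emph{swap} of the basis of the second factor. Your hint $e_1'=\vp_q^{-1}(e_1)=e_0$ points this way, but you do not carry it out. Set $e'_0:=T^he_1$ and $e'_1:=T^he_0$. Then $\vp_q^2(e'_j)=\lambda^2T^{+m_j}e'_j$, hence $\vp_q^{-2}(e'_j)=\lambda^{-2}T^{-m_j/q^2}e'_j$. So $(p^2)^{\mathrm a}$ acts on $e_i\otimes e'_j$ by exactly $(T_0^{q^i}T_1^{q^{j-2}})^{-h(q-1)}\vp_q^2$, which are the printed coefficients. This change only touches the second factor and the variable $T_1$. Hence $(a,1)$ still acts by $(f^{\LT}_{a,0})^{hq^i/(q+1)}[a]\otimes 1$, and $\o a^{h_2}$ is indeed a misprint.

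The scalar cannot be absorbed by an $\F\s$-rescaling: $\vp_q$ is $\F$-linear, so such a rescaling changes nothing. It simply cancels as $\lambda^2\cdot\lambda^{-2}$ between the two factors.

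The paper's proof stops at ``the result follows'' after quoting the basis. Using $e_i\otimes e_j$ literally gives the opposite sign on the $T_1$-exponents. That version would serve equally well later, since Corollary~\ref{cor:smoothness-gp-ring} and Lemma~\ref{lem:invariant-complete} allow arbitrary $r,s\in\Z[1/p]$.
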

\begin{proof}
From \cite[Lemma 2.1.5]{BHHMS3} we see that $D_{\text{LT}}(\rhob) \cong \F\ppar{T}^{\oplus 2}$ such that $\vp_q^2$ (resp.\ $a \in \oks$) on the left corresponds to $(\lambda^2 T^{-h(q-1)} \vp_q^2,\lambda^2 T^{-hq(q-1)} \vp_q^2)$ (resp.\ $((f_a^{\LT})^{h/(q+1)} [a],(f_a^{\LT})^{hq/(q+1)} [a])$) on the right.
The result follows.
\end{proof}

\begin{cor}\label{cor:smooth-irr-case}
Suppose that $\rhob : G_K \to \GL_2(\F)$ is irreducible.
Then the action of $P$ on $H^1(p^{\Z,\mathrm{a}},H^0(\zlt,\cF_{\rhob}))^{G} = (H^0(\zlt,\cF_{\rhob})/(p^{\textnormal{a}}-1)H^0(\zlt,\cF_{\rhob}))^{G}$ is smooth.
\end{cor}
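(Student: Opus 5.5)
We follow the proof of Corollary \ref{cor:smooth}(i): it suffices to show that $N_0$ and $(1+p\cO_K)\times 1\subset K\s\times 1$ act smoothly on $H^0(\zlt,\cF_{\rhob})/(p^{\mathrm a}-1)H^0(\zlt,\cF_{\rhob})$, since smoothness then passes to the $G$-invariants and $P$ is generated by these subgroups together with $\smatr{p^{\Z}[\F_q\s]}{0}{0}{1}$, whose normalizing action preserves smoothness. The new difficulty is that Lemma \ref{lm:convenient-basis-irr} only diagonalizes the antidiagonal action of $p^2$, not of $p$, and the coefficients $(T_0^{\alpha}T_1^{\beta})^{-h(q-1)}$ are not constants.

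\textbf{Reduction to $p^2$.} Set $M:=H^0(\zlt,\cF_{\rhob})$. We have $p^{2,\mathrm a}-1=(p^{\mathrm a}+1)(p^{\mathrm a}-1)$, so $(p^{2,\mathrm a}-1)M\subset (p^{\mathrm a}-1)M$. Hence $M/(p^{\mathrm a}-1)M$ is a quotient of $M/(p^{2,\mathrm a}-1)M$, equivariantly for $N_0$ and $\oks\times 1$, because both commute with the antidiagonal $p$-action. It is therefore enough to prove smoothness on $M/(p^{2,\mathrm a}-1)M$. By Lemma \ref{lm:convenient-basis-irr}, this module splits as a direct sum of four pieces $H^0(\zlt)/(T_0^{-r}T_1^{-s}\vp_q^2-1)H^0(\zlt)$, with $r,s\in\Z[1/p]$ given explicitly by the four exponent pairs. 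Each generator $T_0^{-r}T_1^{-s}\vp_q^2-1$ equals $T_0^{-r}T_1^{-s}$ times $(p^2)^{\mathrm a}-T_0^{r}T_1^{s}$ (acting as an operator on functions). Multiplication by the unit $T_0^{-r}T_1^{-s}$ does not change the image, so each piece is exactly $H^0(\zlt)/((p^t)^{\mathrm a}-\lambda^{-1}T_0^{-r'}T_1^{-s'})H^0(\zlt)$ with $t=2$, $\lambda=1$ and suitable $r',s'$. This is precisely the situation covered by Lemma \ref{lm:p-1-zlt}, Corollary \ref{cor:smoothness-gp-ring} and Lemma \ref{lm:smoothness-diagonal}, which were stated for general $t,r,s$.

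\textbf{Smoothness of $N_0$.} As in the reducible case, the images of $Y_0,Y_1$ under \eqref{globalsections2} lie in the maximal ideal of $\F\bbra{T_0^{q^{-\infty}},T_1^{q^{-\infty}}}$. Corollary \ref{cor:smoothness-gp-ring} then shows that every class is killed by a power of $(Y_0,Y_1)$, which gives smoothness of $N_0$.

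\textbf{Smoothness of $(1+p\cO_K)\times 1$.} Here the action on each summand is $(f^{\LT}_{a,0})^{h/(q+1)}[a]$ or $(f^{\LT}_{a,0})^{hq/(q+1)}[a]$, not just $[a]$, and this is the step we expect to be the main obstacle. By Lemma \ref{lm:p-1-zlt}, every class is represented by a polynomial in $T_0^{-q^{-n}},T_1^{-q^{-n}}$. The argument of Lemma \ref{lm:smoothness-diagonal} shows that $[a]$ fixes such a monomial modulo the relevant ideal once $a\in 1+p^{i}\ok$ with $i$ large. For the extra factor, $f^{\LT}_{a,0}\in 1+T_0^{N}\F\bbra{T_0}$ for $a$ close to $1$, and hence so is any $\Z_{(p)}$-power of it. So $(f^{\LT}_{a,0})^{h/(q+1)}-1$ times the monomial has strictly positive $T_0$-exponents in all terms. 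By Lemma \ref{lm:p-1-zlt}, this difference lies in the submodule, so the class is fixed, once $N$ exceeds the negative exponent of the monomial. One must check that the product of the cocycle factor with the $[a]$-perturbation still satisfies the hypothesis of Lemma \ref{lm:p-1-zlt}, namely that every monomial has $d_0>0$ or $d_1>0$. This holds, since all correction terms have positive $T_0$-exponent shift.
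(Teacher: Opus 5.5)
Your proposal is correct and matches the paper's proof step for step. Both arguments reduce via the surjection from $H^0(\zlt,\cF_{\rhob})/((p^2)^{\mathrm a}-1)H^0(\zlt,\cF_{\rhob})$, split into four pieces by Lemma \ref{lm:convenient-basis-irr}, and get $N_0$-smoothness from Corollary \ref{cor:smoothness-gp-ring} with $t=2$, $\lambda=1$. For $\oks\times 1$, both represent classes by polynomials in negative fractional powers and note that $f^{\LT}_{a,0}\equiv 1$ modulo a high power of $T_0$ for $a$ close to $1$, which is exactly the paper's parenthetical remark.
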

\begin{proof}
{As in the proof of Lemma \ref{cor:smooth}(i),} it suffices to check separately that $N_0$ and $\oks \times 1$ act smoothly on $H^1(p^{\Z,\mathrm{a}},H^0(\zlt,\cF_{\rhob}))$. As $H^1(p^{2\Z,\mathrm{a}},H^0(\zlt,\cF_{\rhob}))$ surjects onto \ $H^1(p^{\Z,\mathrm{a}},H^0(\zlt,\cF_{\rhob}))$, \ it \ suffices \ to \ show \ the \ same \ property \ for $H^1(p^{2\Z,\mathrm{a}},H^0(\zlt,\cF_{\rhob})) = H^0(\zlt,\cF_{\rhob})/((p^2)^{\textnormal{a}}-1)H^0(\zlt,\cF_{\rhob})$.

  For the smoothness of $N_0$, it again suffices to show that $\F\bbra{T_0^{q^{-\infty}},T_1^{q^{-\infty}}}$ acts smoothly on $H^0(\zlt,\cF_{\rhob})/((p^2)^{\textnormal{a}}-1)H^0(\zlt,\cF_{\rhob})$, and by Lemma~\ref{lm:convenient-basis-irr} this follows from Corollary \ref{cor:smoothness-gp-ring} (with $t = 2$, $\lambda = 1$ and suitable choices of $r$, $s$).

  Finally we show that $\oks \times 1$ acts smoothly.
  Let $e_0$, $e_1$ denote the basis of $D_{\text{LT}}(\rhob)$ used in the proof of Lemma~\ref{lm:convenient-basis-irr}.
  By Lemmas~\ref{lm:convenient-basis-irr} and~\ref{lm:convenient-basis} we can represent an element $\o f$ of $H^1(p^{2\Z,\mathrm{a}},H^0(\zlt,\cF_{\rhob}))$ by $f = \sum_{i,j=0}^1 f_{i,j}(T_0^{-q^{-n}},T_1^{-q^{-n}}) e_i \otimes e_j$ for some $n \ge 0$ and some \emph{polynomials} $f_{i,j}$.
  Choose $r,s \in \Z_{\ge 0}$ such that all exponents of $T_0^{r/q^n}T_1^{s/q^n} f_{i,j}(T_0^{-q^{-n}},T_1^{-q^{-n}})$ are nonnegative for all $i,j$.
  As in the proof of Lemma~\ref{lm:smoothness-diagonal} we see that $(1+p^{i(r+2)} \ok) \times 1$ acts smoothly on $\o f$, where the notation $i(N)$ is as in that proof.
  (Note that $a \in 1+p^{i(r+2)}\cO_K$ implies that $f_{a,0}^{\LT} \equiv 1 \pmod{T_0^{r+1}}$.)
\end{proof}

\subsection{A geometric candidate for \texorpdfstring{$A_\infty \otimes_A \Hom_A(D_A(\pi),A) \twoheadrightarrow \pi \otimes \chi$}{Ainfty otimes Hom\_A(D\_A(pi),A) -> pi otimes chi}}\label{sec:another-b-morphism}

{For $f=2$ and $\rhob$, $\cF$ as in the very beginning of \S~\ref{sec:smoothness-result}, using the geometric interpretation of $D_{A_\infty}^\otimes(\rhob)^{\ss}$ as in the last two isomorphisms of (\ref{globalinter}) we construct a $P$-equivariant map $D_{A_\infty}^\otimes(\rhob)^{\ss}\rightarrow H^1(p^{\Z,\textrm{a}},H^0(\zlt,\cF))^{G}$. By using the main result of \S~\ref{morphism} we prove that for $p>2$ its image is infinite-dimensional. In the global context of Definition \ref{cF} with $\rhobar=\o r_v(1)$ and $\pi$ as in (\ref{piglobal}) we hope that this image equals $\pi \otimes \eta$ (see (\ref{surj})).}

We suppose $f = 2$ and consider the rational open subsets $U_0 = \{ |Y_1| \le |Y_0| \ne 0 \}$, $U_1 = \{ |Y_0| \le |Y_1| \ne 0 \}$ of $\Spa \F\bbra{Y_0^{p^{-\infty}},Y_1^{p^{-\infty}}}$ in (\ref{eq:U_i}). Recall $U_0 \cup U_1 = \zok$ and $U_0 \cap U_1 = \zokg$ (see the proof of Lemma \ref{hartogs} and the paragraph below Remark \ref{enplus}).

As \ $\zlt$ \ is \ quasi-Stein, \ for \ any \ locally \ {finite} \ free \ $\cO_{\zlt}$-module \ $\cF$ \ we \ have $H^1(\zlt,\cF) = 0$ \cite[Thm.\ 2.6.5(c)]{KL2}, so by the (five-term exact sequence associated to the) Leray spectral sequence for the morphism $m:\zlt\rightarrow \zok$ in (\ref{m}) we have in particular $H^1(\zok, m_* \cF) = 0$. It follows that for any $(K^\times)^f\rtimes S_2$-equivariant locally {finite} free $\cO_{\zlt}$-module $\cF$ as in \S~\ref{geometricinter}, the Mayer--Vietoris sequence for the open cover $\zok = U_0 \cup U_1$ gives a short exact sequence of $H^0(\zok)$-modules with semilinear actions of $(K\s)^2 \rtimes S_2$:
{\small
\begin{equation}\label{MV}
0 \longrightarrow H^0(\zok,m_* \cF) \longrightarrow H^0(U_0,m_* \cF) \oplus H^0(U_1,m_* \cF) \longrightarrow H^0(\zokg,m_* \cF) \longrightarrow 0.
\end{equation}}%
(Note that the $(K^\times)^2\rtimes S_2$-action permutes the two direct summands, while the group $G \!= \!\{(a,a^{-1}) \!:\! a \in K\s\} \rtimes S_2$ preserves each summand.)
Taking $G$-invariants gives a long exact cohomology sequence by~\eqref{eq:H1cts},
\begin{multline}\label{connecting}
0 \longrightarrow H^0(\zok, (m_* \cF)^G) \longrightarrow H^0(U_0, (m_* \cF)^G) \oplus H^0(U_1, (m_* \cF)^G)\\
\longrightarrow H^0(\zokg, (m_* \cF)^G) \buildrel \delta \over \longrightarrow H^1_{\mathrm{cts}}(G,H^0(\zlt,\cF))
\end{multline}
which is $H^0(\zok)$-linear (as $G$ acts trivially on $H^0(\zok)$) and $K\s$-equivariant, in particular $P$-equivariant. (As $\cF$ is free of finite rank over $\zlt$, note that the injection in \eqref{MV} is identified with (several copies of) $0 \to H^0(\zlt,\cO_{\zlt}) \to H^0(m^{-1}(U_0),\cO_{\zlt}) \oplus H^0(m^{-1}(U_1),\cO_{\zlt})$, which is strict as $\cO_{\zlt}$ is a sheaf of topological rings.)
Finally, recall the $P$-equivariant inflation-restriction sequence for continuous $H^1$
{\small\begin{equation}\label{IR}
0\! \longrightarrow \! H^1_{\mathrm{cts}}(G/p^{\Z}, H^0(\zlt,\cF)^{p^{\Z,\textrm{a}}})\! \longrightarrow \! H^1_{\mathrm{cts}}(G, H^0(\zlt,\cF))\! \buildrel \res \over \longrightarrow \! H^1(p^{\Z,\textrm{a}},H^0(\zlt,\cF))^G
\end{equation}}%
from~\eqref{eq:infl-res} (the $P$-action is similar to the one above Corollary \ref{cor:smooth}).

Now, let $\cF$ be as in {the beginning of this section}. Using the last equality in (\ref{globalinter}) we consider the composition (with $\delta$ as in (\ref{connecting}))
\begin{equation}\label{composition}
D_{A _\infty}^\otimes(\rhob)^{\ss} \xrightarrow{\ \delta\ } H^1_{\mathrm{cts}}(G,H^0(\zlt,\cF)) \xrightarrow{\ \res\ } H^1(p^{\Z,\textrm{a}},H^0(\zlt,\cF))^{G},
\end{equation}
which is $P$-equivariant and $H^0(\zok)$-linear. Moreover the final term is smooth as a $P$-representation by Corollary~\ref{cor:smooth} and Corollary~\ref{cor:smooth-irr-case}.

Contrary to \S~\ref{perfI}, in this setting we at least have a non-trivial result of non-nullity:

\begin{thm}\label{main}
Assume $p>2$. Then the image of $D_{A _\infty}^\otimes(\rhob)^{\ss}$ in the composition (\ref{composition}) is an infinite-dimensional smooth representation of $P$.
\end{thm}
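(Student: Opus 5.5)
Smoothness of the image is already settled: the target $H^1(p^{\Z,\mathrm{a}},H^0(\zlt,\cF))^G$ is smooth by Corollary~\ref{cor:smooth} and Corollary~\ref{cor:smooth-irr-case}, and any $P$-subrepresentation of a smooth representation is smooth. So the work is to show the image is infinite-dimensional. I will do this in two steps: first show the composition (\ref{composition}) is nonzero, then upgrade nonvanishing to infinite-dimensionality using divisibility.

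\textbf{Upgrade step.} The composition (\ref{composition}) is $H^0(\zok)\cong\F\bbra{Y_0^{p^{-\infty}},Y_1^{p^{-\infty}}}$-linear. On $D_{A_\infty}^\otimes(\rhob)^{\ss}$ each $Y_i$ is invertible. Hence the image $V$ is $Y_i$-divisible for $i=0,1$. By smoothness, every $v\in V$ is killed by $(Y_0,Y_1)^N$ for some $N$. Suppose $V\neq 0$ were finite-dimensional. Then $Y_0$ would be a nilpotent endomorphism of $V$ that is also surjective, which is impossible. So nonzero already implies infinite-dimensional.

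\textbf{Nonvanishing, first main step.} I will prove a geometric statement for any nonzero $G$-equivariant vector bundle $\cF$ on $\zlt$: the map
\[
H^0(U_0,(m_*\cF)^G)\oplus H^0(U_1,(m_*\cF)^G)\to H^0(\zokg,(m_*\cF)^G)
\]
is not surjective. Assume it were surjective.
\begin{enumerate}
\item Using the $A_\infty$-module structure on the target, show each $H^0(U_i,(m_*\cF)^G)$ is a nonzero closed $A_\infty$-submodule of the target.
\item Fix $\alpha<1$ close to $1$ with $\alpha^{-1}\in\Z[1/p]$, and set $V_\alpha=\{|Y_0|=|Y_1|^\alpha\neq0\}\subset U_0$. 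Show $m^{-1}(V_\alpha)\to V_\alpha$ is a $G$-torsor, by the argument of \cite[Prop.~2.4.4]{BHHMS3}. Then $H^0(V_\alpha,(m_*\cF)^G)$ is a nonzero locally free $\cO(V_\alpha)$-module.
\item Show the restriction map from $H^0(U_0,(m_*\cF)^G)$ to $H^0(V_\alpha,(m_*\cF)^G)$ is injective, using that $H^0(U_0,\cdot)$ embeds into $H^0(\zokg,\cdot)$.
\item Show the same restriction map is zero. The $A_\infty$-stability lets one divide by arbitrarily large powers of $Y_1/Y_0$, and such elements cannot extend to $V_\alpha$ with bounded norm.
\end{enumerate}
Steps (iii) and (iv) together contradict nonvanishing. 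This is the step I expect to be the main obstacle, especially the torsor property over $V_\alpha$ and the vanishing argument. The hypothesis $p>2$ likely enters here, in controlling the $S_2$-part of $G$ and in the choice of $\alpha$.

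\textbf{Nonvanishing, second main step.} Assume the composition (\ref{composition}) is zero. By (\ref{IR}), $\delta$ then factors through the inflation term $H^1_{\mathrm{cts}}(G/p^{\Z},H^0(\zlt,\cF)^{p^{\Z,\mathrm{a}}})$. Lemma~\ref{invariant} shows the $p^{\Z,\mathrm{a}}$-invariants live in $\F\bbra{T_0^{q^{-\infty}},T_1^{q^{-\infty}}}$-type modules. From this I will show that the inflation term, and likewise $H^0(\zok,(m_*\cF)^G)$, is derived $I$-complete for $I=(Y_0,Y_1)$. Derived completeness is stable under kernels, cokernels and extensions, which is what makes it usable here.

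\textbf{Conclusion.} Any $H^0(\zok)$-linear map from a module on which $Y_0$ is invertible to a derived $I$-complete module is zero. So $\delta=0$, and (\ref{connecting}) becomes a short exact sequence. By the first main step, $H^0(\zok,(m_*\cF)^G)\neq0$. Since $\mathrm{Ext}^1$ from a $Y_0$-invertible module into a derived complete module vanishes, the sequence splits. This yields surjections $H^0(U_i,(m_*\cF)^G)\twoheadrightarrow H^0(\zok,(m_*\cF)^G)$. Since $Y_0$ (respectively $Y_1$) is invertible on $H^0(U_0,\cdot)$ (respectively $H^0(U_1,\cdot)$), these surjections must be zero, contradicting $H^0(\zok,(m_*\cF)^G)\neq0$.
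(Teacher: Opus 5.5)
Your architecture is the paper's: smoothness from Corollaries \ref{cor:smooth} and \ref{cor:smooth-irr-case}, the geometric input of Theorem \ref{AIM23} via $V_\alpha$, derived $I$-completeness, $\delta=0$, splitting. Your upgrade step is a valid variant. The paper assumes from the start that the image $\overline X$ is finite-dimensional, and uses that a finite-dimensional smooth $H^0(\zok)$-module is $I$-adically complete, so the preimage of $\overline X$ is derived complete. You only need the case $\overline X=0$ and then get infinite-dimensionality from $Y_0$-divisibility plus smoothness, as in Corollary \ref{cor:main}. Two points need repair.

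\textbf{First point: the geometric step must be ``not an isomorphism''.} This is Theorem \ref{AIM23}; ``not surjective'' is not what your sketch proves. Under mere surjectivity your step (i) fails. The invertibility of $Y_1$ on $H^0(U_0,(m_*\cF)^G)$ and its continuous $A_\infty$-structure, needed in (iv), are obtained by transporting the $A_\infty$-action on $H^0(\zokg,(m_*\cF)^G)$ through the direct sum decomposition given by a \emph{bijective} map $R$. One also needs the open mapping theorem to make the summands closed. If the kernel $H^0(\zok,(m_*\cF)^G)$ is nonzero, you only get this modulo the kernel.

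Also, (i) cannot show that each $H^0(U_i,\cdot)$ is nonzero. The argument instead gives $H^0(U_0,\cdot)=0$, and by symmetry $H^0(U_1,\cdot)=0$, contradicting $H^0(\zokg,(m_*\cF)^G)\neq 0$. Your conclusion in fact uses the step in the ``not an isomorphism'' form. Genuine non-surjectivity would already finish the proof once $\delta=0$. The splitting argument is exactly the reduction from ``surjective'' to ``isomorphism''.

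\textbf{Second point: derived completeness of the inflation term is only promised.} You need derived $I$-completeness of $H^1_{\mathrm{cts}}(G/p^{\Z},M)$, where $M:=H^0(\zlt,\cF)^{p^{\Z,\mathrm a}}$. This does not follow from generalities, since modules of continuous cochains are not known to be derived complete. The missing mechanism has two parts.

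(a) $M$ is $I$-adically complete by Lemma \ref{lem:invariant-complete}, applied in the bases of Lemmas \ref{lm:convenient-basis} and \ref{lm:convenient-basis-irr}. Lemma \ref{invariant} does not suffice, because $p^{\mathrm a}$ (resp.\ $(p^2)^{\mathrm a}$ in the irreducible case) acts with twists by scalars (resp.\ monomials in $T_0,T_1$). The key facts are $M\subset (T_0T_1)^{-N}\F\bbra{T_0^{q^{-\infty}},T_1^{q^{-\infty}}}$ and $I^{2q^n}\subset (T_0,T_1)$.

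(b) For $p>2$, $(1+p\cO_K)^{\mathrm a}\cong\Z_p^2$ is open normal in $G/p^{\Z}$, with quotient $[\F_q^\times]^{\mathrm a}\rtimes S_2$ of order prime to $p$. Hence $H^i_{\mathrm{cts}}(G/p^{\Z},M)$ is a direct summand of $H^i_{\mathrm{cts}}(1+p\cO_K,M)$. By Lemma \ref{lem:cts-coho-free-prop}, the latter is the cohomology of the Koszul complex $M\to M^{2}\to M$ of $H^0(\zok)$-modules. It is therefore derived complete by \cite[Tag 091U]{stacks-project}.

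This is where $p>2$ is used. Theorem \ref{AIM23} holds for all $p$, so your guess that the hypothesis enters in the geometric step is wrong.
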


\begin{ques}\label{ques}
Is this image isomorphic to $\pi\vert_P\otimes \eta$ {for $\rhob=\o r_v(1)$, $\pi$ as in (\ref{piglobal}) and} $\eta$ as in (\ref{surj})?
\end{ques}

If the answer to Question \ref{ques} is yes, then $\pi\vert_P\otimes \eta$ is local, hence $\pi$ is local as $\GL_2(K)$-representation by \cite[Thm.~4.4]{paskunas-restriction}.

We {will} give the proof of Theorem \ref{main} \emph{assuming} Theorem \ref{AIM23} below. {To do this, we use the following result.} Let $I:=(Y_0,Y_1)$, a finitely generated ideal of the ring $\F\bbra{N_0^{p^{-\infty}}} \cong H^0(\zok)\cong \F\bbra{Y_0^{p^{-\infty}},Y_{1}^{p^{-\infty}}}$.

\begin{lem}\label{lem:invariant-complete}
    For $\lambda\in \F\s$, $t \in \Z_{\ge 1}$, $r,s \in \Z[1/p]$ the $H^0(\zok)$-module
    \begin{equation}\label{eq:invariant-complete}
      H^0(\zlt)^{(p,p^{-1})^t=\lambda^{-1}T_0^{-r}T_1^{-s}}
    \end{equation}
    is $(Y_0,Y_1)$-adically complete, and in particular is derived $(Y_0,Y_1)$-adically complete.
  \end{lem}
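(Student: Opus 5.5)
The plan is to describe the module $M:=H^0(\zlt)^{(p,p^{-1})^t=\lambda^{-1}T_0^{-r}T_1^{-s}}$ explicitly, exactly as in Lemma~\ref{invariant}. We will show that $M$ is contained in a shifted copy of $\F\bbra{T_0^{q^{-\infty}},T_1^{q^{-\infty}}}$ and carries the subspace topology. Write $F=\sum_n \lambda_n T_0^{d_{0,n}}T_1^{d_{1,n}}\in M$ as in Corollary~\ref{section}. The eigen-equation $(p^t)^{\mathrm a}(F)=\lambda^{-1}T_0^{-r}T_1^{-s}F$ sends a monomial with exponent $(a,b)$ to $(q^ta,q^{-t}b)$. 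Comparing with the multiplication by $T_0^{-r}T_1^{-s}$ shows that the support of $F$ is stable under the affine maps $\sigma:(a,b)\mapsto(q^ta+r,q^{-t}b+s)$ and $\sigma^{-1}$, with coefficients related by the factor $\lambda$.

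The affine map $a\mapsto q^ta+r$ has fixed point $a^*=-r/(q^t-1)$, and $b\mapsto q^{-t}b+s$ has fixed point $b^*=s/(1-q^{-t})$. Iterating $\sigma$ on $(a,b)$ gives $a$-coordinates $a^*+q^{tk}(a-a^*)$ and $b$-coordinates tending to $b^*$. If $a<a^*$, these $a$-coordinates tend to $-\infty$ exponentially while the $b$-coordinates stay bounded, which violates the condition $xd_{0,n}+yd_{1,n}\to\infty$. The symmetric argument with $\sigma^{-1}$ forces $b\ge b^*$. Hence all exponents satisfy $a\ge a^*$ and $b\ge b^*$, that is, $F\in T_0^{c_0}T_1^{c_1}\F\bbra{T_0^{q^{-\infty}},T_1^{q^{-\infty}}}$ for suitable real shifts $c_0\le a^*$, $c_1\le b^*$ taken in $\Z[1/p]$. (One can take floor-type rational choices so that we stay inside $\Z[1/p]$-exponent series.)

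Thus $M$ is a closed subspace of $N:=T_0^{c_0}T_1^{c_1}\F\bbra{T_0^{q^{-\infty}},T_1^{q^{-\infty}}}$. It is closed because the eigen-condition is continuous in $H^0(\zlt)$. By Remark~\ref{rem:H0ZLT-topology}, the topology on $N$ induced from $H^0(\zlt)$ is the $(T_0,T_1)$-adic one. Through Proposition~\ref{globalsections}, the $H^0(\zok)$-action factors via $\F\bbra{N_0^{p^{-\infty}}}\to\F\bbra{T_0^{q^{-\infty}},T_1^{q^{-\infty}}}$, which is a homeomorphism onto the $G$-invariants.

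The main obstacle is to compare the $(Y_0,Y_1)$-adic topology on $M$ with the $(T_0,T_1)$-adic topology. One inclusion is easy: since $Y_i$ lies in the maximal ideal, $I^nM$ tends to $0$ $(T_0,T_1)$-adically. For the converse, I would show that some power of $(T_0,T_1)$, intersected with $M$, lies in $IM$. For this I would use the $G$-equivariance of $M$ and that $\F\bbra{T_0^{q^{-\infty}},T_1^{q^{-\infty}}}$ is, in the perfectoid/almost sense, integral over its $G$-invariants. A cleaner route avoids the comparison: $M$ is $(T_0,T_1)$-adically complete and the $Y_i$ are topologically nilpotent there, so $M$ is an $I$-adically complete topological module in the sense that $\sum m_nx_n$ converges for $m_n\in I^n$. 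From this, derived $I$-completeness follows via the criterion that $\mathrm{Ext}$/$\lim^1$ terms vanish for each generator $Y_i$: $\varprojlim(\cdots\xrightarrow{Y_i}M\xrightarrow{Y_i}M)=0$ and $R^1\varprojlim=0$. Both hold because $\bigcap_n Y_i^nM=0$ and Mittag-Leffler-type convergence works in the complete space. I expect checking the precise form of classical (not just derived) $I$-adic completeness to be the delicate point; if it fails, the derived statement via this Koszul/$\lim$ criterion suffices for the later application.
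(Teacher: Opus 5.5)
Your first step is the paper's step (i). The orbit of an exponent $(a,b)$ under $\sigma^{\pm1}$ forces $a\ge -r/(q^t-1)$ and $b\ge s/(1-q^{-t})$, so $M$ lies in a shifted lattice $N=T_0^{c_0}T_1^{c_1}\F\bbra{T_0^{q^{-\infty}},T_1^{q^{-\infty}}}$. Your ``easy inclusion'' also matches the paper. Because the monomials of $Y_0,Y_1$ have total degree bounded below by some $\delta>0$, one gets $I^kN\subset (T_0,T_1)^{c(k)}N$ with $c(k)\to\infty$; the paper phrases this as $I^{2q^n}\subset(T_0,T_1)$. This uniform bound is what makes your series $x_n=\sum_{k\ge 0}Y_i^k y_{n+k}$ converge in $N$, hence in the closed subspace $M$. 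So your verification of $T(M,Y_0)=T(M,Y_1)=0$ is sound and gives derived $I$-completeness. This is a legitimate alternative to the paper's route. The paper instead gets classical $I$-completeness of $N$ from its $(T_0,T_1)$-adic completeness via \cite[Tag 090T]{stacks-project}. It then presents $M$ as the kernel of $\lambda T_0^rT_1^s(p,p^{-1})^t-1$ between two such shifted lattices, using that derived complete modules are closed under kernels. Your use of closedness of $M$ in $N$ replaces that kernel presentation.

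The gap is at the end. The lemma's main assertion is classical $I$-adic completeness, and you leave exactly this open (``if it fails\dots''). It does not fail. You also do not need the comparison of the $I$-adic and $(T_0,T_1)$-adic topologies on $M$; your planned route via almost integrality over the $G$-invariants is both unsupported and unnecessary.

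The missing ingredient is the standard fact that, for a finitely generated ideal $I$, a module that is derived $I$-complete and $I$-adically separated is $I$-adically complete \cite[Tag 091T]{stacks-project}. This is how the paper concludes. Separatedness follows at once from your easy inclusion: $\bigcap_k I^kM\subset\bigcap_k (T_0,T_1)^{c(k)}N=0$. You only recorded $\bigcap_k Y_i^kM=0$, which is weaker.

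Alternatively, you can prove surjectivity of $M\to\varprojlim_k M/I^kM$ by hand. Choose lifts $x_k$ with $x_{k+1}-x_k=\sum_{\alpha_0+\alpha_1=k}Y_0^{\alpha_0}Y_1^{\alpha_1}m_{k,\alpha}$ and $m_{k,\alpha}\in M$; let $x$ be their limit in $M$. Then regroup the tail $x-x_n=\sum_{k\ge n}(x_{k+1}-x_k)$ as $\sum_{\beta_0+\beta_1=n}Y_0^{\beta_0}Y_1^{\beta_1}(\cdots)$. The inner sums converge in $M$ by your own property that $\sum_k m_kx_k$ converges for $m_k\in I^k$. Hence $x-x_n\in I^nM$.
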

  Note that $H^0(\zok)$ acts via $H^0(\zok) \congto H^0(\zlt)^G = \F\bbra{T_0^{q^{-\infty}},T_1^{q^{-\infty}}}^G$.
\begin{proof}
  That the module is derived complete will be a consequence of the following two assertions by \cite[Tag \href{https://stacks.math.columbia.edu/tag/091T}{091T}]{stacks-project} and \cite[Tag \href{https://stacks.math.columbia.edu/tag/091U}{091U}]{stacks-project}:
  \begin{enumerate}
  \item There exists $N \in \Z_{\ge 0}$ such that
    \begin{equation}\label{eq:invt-contain}
      H^0(\zlt)^{(p,p^{-1})^t=\lambda^{-1}T_0^{-r}T_1^{-s}} \subset (T_0T_1)^{-N} \F\bbra{T_0^{q^{-\infty}},T_1^{q^{-\infty}}}.
    \end{equation}
  \item The module $(T_0T_1)^{-N} \F\bbra{T_0^{q^{-\infty}},T_1^{q^{-\infty}}}$ is $I$-adically complete.
  \item The module~\eqref{eq:invariant-complete} is the kernel of the map
    \begin{equation*}
      \lambda T_0^r T_1^s (p,p^{-1})-1 : (T_0T_1)^{-N} \F\bbra{T_0^{q^{-\infty}},T_1^{q^{-\infty}}} \to (T_0T_1)^{-N'} \F\bbra{T_0^{q^{-\infty}},T_1^{q^{-\infty}}}
    \end{equation*}
    where $N' \ge \max\{-Nq^t+r,-Nq^{-t}+s\}$ (arbitrary).
  \end{enumerate}
  The module~\eqref{eq:invariant-complete} is then $I$-adically complete by \cite[Tag \href{https://stacks.math.columbia.edu/tag/091T}{091T}]{stacks-project}, as it is separated by (i) and (ii).
  
  For (i), suppose that $f$ is contained in the space of invariants \eqref{eq:invariant-complete}.
  By invariance, if the monomial $T_0^a T_1^b$ occurs in $f$, then so does the monomial $T_0^{a_n} T_1^{b_n}$ for any $n \in \Z$, where $a_n := q^{nt}a + \frac{q^{nt}-1}{q^t-1}r$, $b_n := q^{-nt}b + \frac{q^{-nt}-1}{q^{-t}-1}s$.
  By Corollary~\ref{section} we deduce that $xa_n+yb_n \to \infty$ (or is constant) as $n \to \pm\infty$.
  As $a_n = q^{nt}(a+\frac r{q^t-1})-\frac r{q^t-1}$ and $b_n = q^{-nt}(b+\frac s{q^{-t}-1})-\frac s{q^{-t}-1}$ we deduce that $a+\frac r{q^t-1} \ge 0$ and $b+\frac s{q^{-t}-1} \ge 0$.
  We obtain the containment~\eqref{eq:invt-contain} for any $N \ge \max\{\frac r{q^t-1},\frac s{q^{-t}-1}\}$.

  For (ii) we may and will take $N = 0$.
  By continuity (Proposition~\ref{globalsections}) we see that $Y_i$ ($i=0,1$) is contained in the maximal ideal of $\F\bbra{T_0^{q^{-\infty}},T_1^{q^{-\infty}}}$.
  Choose $n \ge 0$ such that the minimal total degree (in $T_0$, $T_1$) of any monomial in any $Y_i$ is $\ge 2q^{-n}$.
  Hence $I = (Y_0,Y_1) \subset (T_0^{q^{-n}},T_1^{q^{-n}})$ inside $\F\bbra{T_0^{q^{-\infty}},T_1^{q^{-\infty}}}$.
  Hence $I^{2q^n} \subset (T_0,T_1)$.
  By \cite[Tag \href{https://stacks.math.columbia.edu/tag/090T}{090T}]{stacks-project} we obtain the first part of (ii).

  Part (iii) follows from (i) and (ii).
\end{proof}}

\begin{proof}[Proof of Theorem \ref{main}]
{By Corollary~\ref{cor:smooth}(ii) and Corollary~\ref{cor:smooth-irr-case} we only need to prove that the image of $D_{A _\infty}^\otimes(\rhob)^{\ss}$ is infinite-dimensional.} By combining Lemmas~\ref{lm:convenient-basis}, \ref{lm:convenient-basis-irr} with Lemma~\ref{lem:invariant-complete} we obtain that the $H^0(\zok)$-module $H^0(\zlt,\cF)^{p^{\Z,\textrm{a}}}$ is derived $I$-adically complete.
(If $\rhob$ is irreducible, note that $H^0(\zlt,\cF)^{p^{\Z,\textrm{a}}} \subset H^0(\zlt,\cF)^{p^{2\Z,\textrm{a}}}$ is a closed submodule.)
Now, as $p>2$, we have for $i\geq 0$ that
\[H^i_{\mathrm{cts}}(G/p^{\Z}, H^0(\zlt,\cF)^{p^{\Z,\textrm{a}}})\cong H^i_{\mathrm{cts}}((1+p\ok)^{\textrm{a}}, H^0(\zlt,\cF)^{p^{\Z,\textrm{a}}})^{[\Fq\s]^{\textrm{a}}\rtimes S_2}.\]
(see for instance from \cite[V.3.2.6.4]{lazard} or \cite[Lemma 3.3]{Kedlaya15} noting that $G/p^{\Z}$ is profinite $p$-analytic in the sense of \cite[\S~III.3.2.2]{lazard} and $1+p\ok$ is open in $G/p^{\Z}$). If $g_1,g_2$ are topological generators of $1+p\ok$ (using again $p>2$) and $M$ a separated complete $\F$-vector space endowed with an $H^0(\zok)$-module structure and an $H^0(\zok)$-linear continuous action of $1+p\ok$, recall from Lemma~\ref{lem:cts-coho-free-prop} that $H^i_{\mathrm{cts}}(1+p\ok, M)$ for $i\geq 0$ can be computed as the cohomology of the complex of $H^0(\zok)$-modules $[M\rightarrow M \oplus M \rightarrow M]$, where the first maps sends $m\in M$ to $(g_1-1)m \oplus (g_2-1)m$ and the second sends $m_1 \oplus m_2\in M\oplus M$ to $(g_2-1)m_1-(g_1-1)m_2$ (we actually only need $i=0,1$ in the sequel). If $M$ is moreover derived complete with respect to $I$ as $H^0(\zok)$-module, it follows from part (1) of \cite[Tag \href{https://stacks.math.columbia.edu/tag/091U}{091U}]{stacks-project} that so are the $H^i_{\mathrm{cts}}(1+p\ok, M)$ for $i\geq 0$. In particular (using \emph{loc.~cit.}~again) we deduce that the $H^0(\zok)$-modules $H^i_{\mathrm{cts}}(G/p^{\Z}, H^0(\zlt,\cF)^{p^{\Z,\textrm{a}}})$ are derived complete with respect to $I$ for $i\geq 0$.

Now, let $X:=\delta(D_{A _\infty}^\otimes(\rhob)^{\ss})\subseteq H^1_{\mathrm{cts}}(G, H^0(\zlt,\cF))$ and assume that the image $\overline X$ of $X$ in $H^1(p^{\Z,\textrm{a}},H^0(\zlt,\cF))^G$ via (\ref{IR}) is a finite-dimensional $\F$-vector space. Since $H^0(\zok)$-modules which are finite-dimensional over $\F$ are obviously $I$-adically complete, hence derived complete with respect to $I$ (part (1) of \cite[Tag \href{https://stacks.math.columbia.edu/tag/091R}{091R}]{stacks-project}), by the previous paragraph with part (1) of \cite[Tag \href{https://stacks.math.columbia.edu/tag/091U}{091U}]{stacks-project} the inverse image of $\overline X$ in $H^1_{\mathrm{cts}}(G, H^0(\zlt,\cF))$ is also derived complete with respect to $I$. Since $Y_0$ (or $Y_1$) acts invertibly on $D_{A _\infty}^\otimes(\rhob)^{\ss}$ and $X$ lies in this inverse image, it follows from part (2) of \cite[Tag \href{https://stacks.math.columbia.edu/tag/091P}{091P}]{stacks-project} that the map $\delta$ must be $0$, i.e.,~we have a short exact sequence of $H^0(\zok)$-modules
\begin{multline}\label{MVGbis}
0 \longrightarrow H^0(\zok,(m_* \cF)^G) \longrightarrow H^0(U_0,(m_* \cF)^G) \oplus H^0(U_1,(m_* \cF)^G) \\
\longrightarrow D_{A _\infty}^\otimes(\rhob)^{\ss} \longrightarrow 0.
\end{multline}

But $H^0(\zok,(m_* \cF)^G)=H^0(\zlt,\cF)^G$ is a closed $H^0(\zok)$-submodule of the $I$-adically complete $H^0(\zok)$-module $H^0(\zlt,\cF)^{p^{\Z,\textrm{a}}}$, hence is also $I$-adically complete, hence derived complete with respect to $I$. Since $Y_0$ (or $Y_1$) acts invertibly on $D_{A _\infty}^\otimes(\rhob)^{\ss}$, it then follows from part (2) of \cite[Tag \href{https://stacks.math.columbia.edu/tag/091P}{091P}]{stacks-project} applied with $A:=H^0(\zok)$, \ $f:=Y_0$ \ (or \ $Y_1$), \ $K:=H^0(\zok,(m_* \cF)^G)$ \ (in \ degree \ $0$) \ and \ $E:=D_{A _\infty}^\otimes(\rhob)^{\ss}[-1]$ that (\ref{MVGbis}) splits as a sequence of $H^0(\zok)$-modules. In particular there is an $H^0(\zok)$-linear surjection $H^0(U_0,(m_* \cF)^G) \oplus H^0(U_1,(m_* \cF)^G)\twoheadrightarrow H^0(\zok,(m_* \cF)^G)$. Since $Y_j$ acts invertibly on $H^0(U_j,(m_* \cF)^G)$ ($j=0,1$), by part (2) of \cite[Tag \href{https://stacks.math.columbia.edu/tag/091P}{091P}]{stacks-project} again, applied with $f=Y_j$, the map $H^0(U_j,(m_* \cF)^G) \rightarrow H^0(\zok,(m_* \cF)^G)$ is zero for $j=0,1$. Hence we must have $H^0(\zok,(m_* \cF)^G)=0$. From \ (\ref{MVGbis}) \ we \ deduce \ that \ the \ map \ $H^0(U_0,(m_* \cF)^G) \oplus H^0(U_1,(m_* \cF)^G) \rightarrow D_{A _\infty}^\otimes(\rhob)^{\ss}$ is an isomorphism, which contradicts Theorem \ref{AIM23} below. 

We conclude that $\overline X$ must be infinite-dimensional. As $H^1_{\mathrm{cts}}(G, H^0(\zlt,\cF))$ is a smooth $P$-representation and all maps are $P$-equivariant, this finishes the proof. 
\end{proof}

We can strengthen Theorem~\ref{main} as follows.

\begin{cor}\label{cor:main}
  Assume $p>2$. Then the image of $D_{A _\infty}^\otimes(\rhob)^{\ss}$ in the composition (\ref{composition}) is $Y_i$-divisible for $i\in \{0,1\}$, in particular it has unbounded $(Y_0,Y_1)$-torsion.
\end{cor}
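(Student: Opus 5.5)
The image $\overline X$ of $D_{A_\infty}^\otimes(\rhob)^{\ss}$ under (\ref{composition}) is a quotient of $D_{A_\infty}^\otimes(\rhob)^{\ss}$ by an $H^0(\zok)$-linear map, because both $\delta$ and $\res$ are $H^0(\zok)$-linear. Divisibility should therefore pass formally from source to target. Fix $i\in\{0,1\}$. We have $Y_i\in A^\times\subset A_\infty^\times$, so multiplication by $Y_i$ is bijective on $D_{A_\infty}^\otimes(\rhob)^{\ss}=A_\infty\otimes_A D_A^\otimes(\rhob)^{\ss}$. Given $\bar x=(\res\circ\delta)(x)\in\overline X$, write $x=Y_i y$ and obtain $\bar x=Y_i\,(\res\circ\delta)(y)$ with $(\res\circ\delta)(y)\in\overline X$. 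So $\overline X$ is $Y_i$-divisible.

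The one thing to check is that the $H^0(\zok)$-module structure used on the target agrees with the one used on the source. On $D_{A_\infty}^\otimes(\rhob)^{\ss}=H^0(\zokg,(m_*\cF)^G)$ we use restriction $H^0(\zok)\to H^0(\zokg)\cong A_\infty$. This is the inclusion $\F\bbra{N_0^{p^{-\infty}}}\subset A_\infty$ under Lemma \ref{hartogs}, so $Y_i$ does act invertibly there. The sequences (\ref{connecting}) and (\ref{IR}) are $H^0(\zok)$-linear, as already noted in the text, so nothing further is needed.

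For the torsion statement we combine divisibility with smoothness. By Corollary~\ref{cor:smooth} and Corollary~\ref{cor:smooth-irr-case}, $\overline X$ is a smooth $P$-representation, hence a smooth $\F\bbra{N_0}$-module. So every element is killed by a power of $I=(Y_0,Y_1)$, that is, $\overline X=\bigcup_n \overline X[I^n]$.

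Suppose the torsion were bounded, so that $I^n\overline X=0$ for some $n$. Then $Y_0^n\overline X=0$. Divisibility gives $\overline X=Y_0^n\overline X$, hence $\overline X=0$. This contradicts Theorem~\ref{main}, which says $\overline X$ is infinite-dimensional. So the $(Y_0,Y_1)$-torsion is unbounded.

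For a sharper statement, the same argument with $Y_i^n\overline X=\overline X$ shows that $\overline X[I^n]\neq\overline X$ for every $n$. Since $\overline X[I^{n+1}]\supseteq Y_0^{-1}$-preimages, multiplication by $Y_0$ maps $\overline X[I^{n+1}]$ onto something not contained in $\overline X[I^{n-1}]$, so the torsion filtration is strictly increasing. There is no real obstacle in any of this; the only point needing care is the compatibility of module structures noted above.
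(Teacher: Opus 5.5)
Your argument is correct and is essentially the paper's: $Y_i$-divisibility of $D_{A_\infty}^\otimes(\rhob)^{\ss}$ passes to its image under the $H^0(\zok)$-linear composition. A nonzero, smooth (hence $I$-power torsion), $Y_i$-divisible module cannot be killed by a fixed power of $I$, so Theorem~\ref{main} gives unbounded torsion.

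Your final ``sharper'' paragraph is unnecessary, and its justification is garbled: $Y_0x\in\overline X[I^n]$ does not force $x\in\overline X[I^{n+1}]$. Strict growth of the filtration instead follows from the general fact that $M[I^{n+1}]=M[I^n]$ implies $M[I^{n+2}]=M[I^{n+1}]$, combined with $\overline X[I^n]\neq\overline X=\bigcup_m\overline X[I^m]$ for all $n$.
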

\begin{proof}
As $D_{A _\infty}^\otimes(\rhob)^{\ss}$ is $Y_i$-divisible for $i\in \{0,1\}$, so is its image by any $H^0(\zok)$-linear morphism. Since the image of $D_{A _\infty}^\otimes(\rhob)^{\ss}$ in the composition (\ref{composition}) is nonzero by Theorem \ref{main}, it follows that it has unbounded $Y_i$-torsion for $i\in \{0,1\}$.
\end{proof}

Though we do not know if $H^0(\zok, (m_* \cF)^G)$ is nonzero, we can at least deduce from Theorem \ref{main} the following non-nullity statement:

\begin{cor}\label{non0bis}
Assume \ $p>2$. Then \ at \ least \ one \ of \ $H^0(\zok, (m_* \cF)^G)$, $H^1_{\mathrm{cts}}(G/p^{\Z}, H^0(\zlt,\cF)^{p^{\Z,\mathrm{a}}})$ is nonzero.
\end{cor}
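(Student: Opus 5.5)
We argue by contradiction and reuse the argument in the proof of Theorem~\ref{main}. Assume both $H^0(\zok, (m_* \cF)^G)=0$ and $H^1_{\mathrm{cts}}(G/p^{\Z}, H^0(\zlt,\cF)^{p^{\Z,\mathrm{a}}})=0$.

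The first step uses the inflation-restriction sequence (\ref{IR}). Since its first term vanishes, the restriction map $\res : H^1_{\mathrm{cts}}(G, H^0(\zlt,\cF)) \to H^1(p^{\Z,\textrm{a}},H^0(\zlt,\cF))^G$ is injective. By Theorem~\ref{main}, the image of $D_{A _\infty}^\otimes(\rhob)^{\ss}$ under (\ref{composition}) is infinite-dimensional, in particular nonzero. Hence $\delta$ in (\ref{connecting}) is nonzero. That alone is no contradiction, so we need to use the vanishing of $H^0$ as well.

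Now go back to the Mayer--Vietoris sequence (\ref{connecting}), where $H^0(\zokg,(m_*\cF)^G)=D_{A_\infty}^\otimes(\rhob)^{\ss}$. With $H^0(\zok,(m_*\cF)^G)=0$ we get an injection
\[H^0(U_0,(m_* \cF)^G) \oplus H^0(U_1,(m_* \cF)^G)\hookrightarrow D_{A _\infty}^\otimes(\rhob)^{\ss}\]
whose cokernel embeds into $H^1_{\mathrm{cts}}(G, H^0(\zlt,\cF))$. It remains to see why this forces a contradiction. The cleanest route is to reread the proof of Theorem~\ref{main}: there the contradiction came from assuming $\overline X$ finite-dimensional. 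Here we instead show $\delta=0$ directly from our two assumptions.

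Indeed, by (\ref{IR}) and the second assumption, $H^1_{\mathrm{cts}}(G,H^0(\zlt,\cF))$ embeds into $H^1(p^{\Z,\textrm{a}},\dots)^G$. That alone does not kill $\delta$. So the better approach is the contrapositive of Theorem~\ref{main}'s proof structure. The proof of Theorem~\ref{main} shows the following: if $\delta(D_{A_\infty}^\otimes(\rhob)^{\ss})$ lies in a submodule that is derived $I$-complete, then $\delta=0$, and then Theorem~\ref{AIM23} gives a contradiction via (\ref{MVGbis}), using only that $H^0(\zok,(m_*\cF)^G)$ is derived complete.

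Under our hypotheses, the inverse image of $0$ under $\res$ is $\ker(\res)$. This is the image of $H^1_{\mathrm{cts}}(G/p^\Z,\dots)=0$, hence derived complete. But that only controls $\ker\res$, not $\operatorname{im}\delta$.

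So the honest plan is as follows: the statement must come from the proof via the observation that Theorem~\ref{main} only needs one of the two modules to be nonzero. Precisely, if $H^0(\zok,(m_*\cF)^G)=0$, then (\ref{connecting}) makes $H^0(U_0)\oplus H^0(U_1)\to D_{A_\infty}^\otimes(\rhob)^{\ss}$ injective. If moreover $H^1_{\mathrm{cts}}(G/p^\Z,\dots)=0$, we aim to show surjectivity, i.e.\ $\delta=0$, contradicting Theorem~\ref{AIM23}.

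The main obstacle is precisely showing $\delta=0$ from these two assumptions. Concretely, one must show $\res\circ\delta=0$, for instance by proving that $\res\circ\delta$ factors through something on which $Y_0$ is invertible while the target's relevant part is derived complete. I would try to exploit that $\res\circ\delta$ is $H^0(\zok)$-linear and $G$-coinvariant-compatible. If this fails, the fallback is to note that the paper's proof of Theorem~\ref{main} actually runs by contradiction from the hypothesis "$\overline X$ finite-dimensional". In the present corollary, one must check directly that with both groups zero one still reaches (\ref{MVGbis}) and then Theorem~\ref{AIM23}.
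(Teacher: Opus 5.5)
Your setup is correct. If both groups vanish, then (\ref{connecting}) and (\ref{IR}) give an injection $R\colon H^0(U_0,(m_*\cF)^G)\oplus H^0(U_1,(m_*\cF)^G)\hookrightarrow D_{A_\infty}^\otimes(\rhob)^{\ss}$. Its cokernel $H$ is identified, $H^0(\zok)$-linearly, with the image of $\res\circ\delta$ inside the smooth module $H^1(p^{\Z,\mathrm{a}},H^0(\zlt,\cF))^G$ (Corollaries~\ref{cor:smooth}, \ref{cor:smooth-irr-case}). You also correctly see that what must be shown is $H=0$, which contradicts Theorem~\ref{main} (equivalently, it makes $R$ an isomorphism, contradicting Theorem~\ref{AIM23}).

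But this is exactly the step you leave open as ``the main obstacle'', so the proposal does not prove the corollary. The tool you suggest for it cannot work either. Derived $I$-completeness kills maps out of $D_{A_\infty}^\otimes(\rhob)^{\ss}$ only if the target is derived complete. In the proof of Theorem~\ref{main} this held because $\overline X$ was assumed finite-dimensional. Here the relevant target $H^1(p^{\Z,\mathrm{a}},H^0(\zlt,\cF))^G$ is never derived $I$-complete. By Theorem~\ref{main} and Corollary~\ref{cor:main} it contains a nonzero $Y_0$-divisible submodule $M'$, and then $\Hom(H^0(\zok)[1/Y_0],M')=\varprojlim(\cdots\xrightarrow{Y_0}M'\xrightarrow{Y_0}M')\neq 0$.

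The missing idea is to play the smoothness of $H$ against the invertibility of $Y_0$ on $U_0$.
\begin{enumerate}
\item Injectivity of $Y_1$: since $R$ is injective and $H^0(\zok)$-linear, and $Y_1$ is invertible on $D_{A_\infty}^\otimes(\rhob)^{\ss}$, $Y_1$ is injective on $H^0(U_0,(m_*\cF)^G)$.
\item Surjectivity of $Y_1$: take $v\in H^0(U_0,(m_*\cF)^G)$. The element $Y_1^{-1}R(v)$ exists in $D_{A_\infty}^\otimes(\rhob)^{\ss}$. As $H$ is smooth, there is $N$ with $Y_0^NY_1^{-1}R(v)=R(v_0+v_1)$ for some $v_i\in H^0(U_i,(m_*\cF)^G)$. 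Multiplying by $Y_1$ and using injectivity of $R$ and the direct sum decomposition gives $Y_0^Nv=Y_1v_0$. Hence $v=Y_1\cdot Y_0^{-N}v_0$, because $Y_0$ is invertible on $H^0(U_0,\cO_{U_0})$.
\item Conclusion: $Y_1$ is therefore bijective on $H^0(U_0,(m_*\cF)^G)$, and it is already bijective on $H^0(U_1,(m_*\cF)^G)$ and on $D_{A_\infty}^\otimes(\rhob)^{\ss}$. So $Y_1$ acts bijectively on $H=\operatorname{coker}R$. But every element of the smooth module $H$ is killed by a power of $Y_1$, so $H=0$, contradicting Theorem~\ref{main}.
\end{enumerate}
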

\begin{proof}
Assume that both are zero. Then from (\ref{connecting}) and (\ref{IR}) we have a $P$-equivariant exact sequence
\begin{multline}\label{MVbis}
0 \longrightarrow H^0(U_0, (m_* \cF)^G) \oplus H^0(U_1, (m_* \cF)^G)\longrightarrow H^0(\zokg, (m_* \cF)^G)\\
\longrightarrow H^1(p^{\Z,\textrm{a}},H^0(\zlt,\cF))^G.
\end{multline}
Let $H$ be the image of the last map in $H^1(p^{\Z,\textrm{a}},H^0(\zlt,\cF))^G$, which is a nonzero smooth $P$-representation by Theorem \ref{main}. Let us first prove that $H^0(U_0, (m_* \cF)^G)$ is canonically an $\F\ppar{Y_1^{p^{-\infty}}}$-vector space (note that \emph{a priori} it is just an $\F\ppar{Y_0^{p^{-\infty}}}$-vector space and, using $\F\bbra{Y_1^{p^{-\infty}}}\hookrightarrow \F\ppar{Y_0^{p^{-\infty}}}\langle \big(\frac{Y_1}{Y_0}\big)^{p^{-\infty}}\rangle$, an $\F\bbra{Y_1^{p^{-\infty}}}$-module). 
{It suffices to show that $Y_1 : H^0(U_0, (m_* \cF)^G) \to H^0(U_0, (m_* \cF)^G)$ is bijective, and by~\eqref{MVGbis} we already know it is injective.} Denote by $R$ the injection in (\ref{MVbis}).
{Let $v\in H^0(U_0,(m_*\cF)^G)$ and note that $\frac{1}{Y_1}R(v)$ exists in $H^0(\zokg, (m_* \cF)^G)$.
Since $H$ is a smooth $\F\bbra{Y_0^{p^{-\infty}},Y_{1}^{p^{-\infty}}}$-module, there exists $N>0$ such that $\frac{Y_0^{N}}{Y_1}R(v)$ maps to $0$ in $H$, i.e.,~$\frac{Y_0^{N}}{Y_1}R(v)=R(v_{0}+v_{1})$ with $v_{i}\in H^0(U_i,(m_*\cF)^G)$. Multiplying by $Y_1$ we get \[Y_0^{N}R(v)=Y_1R(v_{0} + v_{1}),\text{ i.e., }R(Y_0^{N}v)=R(Y_1v_{0} + Y_1v_{1}).\]
Hence $Y_0^{N}v=Y_1v_{0} + Y_1v_{1}$, since $R$ is injective, and thus $Y_0^{N}v=Y_1v_{0}$, $Y_1v_{1}=0$.
Since $Y_0$ acts invertibly on $H^0(U_0,\cO_{U_0})$ we deduce that $v = Y_0^{-N}Y_1 v_0$ and so $Y_1$ is indeed surjective on $H^0(U_0, (m_* \cF)^G)$.
Hence $R$ becomes a morphism of $\F\ppar{Y_1^{p^{-\infty}}}$-vector spaces.
It follows that the cokernel $H$ of $R$ is an $\F\ppar{Y_1^{p^{-\infty}}}$-vector space and at the same time is a smooth $\F\bbra{Y_1^{p^{-\infty}}}$-module (by~\eqref{MVbis}).
Therefore $H = 0$, which contradicts Theorem \ref{main}.}
\end{proof}

\subsection{Aside on the trivial Galois representation}\label{sec:aside-trivial-galois}

For $f \geq 2$ we prove that the $P$-representation $H^{f - 1}(\zok,\cO_{\zok})$ is irreducible smooth. When \ $f = 2$ \ and \ $p>2$ \ we \ use \ this \ to \ study \ the \ connecting \ homomorphism $H^0(\zokg,(m_* \cO_{\zlt})^{G}) \xrightarrow{\ \delta\ } H^1_{\mathrm{cts}}(G,H^0(\zlt,\cO_{\zlt}))$ {in (\ref{connecting})}.

{Recall the affinoid perfectoid open cover $(U_i)_{i \in \{0,\ldots, f - 1\}}$ of $\zok$ from~\eqref{eq:U_i}: %
\[U_i:= \mathrm{Spa}\Big(\F\ppar{Y_i^{p^{-\infty}}}\left\langle \big(\frac{Y_j}{Y_i}\big)^{p^{-\infty}}, j\ne i\right\rangle, \F\bbra{Y_i^{p^{-\infty}}}\left\langle \big(\frac{Y_j}{Y_i}\big)^{p^{-\infty}}, j\ne i\right\rangle\Big).\]
Analogously to the proof of \cite[Lemma~2.4.2(i)]{BHHMS3}, for $0 \leq i_0 < i_1 < \ldots < i_k \leq f - 1$, we have
\begin{flushleft}
$\displaystyle{U_{i_0} \cap U_{i_1} \cap \ldots \cap U_{i_k} =}$
\end{flushleft}
\begin{flushleft}
$\displaystyle{\mathrm{Spa}\left(\F\ppar{Y_{i_0}^{p^{-\infty}}}\left\langle \big(\frac{Y_j}{Y_{i_0}}\big)^{\pm p^{-\infty}}\!\!, j = i_1, \ldots, i_k; \big(\frac{Y_j}{Y_{i_0}}\big)^{p^{-\infty}}\!\!, j \ne i_0, i_1, \ldots, i_k\right\rangle, \right.}$  
\end{flushleft}
\begin{flushright}
$\displaystyle{\left. \F\bbra{Y_{i_0}^{p^{-\infty}}}\left\langle \big(\frac{Y_j}{Y_{i_0}}\big)^{\pm p^{-\infty}}\!\!, j = i_1, \ldots, i_k; \big(\frac{Y_j}{Y_{i_0}}\big)^{p^{-\infty}}, j \ne i_0, i_1, \ldots, i_k\right\rangle \right)}$
\end{flushright}
In particular, each intersection $U_{i_0} \cap \ldots \cap U_{i_k}$ is affinoid perfectoid, and \cite[Thm.\ 2.6.5(c)]{KL2} implies that $H^j(U_{i_0} \cap \ldots \cap U_{i_k}, \cO_{\zok}) = 0$ for $j > 0$.  Consequently, by \cite[Tag \href{https://stacks.math.columbia.edu/tag/01ET}{01ET}]{stacks-project} (and \cite[Tag \href{https://stacks.math.columbia.edu/tag/01FM}{01FM}]{stacks-project}), we have for $i \geq 0$ an isomorphism
\[H^i(\zok, \cO_{\zok}) \cong \check{H}^i(\zok, \cO_{\zok})\]
where $\check{H}^i$ is the \v Cech cohomology associated to the open cover $(U_i)_{i \in \{0,\ldots, f - 1\}}$. Taking $i = f - 1$ in the above isomorphism gives
\begin{equation}\label{eq:cokerH^f-1}
H^{f - 1}(\zok,\cO_{\zok}) \cong \textnormal{coker}\left(\bigoplus_{i = 0}^{f - 1}H^0\big(\bigcap_{j \neq i}U_j, \cO_{\bigcap_{j \neq i}U_j}\big) ~\rightarrow~ H^0(\zok^{\textrm{gen}}, \cO_{\zok^{\textnormal{gen}}})\right)
\end{equation}
(recalling that $\bigcap_{i = 0}^{f - 1}U_i = \zok^{\textrm{gen}}$).  
Explicitly,
\begin{align*}
H^0(\zok^{\textrm{gen}}, \cO_{\zok^{\textnormal{gen}}}) & = A_\infty \\
&= \Big\{ \sum_{n=0}^\infty \lambda_n \un{Y}^{\un{d}_n} : ||\un{d}_n|| \to \infty \Big\}, \\
H^0\big(\bigcap_{j \neq i}U_j,\cO_{\bigcap_{j \neq i}U_j}\big) &= \F\ppar{Y_{i_0}^{p^{-\infty}}}\left\langle \big(\frac{Y_j}{Y_{i_0}}\big)^{\pm p^{-\infty}}, j \neq i, i_0; \big(\frac{Y_i}{Y_{i_0}}\big)^{p^{-\infty}}\right\rangle \\
&= \Big\{ \sum_{n=0}^\infty \lambda_n \un{Y}^{\un{d}_n}: d_{n,i} \ge 0,\ ||\un{d}_{n}|| \to \infty \Big\},
\end{align*}
where $\lambda_n \in \F$, $\un{d}_n \in \Z[1/p]^f$ (recalling the notation \eqref{Yk}), and $i_0$ is a fixed index different from $i$.
It follows by \eqref{eq:cokerH^f-1} that
\begin{equation}\label{eq:H^f-1-zok}
H^{f - 1}(\zok,\cO_{\zok}) \cong \bigoplus_{\un{d} \in (\Z[1/p]_{< 0})^f} \F \un{Y}^{\un{d}}.
\end{equation}

\begin{lem}\label{irr}
The $P$-representation $H^{f - 1}(\zok,\cO_{\zok})$ is irreducible and smooth.
\end{lem}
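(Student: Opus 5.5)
We work throughout with the explicit description \eqref{eq:H^f-1-zok}: $H^{f-1}(\zok,\cO_{\zok})\cong\bigoplus_{\un d\in(\Z[1/p]_{<0})^f}\F\,\un Y^{\un d}$, where a class is the image of a series in $A_\infty$ and we keep only the monomials with all exponents negative. The first task is to describe the $P$-action in these terms. The group $P\cong\smatr{1}{K}{0}{1}\rtimes K^\times$ acts on $\zok$: $\smatr{p}{0}{0}{1}$ acts by $\vp$, so $Y_i\mapsto Y_{i-1}^p$; $a\in\oks$ acts by the automorphism induced from multiplication on $\cO_K$; and $\smatr{1}{b}{0}{1}$ acts by multiplication by the group-like element $[b]\in\F\bbra{N_0^{p^{-\infty}}}=H^0(\zok)$. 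All these actions preserve the cover $(U_i)$, up to the permutation of indices induced by $\vp$, and preserve $\zokg$. They therefore act on the \v{C}ech complex, and the isomorphism \eqref{eq:H^f-1-zok} is equivariant once we transport the actions.

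\textbf{Smoothness.} For $N_0$ (and hence all of $\smatr{1}{K}{0}{1}$ after conjugating by $\vp^{-n}$), the point is that $[b]-1$ lies in the augmentation ideal. On the coker it is a finite sum of monomials with all exponents negative, and multiplying by a monomial with nonnegative exponents and large total degree kills it. More precisely, $\un Y^{\un e}\cdot\un Y^{\un d}$ has some coordinate $\ge0$ as soon as $\|\un e\|>\sum_i |d_i|$. Moreover $\F\bbra{N_0^{p^{-\infty}}}$ is spanned topologically by such monomials, and the tails are handled by continuity. This shows that the $\mathfrak m$-power torsion is everything, hence that the $N_0$-action is smooth.

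For $\oks$, we use (as in \cite[(21)]{BHHMS3}) that $a(Y_i)=\o a^{p^i}Y_i f_{a,i}^{-1}$ with $f_{a,i}\equiv1$ modulo high $Y$-degree for $a\in 1+p^n\cO_K$, $n\gg0$; this is the analogue of Lemma~\ref{lm:smoothness-diagonal}. Fractional powers are then handled via $a(Y_i^{1/p^m})=\vp^{-m}(a(Y_i))$. Hence a given monomial $\un Y^{\un d}$ is fixed modulo terms with a nonnegative exponent, i.e.\ fixed in the coker. The element $\smatr{p}{0}{0}{1}$ is a single group element, so smoothness of $P$ follows.

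\textbf{Irreducibility.} Let $0\ne V\subset H^{f-1}$ be a $P$-stable subspace and pick $0\ne v\in V$. First, multiplying by suitable monomials $\un Y^{\un e}$, $\un e\ge0$ (these lie in the $\F$-span of $N_0^{p^{-\infty}}$-translates, after completion; note $V$ is stable under $\F\bbra{N_0^{p^{-\infty}}}$ because $V$ is smooth, so the action factors through finite quotients), we kill all monomials of $v$ but one that is maximal for the product order. This reduces us to $v=\un Y^{\un d}\in V$. Next, multiplying by $\un Y^{\un e}$ with $\un e$ chosen so that $\un d+\un e=(-\epsilon,\dots,-\epsilon)$ gives $\un Y^{(-\epsilon,\dots,-\epsilon)}\in V$ for small $\epsilon=p^{-m}$. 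Applying $\vp^{k}$, which sends $\un Y^{\un c}$ to a monomial in which the coordinates are cyclically shifted and multiplied by $p$, we reach the exponent $(-p^{-m'},\dots)$ for any $m'$. Finally, arbitrary $\un d<0$ is obtained from $(-\epsilon,\dots,-\epsilon)$ as a $\Z[1/p]$-monomial multiple, since for $\epsilon$ smaller than all the $|d_i|$ we have $\un d+\un\epsilon\le 0$, and we multiply by $\un Y^{-\un d-\un\epsilon}$ wait—the sign is wrong. So we instead use $\vp^{-k}$, which is available because $P$ is a group, to go from $(-\epsilon,\dots)$ to exponents that are even more negative; equivalently, we apply $\vp^{-1}$ repeatedly to $(-\epsilon,\dots,-\epsilon)$ to get exponents $-\epsilon p^{k}$, which become arbitrarily negative, and then multiply by nonnegative monomials to reach any $\un d<0$ with $\un d\ge(-\epsilon p^k,\dots)$. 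Hence $V$ is everything.

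\textbf{Main obstacle.} The main obstacle is rigorously justifying that $V$ is stable under multiplication by $\F\bbra{N_0^{p^{-\infty}}}$, in particular by $Y_i^{1/p^m}$, which is not a finite combination of group elements of $N_0$. We would handle this by writing $Y_i^{1/p^m}=\vp^{-m}(Y_i)$, where $Y_i$ is a finite combination of elements of $N_0$, so that $\vp^{-m}(Y_i)$ is a finite combination of elements of $\smatr{1}{K}{0}{1}$. This gives stability under all monomials directly, with no completion needed. We must also be careful with the cyclic index shift under $\vp$ in the orderings used.
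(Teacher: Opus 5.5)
Your strategy is the paper's. You work in the model \eqref{eq:H^f-1-zok}. Smoothness comes from the count ``$\un Y^{\un d}$ is killed by every monomial of total degree $\ge\sum_i|d_i|$'' together with $a(Y_i)\equiv Y_i$ to high degree for $a$ close to $1$. Irreducibility comes from isolating one monomial, making its exponents very negative with $\smatr{p^k}{0}{0}{1}$, and multiplying back by nonnegative monomials. Your remark that $Y_i^{1/p^m}=\vp^{-m}(Y_{i-m})$ is a finite $\F$-linear combination of elements of $\smatr{1}{K}{0}{1}$ is a clean way to see that any $P$-stable subspace is stable under all $\un Y^{\un e}$, $\un e\in\Z[1/p]_{\ge0}^f$. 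However, two steps of your irreducibility argument are stated in the wrong direction and fail as written.

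First, multiplying by nonnegative monomials you cannot keep a monomial that is \emph{maximal} for the product order. If $\un d_n\le\un d_k$, then any $\un e\ge0$ with $\un d_k+\un e<0$ also gives $\un d_n+\un e<0$; for instance $\un Y^{(-1,-1)}+\un Y^{(-2,-2)}$ cannot be reduced to $\un Y^{(-1,-1)}$. Choose $\un d_k$ \emph{minimal} instead. Then each $n\ne k$ has a coordinate $j$ with $d_{n,j}>d_{k,j}$. Multiply by $\un Y^{-\un d_k-\epsilon\un 1}$, where $\un 1:=(1,\dots,1)$ and $\epsilon\in\Z[1/p]_{>0}$ is smaller than all $|d_{k,j}|$ and all these positive differences; this leaves exactly $\lambda_k\un Y^{-\epsilon\un 1}$.

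Second, since $\vp(Y_i)=Y_{i-1}^p$, applying $\vp^{-1}$ divides exponents by $p$ and moves them toward $0$. It is $\vp^k=\smatr{p^k}{0}{0}{1}$ with $k>0$ that sends $\un Y^{-\epsilon\un 1}$ to $\un Y^{-\epsilon p^k\un 1}$, and no group inverse is needed for this step. For $k$ large, multiplying by $\un Y^{\un d+\epsilon p^k\un 1}$ then gives any $\un Y^{\un d}$ with $\un d<0$.

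With these two corrections your argument is the paper's. Your reduction to a constant exponent vector plays the role of the paper's use of $\smatr{p^{fn}}{0}{0}{1}$ to neutralize the cyclic index shift of $\vp$.
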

\begin{proof}
Let $V := H^{f - 1}(\zok,\cO_{\zok})$. From~\eqref{eq:H^f-1-zok} it is immediate that the action of $N_0$ on $V$ is smooth. We also deduce from the definition of $Y_i$ that $a(Y_i) \in Y_i + \m_{N_0}^{p^r}$ for all $a \in 1+p^r \cO_K$, $0 \le i \le f - 1$ and integers $r \ge 0$, which implies that the action of $\smatr{\oks}001$ on $V$ is smooth. This shows that $V$ is a smooth $P$-representation. To prove that $V$ is irreducible, suppose that $0 \ne v \in V$ and represent $v$ by a finite sum $\sum_{n=1}^N \lambda_n \un{Y}^{\un{d}_n} \in A_\infty$ with $\un{d}_n \in (\Z[1/p]_{< 0})^f$ and $\lambda_n \in \F\s$. We may assume that $\un{d}_1 < \un{d}_2 < \cdots < \un{d}_N$ in the lexicographic order. If $N > 1$ we multiply by $\un{Y}^{-\un{d}_2} \in \F\bbra{N_0}$ and obtain an element of $V$ that is of the same form but with $N = 1$. Thus we may assume that $v = \un{Y}^{\un{d}}$ for some $\un{d} \in (\Z[1/p]_{< 0})^f$.
Then $\smatr{p^{fn}}001 v = \un{Y}^{p^{fn}\un{d}}$, and by taking $n$ large enough and using the $N_0$-action we can obtain any monomial $\un{Y}^{\un{c}}$ with $\un{c} \in (\Z[1/p]_{< 0})^f$, proving that $V$ is irreducible.
\end{proof}}

{\begin{rem}
We can make Lemma \ref{irr} a bit more precise: we have an isomorphism of irreducible smooth $P$-representations $H^{f - 1}(\zok,\cO_{\zok}) \cong \cInd_{K^\times}^P(\chi|_{K^\times})$, where $\chi$ is as in \S~\ref{sec:surj-Ainfty-pi}.  See \S~\ref{sec:princ-seri-case2} for more details {for $f = 2$}.
\end{rem}}

We now assume $f=2$ and study the connecting morphism $H^0(\zokg,m_* \cF)^{G} \xrightarrow{\ \delta\ } H^1_{\mathrm{cts}}(G,H^0(\zlt,\cF))$ when $\cF=\cO_{\zlt}$. {From Lemma~\ref{lm:pushforward-sheaf} we know that ${(m_* \cO_{\zlt})^G} \cong \cO_{\zok}$.}
By (\ref{eq:cokerH^f-1}) for $f=2$ the Mayer--Vietoris sequence for the open cover $\zok=U_0\cup U_1$ gives a $P$-equivariant exact sequence
\begin{multline}\label{eq:MV}
0 \to H^0(\zok,\cO_{\zok}) \to H^0(U_0,\cO_{U_0})\oplus H^0(U_1,\cO_{U_1}) \to H^0(U_0 \cap U_1,\cO_{U_0 \cap U_1})\\
\to H^1(\zok,\cO_{\zok}) \to 0.
\end{multline}

From Lemma \ref{irr} and Proposition \ref{avoir} we deduce:

\begin{cor}\label{irrtriv}
{Assume $p>2$.} The $P$-representation $H^1_{\mathrm{cts}}(G,\!H^0(\zlt,\cO_{\zlt}))$ is irreducible, smooth and we have a $P$-equivariant exact sequence
\begin{multline}\label{MVG}
0 \to H^0(\zok,\cO_{\zok}) \to H^0(U_0,\cO_{U_0})\oplus H^0(U_1,\cO_{U_1}) \to H^0(U_0 \cap U_1,\cO_{U_0 \cap U_1})\\
\buildrel \delta \over \to H^1_{\mathrm{cts}}(G,H^0(\zlt,\cO_{\zlt})) \to 0
\end{multline}
with $\delta$ as in (\ref{connecting}) (for $\cF=\cO_{\zlt}$). In particular $\delta$ is surjective in that case.
\end{cor}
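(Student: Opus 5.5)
The plan is to combine the Mayer--Vietoris sequence \eqref{eq:MV} with the identification of Proposition~\ref{avoir} (in the appendix), namely $H^1_{\mathrm{cts}}(G,H^0(\zlt,\cO_{\zlt}))\cong H^1(\zok,\cO_{\zok})$. What has to be checked is that this isomorphism is compatible with the connecting map $\delta$.

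\textbf{Step 1: setting up the comparison.} Take $\cF=\cO_{\zlt}$ in \eqref{MV}. By Lemma~\ref{lm:pushforward-sheaf} we have $(m_*\cO_{\zlt})^G\cong\cO_{\zok}$, and this also holds over every open subset $U\subseteq\zok$, with $H^0(U,\cO_U)\cong H^0(m^{-1}(U),\cO_{\zlt})^G$. Hence the $G$-invariants of the first three terms of \eqref{MV} are precisely the first three terms of \eqref{eq:MV}. The long exact sequence \eqref{connecting} then gives exactness of \eqref{MVG} up to the term $H^1_{\mathrm{cts}}(G,H^0(\zlt,\cO_{\zlt}))$. Everything is $P$-equivariant because all the maps are $H^0(\zok)$-linear and $K^\times$-equivariant.

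\textbf{Step 2: surjectivity of $\delta$.} Exactness of \eqref{MVG} identifies the image of $\delta$ with $\mathrm{coker}(H^0(U_0)\oplus H^0(U_1)\to H^0(U_0\cap U_1))$. By \eqref{eq:MV} this cokernel is $H^1(\zok,\cO_{\zok})$. So $\delta$ induces a $P$-equivariant injection
\[H^1(\zok,\cO_{\zok})\hookrightarrow H^1_{\mathrm{cts}}(G,H^0(\zlt,\cO_{\zlt})).\]
Proposition~\ref{avoir}, which uses $p>2$, says the target is isomorphic to the source. To conclude that $\delta$ is surjective, the thing to verify is that the isomorphism of Proposition~\ref{avoir} is exactly this induced map; I expect this to be the main obstacle.

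I would do this by realizing both sides through the same Čech-type double complex, built from the cover $m^{-1}(U_0)\cup m^{-1}(U_1)$ of $\zlt$ and the continuous cochains of $G$. The cover has vanishing higher coherent cohomology (\cite[Thm.~2.6.5(c)]{KL2}, since $\zlt$ is quasi-Stein), and $H^1_{\mathrm{cts}}(G,-)$ vanishes on the sections over the $G$-torsor part. Computing the total cohomology of this double complex in the two orders gives, on one side, the edge map $\delta$, and on the other side the map of Proposition~\ref{avoir}. Presumably Proposition~\ref{avoir} is proved this way anyway.

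If the compatibility is hard to check directly, I would fall back on an irreducibility argument. By Lemma~\ref{irr}, $H^1(\zok,\cO_{\zok})$ is an irreducible $P$-representation. The image of $\delta$ is therefore a nonzero $P$-subrepresentation of $H^1_{\mathrm{cts}}(G,\ldots)$ isomorphic to this irreducible representation. One would then conclude equality, using that any abstract $P$-isomorphism, being $H^0(\zok)$-linear, identifies the two spaces (for instance by comparing their $N_0$-invariants).

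\textbf{Step 3: irreducibility and smoothness.} Once $\delta$ is surjective, it induces a $P$-equivariant isomorphism $H^1(\zok,\cO_{\zok})\cong H^1_{\mathrm{cts}}(G,H^0(\zlt,\cO_{\zlt}))$. Lemma~\ref{irr} with $f=2$ then shows the latter is irreducible and smooth.
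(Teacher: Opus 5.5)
Your ingredients are the paper's: Proposition~\ref{avoir}, Lemma~\ref{irr}, and \eqref{connecting} together with Lemma~\ref{lm:pushforward-sheaf}. Your Step~1 is fine. However, Step~2 as written does not establish that $\delta$ is surjective.

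Your main route rests on a compatibility between $\delta$ and the isomorphism of Proposition~\ref{avoir}, and you only sketch it. The sketch controls $H^1_{\mathrm{cts}}(G,-)$ only on sections over $m^{-1}(U_0\cap U_1)$, where $m$ is a torsor. It says nothing about $m^{-1}(U_0)$ and $m^{-1}(U_1)$, whose $H^1_{\mathrm{cts}}$ would also contribute to the total cohomology of your double complex.

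Your fallback also does not work as stated. A $P$-subrepresentation of $W$ that is abstractly isomorphic to $W$ need not equal $W$. Comparing $N_0$-invariants cannot decide this here, because they are infinite-dimensional: in $H^1(\zok,\cO_{\zok})$ they are spanned by all $\un Y^{\un d}$ with $-1\le d_0,d_1<0$ and $d_i\in\Z[1/p]$.

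The fix is to use irreducibility of the \emph{target} of $\delta$ rather than its source. This means running your Steps~2 and~3 in the opposite order, which is what the paper's one-line deduction does.

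\begin{itemize}
\item Proposition~\ref{avoir} gives a $P$-equivariant isomorphism $H^1_{\mathrm{cts}}(G,H^0(\zlt,\cO_{\zlt}))\cong H^1(\zok,\cO_{\zok})$. By Lemma~\ref{irr} (with $f=2$), $H^1_{\mathrm{cts}}(G,H^0(\zlt,\cO_{\zlt}))$ is therefore irreducible and smooth, without any reference to $\delta$.
\item By your Step~1 and \eqref{eq:MV}, the image of $\delta$ is isomorphic to $H^1(\zok,\cO_{\zok})$. This is nonzero by \eqref{eq:H^f-1-zok}; for instance $Y_0^{-1}Y_1^{-1}$ gives a nonzero class.
\item A nonzero $P$-subrepresentation of an irreducible representation is everything. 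Hence $\delta$ is surjective and \eqref{MVG} is exact.
\end{itemize}

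No compatibility check between $\delta$ and the isomorphism of Proposition~\ref{avoir} is needed, and your Step~3 becomes superfluous.
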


One can be a bit more specific. Recall from \S~\ref{sec:another-b-morphism} that we have a $P$-equivariant inflation-restriction sequence (writing again $H^0(\zlt)=H^0(\zlt,\cO_{\zlt})$):
\[0\longrightarrow H^1_{\mathrm{cts}}(G/p^{\Z, \mathrm{a}}, H^0(\zlt)^{p^{\Z,\textrm{a}}})\longrightarrow H^1_{\mathrm{cts}}(G, H^0(\zlt)) \buildrel \res \over \longrightarrow H^1(p^{\Z,\textrm{a}},H^0(\zlt))^G,\]
where the last representation is smooth (Corollary \ref{cor:smooth} for $\rhobar=1 \oplus 1$).

\begin{prop}\label{nonzero}
Assume $p> 2$. The composition
\[H^0(U_0 \cap U_1,\cO_{U_0 \cap U_1})\buildrel \delta \over \longrightarrow H^1_{\mathrm{cts}}(G, H^0(\zlt)) \buildrel \res \over \longrightarrow H^1(p^{\Z,\mathrm{a}},H^0(\zlt))^G\]
is nonzero. In particular we have $H^1_{\mathrm{cts}}(G/p^{\Z}, H^0(\zlt)^{p^{\Z,\mathrm{a}}})=0$ and an injection $H^1_{\mathrm{cts}}(G, H^0(\zlt)) \buildrel \res \over \hookrightarrow H^1(p^{\Z,\mathrm{a}},H^0(\zlt))^G$.
\end{prop}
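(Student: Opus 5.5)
We have to show that $\res\circ\delta\ne 0$ on $H^0(U_0\cap U_1,\cO_{U_0\cap U_1})$. The two ``in particular'' statements then follow formally. By Corollary \ref{irrtriv}, $\delta$ is surjective onto $H^1_{\mathrm{cts}}(G,H^0(\zlt))$ and this space is an irreducible $P$-representation. The inflation term $H^1_{\mathrm{cts}}(G/p^{\Z},H^0(\zlt)^{p^{\Z,\mathrm{a}}})$ injects $P$-equivariantly into it by (\ref{IR}), and it is the kernel of $\res$. So if $\res\circ\delta\neq0$, then $\res$ is a nonzero map out of an irreducible representation, hence injective. Its kernel, the inflation term, is therefore $0$.

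It remains to prove the nonvanishing, and I would do it by contradiction, following the derived-completeness argument in the proof of Theorem \ref{main}. Suppose $\res\circ\delta=0$. Then $\delta$ lands in $H^1_{\mathrm{cts}}(G/p^{\Z},H^0(\zlt)^{p^{\Z,\mathrm{a}}})$.

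\begin{enumerate}
\item By Lemma \ref{invariant} (or Lemma \ref{lem:invariant-complete} with $\lambda=1$, $r=s=0$), $H^0(\zlt)^{p^{\Z,\mathrm{a}}}$ is $I$-adically complete, hence derived $I$-complete.
\item Using $p>2$, compute $H^1_{\mathrm{cts}}(G/p^{\Z},-)$ through the Koszul-type complex for $1+p\ok$, exactly as in the proof of Theorem \ref{main}. This shows that the inflation term is derived $I$-complete as an $H^0(\zok)$-module.
\item Since $Y_0$ acts invertibly on $H^0(U_0\cap U_1,\cO_{U_0\cap U_1})=A_\infty$ and $\delta$ is $H^0(\zok)$-linear, \cite[Tag \href{https://stacks.math.columbia.edu/tag/091P}{091P}]{stacks-project} forces $\delta=0$.
\end{enumerate}

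Step 3 contradicts Corollary \ref{irrtriv}, which says $\delta$ is surjective onto a nonzero space. Equivalently, by (\ref{eq:MV}) the target is $H^1(\zok,\cO_{\zok})\cong\bigoplus_{\un d\in(\Z[1/p]_{<0})^2}\F\un Y^{\un d}\neq0$ via (\ref{eq:H^f-1-zok}).

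The main obstacle is step 2: one must check that the $H^0(\zok)$-module structure on $H^0(\zlt)^{p^{\Z,\mathrm{a}}}$, and on its $1+p\ok$-cohomology, is compatible with the action of the generators $g_1,g_2$. The argument should be identical to the one already given in the proof of Theorem \ref{main}, since $\cO_{\zlt}$ is the case $\rhob=1\oplus1$ of the sheaves considered there.

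If this route hits a snag, a more hands-on alternative is available. Take an explicit class such as $Y_0^{-1}Y_1^{-1}\in A_\infty$, which is nonzero in $H^1(\zok,\cO_{\zok})$. Write its image under $\delta$ as an explicit cocycle. Then show that its value on the antidiagonal $p^{\mathrm{a}}$ is not a coboundary in $H^0(\zlt)/(p^{\mathrm{a}}-1)H^0(\zlt)$, using Corollary \ref{section}.
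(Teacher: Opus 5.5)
Your proposal is correct and follows essentially the same route as the paper: the ``in particular'' statements come from the irreducibility in Corollary~\ref{irrtriv}, and nonvanishing follows because otherwise $\delta$ would be a nonzero $H^0(\zok)$-linear map from $A_\infty$ (on which $Y_0$ is invertible) to the derived $I$-complete inflation term, which \cite[Tag 091P]{stacks-project} rules out. The compatibility you flag in step 2 is automatic: $G$ acts trivially on $H^0(\zok)\cong H^0(\zlt)^G$, so the action of $1+p\ok$ is $H^0(\zok)$-linear, exactly as used in the proof of Theorem~\ref{main}.
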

\begin{proof}
The last sentence follows from the second with the irreducibility in Corollary \ref{irrtriv}. Let us prove the first statement. By surjectivity of the last map in (\ref{MVG}), if the composition is zero then there is a nonzero $P$-equivariant map $A_\infty \cong H^0(U_0 \cap U_1,\cO_{U_0 \cap U_1}) \rightarrow  H^1_{\mathrm{cts}}(G/p^{\Z}, H^0(\zlt)^{p^{\Z,\mathrm{a}}})$. 

Let \ $I:=(Y_0,Y_1)\subset H^0(\zok)$, as in the first paragraph of the proof of Theorem \ref{main}, we have that the $H^0(\zok)$-module $H^1_{\mathrm{cts}}(G/p^{\Z}, H^0(\zlt)^{p^{\Z,\mathrm{a}}})$ is derived complete with respect to $I$. Since $Y_0$ (or $Y_1$) is invertible in $A_\infty$, by part (2) of \cite[Tag \href{https://stacks.math.columbia.edu/tag/091P}{091P}]{stacks-project} any $H^0(\zok)$-linear map $A_\infty \rightarrow  H^1_{\mathrm{cts}}(G/p^{\Z}, H^0(\zlt)^{p^{\Z,\mathrm{a}}})$ is zero, a contradiction.
\end{proof}

\begin{rem}
Proposition \ref{nonzero} is probably still true when $p=2$. For $p>2$, even though the $P$-representation $H^1_{\mathrm{cts}}(G, H^0(\zlt))\cong H^1(\zok,\cO_{\zok})$ has an explicit description via (\ref{eq:H^f-1-zok}) and any element of $H^1(p^{\Z,\textrm{a}},H^0(\zlt))\cong H^0(\zlt)/(p^{\textrm{a}}-1)H^0(\zlt)$ is represented by a polynomial in $T_0^{-p^{-n}}$, $T_1^{-p^{-n}}$ for some $n$ (see the proof of Lemma \ref{lm:smoothness-diagonal}), it seems non-trivial to describe explicitly the $P$-equivariant injection $H^1_{\mathrm{cts}}(G, H^0(\zlt)) \buildrel \res \over \hookrightarrow H^1(p^{\Z,\textrm{a}},H^0(\zlt))$.
\end{rem}

\subsection{The morphism \texorpdfstring{$H^0(U_0,(m_*\cF)^G)\oplus H^0(U_1,(m_*\cF)^G) \rightarrow D_{A_\infty}(\pi)$}{H0(U0) oplus H0(U1) -> DAinfty(pi)}}\label{morphism}

For $f=2$ we prove that the map $H^0(U_0,(m_*\cF)^G)\oplus H^0(U_1,(m_*\cF)^G) \rightarrow H^0(U_0\cap U_1, (m_* \cF)^G)$ in (\ref{connecting}) cannot be an isomorphism.

\begin{thm}\label{AIM23}
  Suppose $f=2$ and $\cF$ any $G$-equivariant locally free sheaf of finite rank on $\zlt$.
  If $\cF \ne 0$, the map $$H^0(U_0,(m_*\cF)^G)\oplus H^0(U_1,(m_*\cF)^G) \rightarrow H^0(U_0\cap U_1, (m_* \cF)^G)$$ in (\ref{connecting}) cannot be an isomorphism.
\end{thm}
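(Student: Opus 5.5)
We argue by contradiction, following the strategy sketched in the introduction. Write $M_i:=H^0(U_i,(m_*\cF)^G)$ for $i=0,1$ and $M:=H^0(U_0\cap U_1,(m_*\cF)^G)=H^0(\zokg,(m_*\cF)^G)$. Over $\zokg$ the map $m$ is a $G$-torsor \cite[Prop.~2.4.4]{BHHMS3}, so $M$ is a nonzero finite projective $A_\infty$-module, since $\cF\ne0$. Suppose $M_0\oplus M_1\to M$ is an isomorphism. Both restriction maps $M_i\to M$ are injective: $m^{-1}(U_0\cap U_1)$ meets every component densely, and $\cF$ is locally free. Therefore $M=M_0\oplus M_1$ as $H^0(\zok)$-modules, with $M_i$ a submodule stable under $H^0(U_i,\cO)$.

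\textbf{Step 1.} Each $M_i$ is in fact an $A_\infty$-submodule of $M$. Multiplication by $Y_1/Y_0$, $(Y_1/Y_0)^{p^{-n}}$ and $Y_0^{-1}$ preserves $M_0$. Multiplication by $Y_0/Y_1$ preserves $M_1$, hence it preserves $M_0\cong M/M_1$ as a quotient. Comparing the two descriptions through the direct sum decomposition, $Y_0/Y_1$ preserves $M_0$ itself. Both summands are closed, being kernels of the continuous projections. Since these elements generate a dense subring of $A_\infty$, each $M_i$ is a closed $A_\infty$-submodule, hence a direct summand and projective. At least one of them, say $M_0$, is nonzero; the other case is symmetric.

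\textbf{Step 2.} For $\alpha\in\Q\cap(0,1)$ close to $1$ with $\alpha^{-1}\in\Z[1/p]$, let $V_\alpha:=\{|Y_0|=|Y_1|^{\alpha}\ne0\}\subset U_0$; this subset is disjoint from $\zokg$. We show that $m^{-1}(V_\alpha)\to V_\alpha$ is still a $G$-torsor. The stabilizer of a point in $\zlt$ under $S_2$ lies on the locus where the two Lubin--Tate coordinates are exchanged, which lies over $|Y_0|=|Y_1|$. The condition $\alpha^{-1}\in\Z[1/p]$ is what lets us rescale by a power of $\vp$ and imitate the argument of \cite[Prop.~2.4.4]{BHHMS3} via explicit rational opens as in Lemma~\ref{lem:preimage-m}. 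Consequently $H^0(V_\alpha,(m_*\cF)^G)$ is locally free over $B_\alpha:=H^0(V_\alpha,\cO)$, which is a perfectoid Tate algebra in $Y_1^{\pm p^{-\infty}}$ and $(Y_0/Y_1^{\alpha})^{\pm p^{-\infty}}$. Restriction to $V_\alpha$ is injective on $M_0$: the set $m^{-1}(V_\alpha)$ is, after applying $G$, a nonempty open part of every connected piece of $m^{-1}(U_0)$, and one can use the identity principle on the explicit series of Corollary~\ref{section}. I would rather prove injectivity by an explicit monomial computation after trivializing $\cF$ locally, combined with the embedding $M_0\hookrightarrow M$.

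\textbf{Step 3.} This is the main obstacle: showing that the restriction $M_0\to H^0(V_\alpha,(m_*\cF)^G)$ is zero. The idea is that $M_0$ is an $A_\infty$-module and the elements $(Y_0/Y_1)^{\pm1}$ act on it. Their restrictions to $V_\alpha$ satisfy $|Y_0/Y_1|=|Y_1|^{\alpha-1}$, which is unbounded as one approaches the boundary. Take $v\in M_0$ and the family $v_k:=(Y_1/Y_0)^{k}v\in M_0$ for $k\in\Z$. By Step 1 and the finite-projectivity over $A_\infty$, these are uniformly bounded in the Banach norm of $M_0$ relative to $H^0(U_0,\cO)$, up to a factor $|Y_1/Y_0|^{k}_{\sup}$ computed on $U_0\cap U_1$, where $|Y_1/Y_0|=1$. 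On $V_\alpha$, however, $|Y_1/Y_0|=|Y_1|^{1-\alpha}$ tends to $0$ in a controlled way. Passing to a basis, we want to compare the norm of the image of $v$ with those of $Y_0^{N}Y_1^{-N}v_N$. This forces every coefficient of $v|_{V_\alpha}$ to lie in $\bigcap_N Y_1^{cN}B_\alpha^\circ$ for some fixed $c>0$, and hence to vanish. To make this rigorous I would first reduce to $\cF=\cO_{\zlt}$, or to a rank-one twist, using a $G$-stable trivialization over $m^{-1}(U_0)$ if one exists; failing that, I would use the bounded transition matrices on the torsor locus. The delicate point is the uniform bound for the $A_\infty$-action on $M_0$ as an $H^0(U_0,\cO)$-Banach module; continuity of the $A_\infty$-action, obtained from the closed graph theorem, should provide it. Combining Steps 2 and 3 gives $M_0=0$, a contradiction, which proves Theorem~\ref{AIM23}.
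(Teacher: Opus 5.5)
Your overall architecture matches the paper's proof. Your Step 1 is its Step 4, and your Step 3 is its Step 5: boundedness of $(Y_0/Y_1)^k v$ in $M_0$, continuity of restriction, and $|Y_0/Y_1|_x>1$ at rank-one points of $V_\alpha$. That part goes through once $R$ is known to be a topological isomorphism. The paper gets this from an open mapping theorem for complete metrizable $\F\ppar{X_1^{p^{-\infty}}}$-vector spaces, after checking that $X_1$ acts invertibly on $M_0$.

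The genuine gap is in your Step 2, which carries all the geometric content.

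\emph{Torsor property.} Your reason for it is wrong. By \cite[Prop.~2.4.4]{BHHMS3}, $m$ is already a $G$-torsor over $\zokg=\{|Y_0|=|Y_1|\}$, so points of $\zlt$ with nontrivial stabilizer do not lie over the diagonal. An element $((a,a^{-1}),\mathrm{swap})$ can fix $(t_0,t_1)$ only if $v(t_1)/v(t_0)\in p^{2\Z}$, i.e.\ only if the slopes of $x=F(t_0)F(t_1)$ have multiplicity $2$. By Corollary~\ref{cor:newton} this is cases (ii) and (iii): a region disjoint from $\zokg$ that does meet $U_0$. Also, the residual $K^\times$-action (including $\vp$) preserves $V_\alpha\cup V_{\alpha^{-1}}$. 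So no ``rescaling'' reduces you to the known case, and $\alpha^{-1}\in\Z[1/p]$ plays no role here.

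What is needed is the slope computation. For $\alpha\in(\frac{2p}{p^2+1},1]$, the slope set of $x$ determines $\{\alpha,\alpha^{-1}\}$, by injectivity of $y\mapsto\frac{1-(py)^{-1}}{py^{-1}-1}$ modulo $p^{2\Z}$. This identifies $m^{-1}(V_\alpha\cup V_{\alpha^{-1}})$ with a disjoint union of rational opens $\{|T_{\sigma(1)}|=|T_{\sigma(0)}|^{f(\alpha)p^{2m}}\}$, freely permuted by $S_2$ and $(p,p^{-1})^{\Z}$. Only then does the argument of \cite[Prop.~2.4.4]{BHHMS3} apply.

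\emph{Injectivity of restriction.} Injectivity of $M_0\to H^0(V_\alpha,(m_*\cF)^G)$ is likewise not established. An identity principle would need every connected piece of $m^{-1}(U_0)$ to meet $m^{-1}(V_\alpha)$. That requires the same preimage analysis over all of $U_0$, including the non-torsor part. The paper avoids this as follows:
\begin{enumerate}
\item $M_0\to M$ is injective by hypothesis.
\item This map factors through $W_\alpha=\{|X_1|\le|X_0|\le|X_1|^\alpha\ne0\}$, which contains both $U_0\cap U_1$ and $V_\alpha$.
\item By the same Newton-polygon method, $W_\alpha$ is a torsor locus for $\alpha>\frac2p-\frac1{p^3}$, so $H^0(W_\alpha,(m_*\cF)^G)$ is finite projective over $\cO(W_\alpha)$.
\item $\cO(W_\alpha)\to\cO(V_\alpha)$ is injective by Lemma~\ref{lem:res-inj}. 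This is where $\alpha^{-1}\in\Z[1/p]$ is really used.
\end{enumerate}

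\emph{Your Step 1.} The inference from ``$Y_0/Y_1$ preserves $M_1$, hence $M/M_1$'' to ``$Y_0/Y_1$ preserves $M_0$'' is a non sequitur. The correct reason is this: $Y_0,Y_1\in H^0(\zok)$ preserve both summands and are invertible on $M$, so their inverses also preserve both summands. Closedness of the summands (continuity of the projections) again needs the open mapping theorem.
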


We \ start \ with \ a \ useful \ basic \ lemma \ about \ Newton \ polygons. \ We \ refer \ to \ \cite[\S~2.3]{BHHMS3} (and the references therein) for a reminder on ${\mathbf B}^+(C)$ for $C$ a perfect(oid) field containing $\F$.

\begin{lm}\label{lem:newton}
Suppose that $C$ is a perfect(oid) field containing $\F$ and $v$ a continuous rank $1$ additive valuation on $C$. If $x := \sum_{n \in \Z} [x_n]p^n$ converges in ${\mathbf B}^+(C)$, then the Newton polygon of $x$ is obtained from the maximal decreasing convex polygon that lies below the points $\{ (n,{v(x_n)}) : n \in \Z \}$ in $\R^2$ by removing all segments of slopes $0$ and $\infty$.
\end{lm}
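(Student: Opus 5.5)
I would reduce the statement to the definition of the Newton polygon in ${\mathbf B}^+(C)$ as the Legendre transform of the Gauss valuations. For $r\in \R_{>0}$ (or $r$ in the relevant range) put $v_r(x):=\inf_{n}\big(v(x_n)+nr\big)$. The Newton polygon $\mathcal{NP}(x)$ is then the largest decreasing convex function $\phi$ on $\R$ with $\phi(n)\geq$ nothing more than what is forced by the condition that $v_r(x)=\inf_t(\phi(t)+tr)$ for all admissible $r$; in other words, $\mathcal{NP}(x)$ is the Legendre transform of $r\mapsto v_r(x)$, with slopes $0$ and $\infty$ discarded. The whole point is to show that the infimum defining $v_r(x)$ is computed by the given expansion $\sum[x_n]p^n$. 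This has to be checked even though the expansion need not be the Teichmüller expansion: the $x_n$ are arbitrary and $n$ ranges over $\Z$.

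\textbf{Step 1.} First I would show that for any convergent sum $\sum_{n\in\Z}[x_n]p^n$, the Gauss norm satisfies $v_r\big(\sum[x_n]p^n\big)=\inf_n\big(v(x_n)+nr\big)$. The inequality $\geq$ follows from the ultrametric property of $v_r$, since $v_r([x_n]p^n)=v(x_n)+nr$ and the sum converges. For equality I would reduce modulo $p$ after rescaling. Fix $r$ and let $m=\inf_n(v(x_n)+nr)$. Let $S$ be the finite set of indices attaining the infimum; it is finite by convergence, or the infimum is attained along a single index when extremal. Grouping the terms with the same $n$ is unnecessary, since each $n$ appears once. Because $v_r$ is multiplicative on ${\mathbf B}^+(C)$, it is enough to treat the Witt-vector-style expansion. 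The key input is that $\sum_{n}[x_n]p^n$, rewritten in Teichmüller form $\sum[c_n]p^n$ using the Witt addition polynomials, has $c_{n_0}=x_{n_0}+(\text{terms of larger } v_r\text{-weight})$ at the smallest $n_0\in S$. This follows from the standard congruence $[a]+[b]\equiv [a+b]\pmod p$ together with homogeneity of the Witt polynomials. Hence $v_r$ is exactly $m$.

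\textbf{Step 2.} Since $v_r(x)$ is the inf over the points $(n,v(x_n))$ of the linear functionals $(n,y)\mapsto y+nr$, its Legendre transform is the lower convex hull of these points. Restricting to $r\in(0,\infty)$ keeps exactly the segments of finite positive slope in the decreasing part. This removes the horizontal part (slope $0$, coming from $r\to 0$) and the vertical part (slope $\infty$, coming from indices $n\to-\infty$ or the leftmost point). So $\mathcal{NP}(x)$ equals the decreasing convex minorant with those segments deleted.

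The main obstacle is the equality case in Step 1, namely controlling the carries in the Witt addition when infinitely many $x_n$ are nonzero and $n$ is unbounded below. I would handle it by truncating. First write $x=x_{\le N}+x_{>N}$ and choose $N$ so that $v_r(x_{>N})>m$. Then use the fact that $p^{-n_{\min}}$ times a finite sum lies in $W(\mathcal O_C)$, where the congruence argument applies directly.
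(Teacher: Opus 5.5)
Your proposal is correct and follows the paper's route. You identify the Newton polygon with the inverse Legendre transform of $r\mapsto v_r(x)$ with slopes $0$ and $\infty$ removed (this is \cite[Ex.~1.6.22]{FF}), show $v_r(x)=\inf_n\{v(x_n)+nr\}$, and recognize the transform of this infimum as the maximal decreasing convex polygon below the points $(n,v(x_n))$.

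The Witt-vector carry analysis in Step~1 is unnecessary. Since each $n$ occurs only once, a truncation $\sum_{|n|\le N}[x_n]p^n$ is already in Teichm\"uller form, so there are no carries. The formula for $v_r$ therefore holds for it by definition, and it passes to $x$ by continuity of $v_r$; this is exactly the paper's one-line argument. Note also that the truncation has to be two-sided: your $x_{\le N}$ is still an infinite sum when $n$ is unbounded below, so it is not yet a finite sum.
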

\begin{proof}
By \cite[Ex.\ 1.6.22]{FF}, the Newton polygon of $x$ is obtained from the inverse Legendre transform of the function $\R_{> 0} \to \R$, $\lambda \mapsto v_\lambda(x)$ by removing all segments of slopes 0 and $\infty$. (Note that $\infty$ also needs to be removed! Compare with the sentence preceding Ex.\ 1.6.22 or Def.\ 1.6.18, 1.6.21 in \emph{loc.~cit.}) By continuity of $v_\lambda$ we have $v_\lambda(x) = \inf_{n \in \Z} \{v(x_n)+n\lambda\}$ for $\lambda > 0$. This is precisely the Legendre transform of the maximal decreasing convex polygon that lies below the points $(n,{v(x_n)})$ ($n \in \Z$), see \cite[\S~1.5.1]{FF}.
The lemma follows.
\end{proof}

\begin{cor}\label{cor:newton}
Keep the notation of Lemma~\ref{lem:newton}. Suppose that $x_0, x_1 \in \m_C$. Then the slopes of the Newton polygon of $x := \sum_{n\in \Z}[x_0^{p^{-2n}}]p^{2n}+\sum_{n\in \Z}[x_1^{p^{-1-2n}}]p^{1+2n}\in {\mathbf B}^+(C)^{\varphi_q=p^2}$ are given by the following.
\begin{enumerate}
  \item If $\frac{2p}{p^2+1} < v(x_0)/v(x_1) < \frac{p^2+1}{2p}$:
    \begin{multline*}
      \cdots > p^{-2n+1} v(x_1) - p^{-2n} v(x_0) > p^{-2n} v(x_0) - p^{-2n-1} v(x_1) \\
      > p^{-2n-1} v(x_1) - p^{-2n-2} v(x_0) > \cdots,
    \end{multline*}
    each with multiplicity 1.
  \item If $v(x_0)/v(x_1) \le \frac{2p}{p^2+1}$:
    \begin{equation*}
      \cdots > p^{-2n+2} v(x_0) - p^{-2n} v(x_0) > p^{-2n} v(x_0) - p^{-2n-2} v(x_0) > \cdots,
    \end{equation*}
    each with multiplicity 2.
  \item If $v(x_0)/v(x_1) \ge \frac{p^2+1}{2p}$:
    \begin{equation*}
      \cdots > p^{-2n+1} v(x_1) - p^{-2n-1} v(x_1) > p^{-2n-1} v(x_1) - p^{-2n-3} v(x_1) > \cdots,
    \end{equation*}
    each with multiplicity 2.    
\end{enumerate}
\end{cor}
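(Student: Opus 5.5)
The plan is to apply Lemma~\ref{lem:newton} directly. We compute the lower convex hull of the points $\{(m, v(x_m))\}$, where $x_{2n} = x_0^{p^{-2n}}$ and $x_{2n+1} = x_1^{p^{-1-2n}}$. Thus the relevant points are
\[P_{2n} := \bigl(2n, p^{-2n}a\bigr), \qquad P_{2n+1} := \bigl(2n+1, p^{-2n-1}b\bigr),\]
with $a := v(x_0) > 0$ and $b := v(x_1) > 0$. Both $a,b$ are positive because $x_0,x_1 \in \m_C$. Along each parity class the second coordinates decrease geometrically, tending to $0$ as $m\to+\infty$ and to $+\infty$ as $m\to-\infty$. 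Hence the maximal decreasing convex minorant has no segment of slope $0$ or $\infty$ to remove, apart from the horizontal asymptote. Its slopes, negated, are the Newton slopes in the convention of \cite{FF}.

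The key step is to decide which points are vertices. Since the even points alone and the odd points alone each form convex sequences, it suffices to test one odd point $P_{2n+1}$ against its even neighbours $P_{2n}, P_{2n+2}$, and dually one even point $P_{2n}$ against $P_{2n-1}, P_{2n+1}$. By the scaling symmetry $m\mapsto m+2$, $v\mapsto p^{-2}v$, it is enough to do this for one $n$.

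The point $P_{1}$ lies strictly below the segment $[P_0,P_2]$ iff
\[p^{-1}b < \tfrac12\bigl(a+p^{-2}a\bigr),\]
that is, iff $a/b > \frac{2p}{p^2+1}$. Similarly, $P_0$ lies strictly below $[P_{-1},P_1]$ iff $a < \frac12(pb+p^{-1}b)$, that is, iff $a/b < \frac{p^2+1}{2p}$. This gives the three cases.

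In case (i) every point is a vertex, so every segment has horizontal length $1$. The slopes $v(x_m)-v(x_{m+1})$ are as listed, each with multiplicity $1$. Checking they are strictly decreasing is exactly the two convexity inequalities just derived.

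In case (ii) the odd points lie on or above the even hull, so only the even points are vertices, giving segments of length $2$. The slopes are $\tfrac12\bigl(p^{-2n+2}a - p^{-2n}a\bigr)$ per unit length. These are recorded with multiplicity $2$; I would double-check the normalization of slope, namely drop per unit versus per segment, against the convention in \cite{FF}, and match it to the stated formula. Case (iii) is symmetric, with only the odd points as vertices.

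The main obstacle is only bookkeeping. First, the boundary cases of equality must be put in the correct case, since a collinear point does not change the slopes but does affect the multiplicity description. Second, the sign and length normalization of slopes must agree with Lemma~\ref{lem:newton}. I would also verify that the series converges in ${\mathbf B}^+(C)$, which holds because $v(x_m)\to\infty$ as $m\to-\infty$ fast enough relative to $p^m$.
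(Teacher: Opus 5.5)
Your argument is correct and follows the paper's route. The paper also applies Lemma~\ref{lem:newton} and compares each point with the midpoint of its two neighbours, which gives exactly the thresholds $\frac{2p}{p^2+1}$ and $\frac{p^2+1}{2p}$.

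On the normalization you flag, keep the per-unit-length slope. The negated slopes in Lemma~\ref{lem:newton} are per unit length, and the multiplicity is the horizontal length of the segment. So your value $\frac12\bigl(p^{-2n+2}v(x_0)-p^{-2n}v(x_0)\bigr)$ with multiplicity $2$ is right. It is the displayed formulas in (ii) and (iii) of the statement that are missing a factor $\frac12$, so do not try to force agreement with them.

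You can check this at the boundary ratio $v(x_0)/v(x_1)=\frac{2p}{p^2+1}$. There the two consecutive slopes of (i) on $[2n,2n+1]$ and $[2n+1,2n+2]$ both equal $\frac{p^2-1}{2}\,p^{-2n-2}v(x_0)$. The printed formula in (ii) would instead give $(p^2-1)\,p^{-2n-2}v(x_0)$ for the merged segment.

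The slip is harmless for the paper. Step~1 of the proof of Theorem~\ref{AIM23} only uses that in cases (ii) and (iii) the set of slopes has the form $c\,p^{2\Z}$.
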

(We negated slopes, just like in \cite[\S~1.5.1]{FF}.)
\begin{proof}
  By Lemma~\ref{lem:newton} we consider the points $v_{2n} := (2n,p^{-2n}v(x_0))$ and $v_{2n+1} := (2n+1,p^{-2n-1}v(x_1))$. \ The midpoint \ between \ $v_{2n}$ \ and \ $v_{2n+2}$ \ is \ given \ by $(2n+1,\frac 12 p^{-2n-2}(p^2+1)v(x_0))$ which lies strictly above $v_{2n+1}$ if and only if $v(x_0)/v(x_1) > \frac{2p}{p^2+1}$.
  Similarly, the midpoint between $v_{2n-1}$ and $v_{2n+1}$ lies above ${v_{2n}}$ if and only if $v(x_1)/v(x_0) > \frac{2p}{p^2+1}$.
  The result follows from Lemma~\ref{lem:newton}.
\end{proof}

We now prove Theorem \ref{AIM23}.

\begin{proof}
We recall the variables $X_0$, $X_1$ introduced above \cite[(37)]{BHHMS3} such that $\F\bbra{N_0} = \F\bbra{X_0,X_1}$.
For $\alpha\in (0,1]\cap \Q$ we let
\[V_\alpha:= \{|X_0| = |X_1|^\alpha \ne 0 \}\subset W_\alpha:=\{|X_1| \leq |X_0| \leq |X_1|^\alpha \ne 0 \}\]
which are open affinoid perfectoid subspaces of $U_0=\{|X_1| \leq |X_0| \ne 0 \}$ (note that $V_1=W_1=U_0\cap U_1$). For $\alpha\in [1,+\infty)\cap \Q$ we let 
\[V_\alpha:= \{|X_0| = |X_1|^\alpha \ne 0 \}\subset W_\alpha:=\{|X_1|^\alpha \leq |X_0| \leq |X_1| \ne 0 \}\]
which are open affinoid perfectoid subspaces of $U_1=\{|X_0| \leq |X_1| \ne 0 \}$. Note that $U_0\cap U_1\subset W_\alpha$ for all $\alpha$. We also remark that all $V_\alpha$, $W_\alpha$ are rational subsets of $\Spa \F\bbra{X_0^{p^{-\infty}},X_1^{p^{-\infty}}}$. (We remark that for $\frac 12 < \alpha < 2$, using \cite[(50)]{BHHMS3}, we still have $V_\alpha = \{|Y_0| = |Y_1|^\alpha \ne 0 \}$ and likewise for $W_\alpha$.) 

We divide the proof into several steps.

\textbf{Step 1.} We prove that for $\alpha\in (\frac{2p}{p^2+1},1]\cap \Q$ the map $m^{-1}(V_\alpha)\rightarrow V_\alpha$ is a pro-\'etale $G$-torsor between perfectoid spaces.

Let $C$ be a perfectoid field containing $\F$ and $v$ a continuous rank $1$ additive valuation on $C$ (hence valued in $\R \cup \{+\infty\}$). Let $x:=F(x_0,x_1):=\sum_{n\in \Z}[x_0^{p^{-2n}}]p^{2n}+\sum_{n\in \Z}[x_1^{p^{-1-2n}}]p^{1+2n}\in {\mathbf B}^+(C)^{\varphi_q=p^2}$ (where $x_i\in \cO_C$ with $v(x_i)>0$). Let $N(x) \subset \R^+\setminus\{0\}$ denote the set of slopes of the Newton polygon of $x$.
By Corollary~\ref{cor:newton} it follows that either $\frac{2p}{p^2+1} < v(x_0)/v(x_1) < \frac{p^2+1}{2p}$ and
\begin{equation}\label{eq:Nx}
N(x)=(pv(x_1)-v(x_0))p^{2\Z} \sqcup (v(x_0)-p^{-1}v(x_1))p^{2\Z}\ \subset\ \R^+\setminus\{0\},
\end{equation}
or $N(x) = cp^{2\Z}$ for some $c > 0$. Fix $\alpha\in (\frac{2p}{p^2+1},1]$. We consider the following set for $c\in \R^+\setminus\{0\}$:
\begin{equation}\label{c}
N_c:=(p\alpha^{-1}-1)cp^{2\Z} \cup (1-(p\alpha)^{-1})cp^{2\Z}\ \subset\ \R^+\setminus\{0\}.
\end{equation}
Note that the function $f : (-\infty,p) \to \R,\ x \mapsto \frac{1-(px)^{-1}}{px^{-1}-1}$ is strictly increasing, with $f(\frac{2p}{p^2+1}) = \frac 1{p^2}$, $f(1) = \frac 1p$, $f(\frac{p^2+1}{2p}) = 1$.
In particular the two subsets in (\ref{c}) are disjoint.
Moreover, we obtain
\begin{equation}\label{eq:fxy}
f(x)p^{2\Z} = f(y)p^{2\Z}\ \ \text{for $x,y \in \Big(\frac{2p}{p^2+1},\frac{p^2+1}{2p}\Big)$} \quad\Longrightarrow \quad x=y.
\end{equation}
If $x\in V_\alpha(C,\cO_C)$, then one has $N(x)=N_{v(x_0)}$, and if $x\in V_{\alpha^{-1}}(C,\cO_C)$, one has $N(x)=N_{p^{-1}v(x_1)}$. Conversely, if $x=F(x_0,x_1)\in {\mathbf B}^+(C)^{\varphi_q=p^2}$ is such that $N(x)=N_c$ for some $c>0$, then by above we first deduce that $\beta := v(x_0)/v(x_1)$ satisfies $\frac{2p}{p^2+1} < v(x_0)/v(x_1) < \frac{p^2+1}{2p}$.
By the equality of~\eqref{eq:Nx} and \eqref{c} we deduce that $f(\beta)p^{2\Z} = f(\alpha)p^{2\Z}$ or $f(\beta^{-1})p^{2\Z} = f(\alpha)p^{2\Z}$, hence $\beta = \alpha$ or $\beta = \alpha^{-1}$ by~\eqref{eq:fxy}. Therefore $x\in V_{\alpha}(C,\cO_C)\cup V_{\alpha^{-1}}(C,\cO_C)$ if and only if $N(x)$ is of the form (\ref{c}) for some $c>0$.

Let $(F(t_0),F(t_1))\in {\mathbf B}^+(C)^{\varphi_q=p}\times {\mathbf B}^+(C)^{\varphi_q=p}$ (where $F(t_i):=\sum_{n\in \Z}[t_i^{p^{-2n}}]p^{n}$ with $t_i\in \cO_C$ and $v(t_i)>0$) and recall $(F(t_0),F(t_1))\in m^{-1}(V_\alpha \cup V_{\alpha^{-1}})(C,\cO_C)$ if and only if $F(t_0)F(t_1)\in {\mathbf B}^+(C)^{\varphi_q=p^2}$ lies in $V_\alpha(C,\cO_C)\cup V_{\alpha^{-1}}(C,\cO_C)$. By the previous discussion $(F(t_0),F(t_1))\in m^{-1}(V_\alpha \cup V_{\alpha^{-1}})(C,\cO_C)$ if and only if the Newton polygon of $F(t_0)F(t_1)$ has slopes (\ref{c}) for some $c>0$. Since this Newton polygon has slopes
\begin{equation}\label{newton2}
(p^2-1)v(t_0)p^{2\Z} \cup (p^2-1)v(t_1)p^{2\Z}
\end{equation}
we obtain that $(F(t_0),F(t_1))\in m^{-1}(V_\alpha\cup V_{\alpha^{-1}})(C,\cO_C)$ if and only if there is $c>0$, $\sigma\in S_2$ and $m_0,m_1\in \Z$ such that 
\[(p^2-1)v(t_{\sigma(0)})=(p\alpha^{-1}-1)cp^{2m_0},\ (p^2-1)v(t_{\sigma(1)})=(1-(p\alpha)^{-1})cp^{2m_1},\]
if and only if there is $\sigma\in S_2$ and $m\in \Z$ such that
\[v(t_{\sigma(1)})=\frac{1-(p\alpha)^{-1}}{p\alpha^{-1}-1}p^{2m}v(t_{\sigma(0)})\]
(and recall $\frac{1-(p\alpha)^{-1}}{p\alpha^{-1}-1} = f(\alpha)\notin p^{2\Z}$ for $\alpha\in (\frac{2p}{p^2+1},1]$). 

For $\sigma\in S_2$ and $n\in \Z$, let $V_{\alpha,\sigma,n}$ be the open affinoid perfectoid subspace of $\zlt$ defined by $\vert T_{\sigma(1)}\vert = \vert T_{\sigma(0)}\vert^{\frac{1-(p\alpha)^{-1}}{p\alpha^{-1}-1}p^{n}}$. We claim that
\begin{equation}\label{alpha}
m^{-1}(V_\alpha\cup V_{\alpha^{-1}})=\bigcup_{\sigma\in S_2}\coprod_{m \in \Z}V_{\alpha,\sigma,2m}.
\end{equation}
Indeed, it follows from the previous discussion that the two perfectoid open spaces of $\zlt$ in (\ref{alpha}) have the same rank $1$ points. Then the equality (\ref{alpha}) follows by exactly the same arguments as in the proof of \cite[Prop.~2.4.4]{BHHMS3} (which treats the case $\alpha=1$ for $f\geq 1$), replacing the closed $V_{\sigma(\underline n_0)+f\underline m}$ there by $\kappa^{-1}((\frac{1-(p\alpha)^{-1}}{p\alpha^{-1}-1}p^{2m})^{-\text{sgn}(\sigma)})$, where $\kappa=\kappa_{0,1}$ is defined as in \emph{loc.~cit.}~and $\text{sgn}(\sigma) \in \{\pm 1\}$ is the signature of $\sigma\in S_2$. 

Let us prove that $m^{-1}(V_\alpha\cup V_{\alpha^{-1}})\rightarrow V_\alpha\cup V_{\alpha^{-1}}$ is a pro-\'etale $G$-torsor. If $\alpha=1$ (in which case $V_\alpha = V_{\alpha^{-1}}=V_1$), this is \cite[Cor.~2.4.5]{BHHMS3}, hence we can assume $\alpha\in (\frac{2p}{p^2+1},1)$. Then $V_\alpha$ and $V_{\alpha^{-1}}$ are two disjoint affinoid open subspaces of $\zok$, and since $\frac{1-(p\alpha)^{-1}}{p\alpha^{-1}-1}\in (p^{-2},p^{-1})$, one checks (since $f(\alpha) \notin p^{\Z}$ for $\alpha \ne 1$) that the decomposition (\ref{alpha}) is also disjoint, i.e.,~we have
\[m^{-1}(V_\alpha\sqcup V_{\alpha^{-1}})=\coprod_{\sigma\in S_2}\coprod_{m \in \Z}V_{\alpha,\sigma,2m}.\]
Moreover $S_2$ preserves each $\coprod_{\sigma\in S_2}V_{\alpha,\sigma,2m}$ (sending each open to the other one), $\mathcal{O}_K^{\times, \mathrm{a}}$ preserves each $V_{\alpha,\sigma,2m}$ (looking at rank $1$ points as in the proof of \cite[Prop.~2.4.4]{BHHMS3}) and $(p,p^{-1})$ sends $V_{\alpha,\sigma,2m}$ to $V_{\alpha,\sigma,2m-4\text{sgn}(\sigma)}$. Then one can again apply the same argument as in the last two paragraphs of the proof of \emph{loc.~cit.}\ to deduce that $m^{-1}(V_\alpha\sqcup V_{\alpha^{-1}})\rightarrow V_\alpha\sqcup V_{\alpha^{-1}}$ is a pro-\'etale $G$-torsor. Finally, since $V_\alpha$ (resp.~$m^{-1}(V_\alpha)$) is open in $V_\alpha\sqcup V_{\alpha^{-1}}$ (resp.~$m^{-1}(V_\alpha\sqcup V_{\alpha^{-1}})$), it follows that $m^{-1}(V_\alpha)\rightarrow V_\alpha$ is a pro-\'etale $G$-torsor between perfectoid spaces as well. 

We can determine $m^{-1}(V_\alpha)$, at least under slightly stronger conditions on $\alpha$. Let $(t_0,t_1)\in V_{\alpha,\id,0}(C,\cO_C)$, i.e.,~$(t_0,t_1)\in \cO_C\times \cO_C$ with $v(t_i)>0$ and $v(t_1)=\frac{1-(p\alpha)^{-1}}{p\alpha^{-1}-1}v(t_0)$, and let $x:=F(t_0)F(t_1)\in V_\alpha(C,\cO_C)\sqcup V_{\alpha^{-1}}(C,\cO_C)$ which we write as $x=\sum_{n\in \Z}[x_0^{p^{-2n}}]p^{2n}+\sum_{n\in \Z}[x_1^{p^{-1-2n}}]p^{1+2n}$. Assume that $\alpha$ is such that $\frac{1-(p\alpha)^{-1}}{p\alpha^{-1}-1}>\frac{1}{p^2-1}$ (instead of $\frac{1}{p^2}$ previously) or equivalently $\alpha\in (\frac{2}{p}-\frac{1}{p^3},1)$. Arguing as in the proof of \cite[Lemma 2.9.2]{BHHMS3} with $(C,\cO_C)$ instead of $(A'_\infty,(A'_\infty)^0)$ and with $\vert t_0 t_1\vert = \vert t_0\vert^{1+\frac{1-(p\alpha)^{-1}}{p\alpha^{-1}-1}}$ instead of $p^{-c}$ (where $\lvert \cdot \rvert$ is defined by $v(\cdot)=-\log(\lvert \cdot \rvert)$, note that $\vert t_i^{q^n}\vert<\vert t_0 t_1\vert$ for $i=0,1$ and $n\geq 1$ by our assumption on $\alpha$), the same argument gives in $\cO_C$:
\[\vert x_0-t_0t_1\vert < \vert t_0 t_1\vert\text{ \ and \ } \vert x_1^{p^{-1}}-(t_0^{q^{-1}}t_1+t_0t_1^{q^{-1}})\vert < \vert t_0 t_1\vert.\]
Since $\vert t_0t_1\vert < \vert t_0t_1^{q^{-1}}\vert < \vert t_0^{q^{-1}}t_1\vert$, we deduce $v(x_0)=v(t_0t_1)$ and $v(x_1)=v(t_0^{p^{-1}}t_1^p)$, equivalently $v(x_0)=\frac{p-p^{-1}}{p\alpha^{-1}-1}\alpha^{-1}v(t_0)$ and $v(x_1)=\frac{p-p^{-1}}{p\alpha^{-1}-1}v(t_0)$, hence $v(x_1)=\alpha v(x_0)$ i.e.,~$x\in V_{\alpha^{-1}}(C,\cO_C)$. We deduce that $m(V_{\alpha,\id,0}) \subset V_{\alpha^{-1}}$, and hence by using the $G$-action that $m(\coprod_{\sigma\in S_2}\coprod_{m \in 2\Z}V_{\alpha,\sigma,2m}) \subset V_{\alpha^{-1}}$.
By applying the action of $(p,1)$ we then deduce using \cite[(38)]{BHHMS3} that $m(\coprod_{\sigma\in S_2}\coprod_{m \in 1+2\Z}V_{\alpha,\sigma,2m}) \subset V_{\alpha}$. In particular, for $\alpha\in (\frac{2}{p}-\frac{1}{p^3},1]$ it follows that
\[m^{-1}(V_\alpha)=\coprod_{\sigma\in S_2}\coprod_{m \in 1+2\Z}V_{\alpha,\sigma,2m}\text{ \ and \ }m^{-1}(V_{\alpha^{-1}})=\coprod_{\sigma\in S_2}\coprod_{m \in 2\Z}V_{\alpha,\sigma,2m}\]
(note that when $\alpha=1$ these spaces are equal).

\textbf{Step 2.} We prove that for $\alpha\in (\frac{2}{p}-\frac{1}{p^3},1]\cap \Q$ the map $m^{-1}(W_\alpha)\rightarrow W_\alpha$ is a pro-\'etale $G$-torsor between perfectoid spaces.

When $\alpha=1$ this is \cite[Cor.~2.4.5]{BHHMS3}, hence we can assume $\alpha<1$. For $\sigma\in S_2$ and $n\in \Z$, let $W_{\alpha,\sigma,n}$ be the open affinoid perfectoid subspace of $\zlt$ defined by $\vert T_{\sigma(0)}\vert^{p^{n-1}} \leq \vert T_{\sigma(1)}\vert \leq  \vert T_{\sigma(0)}\vert^{\frac{1-(p\alpha)^{-1}}{p\alpha^{-1}-1}p^{n}}$. Using $\frac{1-(p\alpha)^{-1}}{p\alpha^{-1}-1}\in (p^{-2},p^{-1})$, one easily checks that the intervals
\[\left\{\Big[\frac{1-(p\alpha)^{-1}}{p\alpha^{-1}-1}p^{2m}, p^{2m-1}\Big], \Big[p^{1-2m},\frac{p\alpha^{-1}-1}{1-(p\alpha)^{-1}}p^{-2m}\Big], m\in 1+2\Z\right\}\]
are pairwise disjoint in $\R_{>0}$. Hence for $\sigma\in S_2$ and $m\in 1+2\Z$ the $W_{\alpha,\sigma,2m}$ are pairwise disjoint in $\zlt$ (the assumption $m\in 1+2\Z$ is important as the $W_{\alpha,\sigma,2m}$ for $m\in \Z$ are not pairwise disjoint, as then each $p^{2m-1}$ is contained in two of the intervals above). We claim that
\begin{equation}\label{alphaw}
m^{-1}(W_\alpha)=\coprod_{\sigma\in S_2}\coprod_{m \in 1+2\Z}W_{\alpha,\sigma,2m}.
\end{equation}
Indeed, let $C$ be a perfectoid field containing $\F$ and $v=-\log\lvert \cdot \rvert$ a continuous rank $1$ additive valuation on $C$ (valued in $\R \cup \{+\infty\}$). Let $x=F(x_0,x_1)\in {\mathbf B}^+(C)^{\varphi_q=p^2}$ with $x_i\in \cO_C$ and $v(x_i)>0$, and let $(t_0,t_1)\in \cO_C\times \cO_C$ with $v(t_i)>0$ such that $x=F(t_0)F(t_1)$. Replacing $\alpha$ by $\beta:=v(x_0)/v(x_1)\in \R^+\setminus\{0\}$, exactly the same arguments as in Step 1 (including its last paragraph) show that $\beta \in [\alpha,1]$ if and only if $f(\alpha) p^{2m} \le v(t_{\sigma(1)})/v(t_{\sigma(0)}) \le p^{2m-1}$ for some $\sigma\in S_2$ and $m\in 1+2\Z$  (recall that $f(x)$ is strictly increasing for $x < p$ and that $f(1) = 1/p$).
We deduce that the two perfectoid open subspaces of $\zlt$ in (\ref{alphaw}) have the same rank $1$ points. Then (\ref{alphaw}) follows by the same argument as in the proof of \cite[Prop.~2.4.4]{BHHMS3} (replacing the $V_{\sigma(\underline n_0)+f\underline m}$ there by the $\kappa^{-1}([f(\alpha)p^{2m}, p^{2m-1}])$ and $\kappa^{-1}([p^{1-2m},f(\alpha)^{-1}p^{-2m}])$ for $m\in 1+2\Z$ with $\kappa=\kappa_{0,1}$ as in \emph{loc.~cit.}). Finally, $G$ acts on the $W_{\alpha,\sigma,2m}$ similarly as on the $V_{\alpha,\sigma,2m}$ in Step 1, and by the same argument we obtain that $m^{-1}(W_\alpha)\rightarrow W_\alpha$ is a pro-\'etale $G$-torsor between perfectoid spaces.

\textbf{Step 3.} We assume that the map in the statement is an isomorphism and prove that, for $\alpha \in (\frac{2}{p}-\frac{1}{p^3},1]\cap \Q$ and moreover $\alpha^{-1} \in \Z[1/p]$, the restriction map $H^0(U_0,(m_* \cF)^G)\rightarrow H^0(V_\alpha,(m_* \cF)^G)$ is injective.

Assume $\alpha\in (\frac{2}{p}-\frac{1}{p^3},1]\cap \Q$. Then by Step $2$ and descent of vector bundles (\cite[Lemma 17.1.8]{SWBerkeley}) the $\cO_{W_\alpha}$-module $(m_* \cF)^G\vert_{W_\alpha}$ is locally free of finite rank. Since $W_\alpha$ is affinoid, we deduce that $((m_* \cF)^G)(W_\alpha)$ is a locally free $\cO_{\zok}(W_\alpha)$-module of finite rank. Moreover, \ since \ $V_\alpha\subset W_\alpha$ \ is \ also \ affinoid, \ we \ have \ $((m_* \cF)^G)(V_\alpha)=((m_* \cF)^G)(W_\alpha)\otimes_{\cO_{\zok}(W_\alpha)}\cO_{\zok}(V_\alpha)$.

{As $\alpha^{-1} \in \Z[1/p]$, by Lemma~\ref{lem:res-inj} below the restriction map} $\cO_{\zok}(W_\alpha)\to \cO_{\zok}(V_\alpha)$ {is injective}. Let $M$ be a free $\cO_{\zok}(W_\alpha)$-module of finite rank such that the $\cO_{\zok}(W_\alpha)$-module $((m_* \cF)^G)(W_\alpha)$ is a direct summand of $M$. Since the map $M\rightarrow M\otimes_{\cO_{\zok}(W_\alpha)}\cO_{\zok}(V_\alpha)$ is injective and since the (injective) composition
\[((m_* \cF)^G)(W_\alpha)\hookrightarrow M\hookrightarrow M\otimes_{\cO_{\zok}(W_\alpha)}\cO_{\zok}(V_\alpha)\]
factors through the map $((m_* \cF)^G)(W_\alpha)\rightarrow ((m_* \cF)^G)(W_\alpha)\otimes_{\cO_{\zok}(W_\alpha)}\cO_{\zok}(V_\alpha)$, we deduce that the latter is also injective, i.e.,~$((m_* \cF)^G)(W_\alpha)\hookrightarrow ((m_* \cF)^G)(V_\alpha)$. 

Now, the restriction map $((m_* \cF)^G)(U_0)\rightarrow ((m_* \cF)^G)(U_0\cap U_1)$ is injective as follows from the assumption in Step 3. Since $U_0\cap U_1\subset W_\alpha\subset U_0$, it factors through the restriction map $((m_* \cF)^G)(U_0)\rightarrow ((m_* \cF)^G)(W_\alpha)$ which is therefore also injective. Using the previous paragraph, we finally deduce that the composition
\[((m_* \cF)^G)(U_0)\hookrightarrow ((m_* \cF)^G)(W_\alpha)\hookrightarrow ((m_* \cF)^G)(V_\alpha)\]
is injective.

\textbf{Step 4.} We assume that the map in the statement is an isomorphism and prove that $H^0(U_j,(m_*\cF)^G)$ for $j=0,1$ is a {topological} $A_\infty$-module compatibly with its $H^0(U_j,\cO_{U_j})$-module structure {such that the restriction map $H^0(U_j,(m_*\cF)^G) \to H^0(U_0 \cap U_1,(m_*\cF)^G)$ is an $A_\infty$-linear closed embedding}.

By assumption the restriction maps induce an isomorphism
\begin{equation}\label{R}
R:H^0(U_0,(m_*\cF)^G)\oplus H^0(U_1,(m_*\cF)^G) \congto H^0(U_0\cap U_1, (m_* \cF)^G)
\end{equation}
of $\F\bbra{X_0^{p^{-\infty}},X_1^{p^{-\infty}}}$-modules.
As $X_0$, $X_1$ act invertibly on the right side, the same is true for each direct summand on the left side (cf.\ the proof of Corollary \ref{non0bis}), which means in particular that each $H^0(U_i,(m_*\cF)^G)$ is canonically a $\F\bbra{X_0^{p^{-\infty}},X_1^{p^{-\infty}}}[\frac1{X_0X_1}]$-module and that $R$ is $\F\bbra{X_0^{p^{-\infty}},X_1^{p^{-\infty}}}[\frac1{X_0X_1}]$-linear, {and moreover the restriction of $R$ to the first direct summand in~\eqref{R} is $\F\ppar{X_0^{p^{-\infty}}}\langle \big(\frac{X_1}{X_0}\big)^{p^{-\infty}}\rangle[\frac{1}{X_1}]$-linear}. In particular $H^0(U_0,(m_*\cF)^G)$ is an $\F\ppar{X_1^{p^{-\infty}}}$-vector space.

Now \ we \ prove \ that \ $R$ \ is \ a \ topological \ isomorphism. \ Recall \ first \ that $H^0(m^{-1}(U_0),\cF)\cong H^0(m^{-1}(U_0),\cO_{\zlt})^{\oplus 2}$ (forgetting the group actions) is a Fr\'echet space (over $\F\ppar{X_0^{p^{-\infty}}}$) by the discussion in \S~\ref{geometricinter}. As $H^0(U_0,(m_*\cF)^G)\cong H^0(m^{-1}(U_0),\cF)^G$ is a closed subspace of $H^0(m^{-1}(U_0),\cF)$, it is also a Fr\'echet space. In particular it has a countable fundamental system of neighborhoods of 0. Moreover $H^0(U_0,(m_*\cF)^G)$ is a topological $\F\ppar{X_1^{p^{-\infty}}}$-vector space, i.e.,~the natural map (from the previous paragraph)
\[\F\ppar{X_1^{p^{-\infty}}} \times H^0(U_0,(m_*\cF)^G)\rightarrow H^0(U_0,(m_*\cF)^G)\]
is continuous. {(It suffices to show that each element of $\F\ppar{X_1^{p^{-\infty}}}$ acts continuously on $H^0(U_0,(m_*\cF)^G)$. This is clear, as each $X_1^r$, $r \in \Z[1/p]_{\ge 0}$ induces a continuous bijection, hence homeomorphism, between Fr\'echet spaces.)}
Hence $H^0(U_0,(m_*\cF)^G)$ is a complete topological $\F\ppar{X_1^{p^{-\infty}}}$-vector space having a countable fundamental system of neighborhoods of 0. Likewise $H^0(U_1,(m_*\cF)^G)$ and $H^0(U_0\cap U_1, (m_* \cF)^G)$ are complete topological $\F\ppar{X_1^{p^{-\infty}}}$-vector spaces, each having a countable fundamental system of neighborhoods of 0 (though here we directly have the $\F\ppar{X_1^{p^{-\infty}}}$-action). As the map $R$ is moreover $\F\ppar{X_1^{p^{-\infty}}}$-linear we deduce by \cite[Thm.\ II.4.1.1]{Morel} that $R$ is a topological isomorphism. By the first paragraph of the proof, $\F\bbra{X_0^{p^{-\infty}},X_1^{p^{-\infty}}}[\frac1{X_0X_1}]$ stabilizes the direct sum decomposition of $H^0(U_0\cap U_1, (m_* \cF)^G)$ given by (the image of) $R$. Note that $\F\bbra{X_0^{p^{-\infty}},X_1^{p^{-\infty}}}[\frac1{X_0X_1}]$ \ is \ dense \ in \ $A_\infty$, \ which \ acts \ continuously \ on $H^0(U_0\cap U_1, (m_* \cF)^G)$.
As the direct summands are closed by the previous paragraph, we deduce that $A_\infty$ stabilizes the direct sum decomposition of $H^0(U_0\cap U_1, (m_* \cF)^G)$.
Via restriction, we therefore obtain a topological action of $A_\infty$ on $H^0(U_i, (m_* \cF)^G)$, which extends the given action of $H^0(U_j,\cO_{U_j})$.

\textbf{Step 5.} We assume that the map in the statement is an isomorphism and prove that, for $\alpha\in (\frac{2p}{p^2+1},1)\cap \Q$, the restriction map $H^0(U_0,(m_* \cF)^G)\rightarrow H^0(V_\alpha,(m_* \cF)^G)$ {is zero}, yielding a contradiction with Step 3 {when $\alpha^{-1} \in \Z[1/p]$}.

Write res for the restriction map $\res:H^0(U_0,(m_* \cF)^G)\rightarrow H^0(V_\alpha,(m_* \cF)^G)$. Note that $X_1$ acts invertibly on both sides, using Step 4 for the left side, so $\res$ is $H^0(U_0,\cO_{U_0})[\frac 1{X_1}]$-linear.
Let $v$ be a nonzero element of $H^0(U_0,(m_* \cF)^G)$ and assume that its image $\res(v)$ is nonzero. Let $N,{n}$ be positive integers. Since
\[X_1^n\Big(\frac{X_0}{X_1}\Big)^{Nn}=X_0^n\Big(\frac{X_1}{X_0}\Big)^{n}\Big(\frac{X_0}{X_1}\Big)^{Nn}\rightarrow 0\text{ in }A_\infty\text{ as }n\rightarrow +\infty,\]
by Step $4$ we deduce that $X_1^n(\frac{X_0}{X_1})^{Nn}v\rightarrow 0$ in $H^0(U_0,(m_*\cF)^G)$ as $n\rightarrow +\infty$. By continuity of the restriction map res, we deduce for any positive integer $N$ that
\begin{equation}\label{towards0}
\!\!\res\Big(X_1^n\Big(\frac{X_0}{X_1}\Big)^{Nn}v\Big)=X_1^n\Big(\frac{X_0}{X_1}\Big)^{Nn}\!\!\res(v)\rightarrow 0\text{ in }H^0(V_\alpha,(m_* \cF)^G)\text{ as }n\rightarrow +\infty.
\end{equation}
{Let $A_{\alpha,\infty}:=\Gamma(V_\alpha,\cO_{\zok})$.}
As in Step $3$, $H^0(V_\alpha,(m_* \cF)^G)$ is a locally free $A_{\alpha,\infty}$-module of finite rank (using Step $1$). We fix a free $A_{\alpha,\infty}$-module $M$ of finite rank containing $H^0(V_\alpha,(m_* \cF)^G)$ as a direct summand. Replacing $\res(v)$ by one of its nonzero coordinates in a basis of $M$ over $A_{\alpha,\infty}$, (\ref{towards0}) implies there is a nonzero element $a\in A_{\alpha,\infty}$ such that $X_1^n\big(\frac{X_0}{X_1}\big)^{Nn}a\rightarrow 0$ in $A_{\alpha,\infty}$ as $n\rightarrow +\infty$. 

Recall a point $x$ of $V_\alpha$ corresponds to an equivalence class of continuous multiplicative seminorm $\lvert \cdot \rvert_x:A_{\alpha,\infty}\rightarrow \Gamma \cup \{0\}$ where $\Gamma$ is a totally ordered abelian group (written multiplicatively).
{Also recall that $V_\alpha$ is perfectoid, hence uniform and analytic.}
Since $a\ne 0$ there exists $x \in V_\alpha$ such that $|a|_x \ne 0$ by \cite[Thm.\ 5.2.1]{SWBerkeley}. Replacing $x$ by its maximal generization $\wt x$ as in \cite[Prop.~4.2.5]{SWBerkeley} we may assume that $x$ has rank 1 and that $\lvert \cdot\rvert_x$ is valued in $\R_{\ge 0}$. Since $\lvert \cdot \rvert_x$ is continuous, from the previous paragraph we have $\vert X_1^n\big(\frac{X_0}{X_1}\big)^{Nn}\vert_x\vert a\vert_x\rightarrow 0$ in $\R$ as $n\rightarrow +\infty$ for any positive integer $N$. Moreover, by definition of $V_\alpha$ and using $\alpha<1$, we have $\vert X_1 \vert_x=\vert X_0\vert_x^{\alpha^{-1}}<\vert X_0\vert_x\ (<1)$. Taking $N \gg 0$ such that $(\vert \frac{X_0}{X_1} \vert_x)^N>\vert X_1\vert_x^{-1}$, which is possible as $\vert \frac{X_0}{X_1} \vert_x>1$, since $\vert a\vert_x\ne 0$ we cannot have $\vert X_1^n\big(\frac{X_0}{X_1}\big)^{Nn}\vert_x\vert a\vert_x\rightarrow 0$ in $\R$. It follows that we must have $\res(v)=0$, contradiction. Hence $\res = 0$.

Suppose now moreover that $\alpha^{-1} \in \Z[1/p]$.
  Then res is injective by Step 3, so $H^0(U_0,(m_* \cF)^G) = 0$ and by symmetry $H^0(U_1,(m_* \cF)^G) = 0$.
  By our assumption we deduce that $H^0(U_0 \cap U_1,(m_* \cF)^G) = 0$, but this is a free $A_\infty$-module of the same (positive) rank as $\cF$.
  This contradiction completes the proof of Theorem \ref{AIM23}.
\end{proof}

\begin{lem}\label{lem:res-inj}
  In Step 3, if $\alpha^{-1} \in \Z[1/p] \cap \R_{\ge 1}$, the restriction map $H^0(W_\alpha) \to H^0(V_\alpha)$ is injective.
\end{lem}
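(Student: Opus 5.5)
We work with explicit Laurent-type expansions of the two rings. Here $\alpha\le 1$ and $W_\alpha=\{|X_1|\le|X_0|\le|X_1|^\alpha\ne 0\}$, $V_\alpha=\{|X_0|=|X_1|^\alpha\ne0\}$, both rational subsets of $\Spa\F\bbra{X_0^{p^{-\infty}},X_1^{p^{-\infty}}}$. The plan is to show that $H^0(W_\alpha)$ embeds in a ring of generalized power series and that restriction to $V_\alpha$ is faithful on such series.

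\textbf{Step 1: $H^0(W_\alpha)$.} As in the proof of Lemma~\ref{lm:global-sec-XN}, we complete $\F\bbra{X_0^{p^{-\infty}},X_1^{p^{-\infty}}}[X_1/X_0,\ X_0/X_1^\alpha]$ and invert $X_0X_1$. The monomial $X_1^\alpha$ makes sense because $\alpha\in\Z[1/p]$; this is where the hypothesis $\alpha^{-1}\in\Z[1/p]$ should enter. Presumably one first applies the change of variables $X_0=Z^{1}$, $X_1=Z^{\alpha^{-1}}$-type coordinates, or uses $X_1^{1/\alpha}$ via $\alpha^{-1}\in\Z[1/p]$, to rewrite the conditions with $\Z[1/p]$-exponents. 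Elements of $H^0(W_\alpha)$ are then series $\sum_n\lambda_nX_0^{a_n}X_1^{b_n}$ with $a_n,b_n\in\Z[1/p]$, and the convergence condition is that the valuation $a\,v_0+b\,v_1$ tends to $\infty$ for every $(v_0,v_1)$ with $v_0\in[v_1\alpha... ]$. More precisely, for all ratios $v(X_0)/v(X_1)\in[\alpha,1]$, it suffices to check this at the two extreme rays $v(X_0)=v(X_1)$ and $v(X_0)=\alpha v(X_1)$.

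\textbf{Step 2: $H^0(V_\alpha)$.} Similarly, $H^0(V_\alpha)$ is the completion of $\F\ppar{X_1^{p^{-\infty}}}[(X_0/X_1^\alpha)^{\pm p^{-\infty}}]$. Its elements are series in the variables $X_1$ and $u:=X_0/X_1^\alpha$, with $u$ a unit of absolute value $1$. Since $\alpha^{-1}\in\Z[1/p]$, we can equally write $X_1=X_0^{\alpha^{-1}}u'$ with $u'$ a unit.

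\textbf{Step 3: injectivity.} Take $F=\sum\lambda_nX_0^{a_n}X_1^{b_n}\in H^0(W_\alpha)$ with pairwise distinct exponent pairs. Substituting $X_0=uX_1^\alpha$ gives $F=\sum\lambda_n u^{a_n}X_1^{\alpha a_n+b_n}$. Since $\alpha\in\Z[1/p]$, each term is a monomial $u^{a}X_1^{c}$ with $a,c\in\Z[1/p]$. The map $(a_n,b_n)\mapsto(a_n,\alpha a_n+b_n)$ is injective, so distinct monomials remain distinct. The main obstacle is to prove that the monomials $u^aX_1^c$ are ``linearly independent'' in $H^0(V_\alpha)$, i.e.\ that a convergent series of them vanishes only if all its coefficients vanish. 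The approach here is to compute the Gauss norm on each slice $X_1^c$: for fixed $c$, the coefficient of $X_1^c$ is a series in $u^{\pm p^{-\infty}}$ lying in $\F\langle u^{\pm p^{-\infty}}\rangle$, where monomials are independent. Then $H^0(V_\alpha)\cong\F\ppar{X_1^{p^{-\infty}}}\langle u^{\pm p^{-\infty}}\rangle$ is the completed ring of series $\sum\mu_{a,c}u^aX_1^c$ with $c\to\infty$, and such a series is zero only if every $\mu_{a,c}=0$.

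Finally, we must check that the image of $F$ is a genuinely convergent series of this form. Its $X_1$-degrees are $\alpha a_n+b_n=v(F\text{-term})/v(X_1)$ along the ray $v(X_0)=\alpha v(X_1)$, and by Step 1 these tend to $\infty$. The series is therefore convergent with distinct monomials, hence nonzero unless $F=0$. The delicate point is showing that $\alpha\in\Z[1/p]$, rather than $\alpha^{-1}$, is what is needed. If only $\alpha^{-1}\in\Z[1/p]$ is available, we instead use $X_1=u'X_0^{\alpha^{-1}}$ and run the symmetric argument with the $X_0$-adic valuation, obtaining exponents $a_n+\alpha^{-1}b_n\in\Z[1/p]$.
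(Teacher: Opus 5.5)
Your argument is correct once it is run in the form of your final sentence, with $u'=X_1/X_0^{\alpha^{-1}}$. This element exists in the perfection precisely because $\alpha^{-1}\in\Z[1/p]$, and with it $H^0(V_\alpha)\cong\F\ppar{X_0^{p^{-\infty}}}\langle u'^{\pm p^{-\infty}}\rangle$ and $X_0^aX_1^b\mapsto X_0^{a+\alpha^{-1}b}u'^{b}$. The version using $X_1^{\alpha}$ and $u=X_0/X_1^{\alpha}$, which occupies your Steps 1--3, needs $\alpha\in\Z[1/p]$. That is not the hypothesis (e.g.\ $\alpha=p/(p+1)$ has $\alpha^{-1}\in\Z[1/p]$ but $\alpha\notin\Z[1/p]$), so it should be replaced throughout; in Step 1, use the bounded function $X_0^{\alpha^{-1}}/X_1$ instead of $X_0/X_1^{\alpha}$.

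With this fix your proof is essentially the paper's. Writing $\alpha^{-1}=d/p^r$, $Z=X_1/X_0$ and $W=X_0^d/X_1^{p^r}=u'^{-p^r}$, the paper:
\begin{enumerate}
\item writes $H^0(W_\alpha)$ as $\F\ppar{X_0^{p^{-\infty}}}\langle Z^{p^{-\infty}}\rangle\oplus\F\ppar{X_0^{p^{-\infty}}}\langle W^{p^{-\infty}}\rangle$ modulo the antidiagonal $\F\ppar{X_0^{p^{-\infty}}}$, which is your monomial series split by the sign of the $X_1$-exponent;
\item identifies $H^0(V_\alpha)\cong\F\ppar{X_0^{p^{-\infty}}}\langle W^{\pm p^{-\infty}}\rangle$;
\item reads off injectivity from uniqueness of expansions.
\end{enumerate}
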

\begin{proof}
  First note: if $(A,A^+)$ is any Huber pair of characteristic $p$, then the perfection $(A\perf,A^{+\mathrm{perf}})$ is a Huber pair.
  (If $A_0$ with $I$-adic topology is a ring of definition of $A$, then $A_0\perf$ with $IA_0\perf$-adic topology is a ring of definition of $A\perf$.)
  Then $\Spa(A\perf,A^{+\mathrm{perf}}) \to \Spa(A,A^+)$ is a homeomorphism that identifies rational open subsets (cf.\ the proof of \cite[Prop.\ 6.11(i)]{Scholzeperfectoid}) and likewise for completions \cite[Prop.\ 2.3.9]{SWBerkeley}.
  Moreover a rational open subset $\Spa(B,B^+)$ in $\Spa(A,A^+)$ has preimage $\Spa(\wh {B\perf},\wh {B^{+\mathrm{perf}}})$ in $\Spa(\wh {A\perf},\wh {A^{+\mathrm{perf}}})$ (cf.\ the proof of \cite[Prop.\ 6.11(ii)]{Scholzeperfectoid}).

  Write $\alpha = m/d$ for some positive integers $m \le d$.
  If $W_\alpha^0$ (resp.\ $V_\alpha^0$) denotes the rational subset $\{|X_1| \leq |X_0| \leq |X_1|^\alpha \ne 0 \}$ (resp.\ $\{|X_0| = |X_1|^\alpha \ne 0 \}$) in $\Spa \F\bbra{X_0,X_1}$, then its preimage in $\Spa \F\bbra{X_0^{p^{-\infty}},X_1^{p^{-\infty}}}$ is given by $W_\alpha$ (resp.\ $V_\alpha$).
  Hence $H^0(W_\alpha)$ is given by the completed perfection of
  \begin{equation}
    H^0(W_\alpha^0) \cong \F\ppar{X_0}\left\langle \frac {X_1}{X_0},\frac {X_0^d}{X_1^{m}} \right\rangle \cong \F\ppar{X_0}\langle Z,W \rangle/(Z^{m}W-X_0^{d-m})
  \end{equation}
  and $H^0(V_\alpha)$ is the completed perfection of
  \begin{equation}
    H^0(V_\alpha^0) \cong \F\ppar{X_0}\left\langle \frac {X_1}{X_0},\Big(\frac {X_0^d}{X_1^{m}}\Big)^{\pm 1} \right\rangle \cong \F\ppar{X_0}\langle Z,W^{\pm 1} \rangle/(Z^{m}W-X_0^{d-m}).
  \end{equation}

  Using crucially that $m = p^r$ one verifies that after perfection we have a sequence of topological abelian groups
  \begin{equation*}
    0 \to \F\ppar{X_0}\perf \to \{\F\ppar{X_0}\langle Z\rangle\}\perf \oplus \{\F\ppar{X_0}\langle W\rangle\}\perf \to H^0(W_\alpha^0)\perf \to 0
  \end{equation*}
  where the first map is antidiagonal and the second is addition, and that this sequence is in fact \emph{strict}.
  Hence after completion we obtain
  \begin{equation*}
    0 \to \F\ppar{X_0^{p^{-\infty}}} \to \F\ppar{X_0^{p^{-\infty}}}\langle Z^{p^{-\infty}}\rangle \oplus \F\ppar{X_0^{p^{-\infty}}}\langle W^{p^{-\infty}}\rangle \to H^0(W_\alpha) \to 0.
  \end{equation*}
  Likewise, but more easily, we obtain that the natural map
  \begin{equation*}
    \F\ppar{X_0^{p^{-\infty}}}\langle W^{\pm p^{-\infty}}\rangle \to H^0(V_\alpha)
  \end{equation*}
  is a homeomorphism.
  The restriction map $H^0(W_\alpha) \to H^0(V_\alpha)$ is then injective because the $\F\ppar{X_0^{p^{-\infty}}}$-linear continuous map
  \begin{equation*}
    \F\ppar{X_0^{p^{-\infty}}}\langle Z^{p^{-\infty}}\rangle \oplus \F\ppar{X_0^{p^{-\infty}}}\langle W^{p^{-\infty}}\rangle \to \F\ppar{X_0^{p^{-\infty}}}\langle W^{\pm p^{-\infty}}\rangle,
  \end{equation*}
  sending $W^{1/p^n}$ to $W^{1/p^n}$ and $Z^{1/p^n}$ to $(X_0^{d-p^r}W^{-1})^{1/p^{r+n}}$, has kernel $\F\ppar{X_0^{p^{-\infty}}}$.
\end{proof}

We finally provide a slight strengthening to Theorem~\ref{AIM23}.

\begin{cor}\label{cor:AIM23}
  Suppose $f=2$ and $\cF$ any $G$-equivariant locally free sheaf of finite rank on $\zlt$.
  If $\cF \ne 0$, the map $$H^0(U_0,(m_*\cF)^G)\oplus H^0(U_1,(m_*\cF)^G) \rightarrow H^0(U_0\cap U_1, (m_* \cF)^G)$$ in (\ref{connecting}) cannot be injective with cokernel that is smooth as $H^0(\zok)$-module.
\end{cor}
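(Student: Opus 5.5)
The strategy is to reduce to the hypothesis of Theorem \ref{AIM23}. Assume $R:H^0(U_0,(m_*\cF)^G)\oplus H^0(U_1,(m_*\cF)^G)\to H^0(U_0\cap U_1,(m_*\cF)^G)$ is injective with cokernel $H$ smooth as an $H^0(\zok)\cong\F\bbra{Y_0^{p^{-\infty}},Y_1^{p^{-\infty}}}$-module, i.e.\ every element of $H$ is killed by some power of $I=(Y_0,Y_1)$. The aim is to show that $R$ is then an isomorphism, which contradicts Theorem \ref{AIM23}.

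\textbf{Step 1.} Each summand $H^0(U_j,(m_*\cF)^G)$ is a module over $\F\ppar{Y_{1-j}^{p^{-\infty}}}$, i.e.\ $Y_{1-j}$ acts bijectively on it. This is the argument in the proof of Corollary \ref{non0bis}, which used only injectivity of $R$ and smoothness of its cokernel. Take $v\in H^0(U_0,(m_*\cF)^G)$. Smoothness gives $N$ with $\frac{Y_0^N}{Y_1}R(v)=R(v_0+v_1)$. Injectivity of $R$ and the direct sum decomposition then give $Y_0^Nv=Y_1v_0$. Since $Y_0$ is invertible on $H^0(U_0,\cO_{U_0})$, this shows $Y_1$ is surjective on $H^0(U_0,(m_*\cF)^G)$. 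For injectivity of $Y_1$: it holds on the target $H^0(U_0\cap U_1,(m_*\cF)^G)$ because $Y_1$ is a unit in $A_\infty$, and $R$ is injective, so it holds on the summand. The case of $U_1$ is symmetric.

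\textbf{Step 2.} Conclude that the cokernel vanishes. By Step 1, $Y_1$ acts invertibly on $H^0(U_0,(m_*\cF)^G)$, and also on $H^0(U_1,\cO_{U_1})$-modules, hence on $H^0(U_1,(m_*\cF)^G)$. It also acts invertibly on the target. So $R$ is $\F\bbra{Y_0^{p^{-\infty}},Y_1^{p^{-\infty}}}[1/Y_1]$-linear, and $H$ is a module over this localization. But every element of $H$ is killed by a power of $Y_1$, so $H=0$. (Inverting $Y_0$ instead gives the same conclusion.) Thus $R$ is an isomorphism, contradicting Theorem \ref{AIM23} since $\cF\ne0$.

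The only step needing care is Step 1: we must make sure the Corollary \ref{non0bis} computation really used only injectivity of $R$ and smoothness of $H$, and not the other vanishing hypotheses of that corollary. It did, since $R$ there was exactly the injection and $H$ its image in a smooth module. We must also check that the product $Y_1\cdot\frac{Y_0^N}{Y_1}$ makes sense inside $A_\infty$; it does, because $Y_1$ is a unit in $A_\infty$.
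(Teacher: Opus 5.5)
Your proposal is correct and follows the paper's own proof. The paper likewise reruns the argument of Corollary~\ref{non0bis}, with the smooth cokernel in place of the last term of~\eqref{MVbis}. That argument shows $Y_1$ acts invertibly on both summands, so the cokernel is at once a module on which $Y_1$ is invertible and a smooth module, hence zero; Theorem~\ref{AIM23} then gives the contradiction.
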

\begin{proof}
  The argument of the proof of Corollary~\ref{non0bis} applies (replacing the last term in~\eqref{MVbis} by the cokernel) to show that the cokernel is zero,
  hence the result follows from Theorem~\ref{AIM23}.
\end{proof}

\subsection{The principal series case II}\label{sec:princ-seri-case2}

We briefly discuss the analog of Questions~\ref{ques} in the much simpler principal series case.
Namely, we assume {$\o r_v(1)\cong \smatr{\chi_1}{*}0{\chi_2}$, $\pi = \Ind_B^G(\chi_2^{-1}\omega \otimes \chi_1^{-1})$ and consider the rank 1 subsheaf $\cF_{\chi_1}$} of $\cF_{\o r_v(1)}$. We keep the same notation as in \S~\ref{sec:princ-seri-case} {and write $J:=\pi$}.

We let $\sigma := \ker(J \onto \F(\chi_2^{-1}\omega \otimes \chi_1^{-1}))$ denote its unique irreducible $P$-subrepresen\-tation, cf.\ \eqref{eq:5} (uniqueness follows from the assumption that $\o r_v$ is sufficiently generic, so that $\chi_2\chi_1^{-1}\neq \omega$). Recall that we have an isomorphism of $(\vp,\oks)$-modules $D_A(J) \cong D_A^\otimes(\chi_1)$ over $A$, which is unique up to $\F\s$ by \cite[Cor.\ 3.9]{BHHMS2}.
We claim that the image of the $P$-equivariant map
\begin{equation}\label{eq:first-map}
  \epsilon : D_{A_\infty}^\otimes(\chi_1^{-1}) \cong \Hom_{A_\infty}(D_{A_\infty}(J),A_\infty) \to J \otimes \chi
\end{equation}
constructed in \S~\ref{sec:surj-Ainfty-pi} is isomorphic to the image of the $P$-equivariant map
\begin{equation}\label{eq:second-map}
  \epsilon' : D_{A_\infty}^\otimes(\chi_1^{-1}) \to H^1(p^{\Z,\mathrm{a}},H^0(\cF_{\chi_1^{-1}}))^G
\end{equation}
constructed in \S~\ref{sec:another-b-morphism}, and that both are isomorphic to $\sigma \otimes \chi$.

We first justify this claim assuming the following two assertions:
\begin{enumerate}
\item the image of $\epsilon$ equals $\sigma \otimes \chi$;
\item the kernel of $\epsilon$ equals $A_\infty^+ e$, where $e$ is the basis of $D_{A_\infty}^\otimes(\chi_1^{-1})$ dual to the basis $\o\kappa$ of $D_{A_\infty}(J)$ and where
\begin{equation*}
  A_\infty^+ := \Big\{ \sum_{n=0}^\infty \lambda_n Y_0^{a_n} Y_1^{b_n} \in A_\infty : \text{$a_n \ge 0$ or $b_n \ge 0\ \forall n$} \Big\}.
\end{equation*}  
\end{enumerate}

For short, let $\cG_{\chi_1^{-1}} := (m_* \cF_{\chi_1^{-1}})^G$.
Recall that $\cF_{\chi_1^{-1}} \cong \cO_{\zlt}$ if we only remember the $G$-action and hence $\cG_{\chi_1^{-1}} \cong \cO_{\zok}$ as sheaves (without $K\s$-actions).

By combining Corollary~\ref{irrtriv} and Proposition~\ref{nonzero} {(and Proposition \ref{avoir})} we get an exact sequence
\begin{equation}\label{eq:2}
  \begin{split}
    H^0(U_0,\cO_{U_0})\oplus H^0(U_1,\cO_{U_1}) \to H^0&(U_0 \cap U_1,\cO_{U_0 \cap U_1}) \to H^1(\zok, \cO_{\zok}) \\ & \cong H^1_{\mathrm{cts}}(G,H^0(\zlt)) \into H^1(p^{\Z,\mathrm{a}},H^0(\zlt))^G.
  \end{split}
\end{equation}
and hence, taking into account residual $K\s$-actions, an exact sequence of $P$-represen\-tations
\begin{multline*}
H^0(U_0,\cG_{\chi_1^{-1}})\oplus H^0(U_1,\cG_{\chi_1^{-1}}) \to H^0(U_0 \cap U_1,\cG_{\chi_1^{-1}}) \to H^1(\zok, \cG_{\chi_1^{-1}}) \\ \cong H^1_{\mathrm{cts}}(G,H^0(\zlt,{\cF}_{\chi_1^{-1}})) \into H^1(p^{\Z,\mathrm{a}},H^0(\zlt,{\cF}_{\chi_1^{-1}}))^G.
\end{multline*}
Thus the kernel of $\epsilon'$ equals the image of $H^0(U_0,\cG_{\chi_1^{-1}})\oplus H^0(U_1,\cG_{\chi_1^{-1}})\to H^0(U_0 \cap U_1,\cG_{\chi_1^{-1}}) = D_{A_\infty}(\chi_1^{-1})$.
By the explicit description of the first map in~\eqref{eq:2} given in~\eqref{eq:cokerH^f-1}, \eqref{eq:H^f-1-zok} (and twisting by $\chi_1^{-1}$), the kernel of $\epsilon'$ is given by $A_\infty^+ e$, i.e.,\ it is equal to the kernel of $\epsilon$, which completes the proof of the claim.

It remains to prove assertions (i) and (ii).
For (i), by construction the map $\epsilon$ is obtained from
\begin{equation}\label{eq:seq1}
  \Hom_A(D_A(J),A) \to \Hom_\F^{\cont}(D_A(J),\F) \otimes \chi \to \Hom_\F^{\cont}(J^\vee,\F) \otimes \chi = J \otimes \chi
\end{equation}
by passing to $\ilim_\vp$ and then completing, where $\vp$ acts as precomposition by $\psi$ on the middle term and as $\smatr p001$ on the right.
Passing to $\ilim_\vp$ gives
\begin{multline}\label{eq:seq2}
  A_{(\infty)} \otimes \Hom_A(D_A(J),A) \to \Hom_\F^{\cont}(\plim_\psi D_A(J),\F) \otimes \chi \\
  \to \Hom_\F^{\cont}(J^\vee,\F) \otimes \chi = J \otimes \chi,
\end{multline}
where the second map is obtained by dualizing the map $\Theta$ of Lemma~\ref{kernel}, so that the image of~\eqref{eq:seq2} is contained in the unique subrepresentation of $J \otimes \chi$ of codimension 1, namely $\sigma \otimes \chi$.
By continuity, the image of $\epsilon$ is contained in $\sigma \otimes \chi$ and hence is equal to $\sigma \otimes \chi$, as $\epsilon$ is nonzero by \eqref{eq:lim-eps} and $\sigma$ is irreducible.

We prove assertion (ii).
By (i) it suffices to show that the kernel of $\epsilon$ contains $A_\infty^+ e$, as $A_\infty/A_\infty^+$ is an irreducible $P$-representation by (the proof of) Lemma~\ref{irr} and $\epsilon$ is nonzero.
Factor \eqref{eq:seq1} as
\begin{equation*}
  \Hom_A(D_A(J),A) \to \Hom_\F^{\cont}(D_A(J),\F) \otimes \chi \to \Hom_\F^{\cont}(D_A(J)^\natural,\F) \otimes \chi \to J \otimes \chi.
\end{equation*}
Using the basis $\o\kappa$ of $D_A(J)$ we can identify $D_A(J) \cong A$ and $D_A(J)^\natural \cong \F\bbra{N_0}$, cf.\ \S~\ref{sec:princ-seri-case}.
It thus suffices to ignore the final map and show that the composition
\begin{equation*}
  A = \Hom_A(A,A) \to \Hom_\F^{\cont}(\F\bbra{N_0},\F)
\end{equation*}
kills $A^+ := A \cap A_\infty^+$.
Unwinding definitions, we need to show that $\mu(A^+ \cdot \F\bbra{N_0}) = 0$, or equivalently $\mu(A^+) = 0$, where $\mu : A \to \F$ is the map of \cite[\S~3.3]{BHHMS3}.
Write $\F\bbra{N_0} = \F\bbra{Z_0,Z_1}$ as in \emph{loc.~cit.}, so that $A$ is the completed localization of $\F\bbra{N_0}$ at $Z_0Z_1$.
By \cite[(80)]{BHHMS3} we see that $\mu$ kills $A'^+ := \{\sum_{n=0}^\infty \lambda_n Z_0^{a_n}Z_1^{b_n} \in A : \text{$a_n \ge 0$ or $b_n \ge 0$ for all $n$}\}$ (note that the term $\prod_j (1+T_j)^{-1}$ in \emph{loc.~cit.} is contained in $\F\bbra{N_0}$).
By using that $Z_i \in Y_i + \m_{\F\bbra{N_0}}^2$ it easily follows that $A'^+ = A^+$.

We have finished proving (i) and (ii) and hence the claim that the maps~\eqref{eq:first-map} and \eqref{eq:second-map} have isomorphic images.

\appendix

\section{Digression on continuous group cohomology}\label{sec:digr-cont-coho}

We make a couple of elementary observations about continuous group cohomology, which can be verified from the definitions {and which we use in \S~\ref{perfII}}.

We assume that $G$ is a topological group and consider the category of topological $G$-modules, having as objects topological abelian groups endowed with a continuous action of $G$ and as morphisms continuous group homomorphisms (as in \cite{tate1976}).
For any topological $G$-module $A$ we have
\begin{equation*}
  H^0\cts(G,A) = H^0(G,A), \qquad H^1\cts(G,A) \subset H^1(G,A).
\end{equation*}
If $0 \to A \to B \to C \to 0$ is an exact sequence of topological $G$-modules with the injection $A \to B$ \emph{strict} (i.e.,\ a closed embedding) {it is an easy exercise (left to the reader) to check that} we have a long exact sequence
\begin{multline}\label{eq:H1cts}
  0 \to H^0(G,A) \to H^0(G,B) \to H^0(G,C) \to H^1\cts(G,A) \to H^1\cts(G,B) \\
  \to H^1\cts(G,C).
\end{multline}
If $N$ is a closed subgroup of $G$, then {one can again easily check that} we have an exact inflation-restriction sequence
\begin{equation}
  \label{eq:infl-res}
  0 \to H^1\cts(G/N,A^N) \to H^1\cts(G,A) \to H^1\cts(N,A)^{G/N}.
\end{equation}

\begin{lem1}\label{lem:cts-coho-free-prop}
  Suppose that $\Gamma \cong \Z_p^n$ and that $M$ is a topological $\zp$-module equipped with a continuous action of $\Gamma$.
  If $M$ is separated and complete with respect to a ($\Z$-)linear topology, then $H^i\cts(\Gamma,M)$ is computed by the Koszul complex defined by $\gamma_1-1,\dots,\gamma_n-1$ acting on $M$, where $\gamma_1,\dots,\gamma_n$ denote topological generators of $\Gamma$.
  In particular, $H^i\cts(\Gamma,M) = 0$ if $i > n$ and $H^n\cts(\Gamma,M) \cong M_\Gamma$ ($\Gamma$-coinvariants).
\end{lem1}
\begin{proof}
  We pass to the condensed world \cite{ClausenS} by using the condensation functor $X \mapsto \un{X}$ of \emph{loc.~cit.} Then $\un M$ is a condensed $\un{\Z_p}$-module equipped with a $\un{\Gamma}$-action.
  By our assumption on the topology of $M$, $\un M$ is solid as a condensed abelian group (\cite[Lemma 3.2.1]{BSSW}) and hence solid as $\un{\Z_p}$-module (\cite[Rk.\ 2.14]{RodriguesJacinto2022}). Hence, by \cite[Prop.\ B.3]{bosco} we deduce that the condensed group cohomology $R\un{\Hom}_{\Z[\un\Gamma]}(\un{\Z},\un{M})$ is quasi-isomorphic to the Koszul complex defined by $\gamma_1-1,\dots,\gamma_n-1$ acting on $\un M$. Evaluating at a point $*$ and taking cohomology we obtain the result by \cite[Prop.\ B.2]{bosco}.
\end{proof}

\section{Sheaf cohomology versus continuous group cohomology}\label{sec:appendix}

We put ourselves in the setting of \S\ref{sec:aside-trivial-galois}.
The goal of this appendix is to prove Proposition \ref{avoir} using Proposition \ref{prop:group_cohomology_trivial_bundle} below. As it is going to play a pivotal role, we start by recalling in this context what it means for a locally profinite group to act freely and how we think of the quotient space.

Let $\Perf$ be the category of perfectoid spaces of characteristic
$p$ (we have $\Perf=\Perf_{\Fp}$ with the notation of \S~\ref{geometricinter}). Let $X$ be an object of $\Perf$. Let $H$ be a locally profinite
group acting on $X$. This means that there is a map of perfectoid
spaces $\alpha : \underline{H}\times X\rightarrow X$ which is a group
action. ({Recall that $\un H$ is the sheaf defined by $\un H(W) = \cC^0(|W|,H)$ and} $\underline{H}\times X$ is a perfectoid space as
a consequence of \cite[Ex.~11.12]{ScholzeEtcohDia}). We assume that
$\underline{H}$ acts \emph{freely} on $X$, i.e.,~that for any pair $(k,k^+)$
where $k$ is perfectoid and $k^+$ is an open and bounded valuation
ring of $k$, the action of $H$ on $X(k,k^+)$ has trivial stabilizers.  It
follows from \cite[Prop.~5.3]{ScholzeEtcohDia} that the map of
perfectoid spaces $\underline{H}\times X\rightarrow X\times X$ is
an injection. As $\underline{H}\times X$ is an equivalence relation in
$X\times X$ and the projection map
$\underline{H}\times X\rightarrow X$ is pro\'etale {(it suffices to check one projection, by symmetry of the equivalence relation)}, the quotient $Y=X/\un{H}$
of $X$ by the equivalence relation $\underline{H}\times X$ in the
category of sheaves over $\Perf$ with the pro\'etale topology is a
diamond, the map $X\rightarrow Y$ is quasipro\'etale surjective and we
have an isomorphism of perfectoid spaces
$\underline{H}\times X\xrightarrow{\sim} X\times_YX$ \cite[Prop.~11.3]{ScholzeEtcohDia} so that the map
$X\rightarrow Y$ is a pro\'etale $H$-torsor \cite[Def.~10.12, {Lemma 10.13}]{ScholzeEtcohDia}.

\begin{prop1}\label{avoir}
For all $p>f$ and $i\geq 0$ we have a canonical $P$-equivariant isomorphism
\[H^i(\zok,\cO_{\zok}) \congto H^i_{\textnormal{cts}}(G,\cO_{\zlt}(\zlt)).\]
\end{prop1}
\begin{proof}
We recall that we have an action $\un G \times \zlt \to \zlt$ and we claim that the subgroup $H := \ker((K^\times)^f \to K\s) = (K^\times)^f \cap G$ acts freely on $\zlt$. %
    It suffices to show that $K\s$ acts freely on $\Spa \F\ppar{T^{q^{-\infty}}}$, namely (e.g., see \cite[(27)]{BHHMS3}) that $K\s$ has trivial stabilizers
on $k^{\circ\circ} \setminus \{0\}$ for pairs $(k,k^+)$ as above; this is true because the $K\s$-action is the restriction of a ring action of $K$ on $k^{\circ\circ}$. (Here $k^{\circ\circ}$ denotes the topologically nilpotent
elements in $k$.)

    Let $H' := (p^\Z)^f \cap G \subset H$, so that $G/H'$ is profinite and we observe that $\zlt/\un{H'}$ is a qcqs perfectoid space.
    (It is quasi-compact perfectoid by (the proof of) Lemma~\ref{lem:preimage-m} and it is quasi-separated, since $\zlt$ is quasi-separated, as it is an increasing union of affinoid open subsets, and $\zlt \to \zlt/\un{H'}$ is a local isomorphism.)
    We need to check that $m' : \zlt/\un{H'} \to \zok$ is quasipro\'etale.
    Note that the $H'$-torsor $\zlt \to \zlt/\un{H'}$ is quasipro\'etale by \cite[Lemma 10.13]{ScholzeEtcohDia} and the composition $m : \zlt\rightarrow\zok$ is quasipro\'etale by \cite[Lemma~7.6]{FarguesAJ}.
    On the other hand, we note that $m'$ is separated.
    (It is qs since both domain and target are qs, and then separatedness follows from \cite[Prop.~10.9]{ScholzeEtcohDia} using that any $\Spa(k,\cO_k)$-point of $\zlt/\un{H'}$, where $k$ is any perfectoid field, lifts to $\zlt$, and $\zlt$ is partially proper.)
    Then $m'$ is quasipro\'etale by \cite[Prop.~11.30]{ScholzeEtcohDia}.
    Therefore the result is a consequence of Proposition \ref{prop:group_cohomology_trivial_bundle} below (where we need $p>f$).
\end{proof}

\begin{cor1}\label{vaninshingi>1}
For $p>f$ and $i>f-1$ we have $H^i_{\textnormal{cts}}(G,\cO_{\zlt}(\zlt))=0$.
\end{cor1}
\begin{proof} 
  Note first that $\zok$ is a locally spectral topological space %
  {that is also quasi-compact \ and \ quasi-separated, \ cf.\ \S~\ref{geometricinter}.}
  \ Hence \ it \ is \ spectral \ (e.g.~see \ \cite[Prop.~I.2.2.9(iii)]{Morel}). But the cohomology of any spectral topological spaces vanishes in degree greater than its dimension (see for instance \cite[Tag \href{https://stacks.math.columbia.edu/tag/0A3G}{0A3G}]{stacks-project}). Since $\zok$ has dimension $f-1$, the statement follows from Proposition \ref{avoir}.
\end{proof}

We will work toward the remaining goal of verifying Proposition \ref{prop:group_cohomology_trivial_bundle}. We discussed free quotients above. Now we consider quotients of perfectoid spaces by (possibly) non-free actions of
locally profinite groups. Let us give some general
results.

\begin{lem1}\label{lemm:quotient_top_space}
  Let $X$ be a perfectoid space of characteristic $p$ and $G$ a
  locally profinite group acting on $X$. Assume that the quotient
  $X/\un{G}$ is representable by a perfectoid space $Z$. Then the
  topological space $\vabs{Z}$ is homeomorphic to the quotient space
  $\vabs{X}/G$.
\end{lem1}

\begin{proof}
{Let $R := X\times_Z X \in \Perf$.
  Then the quotient v-stack $X/R$ is in fact a v-sheaf, which is isomorphic to $Z$.
  On the other hand, we have a surjection of v-sheaves $\widetilde{R}:=\underline{G}\times X \to R$, $(g,x) \mapsto (x,gx)$, and hence by \cite[Prop.~12.7]{ScholzeEtcohDia} we deduce that $|X|/|\wt R| \cong |Z|$ as topological spaces.
  Finally observe that $|\wt R| \cong G \times |X|$, so that $|X|/|\wt R| \cong |X|/G$.
  (Alternatively we may observe that \cite[Prop.\ 11.13]{ScholzeEtcohDia} applies with the equivalence relation $R$ to deduce that $|X|/|R| \cong |Z|$.
  Here we observe that the proof of \emph{loc.~cit.} applies despite not assuming that the projections $R \to X$ are pro\'etale since $R$ is of the form $X \times_Z X$ for some $Z \in \Perf$.
  Finally we use that $|\wt R| \onto |R|$, as any map in $\Perf$ that is surjective as v-sheaves is set-theoretically surjective, as v-covers are set-theoretically surjective.)
}
\end{proof}

\begin{prop1}\label{prop:quotient_by_finite_group_have_perf_fibers}
  Let $X$ be a {qcqs} perfectoid space of characteristic $p$ and $G$ a {profinite} group acting on $X$.
  Assume that the quotient $X/\un{G}$ is representable by a {quasi-separated} perfectoid space $Z$ and that the map $X\rightarrow Z$ is quasipro\'etale.
  Let $H\subset G$ be an open normal subgroup acting freely on $X$ and let $Y=X/\un{H}$ which is a diamond.
  Then, {for any totally disconnected perfectoid space $Z'$ over $Z$, the diamond $Y\times_Z Z'$ is affinoid perfectoid.}
\end{prop1}

Before initiating the proof of the proposition, we start with a lemma.

\begin{lem1}\label{lem:open-subdiam}
  Suppose that $f: Y \to Z$ is a map of diamonds.
  Then for any open subdiamond $V \subset Z$, the preimage $Y \times_Z V$ is the open subdiamond whose underlying topological space is $|Y \times_Z V| = |Y| \times_{|Z|} |V|$.
\end{lem1}

By an open subdiamond of a diamond $Y$ we mean an open subfunctor $Y' \to Y$ (i.e.,\ for any $T \in \Perf$ the base change $T \times_Y Y' \to T$ is an open immersion in $\Perf$).

\begin{proof}
  It is clear that $Y \times_Z V$ is an open subdiamond, so that in particular $|Y \times_Z V|$ is an open subspace of $|Y|$ by \cite[Prop.~11.15]{ScholzeEtcohDia}.
  It remains to show that the natural map $|Y \times_Z V| \to |Y| \times_{|Z|} |V|$ is bijective, which follows from \cite[Prop.~11.13]{ScholzeEtcohDia}.
\end{proof}

We also briefly recall connected components of spatial diamonds.
If $Y$ is a spatial diamond \cite[Def.~11.17]{ScholzeEtcohDia}, then $|Y|$ is spectral and open subfunctors $U \subset Y$ correspond bijectively to open subsets $|U| \subset |Y|$.
Such a $U$ can be recovered from the open subset $|U|$ via $U(T) = \{ T \to Y : |T| \to |Y| \text{ factors through $|U|$}\}$ \cite[Prop.\ 11.15]{ScholzeEtcohDia}.
Fix now a connected component $|Y_c| \subset |Y|$.
Then we can define $Y_c$ as a subdiamond of $Y$ as follows: $Y_c := \plim_U U$, where $U$ runs over all open subfunctors of $Y$ such that $|Y_c| \subset |U|$ is a clopen subset.
Indeed, $Y_c$ is a diamond and its underlying topological space is $|Y_c| = \plim_U |U|$ (\cite[Lemma 11.22]{ScholzeEtcohDia}\footnote{The transition maps are qcqs, since each $U$ is qcqs by \cite[Prop.~11.19(iii)]{ScholzeEtcohDia}.}), which is the subspace $|Y_c|$ of $|Y|$ that we started with, since in any spectral space a connected component is the intersection of all clopen subsets containing it.
Moreover, by construction,
\begin{equation}\label{eq:Yc}
  Y_c(T) = \{ T \to Y : |T| \to |Y| \text{ factors through $|Y_c|$}\}.
\end{equation}
If $Y$ is perfectoid, then $Y_c$ is perfectoid by \cite[Prop.\ 6.5]{ScholzeEtcohDia} (checking over each affinoid open of $Y$, noting also that a clopen subspace of an affinoid perfectoid space $\Spa(A,A^+)$ is affinoid perfectoid, since it corresponds to an idempotent of $A^+$).

\begin{proof}[Proof of Proposition~\ref{prop:quotient_by_finite_group_have_perf_fibers}]
  {First, it is easy to check that if $W \to Z$ is any map in $\Perf$ with $W$ qcqs, then $X \times_Z W \to W$ satisfies the assumptions of the lemma, as $(X \times_Z W)/\un{G} \cong (X/\un{G}) \times_Z W \cong W$ and $X \times_Z W$ is qcqs.
    Hence we may assume that $Z$ is totally disconnected, so that $Z$ is in particular affinoid \cite[Lemma 7.5]{ScholzeEtcohDia}.
  }

  {We suppose first that $Z$ is moreover connected, i.e.,\ $Z = \Spa(k,k^+)$ for some perfectoid field $k$ and some open bounded valuation ring $k^+$ (cf.\ \cite[Prop.\ 1.15]{ScholzeEtcohDia}).}
  As $H$ acts freely on $X$, the quotient $Y=X/\un{H}$ is a diamond and
  the map $X\rightarrow Y$ is quasipro\'etale {(see the discussion before Lemma~\ref{lemm:quotient_top_space})}. Let
  $\widetilde{Z}=\Spa(C,C^+)\rightarrow Z$ be a pro\'etale
  surjective map with $(C,C^+)$ an algebraically closed perfectoid
  field. Let $\widetilde{X}\coloneqq X\times_Z\widetilde{Z}$ and
  $\widetilde{Y}\coloneqq Y\times_Z\widetilde{Z}$. As the map
  $X\rightarrow Z$ is quasipro\'etale and $\widetilde{Z}$ is strictly
  totally disconnected {(\cite[Def.\ 1.14]{ScholzeEtcohDia})}, the map
  $\widetilde{X}\rightarrow\widetilde{Z}$ is pro\'etale by
  definition (\cite[Def.\ 1.20]{ScholzeEtcohDia}). As $Z$ represents the quotient of $X$ by $G$, the space
  $\widetilde{Z}$ represents the quotient of $\widetilde{X}$ by
  $G$. It follows from Lemma \ref{lemm:quotient_top_space} that
  $\vabs{\widetilde{Z}}$ is the quotient of
  $\vabs{\widetilde{X}}$ by the action of $G$. Let
  $x\in \widetilde{X}$ be in the preimage of the closed point of
  $\widetilde{Z}$. There exists an affinoid perfectoid neighborhood
  $U$ of $x$ in $\widetilde{X}$ such that
  $U\rightarrow\widetilde{Z}$ is affinoid pro\'etale. Therefore $U$ is
  isomorphic to $\widetilde{Z}\times\underline{S}$ for some
  profinite set $S$ so that the localization of $\widetilde{X}$ at
  $x$ is isomorphic to $\Spa(C,C^+)$ {(cf.\ for instance \cite[Rk.\ 8.2.3]{SWBerkeley})}. Let $H'\subset G$ be the
  stabilizer of $x$, which is a finite group, as $H$ acts freely on
  $X$. As the localization of $\widetilde{X}$ at $x$ is a totally
  ordered set of generizations of $x$ {\cite[Lemma III.5.1.8]{Morel}}, it follows that $H'$ is also
  the stabilizer of any point of {$\wt X_x$}. %
  Therefore we have a $\un{G}$-equivariant map of perfectoid spaces
  $f : \underline{G/H'}\times\Spa(C,C^+)\rightarrow
  \widetilde{X}$. As $\widetilde{Z}$ is exactly the quotient of
  $\widetilde{X}$ by $G$, {and by using Lemma~\ref{lemm:quotient_top_space}}, it follows that the map $f$ is
  bijective {on the underlying topological spaces}. It follows from \cite[Lemma~5.4]{ScholzeEtcohDia} that
  $f$ is an isomorphism, as all residue fields of $\widetilde{X}$
  are isomorphic to $C$.

  It follows that
  $\widetilde{Y}\simeq\underline{G/(H'\cdot H)}\times
  \widetilde{Z}$ is therefore an affinoid perfectoid space. Moreover
  the map $\widetilde{Y}\rightarrow\widetilde{Z}$ is clearly finite \'etale. As {$Z$} is totally
  disconnected and $Z$, $\widetilde{Z}$, $\widetilde{Y}$ are
  all affinoid perfectoid, it follows from
  \cite[Prop.~9.3, Cor.~9.11(iii)]{ScholzeEtcohDia} (see also
  \cite[Thm.~9.1.3(1),(4)]{SWBerkeley}) that $\widetilde{Y}$ descends to
  an affinoid perfectoid space {finite \'etale} over $Z$, i.e.,~that $Y$ is
  affinoid perfectoid and that the map $Y\rightarrow Z$ is
  {finite \'etale.
  Hence $Y = \Spa(A,A^+)$ with $k(z) \to A$ a finite \'etale ring homomorphism.}

  Now suppose that $Z$ is an arbitrary totally disconnected space in $\Perf$.
  We check that the diamond $Y=X/\un{H}$ is spatial.
  The map $X \to Y$ is surjective, quasipro\'etale, and universally open by \cite[Lemma 10.13]{ScholzeEtcohDia}.
  By \cite[Prop.~11.24]{ScholzeEtcohDia} it is enough to show that $Y$ is qcqs as v-sheaf (as defined in \cite[\S~8]{ScholzeEtcohDia}).
  As $X$ is qc and $X \to Y$ is surjective, it follows that $Y$ is qc.
  To check that $Y$ is qs, by the discussion preceding \cite[Prop.\ 8.3]{ScholzeEtcohDia} (using $X$ qcqs and $X \to Y$ surjective), it suffices to check that $X \times_Y X$ is qc.
  But $X \times_Y X \cong \un H \times X$, which is qc, as $H$ profinite and $X$ is qc.

  To show the result, we will show that $Y$ is affinoid perfectoid by showing that any connected component of $Y$ is affinoid perfectoid and applying \cite[Lemma 11.27]{ScholzeEtcohDia}.
  Fix now a connected component $Y_c$ of $Y$ and let $Z_c$ denote the connected component of $Z$ that contains the image of $Y_c$, so that $Z_c = \Spa(k,k^+)$ as in the first part of the proof.
  Using~\eqref{eq:Yc} we obtain a map of diamonds $Y_c \to Z_c$ and hence a commutative diagram
  \begin{equation*}
    \xymatrix{Y_c\ar[rd]\ar[rrd]\ar[rdd] \\
      & Y_c' \ar[r]\ar[d] & Z_c\ar[d] \\
      & Y \ar[r] & Z}
  \end{equation*}
  where $Y_c' := Y \times_Z Z_c$ is affinoid perfectoid by the first part of the proof.
  Note that we have subfunctors $Y_c \subset Y_c' \subset Y$ and $Z_c \subset Z$.
  For any clopen subfunctor $V \subset Z$ we have $|Y \times_Z V| = |Y| \times_{|Z|} |V|$ by Lemma~\ref{lem:open-subdiam}, which is qcqs, as it is open and closed in $|Y|$ and $|Y|$ is qcqs by above.
  Moreover, $|Y_c'| = |\plim_{z \in V} (Y \times_Z V)| = \plim_{z \in V} |Y \times_Z V| = \plim_{z \in V} (|Y| \times_{|Z|} |V|) = |Y| \times_{|Z|} \plim_{z \in V} |V| = |Y| \times_{|Z|} |Z_c|$ (using \cite[Lemma 11.22]{ScholzeEtcohDia}, noting that the transition maps in $\plim_{z \in V} (Y \times_Z V)$ are qcqs, as each term is qcqs).
  It follows that
  \begin{equation}\label{eq:Yz}
    Y_c'(T) = \{ T \to Y : |T| \to |Y| \text{ factors through $|Y_c'|$}\}.
  \end{equation}

  In particular, $|Y_c| \subset |Y_c'| \subset |Y|$ carry subspace topologies, so that $|Y_c|$ is also a connected component of $|Y_c'|$ (as topological spaces).
  Hence $Y_c$ is a connected component of $Y_c'$ (as diamonds) by comparing \eqref{eq:Yc}, \eqref{eq:Yz} and using the discussion preceding \eqref{eq:Yc}.
  We conclude as any connected component of an affinoid perfectoid space is affinoid perfectoid \cite[Prop.\ 6.5]{ScholzeEtcohDia}.
\end{proof}

Now we want to make precise what we mean by equivariant vector bundles over
perfectoid spaces.

Let $\cF$ be a vector bundle over $X \in \Perf$, i.e.,~a locally finite free
sheaf of $\cO_X$-modules. {We keep ourselves in the setting where the locally profinite group $H$ acts freely on $X$}. An \emph{$H$-equivariant structure on $\cF$} is
the data of $\cO_X$-linear isomorphisms
$c_h : \cF\xrightarrow{\sim}h^*\cF$ for all $h\in H$ such that, for
$h,h'\in H$, we have $c_{hh'}=(h')^*(c_h)\circ c_{h'}$. Giving a
map $c_h$ is equivalent to giving a family of isomorphisms
$c_{h,U} : \cF(U)\rightarrow\cF(h(U))$ for $U$ open in $X$ such that,
for $s\in\cF(U)$ and $f\in\cO_X(U)$, we have $c_h(fs)=h(f)c_h(s)$, with
$h(f)=f(h^{-1}\cdot)$. We require moreover a continuity condition which is
the following: for any quasi-compact open subset $U\subset X$ and any
compact open subgroup $H'\subset H$ such that $H'U=U$, the action of $H'$
on {the Banach space} $\cF(U)$ is continuous.
{(Note that such $H'$ always exist by continuity of the map $|\alpha| : |\un H \times X| \cong H \times |X| \to |X|$.)}
An $H$-equivariant structure on a vector
bundle over $X$ is equivalent to an isomorphism
$\theta : \pr_2^*\cF\xrightarrow{\sim}\alpha^*\cF$ of
$\cO_{\underline{H}\times X}$-modules over $\underline{H}\times X$
satisfying $\beta^*(\theta)\circ\pr_{2,3}^*(\theta)=\gamma^*(\theta)$
over $\underline{H}\times\underline{H}\times X$ where $\pr_{2,3}$ is
the projection $(\pr_2,\pr_3)$ on $\underline{H}\times X$, $\beta$ is
$(\pr_1,\alpha)$, $\gamma$ is $(\mu,\pr_3)$ (where $\mu$ is the group
law on $\underline{H})$. Indeed, pulling back such an isomorphism
$\theta$ along the section $X\rightarrow\underline{H}\times X$
corresponding to $x\mapsto (h,x)$ gives an isomorphism $c_h$ and the
family $(c_h)$ satisfies the properties of an $H$-structure on
$\cF$. The continuity condition comes from the fact that, for any
$U\subset X$ quasi-compact open subset and any compact open $H'\subset H$ 
subgroup such that $H'U=U$, {by taking sections over $\underline{H'}\times U$,} the isomorphism $\theta$ induces an
endomorphism of ${(\pr_2^*\cF)(\underline{H'}\times U) \cong (\alpha^*\cF)(\underline{H'}\times U) \simeq {}}
\cC^0(H',\cF(U))$ {\cite[(2.1)]{Heuer_LB}}
which is given by $f\mapsto (h\mapsto h\cdot f(h))$. The existence of this
isomorphism proves the continuity of the action. Conversely assume
that we have an $H$-equivariant structure over $\cF$ satisfying our
continuity condition. For any $x\in X$, there is a quasi-compact open
neighborhood $U$ of $x$ and a compact open subgroup $H'\subset H$ such
that $H'U=U$ and such that $\cF$ is free over $U$. The group $H'$ acts
continuously on $\cF(U)$ which allows us to construct the isomorphism
$\theta : (\pr_2^*\cF)(\underline{H'}\times
U)\xrightarrow{\sim}(\alpha^*\cF)(\underline{H'}\times U)$ using the
formula above.

{If $X \in \Perf$ we denote by $X_{\proet}$ its pro\'etale site \cite[Def.\ 8.1]{ScholzeEtcohDia} and if $Y$ is a diamond we denote by $Y_{\qproet}$ its quasipro\'etale site \cite[Def.\ 14.1]{ScholzeEtcohDia}.
  We let $Y_{\qproetp}$ denote the full subcategory of $Y_{\qproet}$ consisting of all perfectoid objects in $Y_{\qproet}$ and covers consisting of all families of maps that are covers in $Y_\qproet$, equivalently jointly surjective maps of v-sheaves, so that we have an equivalence of topoi $Y_{\qproetp}^\sim \cong Y_{\qproet}^\sim$ by \cite[Tag \href{https://stacks.math.columbia.edu/tag/03A0}{03A0}]{stacks-project} (applied with $u$ the inclusion $Y_{\qproetp} \to Y_{\qproet}$; note that any object of $Y_\qproet$ is a diamond \cite[Prop.\ 11.7]{ScholzeEtcohDia} and hence is covered by an object of $\Perf$).
  We may then tacitly work with $Y_{\qproetp}$ instead of $Y_{\qproet}$.
  In particular, we let $\cO$ denote the structure sheaf on either $X_{\proet}$ or $Y_{\qproet}$, sending $W$ in the site to $\cO_W(W)$, which is a sheaf by \cite[Cor.~8.6, Thm.\ 8.7]{ScholzeEtcohDia}.
  We also note that if $X \in \Perf$ the map of sites $X_\qproet \to X_\proet$ induces an equivalence of topoi by \cite[Tag \href{https://stacks.math.columbia.edu/tag/03A0}{03A0}]{stacks-project}.
  {(This is similar to above, noting first that covers consist of jointly surjective maps of v-sheaves on both sides.
    By \cite[Lemma 7.14]{ScholzeEtcohDia} there exists a pro\'etale cover $\{X_i \to X\}$ such that each $X_i$ is strictly totally disconnected, hence by \cite[Def.~10.1]{ScholzeEtcohDia} for any quasipro\'etale $Y \to X$ the pullback cover $\{X_i \times_X Y \to Y\}$ is such that $X_i \times_X Y$ is perfectoid and $X_i \times_X Y \to X_i \to X$ is pro\'etale for all $i$.)}
}

If $\cF$ is a vector bundle over $X$ we can extend it to a sheaf
$\cF_{\proet}$ of $\cO$-modules over $X_{\proet}$ by pullback, so that $\cF_{\proet}$ is a vector
bundle over $X_{\proet}$ {(cf.\ \cite[Thm.\ 3.5.8]{KL2})}. Moreover if $\cF$ has an $H$-equivariant
structure, then using the isomorphism $\theta$ in the previous paragraph we see that the
$H$-equivariant structure extends automatically to
$\cF_{\proet}$ {(as $\cF \mapsto \cF_{\proet}$ is compatible with pullbacks)}.

{Now suppose again we are in the setting of Proposition~\ref{prop:quotient_by_finite_group_have_perf_fibers} so that $H$ acts freely on $X$.}
As the map of diamonds $f : X\rightarrow Y {{}= X/\un{H}}$ is quasipro\'etale and
surjective, it follows from \cite[Lemma~17.1.8]{SWBerkeley} that, if
$\cF$ is an $H$-equivariant vector bundle over $X_{\proet}$
(equivalently over $X_{\qproet}$), then $(f_*\cF)^H$ is a vector
bundle over $Y_{\qproet}$. Namely the isomorphism
$\theta : \pr_2^*\cF\xrightarrow{\sim}\alpha^*\cF$ provides a descent
datum on $\cF$ along the map $X\rightarrow Y$.
{Conversely, if $\cG$ is a vector bundle on $Y_\qproet$, then $f^* \cG$ is an $H$-equivariant vector bundle on $X_\qproet$.
  (More precisely, $f : X_\qproet \to Y_\qproet$ is a morphism of ringed sites, with $(f_*\cF)(W) = \cF(W \times_Y X)$ for $W \in Y_\qproet$ and $(f^*\cG)(T) = \cG(T)$ for $T \in X_\qproet$.
  Then $(f_*\cF)^H$ is the equalizer of the maps $f_* \cF \rightrightarrows g_*\pr_2^* \cF \cong g_*\alpha^* \cF$ of $\cO_Y$-modules, where $g := f \circ \pr_2 = f \circ \alpha : \un H \times X \to Y$ and the isomorphism is given by $\theta$.)
}
We obtain in this way an
equivalence between the category of locally {finite} free sheaves over
$Y_{\qproet}$ and the category of $H$-equivariant locally {finite} free sheaves
over $X_{\proet}$.

\begin{lem1}\label{lem:coho-vb}
  Suppose that $X$ is a quasi-Stein perfectoid space.
  Then we have a canonical equivalence between vector bundles on $X_v$, $X_\proet$, $X_\an$. If $\cF$ is such a vector bundle, then $H^q(X_v,\cF) = H^q(X_\proet,\cF) = H^q(X_\an,\cF) = 0$ for all $q > 0$.
\end{lem1}

Here, by a slight abuse of notation, we identify an analytic vector bundle $\cF$ with its pullbacks to $X_\proet$ and $X_v$.

\begin{proof}
  The first part follows from \cite[Thm.~3.5.8]{KL2}, and $H^q(X_\an,\cF) = 0$ for $q > 0$ holds by \cite[Thm.~2.6.5(c)]{KL2}.
  By applying the proof of \cite[Thm.~2.6.5(c)]{KL2} with \cite[Thm.~3.5.6]{KL2} instead of \cite[Thm.~2.5.1]{KL2} we deduce that $H^q(X_v,\cF) = 0$ for $q > 0$.
  Then $H^q(X_\proet,\cF) = 0$ for $q > 0$ follows from \cite[Thm.~3.5.9]{KL2}.
\end{proof}

{Assuming that $X$ is quasi-Stein, }
we deduce from
\cite[{Cor.~2.9}]{Heuer_LB} {and Lemma~\ref{lem:coho-vb}} that we have a natural isomorphism, for any
$n\geq0$,
\begin{equation}
  \label{eq:Cartan_Leray}
  H^n(Y_{\qproet},(f_*\cF)^H) \simeq H^n\cts(H,\cF(X)).
\end{equation}
{(Note that $X \times \un{H^n}$ is quasi-Stein for all $n \ge 0$.)}

Note that if $\cF=\cO_X$, then $(f_*\cO_X)^H\simeq\cO_Y$ so that
(\ref{eq:Cartan_Leray}) becomes
\begin{equation}
  \label{eq:Cartan_Leray_OX}
  H^n(Y_{\qproet},\cO_Y) \simeq H^n\cts(H,\cO_X(X)).
\end{equation}

\begin{lem1}\label{lemm:push_forward_along_finite_quotient}
  Let $h : Y\rightarrow Z$ be a map of diamonds with $Z$ a perfectoid
  space. Assume that for any {totally disconnected perfectoid space $Z'$} over $Z$, the fiber product
  ${Y' :={}} Y\times_Z Z'$ is representable by an affinoid perfectoid
  space. %
  Then, for $q>0$, $R^qh_*\cO_Y=0$ {in the quasipro\'etale topology}.
\end{lem1}

\begin{proof}
  {Suppose \ that \ $Z'$ \ is \ totally \ disconnected \ in \ $\Perf$.} \ By \ \cite[Exp.~V, Prop.~5.1(3)]{SGA4vol2}, we have
  an isomorphism {$R^qh'_*\cO_{Y'}\simeq (R^qh_*\cO_Y)_{|Z'}$, where $h'$ denotes the base change $Y' \to Z'$ of $h$}. {As $Y'$, $Z'$ are affinoid perfectoid by assumption,} it follows from
 {Lemma~\ref{lem:coho-vb} as well as \cite[Exp.~V, Prop.~5.1(1)]{SGA4vol2}} that $(R^qh_*\cO_Y)_{|Z'}=0$ for any
  $q>0$.
  {By taking a quasipro\'etale cover of $Z$ by totally disconnected perfectoid spaces (using \cite[Lemma 7.18]{ScholzeEtcohDia}), we deduce that} $R^qh_*\cO_Y=0$ for $q>0$.
\end{proof}

In what follows, for $Z$ a perfectoid space, we will denote
$\nu_Z : Z_{\proet}\rightarrow Z_{\an}$ the natural morphism {of sites, where $Z_{\an}$ denotes the analytic site of $Z$}.

\begin{prop1}\label{prop:group_cohomology_trivial_bundle}
  Let $X$ be a quasi-Stein perfectoid space of characteristic $p$ and
  $G$ a locally profinite group acting on $X$. Assume that the
  quotient $X/\un{G}$ is representable by a quasi-separated perfectoid space $Z$. Assume there exist closed
  normal subgroups $H' \subset H$ of $G$ acting freely on $X$ such that $X/\un{H'}$ is qcqs perfectoid {which is quasipro\'etale over $Z$}, $G/H'$ is profinite, and $G/H$ is
  finite of cardinality prime to $p$. Then we have an isomorphism, for
  every $q\geq0$,
  \[ H^q\cts(G,\cO_X(X))\simeq H^q(Z,\cO_Z). \]
\end{prop1}

\begin{proof}
  Let $\Gamma=G/H$ and let $Y:=X/\un{H}$. Denote $g : X\rightarrow Y$ and
  $h : Y\rightarrow Z$ so that $f=h\circ g$. It follows from
  (\ref{eq:Cartan_Leray_OX}), that we have
  $H^q(Y_{\qproet},\cO_Y)=H^q\cts(H,\cO_X(X))$ for every $q\geq0$.

{By the same proof as for Lemma~\ref{lm:pushforward-sheaf}} we have $(f_*\cO_X)^G\simeq (h_*\cO_Y)^{\Gamma}\simeq\cO_Z$.
  From Proposition~\ref{prop:quotient_by_finite_group_have_perf_fibers} and Lemma~\ref{lemm:push_forward_along_finite_quotient} {applied to $G/H'$ acting on $Y' := X/\un{H'}$}, we have
  $R^qh_*\cO_Y=0$ for $q>0$ so that, for every $q\geq0$,
  \[
    H^q(Z_{\proet},h_*\cO_Y)\simeq H^q(Y_{\qproet},\cO_Y). \] As the
  group $\Gamma$ has order prime to $p$, the sheaf
  $\cO_Z\simeq(h_*\cO_Y)^{\Gamma}$ is a direct factor of $h_*\cO_Y$ and we have
    \[
      H^q(Z_{\proet},\cO_Z)\simeq
      H^q(Y_{\qproet},\cO_Y)^{\Gamma}\simeq H^q\cts(H,\cO_X(X))^{\Gamma}. \] Finally, using again that
    $\Gamma$ has order prime to $p$, we deduce
    $H^q\cts(H,\cO_X(X))^{\Gamma}\simeq
    H^q\cts(G,\cO_X(X))$. Finally as $Z$ is
  perfectoid, it follows from \cite[Prop.~8.5(iii)]{ScholzeEtcohDia}
  that $R^q\nu_{Z,*}\cO_Z=0$ for $q>0$ so that
  \[ H^q(Z_{\proet},\cO_Z)\simeq H^q(Z,\cO_Z), \] which allows us to
  conclude.
\end{proof}

\bibliography{Biblio}

\newcommand{\etalchar}[1]{$^{#1}$}
\providecommand{\bysame}{\leavevmode\hbox to3em{\hrulefill}\thinspace}
\providecommand{\MR}{\relax\ifhmode\unskip\space\fi MR }
\providecommand{\MRhref}[2]{%
  \href{http://www.ams.org/mathscinet-getitem?mr=#1}{#2}
}
\providecommand{\href}[2]{#2}
\begin{thebibliography}{BHH{\etalchar{+}}25b}

\bibitem[BD14]{BD}
Christophe Breuil and Fred Diamond, \emph{Formes modulaires de {H}ilbert modulo
  {$p$} et valeurs d'extensions entre caract\`eres galoisiens}, Ann. Sci.
  \'{E}c. Norm. Sup\'{e}r. (4) \textbf{47} (2014), no.~5, 905--974.
  \MR{3294620}

\bibitem[Ber]{lucrezia}
Lucrezia Bertoletti, \emph{Finite {Length} for {Unramified} $\mathrm{GL_2}$:
  {Beyond} {Multiplicity} {One}}, \url{https://arxiv.org/pdf/2505.20134},
  preprint (2025).

\bibitem[BHH{\etalchar{+}}a]{BHHMS4}
Christophe Breuil, Florian Herzig, Yongquan Hu, Stefano Morra, and Benjamin
  Schraen, \emph{Finite length for unramified {$\mathrm{GL}_2$}},
  \url{https://arxiv.org/pdf/2501.03644}, preprint (2025).

\bibitem[BHH{\etalchar{+}}b]{BHHMS5}
\bysame, \emph{On the constituents of the mod {$p$} cohomology of {S}himura
  curves}, \url{https://arxiv.org/pdf/2506.16293}, preprint (2025).

\bibitem[BHH{\etalchar{+}}23]{BHHMS1}
\bysame, \emph{Gelfand-{K}irillov dimension and {${\rm mod}\, p$} cohomology
  for {$\rm GL_2$}}, Invent. Math. \textbf{234} (2023), no.~1, 1--128.
  \MR{4635831}

\bibitem[BHH{\etalchar{+}}25a]{BHHMS2}
\bysame, \emph{Conjectures and {R}esults on {M}odular {R}epresentations of
  {${\rm GL}_n({K})$} for a {$p$}-{A}dic {F}ield {$K$}}, Mem. Amer. Math. Soc.
  \textbf{315} (2025), no.~1598, v+163. \MR{5003478}

\bibitem[BHH{\etalchar{+}}25b]{BHHMS3}
\bysame, \emph{Multivariable {$(\varphi,\mathcal {O}_K^\times )$}-modules and
  local-global compatibility}, Math. Ann. \textbf{392} (2025), no.~2,
  2709--2801. \MR{4906333}

\bibitem[Bos]{bosco}
Guido Bosco, \emph{On the {$p$}-adic pro-\'etale cohomology of {D}rinfeld
  symmetric spaces}, \url{https://arxiv.org/pdf/2110.10683}, preprint (2023).

\bibitem[BP12]{BP}
Christophe Breuil and Vytautas Pa{\v s}k{\=u}nas, \emph{Towards a modulo {$p$}
  {L}anglands correspondence for {${\rm GL}_2$}}, Mem. Amer. Math. Soc.
  \textbf{216} (2012), no.~1016, vi+114. \MR{2931521}

\bibitem[BSSW]{BSSW}
Tobias Barthel, Tomer~M. Schlank, Nathaniel Stapleton, and Jared Weinstein,
  \emph{On the rationalization of the {$K(n)$}-local sphere},
  \url{https://arxiv.org/pdf/2402.00960}, preprint (2025).

\bibitem[CDP14]{CDP}
Pierre Colmez, Gabriel Dospinescu, and Vytautas Pa{\v s}k{\=u}nas, \emph{The
  {$p$}-adic local {L}anglands correspondence for {${\rm
  GL}_2(\mathbb{Q}_p)$}}, Camb. J. Math. \textbf{2} (2014), no.~1, 1--47.
  \MR{3272011}

\bibitem[Col10a]{Colmez}
Pierre Colmez, \emph{Repr\'{e}sentations \ de \ {${\rm GL}_2({\mathbb Q}_p)$} \
  et \ {$(\varphi,\Gamma)$}-modules}, Ast\'{e}risque (2010), no.~330, 281--509.
  \MR{2642409}

\bibitem[Col10b]{Colmez2}
\bysame, \emph{{$(\varphi,\Gamma)$}-modules et repr\'{e}sentations du
  mirabolique de {${\rm GL}_2(\mathbb{Q}_p)$}}, Ast\'{e}risque (2010), no.~330,
  61--153. \MR{2642405}

\bibitem[CS]{ClausenS}
Dustin Clausen and Peter Scholze, \emph{Lectures on condensed mathematics}, \
  \url{https://arxiv.org/abs/2605.03658}, 2026.

\bibitem[Eme]{emerton-local-global}
Matthew Emerton, \emph{Local-global \ \ compatibility \ \ in \ \ the \ \
  $p$-adic \ \ {L}anglands \ \ program \ \ for \ \ $\mathrm{GL}_{2/{\mathbb
  Q}}$}, \url{ http://www.math.uchicago.edu/~emerton/pdffiles/lg.pdf}, preprint
  (2011).

\bibitem[Far]{Fargues}
Laurent Fargues, \emph{Premiers pas en {L}anglands $p$-adique g\'eom\'etrique},
  draft (July 2023).

\bibitem[Far20]{FarguesAJ}
\bysame, \emph{Simple connexit\'{e} des fibres d'une application
  d'{A}bel-{J}acobi et corps de classes local}, Ann. Sci. \'{E}c. Norm.
  Sup\'{e}r. (4) \textbf{53} (2020), no.~1, 89--124. \MR{4093441}

\bibitem[FF18]{FF}
Laurent Fargues and Jean-Marc Fontaine, \emph{Courbes et fibr\'es vectoriels en
  th\'eorie de {H}odge {$p$}-adique}, no. 406, 2018, With a preface by Pierre
  Colmez. \MR{3917141}

\bibitem[Heu22]{Heuer_LB}
Ben Heuer, \emph{Line bundles on rigid spaces in the {$v$}-topology}, Forum
  Math. Sigma \textbf{10} (2022), Paper No. e82, 36. \MR{4487475}

\bibitem[Hu12]{yongquan-jussieu}
Yongquan Hu, \emph{Diagrammes canoniques et repr\'{e}sentations modulo {$p$} de
  {${\rm GL}_2(F)$}}, J. Inst. Math. Jussieu \textbf{11} (2012), no.~1,
  67--118. \MR{2862375}

\bibitem[Jen72]{jensen}
C.~U. Jensen, \emph{Les foncteurs d\'{e}riv\'{e}s de {$\varprojlim$} et leurs
  applications en th\'{e}orie des modules}, Lecture Notes in Mathematics, Vol.
  254, Springer-Verlag, Berlin-New York, 1972. \MR{407091}

\bibitem[Ked16]{Kedlaya15}
Kiran~S. Kedlaya, \emph{The {H}ochschild-{S}erre property for some {$p$}-adic
  analytic group actions}, Ann. Math. Qu\'e. \textbf{40} (2016), no.~1,
  149--157. \MR{3512526}

\bibitem[KL]{KL2}
Kiran~S. Kedlaya and Ruochuan Liu, \emph{Relative {{\(p\)}}-adic {Hodge}
  theory, {II}: Imperfect period rings},
  \url{https://arxiv.org/pdf/1602.06899}, preprint (2016).

\bibitem[KL15]{KL1}
\bysame, \emph{Relative {$p$}-adic {H}odge theory: foundations}, Ast\'{e}risque
  (2015), no.~371, 239. \MR{3379653}

\bibitem[Laz65]{lazard}
Michel Lazard, \emph{Groupes analytiques {$p$}-adiques}, Inst. Hautes
  \'{E}tudes Sci. Publ. Math. (1965), no.~26, 389--603. \MR{209286}

\bibitem[LvO96]{LiOy}
Huishi Li and Freddy van Oystaeyen, \emph{\ {Z}ariskian \ filtrations},
  $K$-Monographs in Mathematics, vol.~2, Kluwer Academic Publishers, Dordrecht,
  1996. \MR{1420862}

\bibitem[Mor]{Morel}
Sophie Morel, \emph{\ {A}dic spaces}, \
  \url{http://perso.ens-lyon.fr/sophie.morel/adic_notes.pdf}, 2019.

\bibitem[Pa{\v s}07]{paskunas-restriction}
Vytautas Pa{\v s}k{\=u}nas, \emph{On the restriction of representations of
  {${\rm GL}_2(F)$} to a {B}orel subgroup}, Compos. Math. \textbf{143} (2007),
  no.~6, 1533--1544. \MR{2371380}

\bibitem[RJRC22]{RodriguesJacinto2022}
Joaqu\'in Rodrigues~Jacinto and Juan~Esteban Rodr\'iguez~Camargo, \emph{Solid
  locally analytic representations of {$p$}-adic {L}ie groups}, Represent.
  Theory \textbf{26} (2022), 962--1024. \MR{4475468}

\bibitem[Sch]{ScholzeEtcohDia}
Peter Scholze, \emph{{\'E}tale cohomology of diamonds}, Ast\'{e}risque, to
  appear.

\bibitem[Sch12]{Scholzeperfectoid}
\bysame, \emph{Perfectoid spaces}, Publ. Math. Inst. Hautes \'Etudes Sci.
  \textbf{116} (2012), 245--313. \MR{3090258}

\bibitem[Sch15]{Benj}
Benjamin Schraen, \emph{Sur la pr\'{e}sentation des repr\'{e}sentations
  supersinguli\`eres de {${\rm GL}_2(F)$}}, J. Reine Angew. Math. \textbf{704}
  (2015), 187--208. \MR{3365778}

\bibitem[Sch18]{ScholzeLT}
Peter Scholze, \emph{On the {$p$}-adic cohomology of the {L}ubin-{T}ate tower},
  Ann. Sci. \'Ec. Norm. Sup\'er. (4) \textbf{51} (2018), no.~4, 811--863, With
  an appendix by Michael Rapoport. \MR{3861564}

\bibitem[SGA72]{SGA4vol2}
\emph{Th\'eorie des topos et cohomologie \'etale des sch\'emas. {T}ome 2},
  Lecture Notes in Mathematics, vol. Vol. 270, Springer-Verlag, Berlin-New
  York, 1972, S\'eminaire de G\'eom\'etrie Alg\'ebrique du Bois-Marie
  1963--1964 (SGA 4), Dirig\'e{} par M. Artin, A. Grothendieck et J. L.
  Verdier. Avec la collaboration de N. Bourbaki, P. Deligne et B. Saint-Donat.
  \MR{354653}

\bibitem[{Sta}]{stacks-project}
The {Stacks Project Authors}, \emph{\textit{Stacks Project}},
  \url{https://stacks.math.columbia.edu}.

\bibitem[SW20]{SWBerkeley}
Peter Scholze and Jared Weinstein, \emph{Berkeley lectures on {$p$}-adic
  geometry}, Annals of Mathematics Studies, vol. 207, Princeton University
  Press, Princeton, NJ, 2020. \MR{4446467}

\bibitem[Tat76]{tate1976}
John Tate, \emph{Relations between {$K\sb{2}$} and {G}alois cohomology},
  Invent. Math. \textbf{36} (1976), 257--274. \MR{429837}

\bibitem[Vie12]{BVi}
Mathieu Vienney, \emph{Repr\'{e}sentations modulo {$p$} d'un sous-groupe de
  {B}orel de {$GL_2(\mathbb{Q}_p)$}}, C. R. Math. Acad. Sci. Paris \textbf{350}
  (2012), no.~13-14, 651--654. \MR{2971374}

\bibitem[Vig08]{GAFAVigneras}
Marie-France Vign\'eras, \emph{S\'erie principale modulo {$p$} de groupes
  r\'eductifs {$p$}-adiques}, Geom. Funct. Anal. \textbf{17} (2008), no.~6,
  2090--2112. \MR{2399093}

\bibitem[Wana]{YW2}
Yitong Wang, \emph{Diamond \ diagrams \ and \ multivariable \
  {$(\varphi,\mathcal{O}_K^\times)$}-modules},
  \url{https://arxiv.org/pdf/2504.09270}, preprint (2025).

\bibitem[Wanb]{YW3}
\bysame, \emph{Lubin-{T}ate and multivariable
  {$(\varphi,\mathcal{O}_K^\times)$}-modules in dimension $2$},
  \url{https://arxiv.org/pdf/2404.00396}, preprint (2024).

\bibitem[Wanc]{YW}
\bysame, \emph{On the rank of the multivariable
  {$(\varphi,\mathcal{O}_K^\times)$}-modules associated to mod {$p$}
  representations of {$\rm{GL}_2(K)$}}, \url{https://arxiv.org/pdf/2404.00389},
  preprint (2024).

\bibitem[Wan23]{YW0}
\bysame, \emph{On the {${\rm mod} \, p$} cohomology for {${\rm GL}_2$}}, J.
  Algebra \textbf{636} (2023), 20--41. \MR{4637601}

\bibitem[Wed]{Wedhorn:adic}
Torsten Wedhorn, \emph{Adic spaces}, \url{https://arxiv.org/pdf/1910.05934},
  2019.

\bibitem[Wu21]{Wu}
Zhixiang Wu, \emph{A note on presentations of supersingular representations of
  {$\textrm{GL}_2(F)$}}, Manuscripta Math. \textbf{165} (2021), no.~3-4,
  583--596. \MR{4280498}

\end{thebibliography}

\bibliographystyle{amsalpha}

\end{document}